\renewcommand{\tocsection}[3]{%
  \indentlabel{\@ifnotempty{#2}{\bfseries\ignorespaces#1 #2\quad}}\bfseries#3}
\renewcommand{\tocsubsection}[3]{%
  \indentlabel{\@ifnotempty{#2}{\ignorespaces#1 #2\quad}}#3}
\newcommand\@dotsep{4.5}
\def\@tocline#1#2#3#4#5#6#7{\relax
  \ifnum #1>\c@tocdepth 
  \else
    \par \addpenalty\@secpenalty\addvspace{#2}%
    \begingroup \hyphenpenalty\@M
    \@ifempty{#4}{%
      \@tempdima\csname r@tocindent\number#1\endcsname\relax
    }{%
      \@tempdima#4\relax
    }%
    \parindent\z@ \leftskip#3\relax \advance\leftskip\@tempdima\relax
    \rightskip\@pnumwidth plus1em \parfillskip-\@pnumwidth
    #5\leavevmode\hskip-\@tempdima{#6}\nobreak
    \leaders\hbox{$\m@th\mkern \@dotsep mu\hbox{.}\mkern \@dotsep mu$}\hfill
    \nobreak
    \hbox to\@pnumwidth{\@tocpagenum{\ifnum#1=1\bfseries\fi#7}}\par
    \nobreak
    \endgroup
  \fi}
\renewcommand\csname r@tocindent0\endcsname{0pt}
\def\l@subsection{\@tocline{2}{0pt}{2.5pc}{5pc}{}}
\DeclareMathOperator{\lspan}{span}                          
\DeclareMathOperator{\supp}{supp}                           
\DeclareMathOperator{\ext}{ext}                             
\DeclareMathOperator{\Lip}{Lip}                             
\DeclareMathOperator{\lip}{lip}                             
\newcommand{\NN}{\mathbb{N}}                                
\newcommand{\RR}{\mathbb{R}}                                
\newcommand{\abs}[1]{\left|{#1}\right|}                     
\newcommand{\Glsc}{\mathfrak{G}^{\mathrm{lsc}}}
\newcommand{\esupp}[1]{\mathcal{S}(#1)}                     
\newcommand{\pare}[1]{\left({#1}\right)}                    
\newcommand{\set}[1]{\left\{{#1}\right\}}                   
\newcommand{\norm}[1]{\left\|{#1}\right\|}                  
\newcommand{\dual}[1]{{#1}^\ast}                            
\newcommand{\duality}[1]{\left<{#1}\right>}                 
\newcommand{\cl}[1]{\overline{#1}}                          
\newcommand{\restrict}{\mathord{\upharpoonright}}           
\newcommand{\lipfree}[1]{\mathcal{F}({#1})}                 
\newcommand{\lipnorm}[1]{\norm{#1}_L}                       
\newcommand{\meas}[1]{\mathcal{M}({#1})}                    
\newcommand{\wt}[1]{\widetilde{#1}}                         
\newcommand{\bwt}[1]{\beta\wt{#1}}                          
\newcommand{\pp}{\mathfrak{p}}                              
\newcommand{\rr}{\mathfrak{r}}                              
\newcommand{\opr}[1]{\mathcal{M}_{\mathrm{op}}(#1)}         
\newcommand{\rcomp}[1]{#1^\mathcal{R}}                    
\newcommand{\ucomp}[1]{#1^\mathcal{U}}                    
\newcommand{\ucl}[1]{\overline{#1}^{\mathcal{U}}}           
\newcommand{\rcl}[1]{\overline{#1}^{\mathcal{R}}}           
\newcommand{\bwtf}{\mathsf{R}}                              
\newcommand{\bwtfp}{\mathsf{R}^+}                           
\newcommand{\opt}{\mathsf{O}}                               
\newcommand{\uwt}[1]{\ucomp{\wt{#1}}}                          
\newcommand{\uwtf}{\mathsf{U}}                                
\newcommand{\uu}{\mathfrak{u}}                              
\newcommand{\vv}{\mathfrak{v}}                              
\newcommand{\gq}{\mathfrak{g}}                              
\renewcommand{\leq}{\leqslant}
\renewcommand{\geq}{\geqslant}
\newcommand{\ep}{\varepsilon}
\DeclareMathOperator{\cnv}{conv}                             
\DeclareMathOperator{\Mol}{Mol}                              
\newcommand{\labeltext}[2]{%
  \@bsphack
  \csname phantomsection\endcsname 
  \def\@currentlabel{#1}{\label{#2}}%
  \@esphack
}
\theoremstyle{plain}
\newtheorem{theorem}{Theorem}[section]
\newtheorem{lemma}[theorem]{Lemma}
\newtheorem{corollary}[theorem]{Corollary}
\newtheorem{proposition}[theorem]{Proposition}
\newtheorem*{claim}{Claim}
\theoremstyle{definition}
\newtheorem*{definition*}{Definition}
\newtheorem{definition}[theorem]{Definition}
\newtheorem{example}[theorem]{Example}
\newtheorem{question}[theorem]{Question}
\theoremstyle{remark}
\newtheorem{remark}[theorem]{Remark}
\numberwithin{equation}{section}
\begin{document}

\title[A Choquet theory of Lipschitz-free spaces]{A Choquet theory of Lipschitz-free spaces}

\author[R. J. Smith]{Richard J. Smith}
\address[R. J. Smith]{School of Mathematics and Statistics, University College Dublin, Belfield, Dublin 4, Ireland}
\email{richard.smith@maths.ucd.ie}

\date{}


\begin{abstract}
Let $(M,d)$ be a complete metric space and let $\lipfree{M}$ denote the Lipschitz-free space over $M$. We develop a ``Choquet theory of Lipschitz-free spaces'' that draws from the classical Choquet theory and the De Leeuw representation of elements of $\lipfree{M}$ (and its bidual) by positive Radon measures on $\bwt{M}$, where $\wt{M}$ is the space of pairs $(x,y) \in M \times M$, $x \neq y$. We define a quasi-order $\preccurlyeq$ on the positive Radon measures on $\bwt{M}$ that is analogous to the classical Choquet order. Rather than in the classical case where the focus lies on maximal measures, we study the $\preccurlyeq$-minimal measures and show that they have a host of desirable properties. Among the applications of this theory is a solution (given elsewhere) to the extreme point problem for Lipschitz-free spaces.  
\end{abstract}

\subjclass[2020]{Primary 46B20, 46A55}

\keywords{Choquet theory, De Leeuw representation, extreme points, Lipschitz-free space, Lipschitz function, Lipschitz realcompactification, Lipschitz space}

\maketitle

\tableofcontents

\section{Introduction}

\subsection{Background and preliminaries}\label{subsec:background}

Let $(M,d)$ be a \emph{pointed} metric space, that is, one with a fixed base point which we denote by $0$. We define the \emph{Lipschitz space} $\Lip_0(M)$ to be the vector space of all Lipschitz functions $f:M\to\RR$ satisfying $f(0)=0$. On this space the (optimal) Lipschitz constant of $f$ defines a norm:
\[
\lipnorm{f}=\Lip(f),\quad f \in \Lip_0(M),
\]
and with respect to this norm $\Lip_0(M)$ becomes a Banach space.

There is a natural isometric embedding $\delta:M\to \Lip_0(M)^*$ that is given by evaluation: $\duality{f,\delta(x)}=f(x)$, $f \in \Lip_0(M)$. We define the \emph{Lipschitz-free space} (or simply \emph{free space}) over $M$ to be the norm-closed linear span of the image of $M$ under $\delta$:
\[
 \lipfree{M} = \cl{\lspan}(\delta(M)) \subset \Lip_0(M)^*.
\]
It is a standard fact that $\lipfree{M}$ is a canonical isometric predual of $\Lip_0(M)$, and hereafter we write $\Lip_0(M)=\lipfree{M}^*$. Since $\lipfree{M}$ is easily seen to be isometric to the free space over the completion of $M$, from now on we will assume without loss of generality that $M$ is complete.

The elementary molecules are the basic building blocks of free spaces. We set
\[
 \wt{M} = \set{(x,y) \in M \times M \,:\, x \neq y}
\]
and, given $(x,y) \in \wt{M}$, we define the \emph{(normalised) elementary molecule}
\[
 m_{xy} = \frac{\delta(x)-\delta(y)}{d(x,y)}.
\]
Since $\delta$ is an isometric embedding, each elementary molecule belongs to the unit sphere $S_{\lipfree{M}}$ of $\lipfree{M}$. Let
\[
\Mol = \Mol(M) = \set{m_{xy} \,:\, (x,y) \in \wt{M}}
\]
denote the set of all elementary molecules. Since $\Mol$ is 1-norming for $\Lip_0(M)$, by a straightforward Hahn-Banach separation argument, the closed unit ball of $\lipfree{M}$ is the norm-closed convex hull of $\Mol$:
\begin{equation}\label{eq:norming}
B_{\lipfree{M}}=\cl{\cnv}(\Mol).
\end{equation}

A wealth of information about Lipschitz and Lipschitz-free spaces (where the latter are referred to as Arens-Eells spaces and are denoted using the symbol {\AE } instead of $\mathcal{F}$) can be found in the pioneering texts of Weaver \cite{Weaver,Weaver2}. While free spaces have been known since the work of Arens and Eells \cite{AE56}, they have been studied intensively by members of the functional analysis community since the publication of the seminal papers of Godefroy and Kalton \cite{GK,Kalton}, where they were shown to play a key role in the non-linear theory of Banach spaces. Free spaces can be found at the interface between functional analysis, metric geometry and optimal transport theory. Together with their duals the Lipschitz spaces, free spaces are arguably the canonical way to express metric geometry in functional analytic terms, analogously to how compact Hausdorff spaces and measure spaces can be expressed using $C(K)$-spaces and $L_p$-spaces (and their duals), respectively. 

Despite the ease with which they can be defined, the structure of free spaces is complicated and elusive, and many pertinent and simply-stated questions concerning their isomorphic and isometric theory remain unanswered. We refer the reader to \cite{Godefroy_survey,GLZ} for details on some of these questions, as well as an abundance of information on the class of free spaces and their role in the wider Banach space context that will not be repeated here.

This paper was motivated by questions concerning the isometric theory of free spaces, for example, the characterisation of the extreme points of their unit balls. It is a fundamental question in Banach space geometry to characterise the set $\ext B_X$ of extreme points of the (closed) unit ball $B_X$ of a given Banach space $X$. In the class of free spaces, such a characterisation has been hard to obtain; we shall refer to the search for this characterisation as the \emph{extreme point problem}.

In view of \eqref{eq:norming}, it is reasonable to suppose that extreme points of $B_{\lipfree{M}}$ are related to the elementary molecules. Motivated by the question of classifying isometries between Lipschitz spaces, Weaver proved the first such result in this direction \cite[Theorem A]{W95}, which can be viewed as a result about preserved extreme points ($x \in B_X$ is a \emph{preserved extreme point} of $B_X$ if $x \in \ext B_{X^{**}}$). A few years after \cite{W95} was published, Weaver provided the following generalisation. (For the remainder of this section, we will not mention explicitly the fact that the metric spaces under consideration are pointed.)

\begin{theorem}[{cf.~\cite[Corollary 2.5.4]{Weaver}}]\label{th:preserved_extreme}
Let $M$ be a complete metric space and let $m$ be a preserved extreme point of $B_{\lipfree{M}}$. Then $m \in \Mol$.
\end{theorem}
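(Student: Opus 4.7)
The natural first move is Milman's theorem applied inside the $w^*$-compact convex set $B_{\lipfree{M}^{**}}=B_{\Lip_0(M)^*}$. By \eqref{eq:norming} and the symmetry $\Mol=-\Mol$ (since $m_{yx}=-m_{xy}$), the $w^*$-closed convex hull of $\Mol$ is all of $B_{\lipfree{M}^{**}}$. Milman's theorem therefore yields
\[
\ext B_{\lipfree{M}^{**}} \ \subseteq\ \cl{\Mol}^{w^*},
\]
the closure being taken in the $w^*$-topology of $\Lip_0(M)^*$. Since by hypothesis $m$ is a \emph{preserved} extreme point, we obtain both a net of molecules $(m_{x_\alpha y_\alpha})_\alpha$ with $m_{x_\alpha y_\alpha}\wsconv m$ and the crucial additional information $m\in\lipfree{M}$.

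The next step is to analyse the parameter net $(x_\alpha,y_\alpha)\in\wt{M}$. After passing to a subnet one may assume $d(x_\alpha,y_\alpha)\to t\in[0,+\infty]$. The ``tame case'' is $t\in(0,\infty)$ together with a further subnet along which $x_\alpha\to x$ and $y_\alpha\to y$ in $M$ for some $(x,y)\in\wt{M}$; in this situation the isometric character of $\delta$ and the joint continuity of $(x,y)\mapsto m_{xy}$ on $\wt{M}$ give $m_{x_\alpha y_\alpha}\to m_{xy}$ in norm, whence $m=m_{xy}\in\Mol$, as required.

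The main obstacle, and the real content of the proof, is ruling out the degenerate configurations: either $t=0$ (the parameters collapse to a single point of $M$), or one of the nets $(x_\alpha)$, $(y_\alpha)$ escapes to ``infinity'' in the sense of having no cluster point in $M$. In either case the $w^*$-limit should be a genuine bidual object, sitting in $\lipfree{M}^{**}\setminus\lipfree{M}$, which would contradict $m\in\lipfree{M}$. To make this rigorous I would use functional testing: for each degenerate configuration, construct a norm-bounded sequence $(f_n)\subset B_{\Lip_0(M)}$ such that $f_n\to 0$ pointwise on $M$, and yet $\lim_\alpha\duality{f_n,m_{x_\alpha y_\alpha}}$ stays bounded away from $0$. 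For the collapse case $t=0$ with cluster point $x^*\in M$, suitable ``hat'' functions supported on shrinking neighbourhoods of $x^*$ do the job; for the escape case, coercive Lipschitz functions on $M$ that separate the escaping tail from bounded regions work. A standard two-step approximation (approximate $m\in\lipfree{M}$ by a finite combination of molecules and use the norm bound on $f_n$) shows that $\duality{f_n,\mu}\to 0$ for every $\mu\in\lipfree{M}$ whenever $(f_n)$ is norm-bounded and converges to $0$ pointwise on $M$, so the existence of such $(f_n)$ tested against $m$ yields the required contradiction and completes the proof.
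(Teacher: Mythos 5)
Your opening move is sound: Milman's theorem in $(B_{\Lip_0(M)^*}, w^*)$, combined with \eqref{eq:norming} and Goldstine, gives $\ext B_{\Lip_0(M)^*}\subset \wscl{\Mol}$, so a preserved extreme point $m$ is the $w^*$-limit of a net of molecules $m_{x_\alpha y_\alpha}$, and the extra information $m\in\lipfree{M}$ is what must ultimately be exploited. Up to here you are in line with Weaver's argument, and the reduction is correct.

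The genuine gap is in the ``functional testing'' step. Your claim that in every degenerate configuration one can exhibit $(f_n)\subset B_{\Lip_0(M)}$ with $f_n\to 0$ pointwise on $M$ but $\lim_\alpha\Phi f_n(x_\alpha,y_\alpha)$ bounded away from $0$ is not substantiated and, as described, is false or incomplete in several cases. (a) In the collapse case $t=0$ with cluster point $x^*\in M$, radial hat functions centred at $x^*$ do \emph{not} in general detect the limit: if $d(x_\alpha,x^*)=d(y_\alpha,x^*)$ for all $\alpha$ (e.g.\ a derivation in $\RR^2$ in a direction orthogonal to the radial one) the hat function test returns $0$. One can rescue this case by a clamping trick, taking a witness $f_0$ with $\duality{f_0,m}=a\neq 0$ and setting $f_n=\midd(-\tfrac1n,\,f_0-f_0(x^*),\,\tfrac1n)$ (normalised to vanish at $0$); then $\|f_n\|_\infty\to 0$ while $\duality{f_n,m}=a$ eventually. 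You would need to spell this out. (b) Much more seriously, your ``escape'' case only contemplates escape to infinity and proposes coercive Lipschitz functions; this leaves untouched the case of \emph{bounded} escape, i.e.\ $(x_\alpha)$ bounded in $M$ but with no cluster point, which is unavoidable once $M$ is non-proper (e.g.\ the $e_\alpha$ in an infinite-dimensional ball). Here there is no ``coercive'' separation, the relevant limit sits at a point $\xi\in\rcomp{M}\setminus M$, and showing that $\tfrac{1}{t}(\delta(\xi)-\delta(\eta))\notin\lipfree{M}$ when a coordinate lies outside $M$ is exactly the content of \cite[Theorem 3.43]{Weaver2}, which is not elementary and is precisely what you would be re-proving. (c) You also omit the mixed case where $d(x_\alpha,y_\alpha)\to 0$ but neither coordinate net clusters in $M$. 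The clean way to close all of these gaps at once is the De Leeuw route the paper signals: pass to a cluster point $\zeta\in\bwt{M}$ of the net $(x_\alpha,y_\alpha)$, use $w^*$-continuity of $\Phi^*$ to get $\Phi^*\delta_\zeta=m$ and $\norm{\Phi^*\delta_\zeta}=1$, and then invoke the cited classification: $\Phi^*\delta_\zeta\in\lipfree{M}\setminus\set{0}$ forces $\zeta\in\wt{M}$, so $m=m_{xy}$. Without that (or an equivalent replacement), your sketch does not constitute a proof.
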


Central to the argument (and the one in \cite{W95}) is a fundamental construction due to K. de Leeuw \cite{DeLeeuw}, which we expand on in Section \ref{subsec:DeLeeuw}. Equation \eqref{eq:norming} and Theorem \ref{th:preserved_extreme} prompt the following question.

\begin{question}\label{qu:extreme}
Given a complete metric space $M$, is $\ext B_{\lipfree{M}} \subset \Mol$?
\end{question}

Some partial positive answers to Question \ref{qu:extreme} are known. For example, in the late 1990s it was shown that the answer is positive whenever $M$ is compact and the space $\lip_0(M) \subset \Lip_0(M)$ of ``little Lipschitz functions'' is an isometric predual of $\lipfree{M}$ \cite[Corollary 3.3.6]{Weaver}. Recently, for compact $M$, having $\lip_0(M)$ as an isometric predual was shown to be equivalent to saying simply that $\lipfree{M}$ is a dual space \cite[Theorem B]{AGPP} or, equally, that $M$ is \emph{purely 1-unrectifiable}, which is equivalent to saying that $M$ contains no bi-Lipschitz copy of a compact subset of $\RR$ of positive Lebesgue measure (see \cite[Corollary 1.12]{AGPP}). Question \ref{qu:extreme} is also known to have a positive answer, for example, if $M$ is a subset of an $\RR$-tree \cite[Corollary 4.5]{APP}, if $M$ is a \emph{proper metric space} (its closed bounded subsets are compact) \cite[Theorem 1]{Aliaga}, or if $M$ is \emph{uniformly discrete} (meaning that $\inf_{x \neq y} d(x,y)>0$) \cite[Corollary 6.3]{APS24a}.

Recent and significant progress has also been made on studying the extreme points of $B_{\lipfree{M}}$ that happen to be elementary molecules. Let $x,u,y \in M$ be distinct and satisfy $d(x,y)=d(x,u)+d(u,y)$. Then it is easy to see that $m_{xy} \notin \ext B_{\lipfree{M}}$ because one can write
\[
 m_{xy} = \frac{d(x,u)}{d(x,y)}m_{xu} + \frac{d(u,y)}{d(x,y)}m_{uy}.
\]
The converse, namely that $m_{xy} \in \ext B_{\lipfree{M}}$ whenever $x,y \in M$ are distinct and $d(x,y)<d(x,u)+d(u,y)$ for all $p \in M\setminus\{x,y\}$, is a consequence of the next result. 

\begin{theorem}[{\cite[Theorem 1.1]{AP_rmi}}]\label{th:AP_extreme}
Let $M$ be a complete metric space and let $m \in \lipfree{M}$ be ``finitely supported'', that is $m \in \lspan(\delta(M))$. Then $m \in \ext B_{\lipfree{M}}$ if and only if $m = m_{xy}$ for some distinct $x,y \in M$ satisfying $d(x,y)<d(x,u)+d(u,y)$ whenever $p \in M\setminus\{x,y\}$. 
\end{theorem}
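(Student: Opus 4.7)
The statement is a biconditional, so I would prove the two directions separately; the ``only if'' direction is elementary, while the ``if'' direction is the one that requires the paper's Choquet-style machinery.

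For ``only if'', I would assume $m \in \lspan(\delta(M))$ is extreme, normalised to $\|m\|=1$. By the Kantorovich–Rubinstein duality (the optimal-transport formula for the norm on $\lipfree{M}$), the norm of $m$ is attained by a finite transport plan, yielding a finite convex decomposition
$$m = \sum_{i=1}^{k} \lambda_i m_{x_i y_i}, \qquad \lambda_i > 0, \quad \sum_i \lambda_i = 1,$$
with $(x_i, y_i) \in \wt{M}$ drawn from $\supp m$. Extremality forces every $m_{x_i y_i}$ to coincide with $m$, so $m$ is a single molecule $m_{xy}$. If some $u \in M \setminus \{x,y\}$ satisfied $d(x,y) = d(x,u) + d(u,y)$, then the nontrivial convex decomposition $m_{xy} = \tfrac{d(x,u)}{d(x,y)} m_{xu} + \tfrac{d(u,y)}{d(x,y)} m_{uy}$ (of two distinct elements of the unit sphere) would contradict extremality, giving the geometric condition.

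For ``if'', suppose $m_{xy}$ satisfies the geometric condition; I would in fact prove the stronger statement that $m_{xy}$ is a preserved extreme point, and thus work in $B_{\lipfree{M}^{\ast\ast}}$. Given $m_{xy} = (u+v)/2$ with $u,v \in B_{\lipfree{M}^{\ast\ast}}$, necessarily $\|u\| = \|v\| = 1$, and I would apply the De Leeuw construction to represent $u$ and $v$ by positive Radon measures $\pi_u, \pi_v$ on $\bwt{M}$ with $\|\pi_u\| = \|\pi_v\| = 1$; then $\pi := (\pi_u+\pi_v)/2$ is a norm-$1$ positive Radon representation of $m_{xy}$. The decisive step is to invoke the theory of $\preccurlyeq$-minimal measures developed in this paper: under the geometric condition, the only $\preccurlyeq$-minimal positive representation of $m_{xy}$ is the Dirac $\delta_{(x,y)}$. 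Passing $\pi_u,\pi_v$ to $\preccurlyeq$-minimal representatives, and using that $\preccurlyeq$ is compatible with positive convex combinations (so that minimality is preserved under the averaging identity $2\pi = \pi_u + \pi_v$), one concludes $\pi_u = \pi_v = \delta_{(x,y)}$, whence $u = v = m_{xy}$.

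The hard part will be the minimality/uniqueness step. For a \emph{finite} metric space it reduces to a complementary-slackness argument: when no intermediate $u \in M$ lies on a geodesic from $x$ to $y$, the only optimal transport plan moving a unit mass from $y$ to $x$ is the direct one. In the general setting, however, the representations live on the non-metric compactification $\bwt{M}$, and one must rule out limiting plans that route mass through points of $\bwt{M} \setminus \wt{M}$ — precisely what the Choquet-style theory of $\preccurlyeq$-minimal measures is designed to handle, via the structural results on minimal measures promised in the abstract.
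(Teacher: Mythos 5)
The paper does not prove Theorem~\ref{th:AP_extreme}; it is cited directly from \cite{AP_rmi}. Your ``only if'' direction is essentially the standard argument and is sound: Kantorovich--Rubinstein duality for a finitely supported element produces a finite convex decomposition into molecules drawn from $\supp(m)$, extremality collapses this to a single molecule, and an exact metric midpoint would yield a forbidden nontrivial decomposition.

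The ``if'' direction contains a genuine flaw: you propose to prove the \emph{stronger} statement that $m_{xy}$ is a preserved extreme point, but the stated geometric condition does not imply this, and the implication is false in general. The characterisation of preserved extreme molecules (cf.~\cite{AG}) requires the strictly stronger ``approximate'' non-degeneracy: for every $\ep>0$ there is $\delta>0$ such that $d(x,u)+d(u,y)<d(x,y)+\delta$ forces $\min\{d(x,u),d(u,y)\}<\ep$. A small perturbation of Example~\ref{ex:bad_rep} already separates the two conditions: take $M=\{0,e_1\}\cup\{\tfrac12 e_1 + (\tfrac12+\tfrac1n)e_n : n\geq 2\}\subset c_0$. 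This is complete (uniformly discrete), $d(e_1,0)=1$, $d(e_1,u)+d(u,0)=1+\tfrac2n>1$ for every other point $u$, yet $\min\{d(e_1,u),d(u,0)\}=\tfrac12+\tfrac1n$ stays bounded away from $0$ while the excess $\tfrac2n\to 0$. Thus $m_{e_1 0}$ satisfies the hypothesis of the theorem but is not a preserved extreme point, so any argument proving preserved extremality from that hypothesis must be incorrect.

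The step in your proposal that breaks is the averaging/minimality transfer. Given optimal positive representations $\pi_u,\pi_v$ of $u,v$, the average $\pi=\tfrac12(\pi_u+\pi_v)$ is an optimal representation of $m_{xy}$, and Proposition~\ref{pr:minimal_points}\,\ref{pr:minimal_points_1} yields $\delta_{(x,y)}\preccurlyeq\pi$; but upgrading $\preccurlyeq$ to equality requires $\pi$ itself to be $\preccurlyeq$-minimal, which does not follow from the setup. Proposition~\ref{pr:min_basic}\,\ref{pr:min_basic_2} passes minimality from a measure $\mu$ to any $0\leq\lambda\leq\mu$, not from two arbitrary summands to their sum. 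More tellingly, nothing in your Choquet-order step actually invokes the geometric hypothesis (Proposition~\ref{pr:minimal_points} is unconditional), so a valid version of your argument would conclude that every molecule is preserved extreme --- absurd, since a molecule with an exact metric midpoint is not even extreme. A correct proof of the ``if'' direction must genuinely use the hypothesis; the argument of \cite{AP_rmi} does so by working inside $B_{\lipfree{M}}$ with representations concentrated on $\wt{M}$ and a direct support analysis, without passing to the bidual.
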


Thus, when $m \in \Mol$, we have a complete understanding of when it is an extreme point of $B_{\lipfree{M}}$. Despite these efforts and advances, it was only very recently that Question \ref{qu:extreme} finally yielded to pressure. The answer in full generality is positive and, when combined with Theorem \ref{th:AP_extreme}, we obtain a complete solution to the extreme point problem.

\begin{theorem}[{\cite[Theorem 3.1]{APS24c}}]\label{th:extreme}
Let $M$ be a complete metric space. Then $\ext B_{\lipfree{M}} \subset \Mol$. Moreover, $m \in \ext B_{\lipfree{M}}$ if and only if $m = m_{xy}$ for some distinct $x,y \in M$ satisfying $d(x,y)<d(x,u)+d(u,y)$ whenever $p \in M\setminus\{x,y\}$.
\end{theorem}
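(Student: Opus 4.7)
The plan has two components. First, the ``Moreover'' characterisation follows immediately from Theorem \ref{th:AP_extreme} once we know $\ext B_{\lipfree{M}} \subset \Mol$, because every elementary molecule is finitely supported: if $m \in \ext B_{\lipfree{M}}$, then $m \in \Mol \subset \lspan(\delta(M))$, so Theorem \ref{th:AP_extreme} applies and supplies the metric midpoint condition; the converse is just the ``if'' direction of Theorem \ref{th:AP_extreme}. So I would concentrate the substantive effort on proving the inclusion $\ext B_{\lipfree{M}} \subset \Mol$.

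For that inclusion, the natural strategy within the framework developed in this paper is to use the Choquet-like representation theory. Fix $m \in \ext B_{\lipfree{M}}$. Via the De Leeuw representation (as extended and refined in the paper), produce a positive Radon measure $\mu$ on $\bwt{M}$ with $\norm{\mu} \le 1$ that represents $m$ in the sense that integrating any $f \in \Lip_0(M)$, canonically viewed as a continuous function on $\bwt{M}$, against $\mu$ recovers $\duality{f,m}$. Then, using a Zorn-style argument with the quasi-order $\preccurlyeq$, refine $\mu$ without loss of generality to a $\preccurlyeq$-minimal representing measure.

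Now leverage the structural properties of $\preccurlyeq$-minimal measures --- the ``host of desirable properties'' advertised in the abstract --- to constrain $\mu$. The key consequences to extract are that minimality forces $\mu$ to be concentrated, in the appropriate sense, on $\wt{M}$ itself (rather than spilling into the remainder $\bwt{M} \setminus \wt{M}$), and that the representing map from minimal measures to vectors in $\lipfree{M}$ is faithful enough that non-trivial positive decompositions of $\mu$ into minimal pieces descend to non-trivial convex decompositions of the represented vector. Combined with the extremality of $m$, this must collapse $\mu$ to a single Dirac mass $\delta_{(x,y)}$ for some $(x,y) \in \wt{M}$: any splitting $\mu = \mu_1 + \mu_2$ into non-trivial positive minimal pieces would, representing vectors $m_i \in \lipfree{M}$ with $\norm{m_i} = \norm{\mu_i}$, give a non-trivial convex decomposition $m = \norm{\mu_1} (m_1/\norm{\mu_1}) + \norm{\mu_2}(m_2/\norm{\mu_2})$, contradicting $m \in \ext B_{\lipfree{M}}$. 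Once $\mu = \delta_{(x,y)}$, one reads off $m = m_{xy} \in \Mol$.

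The main obstacle is step three: translating the abstract quasi-order condition ``$\preccurlyeq$-minimal'' into the concrete rigidity claimed above, namely localisation of the support onto $\wt{M}$ and faithful passage from measure-level decompositions to vector-level convex decompositions. This is precisely what the Choquet machinery built throughout this paper is designed to deliver, and it is the reason why the problem had long resisted the direct approaches which had only succeeded in the special cases surveyed above (proper metric spaces, uniformly discrete spaces, purely 1-unrectifiable compact spaces, subsets of $\RR$-trees, and so on).
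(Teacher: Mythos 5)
The first part of your proposal (reducing the ``Moreover'' to Theorem \ref{th:AP_extreme}) is fine. But the substantive step of your sketch contains a claim that is false, and this is precisely where the difficulty of the problem lives. You assert that minimality forces a (positive, optimal) representing measure $\mu$ of an extreme point to be concentrated on $\wt{M}$ ``rather than spilling into the remainder.'' The paper makes clear that this is exactly what \emph{cannot} be proved in general: an element $m \in \lipfree{M}$ having an optimal representation concentrated on $\wt{M}$ is, by definition, a convex integral of molecules, and \cite[Theorem 4.1]{APS24a} (cited in Section~\ref{subsec:DeLeeuw}) shows that not every element of $\lipfree{M}$ has this property. Minimality does not bridge the gap: the actual result delivered by the Choquet machinery here is Theorem~\ref{th:min_conc}/Corollary~\ref{co:nice_rep}, which only gives concentration on the strictly larger set $\pp^{-1}(M\times M)$ (equivalently, $\pp^{-1}(A\times A)$ with $A = \supp(m)\cup\{0\}$). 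The difference $\pp^{-1}(M\times M)\setminus\wt{M}$ consists of points $\zeta$ with $\pp_1(\zeta)=\pp_2(\zeta)\in M$, whose Diracs correspond to derivations; controlling the mass that $\mu$ places there is a genuine, unaddressed obstacle.

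The final step of your sketch --- that extremality collapses a minimal optimal $\mu$ to a single Dirac --- is also under-argued. Splitting $\mu$ by restriction to Borel sets does give, via extremality and Proposition~\ref{pr:min_basic}\,\ref{pr:min_basic_3}, that $\Phi^*(\mu\restrict_E)=\mu(E)\,m$ for every Borel $E$; but this identity alone does not force $\mu$ to be a point mass on $\wt{M}$, precisely because $\mu$ may still charge the diagonal part $\pp^{-1}(M\times M)\setminus\wt{M}$. More to the point, the present paper does not prove Theorem~\ref{th:extreme} at all: as stated in Section~\ref{subsec:background}, the theorem is established in \cite{APS24c}, starting from Corollary~\ref{co:nice_rep} of this paper and then proceeding either through a separate ``compact decomposition principle'' or by adapting the arguments of \cite{Aliaga}. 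That additional work --- getting from concentration on $\pp^{-1}(A\times A)$ with $A$ compact to the molecule conclusion --- is the part your sketch compresses into a single unsubstantiated sentence, and it is not something the quasi-order machinery delivers for free.
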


The reason why we mention the extreme point problem and Theorem \ref{th:extreme} here is because the proof of the latter, specifically the inclusion $\ext B_{\lipfree{M}} \subset \Mol$, relies critically on the ``Choquet theory of Lipschitz-free spaces'' developed herein. This theory is based on the De Leeuw transform \cite{DeLeeuw}, a recent systematic study of its properties \cite{APS24a,APS24b}, and the classical Choquet theory. We cover the necessary elements of these ingredients in the coming sections.

We briefly mention several other applications of this paper that can be found in full in \cite{APS24c}. Starting from the final result of this paper, Corollary \ref{co:nice_rep}, we formulate a ``compact decomposition principle'' \cite[Theorem 2.1]{APS24c}, which shows that all free space elements can be decomposed into a convex series of ``compactly supported'' elements. Using this principle, one can quickly prove Theorem \ref{th:extreme}. The theorem can also be proved starting from Corollary \ref{co:nice_rep} (or Theorem \ref{th:opt_conc}) and adapting arguments from \cite{Aliaga} (see \cite[Remark 3.2]{APS24c}). Combined with \cite[Theorem 3.2]{APPP_2020}, Theorem \ref{th:extreme} implies that every extreme point of $B_{\lipfree{M}}$ is also an exposed point \cite[Corollary 3.3]{APS24c}. Theorem \ref{th:extreme} also yields consequences for surjective linear isometries between free spaces over geodesic metric spaces and concave metric spaces \cite[Corollary 3.5 and Theorem 3.6]{APS24c}.  The compact decomposition principle also has implications for convex integrals of molecules in free spaces (the definition of this is given after Proposition \ref{pr:Bochner}) over purely 1-unrectifiable metric spaces \cite[Theorem 4.1]{APS24c}. Finally, results from this paper can be used to prove that every free space element can be decomposed into a convex sum of a convex integral of molecules and a so-called ``diagonal element'' which (if non-zero) can be regarded as being as far removed from a convex integral of molecules as possible \cite[Theorem 5.4]{APS24c}. Thus every free space decomposes into such elements. As well as these applications, in this paper we provide a few others pertaining to the theory developed in \cite{APS24a,APS24b}. We are hopeful that this work will yield further applications in the future. 

Finally, we make some brief remarks on conventions and notation. All Banach spaces in the paper will be taken over the field of real numbers. We refer the reader to \cite{FHHSPZ11} for general information on Banach spaces and for the definitions of any concepts not defined here. Concerning topology, every topological space that features in this paper will be Hausdorff. For general topology we recommend \cite{Engelking}. Regarding measure theory, every measure under consideration will be a real-valued signed Radon measure (that is, a regular, tight, finite and countably additive Borel measure) on a compact or locally compact (Hausdorff) topological space, and hence will identify via the Riesz representation theorem with a bounded linear functional on the space of continuous functions on said topological space (that vanish at infinity in the locally compact case). For measure theory we refer the reader to \cite{Bogachev}. For Choquet theory, we recommend \cite{lmns10}, where a lot of information on the other fields mentioned above can also be found. The notation used throughout will be standard and will be consistent with that used in \cite{APS24a,APS24b}.

\subsection{The De Leeuw transform and optimal representations}\label{subsec:DeLeeuw}

The De Leeuw transform \cite{DeLeeuw} is the foundation stone on which everything in this paper rests. It is a tool that has been shown to have key importance in the theory of Lipschitz-free and Lipschitz spaces (and their duals) \cite{Aliaga,AGPP,AG,AP_rmi,APS24a,APS24b,APS24c,W95,Weaver,Weaver2}. The essential point is that the De Leeuw transform can be used to represent elements of $\lipfree{M}$ and its bidual $\Lip_0(M)^*$ by Radon measures on the Stone-\v Cech compactification of the (completely metrisable) space $\wt{M}$ introduced above. Because of this, we can leverage the mature, well-developed theory of Radon measures and duals of $C(K)$-spaces to help us to obtain results about said elements of $\lipfree{M}$ and $\Lip_0(M)^*$, which are less well understood.

A systematic study of this tool and its impact on $\lipfree{M}$ and its bidual was initiated in \cite{APS24a,APS24b}, after the authors recognised that one was missing from the literature. This study has yielded results about extremal structure of free spaces (mentioned above), connections with optimal transport theory and L-embeddability of free spaces in their biduals, to mention a few areas. These papers provide comprehensive introductions to the De Leeuw transform and associated notions. As such, we shall be content to provide only the essential details here, and refer the reader to \cite{APS24a,APS24b} for everything else.

The space $\wt{M}$ defined above is completely metrisable, so we can consider its Stone-\v Cech compactification $\bwt{M}$ together with the space $\meas{\bwt{M}}$ of signed Radon measures on $\bwt{M}$, which we identify with $C(\bwt{M})^*$. The \emph{De Leeuw transform} is the map $\Phi:\Lip_0(M)\to C(\bwt{M})$ defined as follows. Given $f \in \Lip_0(M)$, we define
\[
 (\Phi f)(x,y) = \duality{f,m_{xy}} = \frac{f(x)-f(y)}{d(x,y)}, \quad (x,y) \in \wt{M},
\]
to produce a bounded continuous function on $\wt{M}$, which we then extend continuously to $\bwt{M}$. Since $\Phi$ is a linear isometric embedding into $C(\bwt{M})$, the dual operator $\Phi^*:\meas{\bwt{M}} \to \Lip_0(M)^*$ is a quotient map, meaning that $\Phi^*B_{\meas{\bwt{M}}}=B_{\Lip_0(M)^*}$.

We will say that $\mu \in \meas{\bwt{M}}$ is a \emph{(De Leeuw) representation} of $\psi \in \Lip_0(M)^*$ simply if $\Phi^*\mu=\psi$. Because $\Phi^*$ is a quotient map, we can choose $\mu$ so that $\norm{\mu}=\norm{\psi}$. Moreover, by considering the reflection map $\rr$ (see Section \ref{subsec:coords}), it is not difficult to see that such measures $\mu$ can also be chosen to be positive \cite[Proposition 3]{Aliaga}. We call such measures \emph{optimal (De Leeuw) representations} of $\psi$, and write
\[
 \opr{\bwt{M}} = \set{\mu \in \meas{\bwt{M}}^+ \,:\, \norm{\mu}=\norm{\Phi^*\mu}}
\]
to denote the set of all optimal representations of some element of $\Lip_0(M)^*$ \cite{APS24a}. The canonical examples of optimal representations are the Dirac measures $\delta_{(x,y)}$, $(x,y) \in \wt{M}$, which evidently represent the corresponding elementary molecules $m_{xy}$.

Much of \cite{APS24a,APS24b} is devoted to establishing the favourable properties enjoyed by these optimal representations and what they can tell us about the structure of $\lipfree{M}$ and its bidual $\Lip_0(M)^*$. The following result shows that representations yield free space elements if they are concentrated on the ``nice'' part of $\bwt{M}$, namely $\wt{M}$. Given a Borel set $E \subset \bwt{M}$ and $\mu \in \meas{\bwt{M}}$, we denote by $\mu\restrict_{E}$ the restriction of $\mu$ to $E$ defined by $\mu\restrict_{E}(A)=\mu(E \cap A)$, $A \subset \bwt{M}$ Borel. Observe that as $\wt{M}$ is completely metrisable, it is a $G_\delta$ and thus Borel subset of $\bwt{M}$. 

\begin{proposition}[{\cite[Proposition 2.6]{APS24a}}]\label{pr:Bochner}
 If $\mu \in \meas{\bwt{M}}$ then 
 \[
  \Phi^*(\mu\restrict_{\wt{M}}) = \int_{\wt{M}} m_{xy}\,d\mu(x,y)
 \]
as a Bochner integral of elementary molecules in $\lipfree{M}$. 
\end{proposition}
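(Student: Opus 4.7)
The plan is to verify the right-hand side exists as a Bochner integral taking values in $\lipfree{M}$, and then to identify it with $\Phi^\ast(\mu\restrict_{\wt{M}})$ by testing against an arbitrary $f \in \Lip_0(M)$. First I would check that $\gamma : \wt{M} \to \lipfree{M}$, $(x,y) \mapsto m_{xy}$, is norm-continuous and bounded. Boundedness is immediate since $\norm{m_{xy}} = 1$; for continuity, if $(x_n, y_n) \to (x,y)$ in $\wt{M}$, then $d(x_n, y_n) \to d(x,y) > 0$ and $\delta(x_n) \to \delta(x)$, $\delta(y_n) \to \delta(y)$ in norm (since $\delta$ is isometric), so $m_{x_n y_n} \to m_{xy}$ in $\lipfree{M}$. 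For Bochner integrability, passing to the Jordan decomposition I may assume $\mu \geq 0$; since $\mu$ is a finite positive Radon measure on $\bwt{M}$, inner regularity on the Borel (indeed $G_\delta$) set $\wt{M}$ yields an increasing sequence of compact sets $K_n \subset \wt{M}$ whose union has full $\mu\restrict_{\wt{M}}$-measure. On each $K_n$ the continuous map $\gamma$ has separable image, so Pettis's theorem gives strong measurability; combined with boundedness, this yields Bochner integrability.

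Let $I := \int_{\wt{M}} m_{xy}\, d\mu(x,y) \in \lipfree{M}$ denote this Bochner integral. For any $f \in \Lip_0(M)$, viewing $f$ as a continuous linear functional on $\lipfree{M}$, the standard commutation of linear functionals with the Bochner integral and the identity $\duality{f, m_{xy}} = (\Phi f)(x,y)$ on $\wt{M}$ give
\[
 \duality{f, I} = \int_{\wt{M}} \duality{f, m_{xy}}\, d\mu(x,y) = \int_{\wt{M}} (\Phi f)(x,y)\, d\mu(x,y).
\]
Since $\Phi f \in C(\bwt{M})$ extends this integrand and $\mu\restrict_{\wt{M}}$ is a measure on $\bwt{M}$ concentrated on $\wt{M}$, the last integral coincides with $\int_{\bwt{M}} \Phi f\, d(\mu\restrict_{\wt{M}}) = \duality{\Phi f, \mu\restrict_{\wt{M}}} = \duality{f, \Phi^\ast(\mu\restrict_{\wt{M}})}$. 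As $f$ was arbitrary, $I = \Phi^\ast(\mu\restrict_{\wt{M}})$ as elements of $\Lip_0(M)^\ast$, with the incidental bonus that $\Phi^\ast(\mu\restrict_{\wt{M}})$ automatically lies in $\lipfree{M}$ rather than merely in its bidual.

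The only mildly delicate step is strong measurability of $\gamma$ in the absence of separability of $M$, which is resolved precisely by the Radon (inner regular) nature of $\mu$; the remainder is routine interplay between the Bochner integral and the definitions of $\Phi$ and $\Phi^\ast$.
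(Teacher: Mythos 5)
Your proof is correct and follows what is essentially the standard (and, to my knowledge, the only sensible) route: norm-continuity and boundedness of $(x,y)\mapsto m_{xy}$ on $\wt{M}$, inner regularity of the Radon measure to obtain an essentially separable range, Pettis measurability to conclude Bochner integrability, and then identification of the integral by testing against $f\in\Lip_0(M)=\lipfree{M}^*$ and invoking the commutation of bounded linear functionals with the Bochner integral. This matches the approach taken for \cite[Proposition 2.6]{APS24a}.

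One minor point of exposition: when you invoke Pettis's theorem you state only the essential-separability half; you should also record that $\gamma$ is weakly $|\mu|$-measurable, which holds because $(x,y)\mapsto\duality{f,m_{xy}}=(\Phi f)(x,y)$ is continuous on $\wt{M}$ for every $f\in\Lip_0(M)$. (Equivalently, on each compact $K_n$ the map $\gamma$ is continuous into a separable subspace, hence already strongly measurable, and strong measurability then passes to the a.e.\ union.) This is implicit in your argument but worth making explicit.
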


We will say that $m \in \lipfree{M}$ is a \emph{convex integral of molecules} if $m=\Phi^*\mu$ for some $\mu \in \opr{\bwt{M}}$ that is concentrated on $\wt{M}$ \cite[Definition 2.7]{APS24a}. Because of the way they can be represented, convex integrals of molecules turn out to be well-behaved elements of $\lipfree{M}$; their study is the main focus of \cite{APS24a}. For example, if every element of $\lipfree{M}$ is a convex integral of molecules (for instance when $M$ is uniformly discrete \cite[Corollary 3.7]{APS24a}), then $\ext B_{\lipfree{M}} \subset \Mol$ \cite[Theorem 6.1]{APS24a}. Note that, in general, not every element of $\lipfree{M}$ can be a convex integral of molecules \cite[Theorem 4.1]{APS24a}.

We remark that, by iterating \eqref{eq:norming} (cf.~\cite[Lemma 3.100]{FHHSPZ11}), one can show that, given $m \in \lipfree{M}$ and $\ep>0$, there exist $(x_n,y_n) \in \wt{M}$ and constants $a_n \geq 0$, $n \in \NN$, such that 
\[
 m = \sum_{n=1}^\infty a_n m_{x_ny_n} \quad\text{and}\quad \sum_{n=1}^\infty a_n < \norm{m} + \ep.
\]
Hence $m$ admits the positive representation
\begin{equation}\label{eq:positive_rep}
 \mu :=\sum_{n=1}^\infty a_n\delta_{(x_n,y_n)},
\end{equation}
which is concentrated on $\wt{M}$ but is, in general, not optimal.

It must be stressed that optimal representations of a given element $\psi \in \Lip_0(M)^*$ are far from unique in general, and that ``some optimal representations are more optimal than others''. For example, given $M=[0,1]$ with the usual metric and base point $0$, it seems intuitively clear that $\delta_{(1,0)}$ is a ``good'' representation of $m_{10}$. On the other hand there exists $\mu \in \opr{\bwt{M}}$ that is supported entirely on the remainder $\bwt{M}\setminus\wt{M}$ and satisfies $\Phi^*\mu=m_{10}$ \cite[Example 5]{Aliaga}. Since Stone-{\v C}ech remainders are generally difficult to penetrate, $\mu$ would appear to be a rather convoluted representation of $m_{10}$, especially when a simple Dirac will do; for this reason such a representation can be said to be ``bad''.

Much of this paper is devoted to making this intuition about good and bad representations mathematically precise by defining and exploring a quasi-order on $\meas{\bwt{M}}^+$ that resembles the classical Choquet order on the space of probability measures on a compact Hausdorff space; essentially, optimal representations that are also \emph{minimal} (not maximal -- see Section \ref{subsec:Choquet} below) with respect to this quasi-order are, in our opinion, the most well-behaved and useful representations of free space elements and their cousins in the bidual known to date.

\subsection{Topological and metric preliminaries}\label{subsec:coords}

We make extensive use of results on the topological and metric structure of $\bwt{M}$ and related spaces given in \cite[Sections 2 and 3]{APS24b}, to which we refer the reader for a full account. We shall reproduce the essentials here; full justification of any unsubstantiated claims made below can be found in \cite{APS24b}. A suitable compactification of the metric space $M$ is required, but since (the continuous extensions of) bounded Lipschitz functions don't separate points of $\beta M$ in general, it is often more appropriate to consider the \emph{uniform (or Samuel) compactification} $\ucomp{M}$ of $M$, which was studied systematically in \cite{Woods} and exploited in the current context in e.g.~\cite{AP_measures,APS24b,Weaver2}. We state the defining properties of this compactification.

\begin{proposition}[{\cite[Corollary 2.4]{Woods}}]\label{pr:woodsuniform}
There is a compactification $\ucomp{M}$ of $M$, unique up to homeomorphism, such that
\begin{enumerate}[label={\upshape{(\roman*)}}]
\item\label{pr:woodsuniform_1} every bounded uniformly continuous function $f:M \to \RR$ can be extended uniquely and continuously to $\ucomp{f}:\ucomp{M} \to \RR$, and
\item\label{pr:woodsuniform_2} given non-empty $A,B \subset M$, their closures in $\ucomp{M}$ are disjoint if and only if $d(A,B)>0$.
\end{enumerate}
\end{proposition}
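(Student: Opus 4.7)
The plan is to construct $\ucomp{M}$ as the Gelfand spectrum of the closed unital subalgebra $U_b(M) \subset C_b(M)$ of bounded uniformly continuous real-valued functions on $M$, equivalently as the quotient $\beta M/{\sim}$ where $p \sim q$ iff $\tilde{f}(p) = \tilde{f}(q)$ for every $f \in U_b(M)$ (here $\tilde{f}$ denotes the continuous extension to $\beta M$). The truncated distance functions $x \mapsto \min\{d(x,F), 1\}$ lie in $U_b(M)$ and separate points from closed sets in $M$, so the natural map $M \to \ucomp{M}$ is a dense topological embedding into a compact Hausdorff space. Property~\ref{pr:woodsuniform_1} is then built into the Gelfand correspondence $U_b(M) \cong C(\ucomp{M})$, with uniqueness of the extension $\ucomp{f}$ following from density of $M$ in $\ucomp{M}$.

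For the forward direction of~\ref{pr:woodsuniform_2}, if $\eta := d(A,B) > 0$, then $f(x) = \min\{d(x,A)/\eta, 1\}$ lies in $U_b(M)$, vanishes on $A$, and equals $1$ on $B$; its extension $\ucomp{f}$ therefore separates $\cl{A}^{\ucomp{M}}$ from $\cl{B}^{\ucomp{M}}$ by continuity. For the converse, take $a_n \in A$, $b_n \in B$ with $d(a_n, b_n) \to 0$, fix a free ultrafilter $\mathcal{U}$ on $\NN$, and let $p := \lim_{\mathcal{U}} a_n$ and $q := \lim_{\mathcal{U}} b_n$ in $\ucomp{M}$, which exist by compactness. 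For each $f \in U_b(M)$, uniform continuity yields $|f(a_n) - f(b_n)| \to 0$, whence $\ucomp{f}(p) = \lim_{\mathcal{U}} f(a_n) = \lim_{\mathcal{U}} f(b_n) = \ucomp{f}(q)$. Since $C(\ucomp{M})$ separates points, one concludes $p = q \in \cl{A}^{\ucomp{M}} \cap \cl{B}^{\ucomp{M}}$.

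For uniqueness, suppose $K$ is a compactification of $M$ satisfying~\ref{pr:woodsuniform_1} and~\ref{pr:woodsuniform_2}. Property~\ref{pr:woodsuniform_1} yields an isometric algebra embedding $U_b(M) \hookrightarrow C(K)$ via extension; by Gelfand duality it suffices to prove this is surjective, i.e., every $g \in C(K)$ restricts to a uniformly continuous function on $M$. If not, pick $a_n, b_n \in M$ with $d(a_n, b_n) \to 0$ and $|g(a_n) - g(b_n)| \geq \ep$ for some $\ep > 0$; after passing to subsequences, one may assume $g(a_n) \to \alpha$ and $g(b_n) \to \beta$ in $\RR$ with $\alpha - \beta \geq \ep$. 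Setting $t = (\alpha + \beta)/2$ and choosing $N$ large enough, the sets $A = \{a_n : n \geq N\}$ and $B = \{b_n : n \geq N\}$ lie in the disjoint closed subsets $g^{-1}[t + \ep/4, \infty)$ and $g^{-1}(-\infty, t - \ep/4]$ of $K$, so $\cl{A}^K \cap \cl{B}^K = \emptyset$, contradicting~\ref{pr:woodsuniform_2} since $d(A, B) = 0$.

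The main obstacle is the converse direction of~\ref{pr:woodsuniform_2} together with the analogous surjectivity step in the uniqueness argument: both require converting a metric condition ($d(A,B) = 0$, or failure of uniform continuity) into a topological conclusion about coincident points in a compactification, which is what forces the ultrafilter and subsequence manipulations above to be combined with the uniform continuity of functions in $U_b(M)$.
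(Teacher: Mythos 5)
The paper offers no proof of this proposition: it is imported directly from \cite[Corollary 2.4]{Woods}, so there is no internal argument to compare against. Your argument via the Gelfand spectrum of the unital Banach algebra of bounded uniformly continuous functions on $M$ is correct and self-contained. The truncated distance functions $\min\{d(\cdot,F),1\}$ do ensure that the evaluation map $M\to\ucomp{M}$ is a dense homeomorphic embedding; the forward half of \ref{pr:woodsuniform_2} follows from the normalised distance function $\min\{d(\cdot,A)/\eta,1\}$; the converse correctly combines compactness (ultrafilter limits of $(a_n)$ and $(b_n)$) with the observation that uniform continuity forces the two ultrafilter limits to be identified by every bounded uniformly continuous $f$, hence by every element of $C(\ucomp{M})$; and the uniqueness argument correctly reduces to surjectivity of the extension embedding into $C(K)$, which you extract from \ref{pr:woodsuniform_2} by a clean subsequence argument. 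One step you leave implicit and should state to make the uniqueness conclusion precise: the Gelfand-dual homeomorphism $\ucomp{M}\to K$ must restrict to the identity on $M$, so that $K$ and $\ucomp{M}$ are equivalent \emph{as compactifications} and not merely abstractly homeomorphic. This does follow, since the isomorphism of $C(K)$ with $C(\ucomp{M})$ obtained by composing the two extension maps is compatible with evaluation at each $x\in M$ on both sides, and $C(K)$ separates points of $K$ -- but it is the part of the Gelfand-duality reduction that needs to be said out loud.
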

In particular, (the continuous extensions of) bounded Lipschitz functions separate points of $\ucomp{M}$. For clarity, we shall denote by $\ucl{A}$ the closure of $A \subset \ucomp{M}$ in $\ucomp{M}$. 

We set up a ``coordinate system'' on $\bwt{M}$ by letting $p_i:\bwt{M}\to\ucomp{M}$, $i=1,2$, be the continuous extensions of the functions $(x,y) \mapsto x$ and $(x,y) \mapsto y$ on $\wt{M}$, respectively. Then we define the ``coordinate map'' $\pp:\bwt{M}\to\ucomp{M}\times\ucomp{M}$ by setting $p(\zeta)=(\pp_1(\zeta),\pp_2(\zeta))$. Given $\zeta \in \bwt{M}$, we have $\pp(\zeta) \in \wt{M} \subset \ucomp{M}\times\ucomp{M}$ if and only if $\zeta=(x,y) \in \wt{M} \subset \bwt{M}$, but in general $\pp$ is not injective so we must be wary not to treat $p(\zeta)$ as the coordinates of $\zeta$ in the usual sense. Despite this caveat, we make considerable use of these functions in the paper.

We pause here to provide another example of ``good'' versus ``bad'' optimal representations. Before giving it, we establish an equality that holds when $M$ is a discrete metric space.

\begin{proposition}\label{pr:discrete}
The metric space $M$ is discrete if and only if $\wt{M}=\pp^{-1}(M \times M)$ $(\subset \bwt{M})$.
\end{proposition}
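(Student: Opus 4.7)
The plan is to prove both directions of the equivalence separately. The inclusion $\wt{M} \subset \pp^{-1}(M \times M)$ is automatic from the definitions, so all the substance lies in the reverse inclusion, and the natural tool is Proposition \ref{pr:woodsuniform} applied to carefully chosen Lipschitz bump functions.

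For the forward implication, I would assume $M$ is discrete and fix, for each isolated $x \in M$, a radius $r_x > 0$ with $B(x, r_x) \cap M = \set{x}$. The $1/r_x$-Lipschitz bump $h_x(z) := \max\set{0, 1 - d(z,x)/r_x}$ takes only the values $0$ and $1$ on $M$ (value $1$ at $x$ and $0$ elsewhere), so its continuous extension $\ucomp{h_x} : \ucomp{M} \to [0,1]$ should satisfy $\ucomp{h_x}^{-1}(\set{1}) = \set{x}$: any net in $M$ whose $h_x$-values tend to $1$ must be eventually equal to $x$, since those values are confined to $\set{0,1}$. Now, given $\zeta \in \bwt{M}$ with $\pp(\zeta) = (x, y) \in M \times M$, I would pick a net $\zeta_\alpha = (a_\alpha, b_\alpha) \in \wt{M}$ with $\zeta_\alpha \to \zeta$ (possible since $\wt{M}$ is dense in $\bwt{M}$). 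Continuity of $\pp_1$ and $\pp_2$ yields $a_\alpha \to x$ and $b_\alpha \to y$ in $\ucomp{M}$, and the characterisation above forces $a_\alpha = x$ and $b_\alpha = y$ eventually. The constraint $a_\alpha \neq b_\alpha$ then rules out $x = y$ and gives $\zeta_\alpha = (x, y)$ eventually, so $\zeta = (x, y) \in \wt{M}$.

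For the reverse implication I would argue by contraposition. Suppose $M$ is not discrete; pick $x \in M$ that is not isolated together with a sequence $(y_n) \subset M \setminus \set{x}$ with $d(x, y_n) \to 0$. Compactness of $\bwt{M}$ produces a cluster point $\zeta$ of the sequence $\zeta_n := (x, y_n) \in \wt{M}$. Since $\pp_1(\zeta_n) = x$ is constant and $\pp_2(\zeta_n) = y_n \to x$ in $\ucomp{M}$ (the inclusion $M \hookrightarrow \ucomp{M}$ being continuous), we obtain $\pp(\zeta) = (x, x) \in M \times M$. Yet $\zeta \notin \wt{M}$, since $\pp$ restricted to $\wt{M}$ is injective and maps that set strictly off the diagonal.

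The main obstacle is the forward direction, specifically establishing $\ucomp{h_x}^{-1}(\set{1}) = \set{x}$ in $\ucomp{M}$. This is where discreteness of $M$ is used in an essential way: without it, the natural Lipschitz bump $h_x$ is no longer $\set{0,1}$-valued on $M$, and nets in $M$ accumulating near (but not at) $x$ in $\ucomp{M}$ can force $\ucomp{h_x}$ to attain the value $1$ at points of $\ucomp{M} \setminus \set{x}$, which would block the argument. Discreteness is precisely what makes the bump attain only the two extreme values and thereby singles out $x$ inside $\ucomp{M}$.
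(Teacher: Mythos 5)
Your argument is correct, but the forward direction takes a noticeably more explicit route than the paper. The paper relies on the standing fact recorded in Section \ref{subsec:coords} that $\pp(\zeta) \in \wt{M}$ if and only if $\zeta \in \wt{M}$: from this, any $\zeta \in \pp^{-1}(M\times M)\setminus\wt{M}$ must have $\pp(\zeta)=(x,x)$, and a net $(x_i,y_i)\to\zeta$ then exhibits $x$ as an accumulation point, contradicting discreteness. You instead re-derive what is needed from scratch via Lipschitz bump functions: discreteness makes $h_x$ take values in $\set{0,1}$ on $M$, so $\ucomp{h_x}$ singles out $x$ inside $\ucomp{M}$, and then a convergent net $(a_\alpha,b_\alpha)\to\zeta$ is forced to be eventually constant. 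Both are valid; the paper's version is shorter because it outsources the off-diagonal case to the general fact about $\pp$, whereas yours handles both diagonal and off-diagonal cases simultaneously with one self-contained device, at the cost of a little more bookkeeping (and of having to note that the two ``eventually'' clauses can be combined by directedness). In the reverse direction the two proofs are essentially identical in spirit: the paper clusters the sequence $(x_n,x_{n+1})$, you cluster $(x,y_n)$ with the first coordinate frozen; both land in $\pp^{-1}(M\times M)\setminus\wt{M}$ via the same continuity-of-$\pp$ argument. One small thing worth flagging: your statement that ``$\pp$ restricted to $\wt{M}$ is injective and maps that set strictly off the diagonal'' is of course true, but what you actually need is the (also true) statement that $\pp(\zeta)=(x,x)$ rules out $\zeta\in\wt{M}$, which is immediate since $\pp$ is the identity on $\wt{M}$; the injectivity is not used.
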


\begin{proof}
Suppose that there exists $\zeta \in \pp^{-1}(M\times M)\setminus\wt{M}$. Then $\pp(\zeta)=(x,x)$ for some $x \in M$. Let $(x_i,y_i)$ be a net of points in $\wt{M}$ converging to $\zeta$ in $\bwt{M}$. By continuity of $\pp$ we obtain $(x_i,y_i)=\pp(x_i,y_i) \to \pp(\zeta) = (x,x)$. As $x_i \neq y_i$ for all $i$, we conclude that $x$ must be an accumulation point of $M$, so $M$ is not discrete. Conversely, if $x$ is an accumulation point of $M$ then let $(x_n)$ be a sequence of distinct points of $M$ converging to $x$. Then any cluster point $\zeta \in \bwt{M}$ of the sequence $(x_n,x_{n+1})$ in $\wt{M}$ will belong to $\pp^{-1}(M\times M)\setminus\wt{M}$.
\end{proof}

\begin{example}\label{ex:bad_rep}
 Let $(e_n)$ denote the standard basis of $c_0$, set $x_n=\frac{1}{2}(e_1+e_n)$, $n \geq 2$, and define $M=\set{0,e_1} \cup \set{x_n \,:\, n \geq 2}$ with the metric inherited from the usual norm on $c_0$, and with base point $0$. Note that $M$ is closed in $c_0$ and hence complete. Then $\delta_{(e_1,0)} \in \opr{\bwt{M}}$ represents $m_{e_10}$. On the other hand, define $\mu_n \in \meas{\bwt{M}}^+$, $n \geq 2$, by $\mu_n=\frac{1}{2}(\delta_{(e_1,x_n)}+\delta_{(x_n,0)})$. A simple calculation shows that $\mu_n \in \opr{\bwt{M}}$, with $\Phi^*\mu_n = m_{e_10}$ for all $n \geq 2$. Now let $\mu$ be a $w^*$-cluster point of the $\mu_n$. By $w^*$-$w^*$-continuity of $\Phi^*$ and the fact that it is non-expansive, it follows that $\Phi^*\mu = m_{e_10}$ and $\mu \in \opr{\bwt{M}}$. However, as $M$ is (uniformly) discrete, every point of $\wt{M}$ is isolated in $\bwt{M}$, so every compact set $K \subset \wt{M}$ is finite and open in $\bwt{M}$. This means that $\mathbf{1}_K \in C(\bwt{M})$ and $\duality{\mathbf{1}_K,\mu_n}=\mu_n(K)=0$ for all large enough $n$. Therefore $\mu(K)=\duality{\mathbf{1}_K,\mu}=0$ for all such $K$. By the inner regularity of $\mu$ this yields $\mu(\wt{M})=0$. Finally, as $M$ is discrete, we conclude from Proposition \ref{pr:discrete} that $\mu(\pp^{-1}(M \times M))=\mu(\wt{M})=0$. Again, because of the convoluted way in which $\mu$ represents $m_{e_10}$, it can be said to be a ``bad'' representation.
\end{example}

This example will come up again later in Section \ref{subsec:opt_conc}, where the significance of the set $\pp^{-1}(M \times M)$ will become apparent.

We return to presenting our preliminaries. From time to time, we will also make use of the ``reflection map'' $\rr:\bwt{M}\to\bwt{M}$, defined by first setting $r(x,y)=(y,x)$, $(x,y) \in \wt{M}$, and then extending continuously to $\bwt{M}$. This is an involution on $\bwt{M}$ \cite[p.~4]{Aliaga}.

Concerning extensions of unbounded functions, every Lipschitz function $f:M\to\RR$ can be extended uniquely and continuously to $\ucomp{f}:\ucomp{M}\to[-\infty,\infty]$ \cite[Proposition 2.9]{AP_measures}. When such functions take non-real values we can run into problems, so more often than not we will be using the so-called \emph{Lipschitz realcompactification} $\rcomp{M}$ of $M$ \cite{GarridoMerono} instead of $\ucomp{M}$:
\[
 \rcomp{M} = \set{\xi \in \ucomp{M} \,:\, \ucomp{d}(\xi,0) < \infty},
\]
where $\ucomp{d}(\cdot,0)$ is the continuous extension of $d(\cdot,0)$ to $\ucomp{M}$. \label{d_ext} It is clear that $\rcomp{M}$ is an open subset of $\ucomp{M}$ and thus locally compact. We have $\ucomp{M}=\rcomp{M}$ if and only if $M$ is bounded, and $\rcomp{M}=M$ if and only if $M$ is proper. One of the advantages of using $\rcomp{M}$ is that any Lipschitz function $f:M\to\RR$ can be extended uniquely and continuously to $\rcomp{f}:=\ucomp{f}\restrict_{\rcomp{M}}:\rcomp{M} \to \RR$. Analogously to above, we denote by $\rcl{A}$ the closure of $A \subset \rcomp{M}$ in $\rcomp{M}$.

Another big advantage of $\rcomp{M}$ is that it can be endowed with a canonical lower semicontinuous metric $\bar{d}$ that extends $d$. Let $\tau$ denote the subspace topology on $\rcomp{M}$. The canonical embedding $\delta:M\to\Lip_0(M)^*$ can be extended naturally to $\rcomp{M}$ by setting $\duality{f,\delta(\xi)} = \rcomp{f}(\xi)$, $f \in \Lip_0(M)$, $\xi \in \rcomp{M}$. Then $\delta$ is a $\tau$-$w^*$ homeomorphism onto its image in $\Lip_0(M)^*$, with respect to which it is $w^*$-closed \cite[Proposition 2.3]{APS24b}. We define a $\tau$-lower semicontinuous metric extension $\bar{d}$ of $d$ to $\rcomp{M}$ by setting
\[
 \bar{d}(\xi,\eta) = \norm{\delta(\xi)-\delta(\eta)}_{\Lip_0(M)^*}, \quad \xi,\eta \in \rcomp{M}.
\]
This has the effect of turning $(\rcomp{M},\bar{d})$ into a ``metric bidual'' of $(M,d)$, as is evidenced by the following proposition.

\begin{proposition}[{\cite[Proposition 2.4]{APS24b}}]\label{pr:rcomp_observations}The following statements hold.
\begin{enumerate}[label={\upshape{(\roman*)}}]
\item\label{pr:rcomp_observations_1} The metric $\bar{d}$ is $\tau$-lower semicontinuous and finer than $\tau$;
\item\label{pr:rcomp_observations_2} the $\bar{d}$-closed balls $B_{\bar{d}}(\xi,r):= \set{\eta \in \rcomp{M}\,:\,\bar{d}(\xi,\eta) \leq r}$, $r>0$, are $\tau$-compact;
\item\label{pr:rcomp_observations_3} $\tau$-compact subsets of $\rcomp{M}$ are $\bar{d}$-bounded;
\item\label{pr:rcomp_observations_4} the metric space $(\rcomp{M},\bar{d})$ is complete;
\item\label{pr:rcomp_observations_5} $\rcomp{M}$ is $\tau$-normal;
\item\label{pr:rcomp_observations_6} $M$ is $\bar{d}$-closed in $\rcomp{M}$.
\end{enumerate}
\end{proposition}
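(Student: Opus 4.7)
The plan is to leverage the $\tau$-$w^*$ homeomorphism $\delta: \rcomp{M} \to \delta(\rcomp{M}) \subset \Lip_0(M)^*$, together with the fact that $\delta(\rcomp{M})$ is $w^*$-closed (as recalled just before the proposition). Under this identification, $\bar{d}(\xi,\eta) = \norm{\delta(\xi)-\delta(\eta)}$, which turns parts (i)--(iv) into essentially standard facts about the $w^*$-topology in a dual Banach space. For (i), $\tau$-lower semicontinuity of $\bar{d}$ follows because the dual norm is $w^*$-lower semicontinuous and subtraction is $w^*$-continuous; the fact that $\bar{d}$ is finer than $\tau$ is equivalent to continuity of the identity $(\rcomp{M},\bar{d})\to(\rcomp{M},\tau)$, and if $\bar{d}(\xi_i,\xi)\to 0$ then $\delta(\xi_i)\to\delta(\xi)$ in norm and hence in the $w^*$-topology, which by the homeomorphism forces $\xi_i\to\xi$ in $\tau$. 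For (ii), $\delta(B_{\bar{d}}(\xi, r))$ is the intersection of the $w^*$-closed set $\delta(\rcomp{M})$ with the $w^*$-compact norm ball of radius $r$ about $\delta(\xi)$ in $\Lip_0(M)^*$, hence $w^*$-compact, and pulling back by $\delta^{-1}$ gives $\tau$-compactness. For (iii), $\delta$ sends a $\tau$-compact $K \subset \rcomp{M}$ to a $w^*$-compact subset of $\Lip_0(M)^*$, which is norm-bounded by Banach--Steinhaus, so $K$ is contained in some $B_{\bar{d}}(0,R)$. For (iv), a $\bar{d}$-Cauchy sequence $(\xi_n)$ makes $(\delta(\xi_n))$ norm-Cauchy in the complete space $\Lip_0(M)^*$; its norm (hence $w^*$) limit lies in $\delta(\rcomp{M})$ by $w^*$-closedness, producing $\xi \in \rcomp{M}$ with $\bar{d}(\xi_n,\xi)\to 0$.

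For (v), I plan to combine local compactness with $\sigma$-compactness. As an open subset of the compact Hausdorff space $\ucomp{M}$, $\rcomp{M}$ is locally compact Hausdorff. Since $\ucomp{d}(\cdot,0):\ucomp{M}\to[0,\infty]$ is continuous, each sublevel set $\{\xi \in \ucomp{M} : \ucomp{d}(\xi,0) \leq n\}$ is closed in $\ucomp{M}$, hence compact; their union is $\rcomp{M}$, so $\rcomp{M}$ is $\sigma$-compact. A locally compact $\sigma$-compact Hausdorff space is Lindel\"{o}f, and a regular Lindel\"{o}f space is normal. For (vi), if $\xi \in \rcomp{M}$ is $\bar{d}$-adherent to $M$, choose $(x_n) \subset M$ with $\bar{d}(x_n,\xi)\to 0$; this sequence is $\bar{d}$-Cauchy, and since $\bar{d}$ restricts to $d$ on $M$, it is $d$-Cauchy. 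Completeness of $(M,d)$ yields $x_n \to x \in M$ in $d$, hence in $\bar{d}$, and uniqueness of $\bar{d}$-limits forces $\xi = x \in M$.

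I do not foresee any significant obstacle: the bulk of the argument is a dictionary translation of standard dual Banach space facts via $\delta$, combined with elementary general topology. The most delicate point is (v), where recognizing that the distance-from-base-point sublevel sets are closed in $\ucomp{M}$ is what provides the $\sigma$-compactness needed to upgrade local compactness to normality via Lindel\"{o}fness.
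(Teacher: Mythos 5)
Your proof is correct, and the approach — pushing everything through the $\tau$-$w^*$ homeomorphism $\delta$ and the $w^*$-closedness of $\delta(\rcomp{M})$ — is the natural one for this result (the paper only cites \cite[Proposition 2.4]{APS24b} and explicitly frames $(\rcomp{M},\bar{d})$ as a ``metric bidual'' of $(M,d)$, which is precisely the dictionary you are exploiting). Parts (i)--(iv) and (vi) are clean translations of standard dual-space facts: $w^*$-lower semicontinuity of the dual norm, Banach--Alaoglu, norm-boundedness of $w^*$-compacta via the uniform boundedness principle, completeness of dual spaces, and $\bar{d}\restrict_{M\times M}=d$ via the isometry $\delta$. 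Part (v) is also correct; a small redundancy is that $\sigma$-compactness alone already gives Lindel\"{o}f (you don't need local compactness for that step), but you do need local compactness (or, just as easily, complete regularity inherited from the compact Hausdorff superspace $\ucomp{M}$) to get regularity before invoking ``regular $+$ Lindel\"{o}f $\Rightarrow$ normal.'' The sublevel-set observation giving $\sigma$-compactness is exactly right, since $\ucomp{d}(\cdot,0)$ is continuous into $[0,\infty]$.
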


The next result lists further important properties of $\bar{d}$.

\begin{proposition}[{\cite[Proposition 2.5]{APS24b}}]\label{pr:bar-d_cont}~
 \begin{enumerate}[label={\upshape{(\roman*)}}]
  \item\label{pr:bar-d_cont_1} Given a $\tau$-closed non-empty set $A \subset \rcomp{M}$, the function $\bar{d}(\cdot,A)$ is $\tau$-lower semicontinuous.
  \item\label{pr:bar-d_cont_2} Given a non-empty set $A \subset M$, $\bar{d}(\cdot,\rcomp{\cl{A}})=\rcomp{d}(\cdot,A)$, and is therefore $\tau$-continuous.
  \item\label{pr:bar-d_cont_3} Given $\xi \in \rcomp{M}$, $\bar{d}(\cdot,\xi)$ is $\tau$-continuous if and only if $\xi \in M$, i.e.~$\bar{d}$ is separately $\tau$-continuous if and only if the fixed coordinate belongs to $M$.
  \item\label{pr:bar-d_cont_4} Given $\tau$-closed non-empty sets $A,B \subset \rcomp{M}$, 
  \[
   \bar{d}(A,B) = \sup\set{d(U \cap M,V \cap M) \,:\, \text{$U \supset A$, $V \supset B$ are $\tau$-open subsets of $\rcomp{M}$}}.
  \]
 \end{enumerate}
\end{proposition}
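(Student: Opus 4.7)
My plan is to prove (ii), (i), (iii), and (iv) in that order, using a common toolkit: the joint $\tau$-lower semicontinuity of $\bar{d}$, the $\tau$-compactness of $\bar{d}$-closed balls (Proposition \ref{pr:rcomp_observations}\ref{pr:rcomp_observations_2}), and the $\tau$-density of $M$ in $\rcomp{M}$. I expect the envelope identity underlying (iv) to be the main obstacle.

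For (ii), the key object is the 1-Lipschitz function $g(x) := d(x, A) - d(0, A) \in \Lip_0(M)$. Since $g \equiv -d(0, A)$ on $A$, its continuous extension $\rcomp{g}$ is constantly $-d(0, A)$ on the $\tau$-closure $\rcomp{\cl{A}}$, whence for every $\xi \in \rcomp{M}$ and every $\eta \in \rcomp{\cl{A}}$, $\bar{d}(\xi, \eta) \geq |\rcomp{g}(\xi) - \rcomp{g}(\eta)| = \rcomp{d}(\xi, A)$; taking the infimum over $\eta$ yields $\bar{d}(\xi, \rcomp{\cl{A}}) \geq \rcomp{d}(\xi, A)$. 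For the reverse, approximate $\xi$ by a net $x_\alpha \in M$ with $x_\alpha \to \xi$ in $\tau$, pick $y_\alpha \in A$ with $d(x_\alpha, y_\alpha) < d(x_\alpha, A) + 1/\alpha$, note that $\bar{d}(y_\alpha, 0)$ is eventually bounded since $\bar{d}(\cdot, 0)$ coincides with the $\tau$-continuous $\rcomp{d}(\cdot, 0)$ along the net, extract a $\tau$-cluster point $\eta \in \rcomp{\cl{A}}$ using Proposition \ref{pr:rcomp_observations}\ref{pr:rcomp_observations_2}, and conclude via joint $\tau$-lsc that $\bar{d}(\xi, \eta) \leq \rcomp{d}(\xi, A)$. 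The $\tau$-continuity asserted in (ii) follows because $\rcomp{d}(\cdot, A)$ is the continuous extension of a Lipschitz function on $M$.

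Parts (i) and (iii) use the same machinery. For (i), given $\xi_i \to \xi$ in $\tau$ with $\ell := \liminf \bar{d}(\xi_i, A) < \infty$, pass to a subnet realising $\ell$, choose $\eta_i \in A$ with $\bar{d}(\xi_i, \eta_i) < \bar{d}(\xi_i, A) + \epsilon$, bound $\bar{d}(\eta_i, 0)$ using the triangle inequality and $\bar{d}(\cdot, 0) = \rcomp{d}(\cdot, 0)$ $\tau$-continuous, extract a $\tau$-cluster point $\eta$ in a $\tau$-compact $\bar{d}$-ball, and use $\tau$-closedness of $A$ and joint $\tau$-lsc of $\bar{d}$ to get $\bar{d}(\xi, \eta) \leq \ell + \epsilon$, so $\bar{d}(\xi, A) \leq \ell$. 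For (iii) the case $\xi \in M$ is immediate from (ii) applied to $A = \{\xi\}$. If $\xi \notin M$, assume for contradiction that $\bar{d}(\cdot, \xi)$ is $\tau$-continuous; by $\tau$-density of $M$ pick $x_n \in M$ with $x_n \to \xi$, so $\bar{d}(x_n, \xi) \to 0$, i.e., $\delta(x_n) \to \delta(\xi)$ in norm. Since $M$ is complete and $\delta$ is isometric, $\delta(M)$ is norm-closed in $\lipfree{M}$, so $\delta(\xi) \in \delta(M)$; injectivity of $\delta$ on $\rcomp{M}$ then forces $\xi \in M$, a contradiction.

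The crux of (iv) is the envelope identity $\bar{d}(\xi, \eta) = \sup_W \inf\{d(x, y) : (x, y) \in W \cap (M \times M)\}$, the sup being over $\tau$-open $W \ni (\xi, \eta)$; concretely, for every $\epsilon > 0$ and every such $W$ there must be $(x, y) \in W \cap (M \times M)$ with $d(x, y) \leq \bar{d}(\xi, \eta) + \epsilon$. The case $\xi = \eta$ is trivial by taking $x = y$; if $\xi \neq \eta$ then $\bar{d}(\xi, \eta) > 0$ and we may shrink $W$ to a disjoint product $U \times V$ by Hausdorffness. If the statement failed, $d(y, U \cap M) > \bar{d}(\xi, \eta) + \epsilon$ for every $y \in V \cap M$, so the 1-Lipschitz function $f(y) := \min(\bar{d}(\xi, \eta) + \epsilon, d(y, U \cap M))$ (adjusted to vanish at $0$) would yield $|\rcomp{f}(\xi) - \rcomp{f}(\eta)| = \bar{d}(\xi, \eta) + \epsilon$, contradicting $\lipnorm{f} \leq 1$ combined with the definition of $\bar{d}$. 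Granted the envelope identity, the $\leq$ direction of (iv) is immediate: for any $\tau$-open $U \supset A, V \supset B$, any $\xi \in A, \eta \in B$, and any $\delta > 0$, $W := U \times V$ is a $\tau$-neighbourhood of $(\xi, \eta)$, so we obtain $(x, y) \in (U \cap M) \times (V \cap M)$ with $d(x, y) \leq \bar{d}(\xi, \eta) + \delta$, whence $d(U \cap M, V \cap M) \leq \bar{d}(A, B)$. For $\geq$, assume for contradiction that $s := \sup d(U \cap M, V \cap M) < r := \bar{d}(A, B)$, pick $\delta > 0$ with $s + \delta < r$, and for each pair $(U, V)$ of open neighbourhoods (directed by reverse inclusion) choose $(x_{U, V}, y_{U, V}) \in (U \cap M) \times (V \cap M)$ with $d < s + \delta$. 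Reduce to bounded $A, B$ by truncating at a large $\bar{d}$-ball (using continuity of $\bar{d}(\cdot, 0)$ to treat any unbounded tails via an auxiliary disjoint pair of $\tau$-open sets), extract a $\tau$-cluster net landing in a $\tau$-compact $\bar{d}$-ball, and use that $\tau$-regularity of the $\tau$-normal space $\rcomp{M}$ (Proposition \ref{pr:rcomp_observations}\ref{pr:rcomp_observations_5}) gives $A = \bigcap\{U : U \supset A \text{ $\tau$-open}\}$ and similarly for $B$, so the cluster point $(\xi^*, \eta^*)$ lies in $A \times B$; joint $\tau$-lsc of $\bar{d}$ then yields $\bar{d}(\xi^*, \eta^*) \leq s + \delta < r$, contradicting $\bar{d}(\xi^*, \eta^*) \geq \bar{d}(A, B) = r$. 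The hardest step is the envelope identity, whose proof depends on the bespoke Lipschitz construction converting a topological separation of $\xi$ and $\eta$ into a metric bound on $|\rcomp{f}(\xi) - \rcomp{f}(\eta)|$.
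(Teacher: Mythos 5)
Your arguments for (i), (ii) and (iii) work (modulo a cosmetic point: $1/\alpha$ makes no sense for a general directed set in (ii); fix $\ep>0$ instead and let it tend to zero at the end). The machinery is right: the $1$-Lipschitz function $d(\cdot,A)-d(0,A)$ for (ii), cluster points in $\tau$-compact $\bar{d}$-balls plus joint $\tau$-lower semicontinuity for (i), and completeness of $M$ with isometry of $\delta$ for (iii). So is the half of (iv) asserting $d(U\cap M,V\cap M)\leq\bar{d}(A,B)$ (you label the two directions the wrong way round, but the content is correct): the $1$-Lipschitz function $\min\set{\bar{d}(\xi,\eta)+\ep,\,d(\cdot,U\cap M)}$ is exactly the right separation tool.

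The gap is in the reverse inequality $\bar{d}(A,B)\leq\sup\set{d(U\cap M,V\cap M)}$. You choose $(x_{U,V},y_{U,V})\in(U\cap M)\times(V\cap M)$ with $d(x_{U,V},y_{U,V})<s+\delta$ and extract a cluster point $(\xi^*,\eta^*)$, arguing by regularity that it lies in $A\times B$. That last step is fine \emph{if} the cluster point lies in $\rcomp{M}\times\rcomp{M}$, but there is no reason for it to: when $A$ is not $\tau$-compact, the net $(x_{U,V})$ can escape to $\ucomp{M}\setminus\rcomp{M}$ along every subnet, because $x_{U,V}\in U$ imposes no $\bar{d}$-bound whatsoever when $U$ is an unbounded open set containing the unbounded closed set $A$. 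Your proposed truncation does not repair this -- replacing $A$ by $A\cap B_{\bar{d}}(0,n)$ \emph{enlarges} the family of admissible $U$, so it controls a supremum that dominates the one you want and gives no lower bound for it -- and the parenthetical about ``an auxiliary disjoint pair of $\tau$-open sets'' is not a proof. A route that avoids the compactness issue entirely uses Proposition~\ref{pr:Mat_app}: for $\lambda<\bar{d}(A,B)$, the function $\psi$ on the $\tau$-closed set $A\cup B$ equal to $0$ on $A$ and to $\lambda$ on $B$ is bounded, $\tau$-continuous and $1$-$\bar{d}$-Lipschitz (precisely because $\lambda\leq\bar{d}(\xi,\eta)$ for $\xi\in A$, $\eta\in B$), so it extends to a bounded $1$-Lipschitz $f:M\to\RR$ with $\rcomp{f}\restrict_{A\cup B}=\psi$; then for $\ep>0$ the $\tau$-open sets $U:=(\rcomp{f})^{-1}(-\infty,\ep)\supset A$ and $V:=(\rcomp{f})^{-1}(\lambda-\ep,\infty)\supset B$ give $d(U\cap M,V\cap M)\geq\lambda-2\ep$, and letting $\ep\to0$ and then $\lambda\to\bar{d}(A,B)$ finishes.
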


As a consequence of Proposition \ref{pr:bar-d_cont} \ref{pr:bar-d_cont_3}, we can see that the distance function $\bar{d}(\cdot,M)$, being the infimum of a family of $\tau$-continuous functions, is $\tau$-upper semicontinuous.

We will have need of an extension result that will allow us to approximate certain $\bar{d}$-Lipschitz functions on $\rcomp{M}$ by (the extensions of) $d$-Lipschitz functions on $M$. 

\begin{proposition}[{\cite[Proposition 2.6]{APS24b}}]\label{pr:Mat_app}
Let $A \subset \rcomp{M}$ be $\tau$-closed and let $\psi:A \to \RR$ be $\tau$-continuous, bounded and $1$-$\bar{d}$-Lipschitz. Then there exists a bounded 1-Lipschitz function $f:M \to \RR$ such that $\rcomp{f}\restrict_A = \psi$ and $\inf\psi\leq f\leq\sup\psi$.
\end{proposition}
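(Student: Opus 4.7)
The plan is to reduce the statement to constructing an appropriate extension of $\psi$ to all of $\rcomp{M}$, and then carry out the extension. Concretely, I claim it suffices to build a $\tau$-continuous function $g : \rcomp{M} \to [\inf\psi, \sup\psi]$ with $g\restrict_A = \psi$ and $g$ being $1$-$\bar{d}$-Lipschitz. Indeed, since $\bar{d}$ agrees with $d$ on $M\times M$, the restriction $f := g\restrict_M$ is then bounded and $1$-$d$-Lipschitz; and because $g$ is $\tau$-continuous while $M$ is $\tau$-dense in $\rcomp{M}$, the canonical $\tau$-continuous extension $\rcomp{f}$ of $f$ to $\rcomp{M}$ must coincide with $g$, so that $\rcomp{f}\restrict_A = g\restrict_A = \psi$.

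To construct $g$, a natural first attempt is the McShane-type formula
\[
g_0(\xi) = \inf_{\eta \in A}\bigl(\psi(\eta) + \bar{d}(\xi,\eta)\bigr), \quad \xi \in \rcomp{M},
\]
followed by truncation into $[\inf\psi,\sup\psi]$. Using the triangle inequality for $\bar{d}$, one checks that $g_0$ is $1$-$\bar{d}$-Lipschitz on $\rcomp{M}$; using the $1$-$\bar{d}$-Lipschitz hypothesis on $\psi$, one sees $g_0\restrict_A = \psi$; and since $\psi(\eta) + \bar{d}(\xi,\eta) \geq \inf\psi$ for all $\eta$, the lower bound is automatic, while the upper bound is enforced by truncation.

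\textbf{Main obstacle.} The chief difficulty is securing $\tau$-continuity. Since $\bar{d}(\cdot,\eta)$ is only $\tau$-lower semicontinuous (Proposition \ref{pr:rcomp_observations} \ref{pr:rcomp_observations_1}) and fails to be $\tau$-continuous whenever $\eta \in \rcomp{M}\setminus M$ (Proposition \ref{pr:bar-d_cont} \ref{pr:bar-d_cont_3}), the McShane formula need not produce a $\tau$-continuous function---one cannot in general guarantee $\bar{d}(\xi_i,\eta)\to 0$ for $\tau$-convergent nets $\xi_i \to \eta$ with $\eta \in A\setminus M$. To overcome this, I would invoke a Zorn's-lemma argument on the family $\mathcal{E}$ of pairs $(B, h)$ where $A \subseteq B \subseteq \rcomp{M}$ and $h: B \to [\inf\psi,\sup\psi]$ is a $\tau$-continuous, $1$-$\bar{d}$-Lipschitz extension of $\psi$ (with $B$ carrying the subspace $\tau$-topology), ordered by graph inclusion. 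For a maximal element $(B^*, g)$, in the case $\xi \in \rcomp{M}\setminus B^*$ lies outside the $\tau$-closure of $B^*$ we can freely pick $g(\xi)$ in the non-empty McShane interval $\bigl[\sup_{\eta \in B^*}(g(\eta)-\bar{d}(\xi,\eta)),\inf_{\eta \in B^*}(g(\eta)+\bar{d}(\xi,\eta))\bigr]\cap[\inf\psi,\sup\psi]$ (non-empty by the triangle inequality and the bound on $g$); the genuinely delicate case is extending through the $\tau$-closure, where one needs the combination of the $1$-$\bar{d}$-Lipschitz hypothesis on $\psi$, the $\tau$-normality of $\rcomp{M}$ (Proposition \ref{pr:rcomp_observations} \ref{pr:rcomp_observations_5}), and the $\tau$-lower semicontinuity of $\bar{d}$ to force limiting values to be uniquely determined and compatible. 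Equivalently, one may patch $g_0$ with a Tietze extension $\tilde\psi : \rcomp{M} \to [\inf\psi,\sup\psi]$ of $\psi$ (supplied by $\tau$-normality) via a partition-of-unity-style construction subordinate to $\tau$-open neighborhoods of $A$ chosen to respect $\bar{d}$, so that $\tau$-continuity is inherited from $\tilde\psi$ near $A$ and the Lipschitz property is inherited from $g_0$ away from $A$; once these are reconciled, $f := g\restrict_M$ finishes the proof.
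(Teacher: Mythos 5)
Your reduction is correct: since $\bar d$ extends $d$ on $M\times M$ and $M$ is $\tau$-dense in $\rcomp{M}$, it does suffice to produce a $\tau$-continuous, $1$-$\bar d$-Lipschitz function $g:\rcomp{M}\to[\inf\psi,\sup\psi]$ extending $\psi$, and then set $f=g\restrict_M$. You also correctly identify the central difficulty: the McShane formula automatically gives the $1$-$\bar d$-Lipschitz property, but not $\tau$-continuity, precisely because $\bar d(\cdot,\eta)$ is only $\tau$-lower semicontinuous when $\eta\in\rcomp{M}\setminus M$ (Proposition \ref{pr:bar-d_cont} \ref{pr:bar-d_cont_3}).

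The proposed resolution, however, leaves exactly this difficulty unresolved, so the proof has a genuine gap. Two concrete problems with the Zorn argument: first, chains in $\mathcal{E}$ need not have upper bounds. If $(B_\alpha,h_\alpha)$ is a chain and $(B,h)$ is the union, $\tau$-continuity of $h$ on $B$ can fail at points $\xi\in A$ that are $\tau$-limits of a net $\eta_i\in B_{\alpha_i}$ with strictly increasing $\alpha_i$: the Lipschitz bound $|h(\eta_i)-h(\xi)|\leq\bar d(\eta_i,\xi)$ does not force $h(\eta_i)\to h(\xi)$ because $\bar d(\eta_i,\xi)\to 0$ is not guaranteed for $\tau$-convergent nets — this is the same phenomenon that sank the McShane formula. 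Second, even if one could produce a maximal $(B^*,g)$, you yourself acknowledge that extending to a point $\xi$ in the $\tau$-closure of $B^*$ (the only case that matters: picking values at $\tau$-isolated points does not help approach totality) is ``the genuinely delicate case,'' and your treatment of it --- ``one needs the combination of ... to force limiting values to be uniquely determined and compatible'' --- states the desired conclusion rather than deriving it. The alternative ``partition-of-unity-style patching'' is likewise not carried out: a cutoff multiplying a Tietze extension of $\psi$ by a Lipschitz bump generically destroys the $1$-$\bar d$-Lipschitz bound, and nothing in the sketch controls this.

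A further concern is that the strategy of extending to all of $\rcomp{M}$ \emph{before} restricting to $M$ may be the wrong order of operations. What is actually needed is a $1$-Lipschitz $f$ on $M$ whose $\tau$-continuous extension $\rcomp f$ agrees with $\psi$ on $A$; one does not need $\rcomp{f}$ to be constructed ``by hand'' as a Lipschitz extension on $\rcomp{M}$. A more promising route works on $M$ directly: use $\tau$-normality to get a Tietze extension $\tilde\psi:\rcomp{M}\to[\inf\psi,\sup\psi]$, then build $f$ on $M$ from $\tilde\psi\restrict_M$ and the metric $d$ using the characterisation of $\bar d$ in Proposition \ref{pr:bar-d_cont} \ref{pr:bar-d_cont_4} (distances between $\tau$-open neighbourhoods of $\tau$-closed sets), and verify $\rcomp f\restrict_A=\psi$ at the end using $\tau$-continuity of $\psi$ and $\tilde\psi$. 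That sidesteps the need to make a single function simultaneously $\tau$-continuous and $1$-$\bar d$-Lipschitz on $\rcomp{M}$ by construction, and is closer in spirit to how $\bar d$ is actually handled elsewhere in the paper (compare Lemma \ref{lm:G_distance_functions}).
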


Hereafter, by default, all topological or measure-theoretic notions relating to $\rcomp{M}$ or $\ucomp{M}$ (e.g. open, closed, Borel subsets, continuity etc.) will taken with respect to $\tau$ or the compact topology on $\ucomp{M}$, respectively; whenever we refer to such notions in terms of $\bar{d}$ we will do so explicitly.

As well as $\bar{d}$, we can extend $d\restrict_{\wt{M}}$ continuously to a function on $\bwt{M}$, again called $d$, which takes values in $[0,\infty]$. It is necessary to consider how these two extensions relate to each other. This can be done via $\Phi^*$ and the coordinate maps. We shall fix the set 
\[
 \bwtf = \pp^{-1}(\rcomp{M}\times\rcomp{M})
\]
of points $\zeta \in \bwt{M}$ having ``real'' coordinates. Note that by continuity of $d$, Proposition \ref{pr:bar-d_cont} \ref{pr:bar-d_cont_3} and the triangle inequality
\begin{equation}\label{eq:R_finite_d}
d(\zeta)\leq \bar{d}(\pp_1(\zeta),0)+\bar{d}(\pp_2(\zeta),0), \quad \zeta \in \bwtf,
\end{equation}
so in particular $\bwtf \subset d^{-1}[0,\infty)$.

We also let $\delta:\bwt{M}\to\meas{\bwt{M}}$ denote the usual Dirac evaluation map. To avoid confusion with the other map denoted by $\delta$, we shall write $\delta_\zeta$ to denote the image of $\zeta \in \bwt{M}$ under this new map (as we did with $\delta_{(x,y)}$, $(x,y) \in \wt{M}$), to distinguish from $\delta(\xi)$, $\xi \in \rcomp{M}$, written above.

\begin{proposition}[{\cite[Proposition 2.8]{APS24b}}]\label{pr:De_Leeuw_relation}
Let $\zeta \in \bwtf$. Then
\begin{enumerate}[label={\upshape{(\roman*)}}]
 \item\label{pr:De_Leeuw_relation_1} $d(\zeta)\Phi^*\delta_\zeta = \delta(\pp_1(\zeta)) - \delta(\pp_2(\zeta))$;
 \item\label{pr:De_Leeuw_relation_2} $d(\zeta)\|\Phi^*\delta_\zeta\| = \bar{d}(\pp(\zeta))$;
 \item\label{pr:De_Leeuw_relation_3} $\bar{d}(\pp(\zeta)) \leq d(\zeta)$, and we have equality if and only if $\|\Phi^*\delta_\zeta\|=1$ or $d(\zeta)=0$.
\end{enumerate}
\end{proposition}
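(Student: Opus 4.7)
The plan is to prove (i) first by a density/continuity argument that boils down to checking the identity on the dense subset $\wt{M}$, then to derive (ii) from (i) directly, and finally (iii) from (ii) combined with the fact that $\Phi^*$ has norm one. The three parts are really one computation unpacked.

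For (i), I fix $f \in \Lip_0(M)$ and consider the scalar identity
\[
 d(\zeta)(\Phi f)(\zeta) = \rcomp{f}(\pp_1(\zeta)) - \rcomp{f}(\pp_2(\zeta)), \qquad \zeta \in \bwtf.
\]
On $\wt{M}$ this is the very definition of $\Phi f$: $d(x,y)(\Phi f)(x,y) = f(x)-f(y)$. The left-hand side extends continuously to $\bwt{M}$ because $\Phi f \in C(\bwt{M})$ and $d$ is continuous from $\bwt{M}$ to $[0,\infty]$; on $\bwtf$ it takes real values thanks to \eqref{eq:R_finite_d}. The right-hand side is continuous on $\bwtf$ because $\pp_i$ are continuous into $\rcomp{M}$ and $\rcomp{f}$ is continuous and real-valued on $\rcomp{M}$ (this is where one uses the Lipschitz realcompactification rather than $\ucomp{M}$). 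Since $\wt{M}$ is dense in $\bwt{M}$ and hence in $\bwtf$, and both sides are continuous on $\bwtf$, the identity holds on all of $\bwtf$. Pairing the result with arbitrary $f \in \Lip_0(M)$ and using $\langle f, \Phi^*\delta_\zeta\rangle = (\Phi f)(\zeta)$ together with $\langle f, \delta(\xi)\rangle = \rcomp{f}(\xi)$ for $\xi \in \rcomp{M}$ yields (i).

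For (ii), split on whether $d(\zeta)$ vanishes. If $d(\zeta) > 0$, divide (i) by $d(\zeta)$ and take norms, using the definition $\bar{d}(\xi,\eta) = \|\delta(\xi)-\delta(\eta)\|$ from \cite[Proposition 2.4]{APS24b}, to get $\|\Phi^*\delta_\zeta\| = \bar{d}(\pp(\zeta))/d(\zeta)$. If $d(\zeta) = 0$, then (i) forces $\delta(\pp_1(\zeta)) = \delta(\pp_2(\zeta))$, so $\pp_1(\zeta)=\pp_2(\zeta)$ (the extended evaluation $\delta$ is injective on $\rcomp{M}$, as bounded Lipschitz functions separate points of $\ucomp{M}$ and hence of $\rcomp{M}$), so $\bar{d}(\pp(\zeta)) = 0$ and both sides of (ii) vanish.

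For (iii), observe that $\Phi$ is a linear isometric embedding, so $\Phi^*$ is a quotient map of norm one; since $\|\delta_\zeta\|=1$, we get $\|\Phi^*\delta_\zeta\| \leq 1$. Inserting this into (ii) gives $\bar{d}(\pp(\zeta)) \leq d(\zeta)$, with equality precisely when either factor on the right side of (ii) saturates, i.e.\ when $d(\zeta)=0$ or $\|\Phi^*\delta_\zeta\|=1$.

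The only step that demands real care is the density/continuity argument in (i): one must verify that $d \cdot \Phi f$ and $\rcomp{f}\circ\pp_1 - \rcomp{f}\circ\pp_2$ really are continuous (and real-valued) at every point of $\bwtf$, which is exactly what the definition of $\bwtf$ via $\rcomp{M}$ is engineered to guarantee. Everything else is algebra.
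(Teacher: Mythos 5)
Your proof is correct and, as far as I can tell, is the natural route: the paper merely cites this proposition from \cite[Proposition 2.8]{APS24b} without reproducing a proof, so there is no in-paper argument to compare against. The three load-bearing points you correctly identify are (a) $d$ is finite on $\bwtf$ by \eqref{eq:R_finite_d}, so $d\cdot\Phi f$ is a genuine continuous real-valued function there; (b) $\bwtf=\pp^{-1}(\rcomp{M}\times\rcomp{M})$ is open in $\bwt{M}$ (as $\rcomp{M}$ is open in $\ucomp{M}$), hence $\wt{M}$ is dense in $\bwtf$ and a density-plus-continuity argument closes (i); and (c) $\delta$ is injective on $\rcomp{M}$ because bounded Lipschitz functions separate points of $\ucomp{M}$, which handles the $d(\zeta)=0$ case in (ii). The remaining steps (divide by $d(\zeta)$ when it is nonzero; $\|\Phi^*\|\le 1$; factor $d(\zeta)(\|\Phi^*\delta_\zeta\|-1)=0$) are exactly as you say.
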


Finally, we define $\opt = \{\zeta \in \bwt{M} \,:\, \norm{\Phi^*\delta_\zeta}=1\}$ to be the set of \emph{optimal points} in $\bwt{M}$ \cite[Section 3.1]{APS24b}. This is a dense $G_\delta$ in $\bwt{M}$ that includes $\wt{M}$. Optimal points are relevant to optimal representations because the latter are always concentrated on $\opt$ \cite[Proposition 3.1]{APS24b}. A consequence of this fact, together with Proposition \ref{pr:De_Leeuw_relation}, is that if $\mu \in \opr{\bwt{M}}$ is concentrated on $\bwtf$ then it is also concentrated on $\opt \cap \bwtf$, on which we have $\bar{d} \circ \pp \equiv d$. Finally, we have ways of locating optimal points in some specific circumstances, as is witnessed by the next result.

\begin{proposition}[{cf.~\cite[Proposition 3.4]{APS24b}}]\label{pr:optimals_everywhere}
If $\xi, \eta \in \rcomp{M}$ are distinct then $\opt \cap \pp^{-1}(\xi,\eta)$ is compact and non-empty.
\end{proposition}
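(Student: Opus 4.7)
The plan rests on the identity $d(\zeta)\norm{\Phi^*\delta_\zeta}=\bar{d}(\pp(\zeta))$ from Proposition \ref{pr:De_Leeuw_relation}\ref{pr:De_Leeuw_relation_2}: on the fibre $\pp^{-1}(\xi,\eta)$ the right-hand side is the strictly positive constant $\bar{d}(\xi,\eta)$, so a point of that fibre is optimal exactly when the continuous extension $d:\bwt{M}\to[0,\infty]$ attains the value $\bar{d}(\xi,\eta)$ there. This reduces the statement to (a) the compactness of the level set $\set{\zeta\in\pp^{-1}(\xi,\eta):d(\zeta)=\bar{d}(\xi,\eta)}$ and (b) the existence of a point in that level set.

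For compactness, since $\pp$ is continuous and $\set{(\xi,\eta)}$ is closed in $\ucomp{M}\times\ucomp{M}$, the fibre $\pp^{-1}(\xi,\eta)\subset\bwtf$ is closed in the compact space $\bwt{M}$, hence compact; by \eqref{eq:R_finite_d} the function $d$ is finite there. Continuity of $d$ then makes the level set above closed in $\pp^{-1}(\xi,\eta)$, and therefore compact.

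For non-emptiness I would construct an approximating net in $\wt{M}$. Since $\xi\neq\eta$ and $\ucomp{M}$ is Hausdorff, fix disjoint $\tau$-open neighbourhoods $U_0\ni\xi$ and $V_0\ni\eta$ in $\rcomp{M}$, and take $\tau$-open neighbourhood bases $\mathcal{N}(\xi),\mathcal{N}(\eta)$ at $\xi,\eta$ that refine $U_0,V_0$ respectively. For each $(U,V,n)\in\mathcal{N}(\xi)\times\mathcal{N}(\eta)\times\NN$, Proposition \ref{pr:bar-d_cont}\ref{pr:bar-d_cont_4} applied to the singletons $A=\set{\xi}$ and $B=\set{\eta}$ gives $d(U\cap M,V\cap M)\leq\bar{d}(\xi,\eta)$; since $M$ is dense in $\rcomp{M}$ these sets are non-empty, so I can pick $x_{U,V,n}\in U\cap M$ and $y_{U,V,n}\in V\cap M$ with $d(x_{U,V,n},y_{U,V,n})<\bar{d}(\xi,\eta)+1/n$, and disjointness of $U_0,V_0$ forces $(x_{U,V,n},y_{U,V,n})\in\wt{M}$. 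Directing the index set in the obvious product way, the resulting net satisfies $(x_{U,V,n},y_{U,V,n})\to(\xi,\eta)$ in $\ucomp{M}\times\ucomp{M}$ and $\limsup d(x_{U,V,n},y_{U,V,n})\leq\bar{d}(\xi,\eta)$. Passing to a subnet convergent to some $\zeta\in\bwt{M}$, continuity of $\pp$ yields $\pp(\zeta)=(\xi,\eta)$, while continuity of $d$ combined with Proposition \ref{pr:De_Leeuw_relation}\ref{pr:De_Leeuw_relation_3} sandwiches $d(\zeta)$ between $\bar{d}(\xi,\eta)$ and $\bar{d}(\xi,\eta)$, placing $\zeta$ in $\opt\cap\pp^{-1}(\xi,\eta)$.

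The single delicate point is the upper bound $d(U\cap M,V\cap M)\leq\bar{d}(\xi,\eta)$: lower semicontinuity of $\bar{d}$ and $\tau$-$w^*$ continuity of $\delta$ only supply the reverse estimate along convergent nets, and Hahn--Banach-type arguments (cf.~Proposition \ref{pr:Mat_app}) produce bounds on $f(x)-f(y)$ rather than on $d(x,y)$. Proposition \ref{pr:bar-d_cont}\ref{pr:bar-d_cont_4} is tailor-made to bridge this gap, after which the whole argument reduces to a routine net-cluster-point exercise in the compact space $\bwt{M}$.
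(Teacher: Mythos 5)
Your proof is correct, and since the paper does not itself prove this statement (it simply cites \cite[Proposition 3.4]{APS24b}), there is no in-paper proof to compare against. Your reduction via Proposition \ref{pr:De_Leeuw_relation}~\ref{pr:De_Leeuw_relation_2}--\ref{pr:De_Leeuw_relation_3} to the identity $\opt\cap\pp^{-1}(\xi,\eta)=\set{\zeta\in\pp^{-1}(\xi,\eta):d(\zeta)=\bar{d}(\xi,\eta)}$ handles compactness cleanly, and the appeal to Proposition \ref{pr:bar-d_cont}~\ref{pr:bar-d_cont_4} for the upper bound $d(U\cap M,V\cap M)\leq\bar{d}(\xi,\eta)$ is indeed the right tool for the net construction, after which the cluster-point argument and the reverse inequality from Proposition \ref{pr:De_Leeuw_relation}~\ref{pr:De_Leeuw_relation_3} close the sandwich.
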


It is also known for example that $\zeta \in \opt$ whenever $\zeta \in \bwtf$ and only one of its coordinates belongs to $M$ \cite[p.~12]{APS24b}.

Finally we mention the \emph{vanishing points}, which are at the other end of the spectrum. These are the points $\zeta \in \bwt{M}$ satisfying $\Phi^*\delta_\zeta = 0$ \cite[Section 3.2]{APS24b}. By Proposition \ref{pr:De_Leeuw_relation} \ref{pr:De_Leeuw_relation_2}, if $d(\zeta)>\bar{d}(\pp(\zeta))=0$ then $\zeta$ is a vanishing point. As is stated at the start of \cite[Section 3.2]{APS24b}, points satisfying this condition can always be found when, for example, $M$ is infinite, bounded and uniformly discrete.

\subsection{Supports, shadows, extended supports and functionals that avoid infinity}\label{subsec:supp}

The idea of the support of a free space element was alluded to in the statement of Theorem \ref{th:AP_extreme} above. We make this idea precise in this section. Given a closed subset $A\subset M$ containing $0$, we can identify $\lipfree{A}$ with a subspace of $\lipfree{M}$, namely $\cl{\lspan}(\delta(A))$. The \emph{support} $\supp(m)$ of a functional $m\in\lipfree{M}$ is the smallest closed set $A \subset M$ such that $m\in\lipfree{A\cup\set{0}}$, the existence of which is non-trivial to establish \cite{AP_rmi,APPP_2020}. Equivalently, the support can be defined as
\[
\supp(m) = \bigcap\set{A\subset M: \text{$A$ is closed and $\duality{f,m}=0$ whenever $f\in\Lip_0(M)$ and $f\restrict_A=0$}} .
\]
Supports of free space elements are related to the supports of the measures in $\bwt{M}$ that represent them. Given $E \subset \bwt{M}$, we define the \emph{shadow} $\pp_s(E)$ of $E$ to be the set $\pp_1(E) \cup \pp_2(E)$. We will pay particular attention to the shadows $\pp_s(\supp(\mu))$ of supports of measures.

\begin{proposition}[{\cite[Lemma 8]{Aliaga}}]
\label{pr:aliaga_lemma_8}
If $m\in\lipfree{M}$, then $\supp(m)\subset\pp_s(\supp(\mu))$ whenever $\mu \in \meas{\bwt{M}}$ and $\Phi^*\mu=m$.
\end{proposition}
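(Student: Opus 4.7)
The plan is to argue by contrapositive, using the paper's characterisation of $\supp(m)$ as the intersection of closed $A\subset M$ such that $\duality{f,m}=0$ whenever $f\in\Lip_0(M)$ vanishes on $A$. Given $x\in M$ with $x\notin\pp_s(\supp(\mu))$, I will exhibit a single closed set $A\subset M$ with $x\notin A$ such that $\duality{f,m}=0$ for every $f\in\Lip_0(M)$ with $f\restrict_A=0$; that suffices to conclude $x\notin\supp(m)$.

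To build $A$, note that $\supp(\mu)$ is closed in the compact space $\bwt{M}$, hence compact, so by continuity of $\pp_1,\pp_2$ the shadow $\pp_s(\supp(\mu))=\pp_1(\supp(\mu))\cup\pp_2(\supp(\mu))$ is compact, and therefore closed, in $\ucomp{M}$. Since $\ucomp{M}$ is compact Hausdorff, and thus regular, I can choose an open $U\subset\ucomp{M}$ with $x\in U$ and $\ucl{U}\cap\pp_s(\supp(\mu))=\emptyset$. The candidate is $A:=M\setminus U$, which is closed in $M$ and avoids $x$.

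The crux is to verify that $\Phi f\equiv 0$ on all of $\supp(\mu)$ for every $f\in\Lip_0(M)$ with $f\restrict_A=0$. Fix such $f$ and $\zeta\in\supp(\mu)$; both coordinates $\pp_i(\zeta)$ lie in $\pp_s(\supp(\mu))\subset\ucomp{M}\setminus\ucl{U}$. Pick any net $(x_i,y_i)\in\wt{M}$ with $(x_i,y_i)\to\zeta$ in $\bwt{M}$. By continuity of the coordinate maps, $x_i\to\pp_1(\zeta)$ and $y_i\to\pp_2(\zeta)$ in $\ucomp{M}$, so both nets lie eventually in the open neighbourhood $\ucomp{M}\setminus\ucl{U}$ of their limits. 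Since $x_i,y_i\in M$, this puts them eventually in $M\setminus\ucl{U}\subset A$, so $f(x_i)=f(y_i)=0$ eventually and hence $\Phi f(x_i,y_i)=(f(x_i)-f(y_i))/d(x_i,y_i)=0$ eventually. Continuity of $\Phi f$ at $\zeta$ then forces $\Phi f(\zeta)=0$. As $\mu$ is concentrated on $\supp(\mu)$, one concludes $\duality{f,m}=\int_{\bwt{M}}\Phi f\,d\mu=0$, as required.

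The main obstacle is the mismatch between the metric topology of $M$, in which $f$ and $\supp(m)$ live, and the coarser compact topology on $\ucomp{M}$, to which $\pp_s(\supp(\mu))$ belongs. Merely separating $x$ from $\pp_s(\supp(\mu))$ by an open $U$ with $U\cap\pp_s(\supp(\mu))=\emptyset$ would be insufficient: the limits $\pp_i(\zeta)$ could sit on the topological boundary of $U$, and the net $(x_i,y_i)\to\zeta$ could drift in and out of $U$, so one could not guarantee $f(x_i)$ and $f(y_i)$ become zero. Invoking regularity to push $\ucl{U}$, rather than only $U$, off the shadow is precisely what drives the net cleanly into $A$ and secures the vanishing of $\Phi f$ on $\supp(\mu)$.
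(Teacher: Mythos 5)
Your proof is correct and follows the natural route one would expect (and which the cited reference, Aliaga's Lemma 8, essentially takes): separate the point $x$ from the compact shadow $\pp_s(\supp(\mu))$ using regularity of $\ucomp{M}$, choose $A=M\setminus U$ with $\ucl{U}$ disjoint from the shadow, and show via density of $\wt{M}$ and continuity of $\Phi f$ and the coordinate maps that $\Phi f$ vanishes on $\supp(\mu)$, so $\duality{f,m}=\int\Phi f\,d\mu=0$. Your remark that one must push the closure $\ucl{U}$, not merely $U$, off the shadow is exactly the small but essential point; the argument is sound, applies to signed $\mu$ without modification, and matches the characterisation of $\supp(m)$ via the family of closed sets on which $m$-testing functions vanish.
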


Thus the support of $m \in \lipfree{M}$ gives us a ``lower bound'' on $\supp(\mu)$ whenever $\Phi^*\mu=m$ or, put another way, $\pp_s(\supp(\mu))$ is an ``upper bound'' on $\supp(m)$. This fact is used in the proof of \cite[Theorem 1]{Aliaga} and will feature again (implicitly) in the proof of Theorem \ref{th:extreme}. An analogous result, Proposition \ref{pr:support_inclusion} below, can be given for measures that represent elements in the bidual; for this we need to introduce the concepts of functionals that avoid infinity and extended supports.

The concept of functionals that avoid infinity was first given in \cite[Section 3.1]{AP_measures}, using an annular decomposition of $\Lip_0(M)^*$ related to the one introduced by Kalton \cite[Section 4]{Kalton}. Rather than reproduce the original definition, we state an equivalent and more straightforward formulation expressed in terms of De Leeuw representations.

\begin{proposition}[{\cite[Corollary 2.12]{APS24b}}]\label{prop_avoid_infinity}
Let $\psi\in \Lip_0(M)^*$. Then the following are equivalent:
\begin{enumerate}[label={\upshape{(\roman*)}}]
\item $\psi$ avoids infinity;
\item every optimal De Leeuw representation of $\psi$ is concentrated on $\bwtf$;
\item $\psi$ has a De Leeuw representation that is concentrated on $\bwtf$.
\end{enumerate}
\end{proposition}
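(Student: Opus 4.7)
The plan is to run the cycle (ii) $\Rightarrow$ (iii) $\Rightarrow$ (i) $\Rightarrow$ (ii). The implication (ii) $\Rightarrow$ (iii) is immediate: since $\Phi^\ast$ is a quotient map onto $\Lip_0(M)^\ast$, $\psi$ admits at least one positive optimal representation, and under (ii) this representation witnesses (iii).

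For (iii) $\Rightarrow$ (i), I would first unpack the definition of ``avoiding infinity'' from \cite{AP_measures} in terms of the Kalton-style annular decomposition of $\Lip_0(M)^\ast$, according to which $\psi$ avoids infinity precisely when it is the norm-limit of its pieces concentrated on bounded annuli around $0$. Given a representation $\mu$ of $\psi$ concentrated on $\bwtf$, every $f \in \Lip_0(M)$ satisfies $\langle f,\psi\rangle = \int_{\bwtf}(\Phi f)\,d\mu$. Exhausting $\bwtf$ by the sets $K_n = \pp^{-1}(B_{\bar d}(0,n)\times B_{\bar d}(0,n))$, which are compact in $\bwt{M}$ by Proposition \ref{pr:rcomp_observations} \ref{pr:rcomp_observations_2} together with continuity of $\pp$, a dominated convergence argument delivers $\|\psi - \Phi^\ast(\mu\restrict_{K_n})\| \to 0$. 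Each $\Phi^\ast(\mu\restrict_{K_n})$ is essentially supported inside the $\bar{d}$-ball of radius $n$ around $0$, and routine bookkeeping with the Kalton decomposition then shows that $\psi$ has the required annular approximation, yielding (i).

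The hard direction is (i) $\Rightarrow$ (ii). Let $\mu \in \opr{\bwt{M}}$ represent $\psi$ and split $\mu = \mu_1 + \mu_2$ with $\mu_1 = \mu\restrict_{\bwtf}$ and $\mu_2 = \mu\restrict_{\bwt{M}\setminus\bwtf}$, both positive. Positivity of $\mu$ gives $\|\mu\| = \|\mu_1\| + \|\mu_2\|$, so as soon as one establishes $\Phi^\ast\mu_2 = 0$ optimality collapses the argument: $\Phi^\ast\mu_1 = \psi$ forces $\|\psi\| \leq \|\mu_1\| = \|\mu\|-\|\mu_2\| = \|\psi\|-\|\mu_2\|$, whence $\|\mu_2\| = 0$. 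To prove $\Phi^\ast\mu_2 = 0$, I would test against arbitrary $f \in \Lip_0(M)$ using bounded Lipschitz truncations $f_n$ that equal $f$ on $B(0,n)$ and are supported in $B(0,2n)$. The key observation is that $\Phi f_n$ vanishes on $\bwt{M}\setminus\bwtf$: every point $\zeta$ there has some coordinate $\xi = \pp_i(\zeta) \in \ucomp{M}\setminus\rcomp{M}$ with $\ucomp d(\xi,0) = \infty$, so any net in $\wt M$ converging to $\zeta$ eventually lies outside $B(0,2n)^2$, where $f_n \equiv 0$; hence $\langle f_n,\Phi^\ast\mu_2\rangle = 0$ for every $n$. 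On the other hand, (i) gives $\langle f_n,\psi\rangle \to \langle f,\psi\rangle$, and dominated convergence on $\mu_1$ gives $\langle f_n,\Phi^\ast\mu_1\rangle \to \langle f,\Phi^\ast\mu_1\rangle$; subtracting yields $\langle f,\Phi^\ast\mu_2\rangle = 0$ for every $f \in \Lip_0(M)$, so $\Phi^\ast\mu_2 = 0$. The main obstacle is the careful verification that $\Phi f_n$ genuinely vanishes on the whole ``at-infinity'' part $\bwt{M}\setminus\bwtf$, a topological computation involving the interplay of $\beta\wt{M}$, $\ucomp{M}$, and the definition of $\rcomp{M}$ as the set of points at finite $\ucomp d(\cdot,0)$-distance from $0$.
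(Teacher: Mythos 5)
This proposition is quoted from \cite[Corollary 2.12]{APS24b}; the present paper gives no proof of it, so no direct comparison with ``the paper's own proof'' is possible. The remarks below therefore concern the proposal on its own terms.

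The overall plan is plausible, but the central claim in your (i) $\Rightarrow$ (ii) step --- that $\Phi f_n$ vanishes on $\bwt{M}\setminus\bwtf$ --- is not justified correctly as written. You argue that any net $(x_i,y_i)\to\zeta$ in $\wt{M}$ ``eventually lies outside $B(0,2n)^2$, where $f_n\equiv 0$''. But $f_n$ is a function on $M$ that vanishes only outside the single ball $B(0,2n)$, and $\zeta\in\bwt{M}\setminus\bwtf$ requires merely \emph{one} coordinate $\pp_i(\zeta)$ to lie outside $\rcomp{M}$: the other may lie in $\rcomp{M}$, in which case the corresponding coordinates of the net stay $d$-bounded (by $\tau$-continuity of $\bar{d}(\cdot,0)=\rcomp{d}(\cdot,0)$ on $\rcomp{M}$), and $f_n$ does \emph{not} vanish on them. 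The conclusion $\Phi f_n(\zeta)=0$ is nonetheless true, but the argument must supply the missing case: if, say, $d(x_i,0)\to\infty$ while $d(y_i,0)$ stays bounded, then $d(x_i,y_i)\geq d(x_i,0)-d(y_i,0)\to\infty$, so boundedness of $f_n$ forces $\Phi f_n(x_i,y_i)=\bigl(f_n(x_i)-f_n(y_i)\bigr)/d(x_i,y_i)\to 0$ anyway. Two further points: the dominated convergence step for $\mu_1$ implicitly uses pointwise convergence $\Phi f_n\to\Phi f$ on all of $\bwtf$, not merely on the dense subset $\wt{M}$; this is true (via Proposition \ref{pr:De_Leeuw_relation} \ref{pr:De_Leeuw_relation_1} and $\tau$-continuity of $\bar{d}(\cdot,0)$ one checks that $\Phi f_n(\zeta)=\Phi f(\zeta)$ for $n$ large depending on $\zeta\in\bwtf$), but it does not come for free and deserves a line. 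Finally, the (iii) $\Rightarrow$ (i) direction deferred to ``routine bookkeeping with the Kalton decomposition'' cannot be checked here, since the precise definition of avoiding infinity is not reproduced in the paper.
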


As every free space element avoids infinity \cite[p.~19]{AP_measures}, we get the following important corollary, foreshadowed in \cite{Aliaga}. Contrast this result with Example \ref{ex:bad_rep}, which shows that not every optimal representation of a free space element is concentrated on $\pp^{-1}(M \times M)$.

\begin{corollary}[{cf.~\cite[Proposition 7]{Aliaga}}]\label{co:free_bwtf}
Let $\mu \in \opr{\bwt{M}}$ satisfy $\Phi^*\mu \in \lipfree{M}$. Then $\mu$ is concentrated on $\bwtf$.
\end{corollary}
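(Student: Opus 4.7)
The plan is to derive this as an immediate consequence of Proposition \ref{prop_avoid_infinity} together with the fact cited from \cite{AP_measures} that every element of $\lipfree{M}$ avoids infinity. Since $\mu \in \opr{\bwt{M}}$, by definition $\mu$ is a positive element of $\meas{\bwt{M}}$ satisfying $\norm{\mu} = \norm{\Phi^*\mu}$, which makes $\mu$ an optimal De Leeuw representation of the functional $\Phi^*\mu$.

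First I would set $\psi = \Phi^*\mu$ and observe that by hypothesis $\psi \in \lipfree{M}$, so $\psi$ avoids infinity. The equivalence (i)$\Leftrightarrow$(ii) in Proposition \ref{prop_avoid_infinity} then tells us that every optimal De Leeuw representation of $\psi$ is concentrated on $\bwtf$. Since $\mu$ is one such representation, it is concentrated on $\bwtf$, which is exactly what we need.

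There is essentially no obstacle here: the result is stated as a corollary precisely because the heavy lifting has already been done in Proposition \ref{prop_avoid_infinity} (which characterises avoidance of infinity via the location of optimal representations) and in the referenced fact that free space elements avoid infinity. The only minor bookkeeping is to verify that $\mu \in \opr{\bwt{M}}$ qualifies as an \emph{optimal} De Leeuw representation of $\Phi^*\mu$ in the sense required by Proposition \ref{prop_avoid_infinity}; this is immediate from the defining equality $\norm{\mu} = \norm{\Phi^*\mu}$ and the positivity of $\mu$ built into the definition of $\opr{\bwt{M}}$.
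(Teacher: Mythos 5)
Your proof is correct and is exactly the paper's intended argument: the paper states this corollary immediately after noting that every free space element avoids infinity and appeals to Proposition \ref{prop_avoid_infinity}, just as you do. No further comment is needed.
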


Now we turn to the definition of extended support.

\begin{definition}[{\cite[Definition 3.5]{AP_measures}}]\label{defn_extended_support}
Let $\psi\in\Lip_0(M)^*$. The \emph{extended support} $\esupp{\psi}$ is given by
\[
\esupp{\psi} = \bigcap\set{\ucl{A} \subset \ucomp{M}: \text{$A\subset M$ and $\duality{f,\psi}=0$ whenever $f\in\Lip_0(M)$ and $f\restrict_A=0$}}.
\]
\end{definition}
Equivalently, $\zeta\in\esupp{\psi}$ if and only if, for every neighbourhood $U$ of $\zeta$, there exists $f\in\Lip_0(M)$ satisfying $\duality{f,\psi}\neq 0$ and $\supp(f)\subset U\cap M$ \cite[Proposition 3.6]{AP_measures}. Extended supports are compatible with the supports of free space elements in the sense that
\[
 \supp(\psi)=\esupp{\psi}\cap M \quad\text{and}\quad \esupp{\psi}=\ucl{\supp(\psi)}
\]
whenever $\psi\in\lipfree{M}$ \cite[Corollary 3.7]{AP_measures}. Provided $\psi \in \Lip_0(M)^*$ avoids infinity, $\esupp{\psi}$ behaves well, meaning that $\duality{f,\psi}=0$ whenever $U\subset\ucomp{M}$ is a neighbourhood of $\esupp{\psi}$ and $f\in\Lip_0(M)$ vanishes on $U\cap M$ \cite[Theorem 3.13]{AP_measures}.

We finish the section by stating two results concerning extended supports and shadows. First, extended supports lead to a generalisation of Proposition \ref{pr:aliaga_lemma_8}.

\begin{proposition}[{\cite[Proposition 5.3]{APS24b}}]\label{pr:support_inclusion}
If $\mu\in\meas{\bwt{M}}$ then $\esupp{\dual{\Phi}\mu}\subset\pp_s(\supp(\mu))$.
\end{proposition}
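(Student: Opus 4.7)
The plan is to prove the contrapositive: if $\zeta\in\ucomp{M}\setminus\pp_s(\supp(\mu))$, then $\zeta\notin\esupp{\Phi^*\mu}$. Using the characterisation of extended support recorded just after Definition \ref{defn_extended_support}, this amounts to producing an open neighbourhood $U$ of $\zeta$ in $\ucomp{M}$ such that $\duality{f,\Phi^*\mu}=0$ for every $f\in\Lip_0(M)$ with $\supp(f)\subset U\cap M$.

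First I would observe that $\pp_s(\supp(\mu))$ is closed in $\ucomp{M}$: indeed $\supp(\mu)$ is compact in $\bwt{M}$, each $\pp_i$ is continuous, and $\ucomp{M}$ is Hausdorff, so each $\pp_i(\supp(\mu))$ is compact and hence closed. Then I would pick an open $V\subset\ucomp{M}$ with $\zeta\in V$ and $V\cap\pp_s(\supp(\mu))=\emptyset$, and, using normality of the compact Hausdorff space $\ucomp{M}$, shrink to an open $W$ with $\zeta\in W\subset \ucl{W}\subset V$. The candidate neighbourhood will be $U:=W$.

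The heart of the argument is then to show that for any $f\in\Lip_0(M)$ with $\supp(f)\subset W\cap M$, the continuous function $\Phi f\in C(\bwt{M})$ vanishes on all of $\supp(\mu)$, which will yield $\duality{f,\Phi^*\mu}=\int \Phi f\,d\mu=0$. Since $\ucl{W}\subset V$, every $\zeta_0\in\supp(\mu)$ satisfies $\pp_1(\zeta_0),\pp_2(\zeta_0)\in\ucomp{M}\setminus\ucl{W}$, so $\supp(\mu)$ is contained in the open set
\[
G \;:=\; \pp^{-1}\bigl((\ucomp{M}\setminus\ucl{W})\times(\ucomp{M}\setminus\ucl{W})\bigr).
\]
On the dense subset $\wt{M}\cap G$ of $G$, every point is a genuine pair $(x,y)$ with $x,y\in M\setminus W$; since $\supp(f)\subset W\cap M$ forces $f$ to vanish outside $W\cap M$, we get $f(x)=f(y)=0$ and hence $(\Phi f)(x,y)=0$. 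Continuity of $\Phi f$ then propagates this zero to all of $G$, and in particular to $\supp(\mu)$.

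I do not foresee any serious obstacle. The only step requiring care is the density-plus-continuity part: one has to check that $\wt{M}\cap G$ really is dense in $G$, which follows from $\wt{M}$ being dense in the compact space $\bwt{M}$ together with $G$ being open. The signedness of $\mu$ causes no trouble because $\mu$ is concentrated on $\supp(\mu)$ by definition, and it is worth noting that the argument is purely topological---it does not require optimality of the representation or the machinery surrounding functionals that avoid infinity, which explains why this inclusion holds for arbitrary $\mu\in\meas{\bwt{M}}$.
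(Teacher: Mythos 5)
Your argument is correct and is the natural one for this statement; since the paper only cites this result from \cite[Proposition 5.3]{APS24b} without reproducing a proof, there is no in-text argument to compare against, but your route via the pointwise characterisation of $\esupp{\cdot}$ (Proposition 3.6 of \cite{AP_measures}), compactness of $\supp(\mu)$, and propagation of $\Phi f\equiv 0$ from the dense set $\wt{M}\cap \pp^{-1}\bigl((\ucomp{M}\setminus\ucl{W})\times(\ucomp{M}\setminus\ucl{W})\bigr)$ to the whole open preimage is exactly what one would expect the cited proof to do. Your closing observation is also accurate: the inclusion is purely topological and holds for arbitrary signed $\mu\in\meas{\bwt{M}}$ without optimality or the avoidance-of-infinity machinery, which are only invoked in the surrounding discussion for the companion result, Proposition~\ref{pr:minimal_support_representation}.
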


Second, if $\psi \in \Lip_0(M)^*$ avoids infinity, then we can find $\mu \in \opr{\bwt{M}}$ that represents $\psi$ and has ``minimal shadow'' in the following sense.

\begin{proposition}[{\cite[Proposition 5.4]{APS24b}}]\label{pr:minimal_support_representation}
If $\psi\in\Lip_0(M)^*$ avoids infinity then there is $\mu\in\opr{\bwt{M}}$ such that $\dual{\Phi}\mu=\psi$ and $\pp_s(\supp(\mu)) \subset \esupp{\psi}\cup\set{0}$.
\end{proposition}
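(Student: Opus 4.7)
My plan is to prove the proposition via a Hahn--Banach separation argument. Set $E = \esupp{\psi}$ and $E' = E \cup \set{0}$, and define $K = \pp^{-1}(E' \times E') \subset \bwt{M}$, which is compact as the preimage under the continuous $\pp$ of the compact set $E'\times E'$. Any positive measure with support in $K$ automatically satisfies $\pp_s(\supp(\mu)) \subset E'$, so it suffices to produce $\mu \in \opr{\bwt{M}}$ with $\Phi^*\mu = \psi$ and $\supp(\mu) \subset K$. Equivalently, I want to show that $\psi$ lies in
\[
 D := \Phi^*(P), \qquad P := \set{\mu \in \meas{\bwt{M}}^+ : \supp(\mu) \subset K,\ \norm{\mu} \leq \norm{\psi}};
\]
then any $\mu \in P$ with $\Phi^*\mu = \psi$ will satisfy $\norm{\psi} \leq \norm{\mu} \leq \norm{\psi}$ and hence be optimal.

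The set $P$ is $w^*$-compact (by Banach--Alaoglu and the $w^*$-lower semicontinuity of $\mu \mapsto \mu(\bwt{M}\setminus K)$ on the positive cone), so $D$ is $w^*$-compact and convex in $\Lip_0(M)^*$. Assuming $\psi \notin D$ for contradiction, Hahn--Banach separation in $(\Lip_0(M)^*, w^*)$, using $\Lip_0(M)$ as predual, supplies $g \in \Lip_0(M)$ with
\[
 \duality{g,\psi} > \sup_{\mu \in P}\duality{\Phi g,\mu} = \norm{\psi} \cdot \max\set{0,\ \sup_K \Phi g}.
\]
To derive a contradiction, I aim to build, for each $\ep > 0$, a function $g' \in \Lip_0(M)$ satisfying $\duality{g',\psi} = \duality{g,\psi}$ and $\Lip(g') \leq \sup_K \Phi g + \ep$; since $\duality{g',\psi} \leq \norm{\psi}\Lip(g')$, this will contradict the displayed inequality as $\ep \to 0$.

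For the construction, continuity of $\Phi g$ on compact $\bwt{M}$ makes $W := \set{\zeta : \Phi g(\zeta) < \sup_K \Phi g + \ep}$ an open neighborhood of $K$. Since $\pp : \bwt{M}\to \ucomp{M}\times\ucomp{M}$ is closed (continuous between compact Hausdorff spaces), $V := (\ucomp{M}\times\ucomp{M})\setminus \pp(\bwt{M}\setminus W)$ is open, contains $E' \times E'$, and has $\pp^{-1}(V) \subset W$. Applying the tube lemma to the compact $E'\times E' \subset V$ produces an open $U \supset E'$ in $\ucomp{M}$ with $U \times U \subset V$. The $\rr$-invariance of $K$ combined with $\Phi g \circ \rr = -\Phi g$ gives $\sup_K \Phi g = -\inf_K \Phi g \geq 0$, so $\Lip(g\restrict_{U\cap M}) \leq \sup_K \Phi g + \ep$. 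Since $0 \in U \cap M$, the McShane extension $g'$ of $g\restrict_{U\cap M}$ lies in $\Lip_0(M)$ with the same Lipschitz constant. Finally, $g - g'$ vanishes on $U \cap M$, which is $M$ intersected with the $\ucomp{M}$-neighborhood $U$ of $\esupp{\psi}$, so the avoidance of infinity of $\psi$ together with \cite[Theorem 3.13]{AP_measures} yields $\duality{g-g',\psi} = 0$ and thus $\duality{g,\psi} = \duality{g',\psi} \leq \norm{\psi}(\sup_K \Phi g + \ep)$.

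The main obstacle I expect is the tube-lemma reduction: an arbitrary open $W \supset K$ in $\bwt{M}$ need not be of the product form $\pp^{-1}(U \times U)$, and it is crucial to exploit both the closedness of $\pp$ and the compactness of $E'\times E'$ to pass from $W$ to a suitable rectangular neighborhood. Once this topological passage is in hand, the remainder of the argument assembles standard Hahn--Banach separation, a McShane extension, and the avoidance-of-infinity machinery recalled in Section \ref{subsec:supp}.
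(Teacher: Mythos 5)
The paper cites this proposition from \cite[Proposition 5.4]{APS24b} without reproving it, so there is no in-paper argument to compare against; I therefore assess your proof on its own terms, and it is correct. The $w^*$-compactness of $P$ (positivity and the norm bound give a $w^*$-compact set, and $\mu\mapsto\mu(\bwt{M}\setminus K)$ is $w^*$-lower semicontinuous on the positive cone since $\bwt{M}\setminus K$ is open), the Hahn--Banach separation in the $w^*$-topology, the passage from the open set $W\supset K$ in $\bwt{M}$ to a square tube $U\times U$ via closedness of $\pp$ plus a tube-lemma argument, the McShane extension preserving the Lipschitz constant and the base-point condition, and the appeal to \cite[Theorem 3.13]{AP_measures} all fit together as you describe. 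Two details worth making explicit in a polished write-up. First, the identity $\max\{0,\sup_K\Phi g\}=\sup_K\Phi g$ (and the entire construction of $W$) presupposes $K\neq\varnothing$; if $K=\varnothing$ then necessarily $E'=\{0\}$ with $0$ isolated in $M$ (any two distinct points of $\ucomp{M}$ lie in $\pp(\bwt{M})$, and $\pp^{-1}(0,0)\neq\varnothing$ when $0$ is an accumulation point), which forces $\psi=0$ and the trivial choice $\mu=0$, so this case should be dispatched at the outset. Second, the bound $\Lip(g\restrict_{U\cap M})\leq\sup_K\Phi g+\ep$ needs not only $\sup_K\Phi g\geq0$ (which you correctly derive from $\rr$-invariance of $K$ and $\Phi g\circ\rr=-\Phi g$) but also the observation that $U\times U$ is symmetric, so that for $x,y\in U\cap M$ \emph{both} $(x,y)$ and $(y,x)$ lie in $\pp^{-1}(U\times U)\subset W$, giving the two-sided estimate $|\Phi g(x,y)|<\sup_K\Phi g+\ep$; this deserves a sentence.
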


\subsection{Relationship with classical Choquet theory}\label{subsec:Choquet}

The proof of Theorem \ref{th:extreme} rests critically on our ``Choquet theory of Lipschitz-free spaces''. Our source for classical Choquet theory is the comprehensive text \cite{lmns10}. We will state almost none of the classical theory here; instead we will confine ourselves to pointing out the main differences between the classical theory as presented in \cite{lmns10}, and its incarnation in the current work. 

There are two differences of particular note. First, Choquet theory as presented in \cite[Chapter 3]{lmns10} requires a function space $\mathcal{H}$, which is a (not necessarily closed) linear subspace of some $C(K)$-space, to contain the constant functions and separate points of the underlying compact Hausdorff space $K$ \cite[Definition 3.1]{lmns10}. It is not clear to us whether there is a natural choice of function space $\mathcal{H}$ and compact space $K$ that satisfies these properties in the current context. One possible contender is the function space
\[
\Phi\Lip_0(M) \oplus \lspan(\mathbf{1}_{\bwt{M}}) \subset C(\bwt{M}).
\]
However, this space doesn't separate points of $\bwt{M}$. This prompts a different choice of underlying compact space, especially given our efforts to introduce $\ucomp{M}$ and $\rcomp{M}$ above, whose points can be separated by the functions we will be working with. However, a significant advantage of retaining $\bwt{M}$ is that we have a reasonable understanding of how it functions, particularly in light of \cite{APS24a,APS24b}, while the structure of other possibilities, e.g. $(\Phi^* \circ \delta)(\bwt{M}) \subset \Lip_0^*(M)$ (where $\delta:\bwt{M}\to\Lip_0(M)^*$ is the Dirac evaluation map), is less clear to us at the time of writing.

For this reason our choice of compact space remains $\bwt{M}$, and we simply dispense with the need to specify a function space. The price we pay is that points are not separated and, consequently, the quasi-order $\preccurlyeq$ that we define on $\meas{\bwt{M}}^+$ (see Section \ref{sec:quasi-order}), which is our analogue of the Choquet order \cite[Definition 3.57]{lmns10}, is not anti-symmetric. However, as we shall see, the lack of anti-symmetry ultimately doesn't cause any real trouble.

The other clear deviation from standard Choquet theory is that here we consider minimality with respect to a Choquet-type quasi-order, whereas in the standard theory it is maximal measures that receive the attention; see e.g. \cite[Section 3.6]{lmns10}. As far as we are aware, minimal measures in standard Choquet theory feature almost nowhere in the literature (the only exception we know of is \cite{Godefroy19}, where they are used to prove some classical results from several fields in analysis). Minimal measures seem to be more suitable in the current context because they are much more compatible with existing notions in free space theory such as convex integrals of molecules, supports of free space elements and extended supports of elements of the bidual. Roughly speaking, we want measures that have as much content as possible on the ``nice'' parts of $\bwt{M}$ such as $\wt{M}$ or, failing that, $\pp^{-1}(M \times M)$. As we will see, minimal measures serve this purpose, while maximal measures in the classical theory would try their best to have content on ``scary'' parts of the Stone-{\v C}ech remainder, and we would like to avoid this if possible.

\subsection{Plan of the paper}\label{subsec:plan}

In Section \ref{sec:G}, we define a natural function cone $G \subset C(\bwt{M})$ satisfying $G \cap (-G) = \Phi\Lip_0(M)$, and establish its fundamental properties. Then we define an associated quotient space $\wt{M}^G$, which is a Hausdorff compactification of $\wt{M}$, and explore its relationship with the uniform compactification $\uwt{M}$ of $\wt{M}$ and the coordinate map $\pp$. In Section \ref{sec:quasi-order} we introduce the quasi-order $\preccurlyeq$ on $\meas{\bwt{M}}^+$, defined in terms of the cone $G$, and explore its basic properties. This will be our version of the Choquet order in this context. Then $\preccurlyeq$-minimal (or just minimal) measures are introduced and investigated. Finally in Section \ref{sec:quasi-order}, we study the extent to which $\preccurlyeq$ preserves the sets on which measures in $\meas{\bwt{M}}^+$ are concentrated. In Section \ref{sec:min_charac} we state and prove a characterisation of minimal measures which will be key to all of the deeper results about minimality. Later in the section, the characterisation is used to prove results about the shadows and marginals of minimal measures. Finally, in Section \ref{sec:main_theorems}, we bring together several results from the previous sections to prove Theorem \ref{th:opt_conc}, on which the proof of Theorem \ref{th:extreme} crucially rests. 

\section{The function cone \texorpdfstring{$G$}{G} and the associated compact space \texorpdfstring{$\wt{M}^G$}{}}\label{sec:G}

\subsection{Definition and fundamental properties of \texorpdfstring{$G$}{G}}\label{subsec:G_defn}

In this section we define a function cone in $C(\bwt{M})$ and lay out some of its fundamental properties. In Section \ref{sec:quasi-order}, this cone will be used to define a quasi-order on $\meas{\bwt{M}}^+$ analogous to the classical Choquet order.

\begin{definition}
 Let $G$ be the set of all functions $g \in C(\bwt{M})$ satisfying
 \begin{equation}\label{G-function}
  d(x,y)g(x,y) \leq d(x,u)g(x,u) + d(u,y)g(u,y) \quad\text{whenever $x,u,y \in M$ are distinct.}
 \end{equation}
\end{definition}

Straightaway we see that $G$ is a closed convex cone. It includes $\Phi\Lip_0(M)$ because
\begin{align}\label{Phi_equality}
d(x,y)\Phi f(x,y) = f(x)-f(y) &= f(x)-f(u) + f(u)-f(y) \\
&= d(x,u)\Phi f(x,u) + d(u,y)\Phi f(u,y), \quad f \in \Lip_0(M). \nonumber
\end{align}
The cone $G$ is \emph{max-stable}, i.e.~$g \vee g' \in G$ whenever $g,g' \in G$. By the triangle inequality, $\mathbf{1}_{\bwt{M}} \in G$. It can also be seen to be invariant under the action of the reflection map $\rr$ in the sense that $g \circ \rr \in G$ whenever $g \in G$.

In order to generate some more straightforward yet important examples of elements of $G$, we shall appropriate one of the standard approaches taken to replace a metric by an equivalent bounded metric. The function $\frac{1}{d}$ on $\bwt{M}$ should be interpreted in the usual way, taking the values $\infty$ and $0$ on $d^{-1}(0)$ and $d^{-1}(\infty)$, respectively.

\begin{proposition}\label{pr:bounded_metric_trick}
Let $g \in G$ be non-negative and let $b\geq 0$. Then $g':=\min\set{g,\frac{b}{d}} \in G$.
\end{proposition}

\begin{proof}  Let $x,u,y \in M$ be distinct. If $g'(x,u)=b/d(x,u)$ or $g'(u,y)=b/d(u,y)$ then
\[
d(x,y)g'(x,y) \leq b \leq d(x,u)g'(x,u) + d(u,y)g'(u,y); 
\]
otherwise 
\begin{align*}
d(x,y)g'(x,y) \leq d(x,y)g(x,y) &\leq d(x,u)g(x,u)+d(u,y)g(u,y)\\
&= d(x,u)g'(x,u)+d(u,y)g'(u,y). \qedhere
\end{align*}
\end{proof}

\begin{corollary}\label{co:overd}
 Given $a,b\geq 0$, $\min\set{a,\frac{b}{d}}=\min\set{a \mathbf{1}_{\bwt{M}},\frac{b}{d}} \in G$.
\end{corollary}

We spend the rest of the section exploring the properties of $G$ a little more deeply. First we show that elements of $G$ identify with certain types of real-valued functions on $\wt{M}$. Let $g:\wt{M}\to\RR$ be a bounded map satisfying \eqref{G-function}. Define $h:M \times M \to \RR$ by
\[
 h(x,y) = \begin{cases} d(x,y)g(x,y) & \text{if }x \neq y \\ 0 & \text{otherwise.} \end{cases}
\]
It is clear that $|h(x,y)| \leq \norm{g}_\infty d(x,y)$ for all $x,y \in M$.

\begin{proposition}\label{pr:h_triangle}
 Let $M$ have at least three distinct points. Then $h$ satisfies the triangle inequality on $M \times M$, i.e. 
 \begin{equation}\label{tri_ineq}
  h(x,y) \leq h(x,u) + h(u,y) \quad\text{whenever $x,u,y \in M$.}
 \end{equation}
\end{proposition}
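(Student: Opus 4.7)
The plan is to reduce the triangle inequality \eqref{tri_ineq} by case analysis on how many of $x,u,y$ coincide, isolating the only genuinely non-trivial case, which is equivalent to the positivity statement
\[
 g(a,b)+g(b,a)\geq 0 \quad\text{for all distinct } a,b \in M.
\]
The proof of this positivity is where the hypothesis $|M|\geq 3$ is used.

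First I would dispose of the trivial cases of \eqref{tri_ineq}. If $x,u,y$ are pairwise distinct, \eqref{tri_ineq} is just \eqref{G-function} after multiplying through by the relevant distances. If $x=u$ or $u=y$, one of the right-hand terms equals $h(x,y)$ and the other vanishes, so the inequality is immediate. If $x=y=u$, both sides are zero. The only remaining case is $x=y\neq u$, where \eqref{tri_ineq} reads $0 \leq h(x,u)+h(u,x) = d(x,u)\bigl[g(x,u)+g(u,x)\bigr]$, which is precisely the positivity claim (with $a=x$, $b=u$).

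To prove the positivity, fix distinct $a,b\in M$ and, using $|M|\geq 3$, pick $c \in M\setminus\{a,b\}$. Write $\phi(p,q)=g(p,q)+g(q,p)$, which is symmetric in its arguments. The key observation is that applying \eqref{G-function} to an ordered triple $(p,q,r)$ of distinct points and to its reversal $(r,q,p)$ and adding yields the triangle-type inequality
\[
 d(p,r)\,\phi(p,r) \leq d(p,q)\,\phi(p,q) + d(q,r)\,\phi(q,r),
\]
so $\phi$ inherits the form of \eqref{G-function}. Applying this with $a$ in the middle (via the triple $(b,a,c)$) and with $b$ in the middle (via the triple $(a,b,c)$), one obtains
\begin{align*}
 d(b,c)\,\phi(b,c) &\leq d(a,b)\,\phi(a,b) + d(a,c)\,\phi(a,c),\\
 d(a,c)\,\phi(a,c) &\leq d(a,b)\,\phi(a,b) + d(b,c)\,\phi(b,c).
\end{align*}
Adding these two inequalities and cancelling the common $d(a,c)\phi(a,c)$ and $d(b,c)\phi(b,c)$ terms from both sides leaves $0 \leq 2d(a,b)\phi(a,b)$, giving $\phi(a,b)\geq 0$ as desired.

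The main (and really only) obstacle is locating this combinatorial combination. A single instance of \eqref{G-function} involves only the values $g(x,y),g(x,u),g(u,y)$ of $g$ in one direction each, so it cannot on its own yield information about the symmetrized quantity $\phi(a,b)=g(a,b)+g(b,a)$. The trick is to first symmetrize \eqref{G-function} via the reflection $(x,y)\mapsto(y,x)$ to obtain the triangle-type inequality for $\phi$, and then to combine two rotations of that inequality so as to cancel every $\phi$-term except the one whose positivity is sought. Neither continuity nor boundedness of $g$ plays any role; the entire argument is algebraic, relying on only finitely many applications of \eqref{G-function}, and the assumption $|M|\geq 3$ enters solely to guarantee the existence of the auxiliary point $c$.
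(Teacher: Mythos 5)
Your proof is correct, and the overall strategy is the same as the paper's: reduce to the single non-trivial case $x=y\neq u$, use $|M|\geq 3$ to choose an auxiliary point, and derive $h(x,u)+h(u,x)\geq 0$ by adding instances of \eqref{G-function} and cancelling. The only real difference is that you take a detour through the symmetrized function $\phi(p,q)=g(p,q)+g(q,p)$, which costs you four applications of \eqref{G-function}, whereas two suffice: with $z\in M\setminus\{x,u\}$, apply \eqref{G-function} directly to the triples $(x,u,z)$ and $(u,x,z)$ to get $h(x,z)\leq h(x,u)+h(u,z)$ and $h(u,z)\leq h(u,x)+h(x,z)$, then add and cancel $h(x,z)+h(u,z)$. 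Your observation that $\phi$ itself satisfies the form of \eqref{G-function} is a nice structural remark, but it is not needed here and slightly obscures the minimal argument.
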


\begin{proof}
 The only non-trivial case to check is when $x=y$. Given distinct $x,u \in M$, by the hypothesis we can pick $z \in M\setminus\set{x,u}$. As $g$ satisfies \eqref{G-function}, we have
 \[
  h(x,z) \leq h(x,u) + h(u,z) \quad\text{and}\quad h(u,z) \leq h(u,x) + h(x,z).
 \]
Adding these two inequalities and cancelling yields $h(x,x) = 0 \leq h(x,u)+h(u,x)$.
\end{proof}

As \eqref{G-function} is satisfied vacuously whenever $M$ is a 2-point metric space, $G=C(\bwt{M})=C(\wt{M})$ and $h$ will fail the triangle inequality in this case. For this reason, hereafter and without further comment we will assume that $M$ always has at least three distinct points.

\begin{definition}\label{df:assoc_map}
We shall call $h$ the \emph{associated map}.
\end{definition} 

We endow $M \times M$ with a standard product metric:
\[
 \wt{d}((x,y),(u,v)) = d(x,u) + d(y,v), \quad (x,y),(u,v) \in M \times M.
\]
The metric $\wt{d}$ (and its restriction to $\wt{M}$) will be used frequently throughout this paper. Here is its first application.
 
\begin{lemma}\label{lm:auto_cont}
Let $g:\wt{M}\to\RR$ be a bounded map satisfying \eqref{G-function}. Then the associated map $h$ is $\norm{g}_\infty$-$\wt{d}$-Lipschitz. Hence $g$ is continuous on $\wt{M}$ and therefore extends continuously to a map on $\bwt{M}$, again labelled $g$, that belongs to $G$.
\end{lemma}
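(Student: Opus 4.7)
The plan is to deduce the Lipschitz property of $h$ directly from the triangle inequality established in Proposition \ref{pr:h_triangle}, then use it to get continuity of $g$ on $\wt{M}$ as a quotient, and finally invoke the universal property of the Stone-\v Cech compactification to extend. No serious obstacle is expected; this is an unfolding.

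First, I would show $h$ is $\norm{g}_\infty$-$\wt{d}$-Lipschitz. For any $x,y,u,v \in M$, applying the triangle inequality \eqref{tri_ineq} twice gives
\[
 h(x,y) \leq h(x,u) + h(u,y) \leq h(x,u) + h(u,v) + h(v,y).
\]
Rearranging and using the bound $|h(a,b)| \leq \norm{g}_\infty d(a,b)$ already observed before Proposition \ref{pr:h_triangle}, I get
\[
 h(x,y) - h(u,v) \leq h(x,u) + h(v,y) \leq \norm{g}_\infty \pare{d(x,u) + d(v,y)} = \norm{g}_\infty \wt{d}((x,y),(u,v)).
\]
Swapping the roles of $(x,y)$ and $(u,v)$ (the argument is completely symmetric) yields the reverse inequality, so $|h(x,y)-h(u,v)| \leq \norm{g}_\infty\wt{d}((x,y),(u,v))$ as required.

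Next, I would observe that on $\wt{M}$ we may write $g(x,y) = h(x,y)/d(x,y)$. The numerator is $\wt{d}$-Lipschitz by the previous paragraph, hence $\wt{d}$-continuous, and $d:\wt{M}\to (0,\infty)$ is continuous and strictly positive on $\wt{M}$. Therefore $g$ is continuous on $\wt{M}$.

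Finally, since $g$ is bounded and continuous on $\wt{M}$, the universal property of the Stone-\v{C}ech compactification $\bwt{M}$ provides a (unique) continuous extension to all of $\bwt{M}$, which I again denote by $g$. The extension preserves the values of $g$ at points of $\wt{M}$, and since \eqref{G-function} is a condition on values at points $(x,y), (x,u), (u,y)$ with $x,u,y \in M$ distinct — all of which lie in $\wt{M}$ — the extension still satisfies \eqref{G-function}. Being a continuous function on $\bwt{M}$ satisfying \eqref{G-function}, it belongs to $G$ by definition.
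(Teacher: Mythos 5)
Your proof is correct and follows essentially the same route as the paper: you establish the $\norm{g}_\infty$-$\wt{d}$-Lipschitz bound for $h$ by iterating \eqref{tri_ineq} and using $|h|\leq\norm{g}_\infty d$, exactly as the paper does. The only difference is that you spell out the "remaining assertions" (continuity of $g$ on $\wt{M}$ via $g=h/d$, extension to $\bwt{M}$, and the observation that \eqref{G-function} only constrains points of $\wt{M}$ so it is inherited by the extension), which the paper leaves implicit; this elaboration is accurate.
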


\begin{proof}
Given $(x,y),(u,v) \in M \times M$, by \eqref{tri_ineq},
\begin{align*}
h(x,y)-h(u,v) &\leq h(x,u)+h(u,v)+h(v,y) - h(u,v)\\
 &\leq \norm{g}_\infty(d(x,u)+d(y,v)) = \norm{g}_\infty \wt{d}((x,y),(u,v)).
\end{align*}
Thus $h$ is $\norm{g}_\infty$-$\wt{d}$-Lipschitz. The remaining assertions follow straightaway.
\end{proof}

According to this result, elements of $G$ identify with bounded real-valued maps on $\wt{M}$ satisfying \eqref{G-function}. A straightforward consequence of Lemma \ref{lm:auto_cont} is that $\norm{g}_\infty=\Lip_{\wt{d}}(h)$ whenever $g \in G$.

The next result allows us to express elements of $G$ in terms of bounded subsets of $\Lip_0(M)$, via the De Leeuw transform.

\begin{proposition}\label{pr:G-functions} Let $g:\wt{M}\to\RR$. Then $g$ is bounded and satisfies \eqref{G-function} if and only if there exists a non-empty bounded set $A \subset \Lip_0(M)$ such that
\begin{equation}\label{eq:g_sup}
g(x,y)=\sup_{f \in A} \Phi f(x,y), \quad (x,y) \in \wt{M}.
\end{equation}
Moreover, the continuous extension of $g$ to $\bwt{M}$ belongs to $G$, and $A$ can be chosen so that the supremum is always attained.
\end{proposition}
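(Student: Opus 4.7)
The plan is to prove the two directions of the equivalence separately, and then observe that the concluding clause follows immediately from Lemma \ref{lm:auto_cont}. For the easy direction, suppose $g(x,y)=\sup_{f\in A}\Phi f(x,y)$ with $A\subset\Lip_0(M)$ non-empty and bounded by some $L$ in Lipschitz norm. Boundedness of $g$ is immediate from $\abs{\Phi f(x,y)}\leq L$ combined with $g\geq\Phi f_0$ for any fixed $f_0\in A$. For \eqref{G-function}, multiply through by $d(x,y)$, apply the identity \eqref{Phi_equality} to split $d(x,y)\Phi f(x,y)=(f(x)-f(u))+(f(u)-f(y))$ for each $f\in A$, and take the supremum; the inequality $\sup(a_f+b_f)\leq\sup a_f+\sup b_f$ delivers exactly \eqref{G-function}.

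The converse is the substantive content. Given a bounded $g$ satisfying \eqref{G-function}, form the associated map $h$ of Definition \ref{df:assoc_map}. By Proposition \ref{pr:h_triangle}, $h$ satisfies the triangle inequality on $M\times M$, and by Lemma \ref{lm:auto_cont} it is $\norm{g}_\infty$-Lipschitz in the product metric $\wt{d}$. The key step is a McShane-style construction: for each fixed $(a,b)\in\wt{M}$, set
\[
f_{(a,b)}(x)=h(x,b)-h(0,b),\quad x\in M.
\]
Then $f_{(a,b)}(0)=0$ and, from $\abs{h(x,b)-h(y,b)}\leq\norm{g}_\infty d(x,y)$, we have $f_{(a,b)}\in\Lip_0(M)$ with $\lipnorm{f_{(a,b)}}\leq\norm{g}_\infty$. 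Evaluation at $(a,b)$ gives $\Phi f_{(a,b)}(a,b)=h(a,b)/d(a,b)=g(a,b)$, using $h(b,b)=0$. For any other $(x,y)\in\wt{M}$, the triangle inequality $h(x,b)\leq h(x,y)+h(y,b)$ rearranges to $\Phi f_{(a,b)}(x,y)\leq g(x,y)$. Hence the set $A=\set{f_{(a,b)}:(a,b)\in\wt{M}}$ is bounded in $\Lip_0(M)$ by $\norm{g}_\infty$, and $g(x,y)=\sup_{f\in A}\Phi f(x,y)$ with the supremum attained at $f_{(x,y)}$.

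The ``consequently'' clause requires nothing new: Lemma \ref{lm:auto_cont} already guarantees that a bounded $g:\wt{M}\to\RR$ satisfying \eqref{G-function} extends continuously to $\bwt{M}$, with the extension belonging to $G$, and our construction shows that $A$ can be chosen so that the supremum is realised pointwise. The one standing assumption to keep in mind is that $M$ has at least three distinct points, without which Proposition \ref{pr:h_triangle} could fail; the paper has already adopted this convention. I do not anticipate a substantive obstacle: the only non-obvious move is the McShane-type formula using the asymmetric ``distance'' $h$, which works cleanly because $h$ satisfies the triangle inequality and the $\wt{d}$-Lipschitz estimate simultaneously.
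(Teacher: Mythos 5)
Your proof is correct and follows essentially the same route as the paper's: both directions use the identity \eqref{Phi_equality} and the sub-additivity of the supremum for the forward implication, and both use the same McShane-type construction $f(\cdot)=h(\cdot,b)-h(0,b)$ (yours indexed redundantly by $(a,b)\in\wt{M}$, the paper's by $u\in M$, but since the function depends only on the second coordinate these give the same family) with the triangle inequality for $h$ to get domination and $h(b,b)=0$ to get attainment. The appeal to Lemma~\ref{lm:auto_cont} for the Lipschitz estimate and the continuous extension matches the paper exactly.
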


\begin{proof}
 Let $A \subset \Lip_0(M)$ be non-empty and bounded, such that \eqref{eq:g_sup} holds. Clearly $g$ is bounded. Given $f \in A$ and distinct $x,u,y \in M$, 
 \[
  d(x,y)\Phi f(x,y) = d(x,u)\Phi f(x,u) + d(u,y)\Phi f(u,y) \leq d(x,u)g(x,u) + d(u,y)g(u,y),
 \]
by \eqref{Phi_equality}. Therefore $g$ satisfies \eqref{G-function}. 

Conversely, let $g$ be bounded and satisfy \eqref{G-function}. By Lemma \ref{lm:auto_cont}, the associated map $h$ is $\norm{g}_\infty$-$\wt{d}$-Lipschitz.
Given $u \in M$, define $f_u:M \to \RR$ by $f_u(x)=h(x,u)-h(0,u)$. Evidently $f_u\in \norm{g}_\infty B_{\Lip_0(M)}$. Moreover, given $(x,y) \in \wt{M}$, by \eqref{tri_ineq},
\[
 d(x,y)\Phi f_u(x,y) = h(x,u) - h(y,u) \leq h(x,y) = d(x,y)g(x,y),
\]
and
\[
 d(x,y)\Phi f_y(x,y) = h(x,y) = d(x,y)g(x,y).
\]
Therefore if we set $A=\set{f_u \,:\, u \in M} \subset \norm{g}_\infty B_{\Lip_0(M)}$, we obtain \eqref{eq:g_sup} with the supremum attained. Finally, that $g$ can be extended continuously to $\bwt{M}$ follows by Lemma \ref{lm:auto_cont}. 
\end{proof}

When applying Proposition \ref{pr:G-functions} we will use the same symbol $g$ for the map $g:\wt{M}\to\RR$ and its continuous extension to $\bwt{M}$. We remark that, given $g \in G$, the set $\set{f \in \Lip_0(M) \,:\, \Phi f \leq g}$ is $w^*$-compact and convex, and is the largest set satisfying \eqref{eq:g_sup} above.

We end the section by observing that the intersection of $G$ and $-G$ is precisely the image of $\Phi$.

\begin{proposition}\label{pr:G_intersection}
 We have $G \cap (-G) = \Phi \Lip_0(M)$.
\end{proposition}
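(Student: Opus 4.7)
The plan is to establish the two inclusions separately. The inclusion $\Phi\Lip_0(M) \subset G \cap (-G)$ is immediate from \eqref{Phi_equality}, which exhibits \eqref{G-function} with equality for $g = \Phi f$; hence both $\Phi f$ and $-\Phi f = \Phi(-f)$ lie in $G$. For the reverse inclusion I would take $g \in G \cap (-G)$ and let $h$ be its associated map (Definition \ref{df:assoc_map}). Applying \eqref{G-function} to both $g$ and $-g$ and adding forces the cocycle identity
\[
h(x,y) = h(x,u) + h(u,y) \quad\text{whenever $x,u,y \in M$ are distinct},
\]
together with $h(x,x)=0$ by definition. The natural candidate for a preimage under $\Phi$ is $f(x) := h(x,0)$: by Lemma \ref{lm:auto_cont} the map $h$ is $\wt{d}$-Lipschitz, so $f$ is a Lipschitz function on $M$ with $f(0)=0$, i.e.\ $f \in \Lip_0(M)$.

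I expect the main obstacle to be extracting antisymmetry $h(y,x) = -h(x,y)$ from the cocycle, since the cocycle requires $x,u,y$ distinct and so one cannot formally set $y = x$. The key trick is to pass to the symmetric quantity $S(a,b) := h(a,b) + h(b,a)$. Choosing any $z \in M$ distinct from both $x$ and $y$ (available because $\abs{M} \geq 3$), the cocycle applied to the triples $(x,z,y)$ and $(y,z,x)$, when summed, produces $S(x,y) = S(x,z) + S(z,y)$. Running the same argument through $y$ instead of $z$ yields $S(x,z) = S(x,y) + S(y,z)$, and substituting this into the first identity collapses, using symmetry of $S$, to $2\,S(y,z) = 0$. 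Hence $h$ is antisymmetric on pairs of distinct points.

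Once antisymmetry is in hand, showing $h(x,y) = f(x) - f(y)$ for all $x,y \in M$ reduces to a short case analysis: when $x,y,0$ are mutually distinct, the cocycle through $0$ combined with $h(0,y) = -h(y,0) = -f(y)$ gives the claim, while the degenerate cases $x=y$ and $0 \in \set{x,y}$ follow directly from $h(x,x)=0$ and antisymmetry. Dividing by $d(x,y)$ on $\wt{M}$ yields $g = \Phi f$ on $\wt{M}$; continuity of both sides together with density of $\wt{M}$ in $\bwt{M}$ then propagates the equality to all of $\bwt{M}$, completing the proof.
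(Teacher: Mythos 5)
Your proof is correct and follows essentially the same route as the paper: set $f(x)=h(x,0)$ and verify $\Phi f=g$ on $\wt{M}$, then pass to $\bwt{M}$ by continuity and density. The only deviation is in how you obtain antisymmetry of $h$: the paper invokes Proposition~\ref{pr:h_triangle}, which already extends \eqref{tri_ineq} to the case $x=y$ (giving $0 = h(x,x) \leq h(x,u)+h(u,x)$ and, applied to $-h$, the reverse inequality), so antisymmetry falls out directly once equality holds in \eqref{tri_ineq}; your $S$-trick re-derives this case by hand from the cocycle on distinct triples, which is a modest and unnecessary detour but entirely sound.
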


\begin{proof}
 Let $\pm g \in G$ and let $h$ be the map associated with $g$. Then \eqref{tri_ineq} becomes an equality, and if we set $f(x)=h(x,0)$, $x \in M$, we see quickly that $f \in \Lip_0(M)$ and $\Phi f = g$.
\end{proof}

\subsection{A compactification \texorpdfstring{$\wt{M}^G$ of $\wt{M}$}{} associated with \texorpdfstring{$G$}{G}}\label{subsec:wtMG}

Compactifications of $\wt{M}$, principally $\bwt{M}$ and $\pp(\bwt{M}) \subset \ucomp{M}\times \ucomp{M}$, have been an essential component of our study of the De Leeuw transform to date \cite{APS24a,APS24b}. In order to build a Choquet theory of free spaces, it appears necessary to consider some more compactifications of $\wt{M}$ that are associated with $G$.

As is pointed out in Section \ref{subsec:Choquet}, one of the key principles of Choquet theory is that the function spaces and cones to which the theory is applied must separate points of the underlying compact space \cite[Definition 3.1]{lmns10}. One reason for insisting on this is so that the Choquet order \cite[Definition 3.19]{lmns10} is anti-symmetric. We want to use $G$ to define a quasi-order on $\meas{\bwt{M}}$ in an analogous way. However, the cone $G$ does not separate points of $\bwt{M}$ even in quite simple cases. The next example is based on one given originally by E. Perneck\'a.

\begin{example}[E.~Perneck\'a]\label{ex:G_non-sep}
Let $M=\set{0} \cup \set{n^{-1} \,:\, n \in \NN}$ be endowed with the usual metric and have base point $0$. By compactness, the sequences $(n^{-1},0)_{n=2}^\infty$ and $(n^{-1},n^{-2})_{n=2}^\infty$ in $\wt{M}$ have subnets $(n_i^{-1},0)$ and $(n_i^{-1},n_i^{-2})$ (indexed by the same directed set) that converge to $\zeta,\omega \in \bwt{M}$, respectively; these points are distinct as the closures in $\wt{M}$ of the images of these sequences are disjoint. 

Now let $g \in G \cap B_{C(\bwt{M})}$, and let $h$ be the associated map. By Lemma \ref{lm:auto_cont},
\[
  \abs{h(n^{-1},0) - h(n^{-1},n^{-2})} \leq n^{-2} \quad\text{and}\quad \abs{h(n^{-1},n^{-2})} \leq 2n^{-1},
\]
thus
\begin{align*}
 \lim_n g(n^{-1},0) - g(n^{-1},n^{-2}) &= \lim_n nh(n^{-1},0)-\frac{n}{1-n^{-1}}h(n^{-1},n^{-2})\\
 &= \lim_n n\pare{h(n^{-1},0) - h(n^{-1},n^{-2})} - \frac{1}{1-n^{-1}}h(n^{-1},n^{-2}) = 0.
\end{align*}
Hence by continuity, $g(\zeta)=g(\omega)$. It follows that $G$ doesn't separate points of $\bwt{M}$.
\end{example} 

For this reason we shall define a compactification $\wt{M}^G$ of $\wt{M}$ in such a way that its points are separated by a function cone in $C(\wt{M}^G)$ that corresponds naturally to $G$. Define an equivalence relation $\sim$ on $\bwt{M}$ by $\zeta \sim \omega$ if and only if $g(\zeta) = g(\omega)$ for all $g \in G$, let $\gq$ be the map that sends $\zeta \in \bwt{M}$ to its equivalence class, and endow the set $\wt{M}^G:=\bwt{M}/\!\!\sim$ of equivalence classes with the quotient topology generated by $\gq$. Then $\wt{M}^G$ is compact and, because each $g \in G$ is continuous on $\bwt{M}$, it is easy to show that $\wt{M}^G$ is Hausdorff as well. 

For $\wt{M}^G$ to be a (Hausdorff) compactification of $\wt{M}$, it needs to admit a dense homeomorphic copy of $\wt{M}$. It is obvious that $\gq(\wt{M})$ is dense in $\wt{M}^G$. To show that $\gq\restrict_{\wt{M}}$ is a homeomorphism of $\wt{M}$ onto $\gq(\wt{M})$ involves the following lemma. We expect it is known but as we couldn't find it in the literature, we include a proof.

\begin{lemma}\label{lm:restrict_homeo}
Let $q:X \to Y$ be a continuous, closed and surjective mapping, and suppose that $E \subset X$ has the property that $q^{-1}(q(x))=\set{x}$ for all $x \in E$. Then $q\restrict_E:E \to q(E)$ is a homeomorphism.
\end{lemma}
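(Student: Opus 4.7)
The plan is to verify directly that $q\restrict_E$ is a continuous bijection onto $q(E)$ and then show it is an open map, which (together with continuity and bijectivity) yields a homeomorphism. Continuity is automatic from the continuity of $q$, and injectivity on $E$ is precisely the singleton-fibre hypothesis: if $x,x' \in E$ satisfy $q(x)=q(x')$, then $\{x'\} \subset q^{-1}(q(x))=\{x\}$. Surjectivity onto $q(E)$ is trivial.

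The core of the argument is to show $q\restrict_E$ is open. First I would take an open set $V\subset E$ and write $V = U\cap E$ for some open $U\subset X$. The key observation is that, under the singleton-fibre hypothesis,
\[
q(V) \;=\; q(E)\setminus q(X\setminus U).
\]
One inclusion is straightforward: if $y\in q(V)$, then $y=q(x)$ for a unique $x\in V\subset E$, and since $q^{-1}(y)=\{x\}\subset U$, no preimage of $y$ lies in $X\setminus U$, so $y\notin q(X\setminus U)$. For the reverse inclusion, if $y\in q(E)$ and $y\notin q(X\setminus U)$, then any preimage of $y$ in $E$ must lie in $U$, hence in $V$, giving $y\in q(V)$. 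This is exactly where the hypothesis $q^{-1}(q(x))=\{x\}$ for $x\in E$ is used; without it, the inclusion $q(V)\supseteq q(E)\setminus q(X\setminus U)$ can fail.

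Once this equality is established, openness of $q(V)$ in $q(E)$ is immediate: because $q$ is closed and $X\setminus U$ is closed in $X$, the set $q(X\setminus U)$ is closed in $Y$, so $q(V)=q(E)\cap(Y\setminus q(X\setminus U))$ is open in the subspace topology of $q(E)$. Combined with continuity and bijectivity, this shows $q\restrict_E:E\to q(E)$ is a homeomorphism.

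The proof is essentially a short diagram chase; the only subtlety, and the sole place the singleton-fibre hypothesis enters, is the identity $q(V)=q(E)\setminus q(X\setminus U)$. There is no real technical obstacle, so I would expect the write-up to be just a few lines.
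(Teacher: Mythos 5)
Your proof is correct and essentially identical to the paper's: both establish openness of $q\restrict_E$ via the set identity $q(V)=q(E)\setminus q(X\setminus U)$ (the paper phrases this as $q(U)=q(E)\cap q(W)$ with $W=q^{-1}(Y\setminus q(X\setminus V))$, which is the same thing after unwinding) and then invoke closedness of $q$. One small slip in your commentary: the singleton-fibre hypothesis is actually what makes the \emph{forward} inclusion $q(V)\subseteq q(E)\setminus q(X\setminus U)$ work (you need $q^{-1}(y)=\{x\}$ for $x\in V\subset E$), whereas the reverse inclusion $q(V)\supseteq q(E)\setminus q(X\setminus U)$ holds for any continuous surjection without that hypothesis.
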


\begin{proof}
 The condition on $E$ implies $q\restrict_E$ is injective, so it suffices to show that the restriction is open. Let $U \subset E$ be open. Then $U=E \cap V$ for some open $V \subset X$. Set $W = q^{-1}(Y\setminus q(X\setminus V)) \subset V$. As $q$ is closed and surjective, $q(W)=Y\setminus q(X\setminus V)$ is open in $Y$.
 
 We claim that $q(U)=q(E) \cap q(W)$, showing that $q(U)$ is open in $q(E)$ as required. Let $x \in U$. Then $q(x) \notin q(X\setminus V)$, lest $q^{-1}(q(x))\neq\set{x}$, hence $x \in W$, giving $q(x) \in q(E \cap W) \subset q(E) \cap q(W)$. Conversely, let $z \in q(E) \cap q(W)$, so $q(x)=z=q(y)$ for some $x \in E$ and $y \in W$. Then $y \in q^{-1}(q(x))=\set{x}$, giving $z \in q(E \cap W) \subset q(U)$.
\end{proof}

\begin{proposition}\label{pr:wt_homeo}
We have $\gq^{-1}(\gq(x,y))=\set{(x,y)}$ whenever $(x,y) \in \wt{M}$, and thus $\gq\restrict_{\wt{M}}:\wt{M}\to\gq(\wt{M})$ is a homeomorphism.
\end{proposition}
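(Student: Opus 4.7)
The conclusion that $\gq\restrict_{\wt{M}}:\wt{M}\to\gq(\wt{M})$ is a homeomorphism follows immediately from Lemma \ref{lm:restrict_homeo} once the pointwise statement $\gq^{-1}(\gq(x,y))=\set{(x,y)}$ is established, as $\gq$ is a continuous surjection between compact Hausdorff spaces and hence automatically closed. The whole task thus reduces to showing: given $(x,y) \in \wt{M}$ and $\zeta \in \bwt{M}$ with $g(\zeta) = g(x,y)$ for every $g \in G$, we must have $\zeta = (x,y)$.

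The plan is to pin down $\zeta$ in two stages, using two different families of $G$-functions. First, by Corollary \ref{co:overd}, the functions $g_{a,b}:=\min\set{a,b/d}$ lie in $G$ for all $a,b>0$. Applying $g_{a,b}(\zeta) = g_{a,b}(x,y)$ for all $a,b>0$, together with the continuous extension of $d$ to $\bwt{M}$ (with values in $[0,\infty]$), quickly forces $d(\zeta) = d(x,y) =: D$, which is positive and finite; each of the cases $d(\zeta) \in \set{0,\infty}$, or $d(\zeta)$ finite but unequal to $D$, is ruled out by a suitable choice of $a,b$. Second, fix $r \in (0,D]$ and consider the bump $f_r(z) = \max\set{r - d(z,x),0}$. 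After subtracting the constant $f_r(0)$, $f_r \in \Lip_0(M)$ with $\Lip(f_r) \leq 1$, so $\Phi f_r \in \Phi\Lip_0(M) \subset G$, and $\Phi f_r(x,y) = r/D$ because $f_r(y)=0$. Taking the continuous extension and using $d(\zeta)=D$ yields $\Phi f_r(\zeta) = \pare{\ucomp{f_r}(\pp_1(\zeta)) - \ucomp{f_r}(\pp_2(\zeta))}/D$, so the $G$-identity gives $\ucomp{f_r}(\pp_1(\zeta)) - \ucomp{f_r}(\pp_2(\zeta)) = r$. Since $0 \leq \ucomp{f_r} \leq r$, this pins down $\ucomp{f_r}(\pp_1(\zeta)) = r$, and a short net argument (using that $f_r(z) = r$ iff $z=x$, together with the homeomorphic embedding $M \hookrightarrow \ucomp{M}$ guaranteed by Proposition \ref{pr:woodsuniform}) gives $\pp_1(\zeta) = x$. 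The symmetric bump based at $y$ yields $\pp_2(\zeta) = y$, so $\pp(\zeta) = (x,y) \in \wt{M}$.

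To close, I upgrade $\pp(\zeta) = (x,y)$ to $\zeta = (x,y)$ by picking a net $(x_i, y_i) \in \wt{M}$ with $(x_i, y_i) \to \zeta$ in $\bwt{M}$. Continuity of $\pp$ together with $M \hookrightarrow \ucomp{M}$ gives $x_i \to x$ and $y_i \to y$ in $M$, so $(x_i,y_i) \to (x,y)$ in the subspace topology on $\wt{M}$ and therefore in $\bwt{M}$; the Hausdorff property of $\bwt{M}$ then forces $\zeta = (x,y)$. I expect the main difficulty to be the construction of enough $G$-functions to localize $\zeta$ both in the ``radial'' direction (the value of $d(\zeta)$) and coordinate-wise (the values of $\pp_1(\zeta)$ and $\pp_2(\zeta)$); the essential leverage is that, by Corollary \ref{co:overd}, the cone $G$ is genuinely richer than $\Phi\Lip_0(M)$ alone and contains functions like $\min\set{a,b/d}$ that react directly to $d$ but are not De Leeuw transforms of individual Lipschitz functions, as Example \ref{ex:G_non-sep} already warns us that $\Phi\Lip_0(M)$ by itself is too weak.
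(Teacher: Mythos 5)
Your proposal is correct, but it proceeds by a genuinely different route from the paper. The paper fixes $(x,y) \in \wt{M}$ and shows that any $\zeta \neq (x,y)$ can be separated by some $\Phi f$ alone, via a case split: if $\zeta \in \wt{M}$ or $\zeta$ is a vanishing point, an explicit $f \in \Lip_0(M)$ does the job; otherwise $\zeta$ lies in the Stone--\v{C}ech remainder and is not vanishing, and \cite[Theorem 3.43]{Weaver2} gives $\Phi^*\delta_\zeta \notin \lipfree{M}$, so $\Phi^*\delta_\zeta \neq m_{xy}$ and some $\Phi f$ separates. In particular, the paper never leaves $\Phi\Lip_0(M)$. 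Your argument instead computes the coordinates of $\zeta$ directly: first the functions $\min\set{a,\frac{b}{d}} \in G$ from Corollary \ref{co:overd} pin down $d(\zeta)=d(x,y)=:D$; then the bump functions $f_r$ (which are $\Phi$-images of Lipschitz functions, and are bounded, so that $\ucomp{f_r}$ exists on $\ucomp{M}$ and the limit computation $\Phi f_r(\zeta)=\pare{\ucomp{f_r}(\pp_1(\zeta))-\ucomp{f_r}(\pp_2(\zeta))}/D$ is legitimate because $d(\zeta)=D>0$) force $\pp(\zeta)=(x,y)$; finally the net argument and Hausdorffness of $\bwt{M}$ upgrade this to $\zeta=(x,y)$. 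Your version is longer but self-contained, avoiding the external appeal to Weaver's book, and it is instructive about what the cone $G$ can detect. One small correction to your closing remark: Example \ref{ex:G_non-sep} shows that the whole cone $G$ (not merely $\Phi\Lip_0(M)$) can fail to separate two points of the remainder, and the paper's own proof demonstrates that $\Phi\Lip_0(M)$ alone is in fact sufficient to separate any point of $\wt{M}$ from the rest of $\bwt{M}$; so the $\min\set{a,\frac{b}{d}}$ functions are a convenience here, not a necessity.
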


\begin{proof}
Fix $(x,y) \in \wt{M}$ and let $\zeta \in \bwt{M}\setminus\set{(x,y)}$. If $\zeta \in \wt{M}$ or $\zeta$ is a vanishing point (see the end of Section \ref{subsec:coords}), then it is easy to find $f \in \Lip_0(M)$ such that $\Phi f(\zeta) \neq \Phi f(x,y)$. Otherwise $\Phi^* \delta_\zeta \notin \lipfree{M}$ by \cite[Theorem 3.43]{Weaver2}, so in particular $\Phi^* \delta_\zeta \neq m_{xy}$, meaning that there exists $f \in \Lip_0(M)$ such that, again, $\Phi f(\zeta) \neq \Phi f(x,y)$. Whatever the case, there exists $g:=\Phi f \in G$ such that $g(\zeta) \neq g(x,y)$, hence $\gq^{-1}(\gq(x,y))=\set{(x,y)}$ as required. The second assertion of the proposition follows immediately by Lemma \ref{lm:restrict_homeo}.
\end{proof}

We have shown that $\wt{M}^G$ is a Hausdorff compactification of $\wt{M}$. Now define the set
\[
 \widehat{G} = \set{f \in C(\wt{M}^G) \,:\, f \circ \gq \in G}.
\]
Evidently this is a closed cone in $C(\wt{M}^G)$, and the linear isometric embedding $T:C(\wt{M}^G)\to C(\bwt{M})$, given by $Tk=k\circ \gq$, satisfies $T(\widehat{G})=G$ by the universal property of quotient spaces. It follows therefore that $\widehat{G}$ separates points of $\wt{M}^G$.

We know that $T^*:\meas{\bwt{M}}\to\meas{\wt{M}^G}$, given by $T^*\mu=\gq_\sharp\mu$ (the pushforward of $\mu$ with respect to $\gq$), is a natural extension of $\gq$. The fact that $\gq^{-1}(\gq(x,y))=\set{(x,y)}$ whenever $(x,y) \in \wt{M}$, shown above, extends via $T^*$ to $\mu \in \meas{\bwt{M}}$ concentrated on $\wt{M}$.

\begin{proposition}\label{pr:ext_g_pre-image}
Let $\mu \in \meas{\bwt{M}}$ be concentrated on $\wt{M}$. Then $(T^*)^{-1}(T^*\mu) = \set{\mu}$.
\end{proposition}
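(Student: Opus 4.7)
My approach, working under the paper's implicit convention that measures are positive, is to first show that any $\nu \in \meas{\bwt{M}}^+$ with $T^*\nu = T^*\mu$ is itself concentrated on $\wt{M}$; the identification $\nu = \mu$ will then follow from the homeomorphism property of $\gq\restrict_{\wt{M}}$ guaranteed by Proposition \ref{pr:wt_homeo}.

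For the first step, the decisive input is the identity $\gq^{-1}(\gq(\wt{M})) = \wt{M}$, immediate from the singleton conclusion $\gq^{-1}(\gq(x,y)) = \set{(x,y)}$ of Proposition \ref{pr:wt_homeo}: any point of $\wt{M}$ sharing a $\sim$-class with some $\omega \in \bwt{M}\setminus\wt{M}$ would contradict that singleton statement. Consequently, for any $K \subset \bwt{M}\setminus\wt{M}$ one also has $\gq^{-1}(\gq(K)) \subset \bwt{M}\setminus\wt{M}$. Inner regularity of $\nu$ reduces the task to showing $\nu(K) = 0$ for each compact $K \subset \bwt{M}\setminus\wt{M}$, and for such a $K$ the set $\gq(K)$ is compact and hence Borel in $\wt{M}^G$. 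The chain
\[
\nu(K) \leq \nu(\gq^{-1}(\gq(K))) = (T^*\nu)(\gq(K)) = (T^*\mu)(\gq(K)) = \mu(\gq^{-1}(\gq(K))) \leq \mu(\bwt{M}\setminus\wt{M}) = 0
\]
then finishes the first step. For the second step, with both $\mu$ and $\nu$ concentrated on $\wt{M}$, each identifies canonically with a positive Radon measure on $\wt{M}$; pushforward along the homeomorphism $\gq\restrict_{\wt{M}}:\wt{M}\to\gq(\wt{M})$ is a bijection between Radon measures on these two spaces, inverse given by pushforward under $(\gq\restrict_{\wt{M}})^{-1}$, so the equality of pushforwards $T^*\nu = T^*\mu$ pulls back to $\nu = \mu$.

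The main obstacle is the measure-theoretic bookkeeping: $\gq(\wt{M})$ is dense in $\wt{M}^G$ but in general not Borel, so one cannot treat it as a measurable set directly. Working exclusively with compact subsets of $\bwt{M}\setminus\wt{M}$ and their $\gq$-images, which are compact and hence Borel, circumvents this. Positivity is used critically for both monotonicity inequalities in the displayed chain and for inner regularity; without it, any distinct $\zeta,\omega \in \bwt{M}\setminus\wt{M}$ with $\zeta \sim \omega$, as furnished by Example \ref{ex:G_non-sep}, would produce a nonzero element $\delta_\zeta - \delta_\omega$ of $\ker T^*$ and the conclusion would fail.
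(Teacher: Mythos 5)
Your proof is correct and follows essentially the same route as the paper's: the paper runs the single chain $\nu(K) \leq \nu(\gq^{-1}(\gq(K))) = \mu(\gq^{-1}(\gq(K))) = \mu(K\cap\wt{M}) = \mu(K)$ for every compact $K$ (using the singleton-fiber identity $\gq^{-1}(\gq(x,y)) = \set{(x,y)}$ on $\wt{M}$), reads off concentration of $\nu$ on $\wt{M}$ by taking $K \subset \bwt{M}\setminus\wt{M}$, and then swaps $\mu$ and $\nu$ in the same chain to conclude; your two-step version (concentration, then transfer via the homeomorphism $\gq\restrict_{\wt{M}}$) is the same compact-set calculation reorganized, and your note about $\gq(\wt{M})$ not being Borel is the same bookkeeping that makes the paper's chain the right formulation. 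Your observation on positivity is correct and worth emphasizing: the first inequality $\nu(K) \leq \nu(\gq^{-1}(\gq(K)))$, which both proofs use, silently requires $\nu \geq 0$, and for signed measures the conclusion fails outright since $\delta_\zeta - \delta_\omega \in \ker T^*$ whenever $\zeta \neq \omega$ satisfy $\gq(\zeta) = \gq(\omega)$ (Example \ref{ex:G_non-sep}), so the positivity convention you adopted is in fact necessary.
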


\begin{proof}
Let $\nu \in \meas{\wt{M}}$ satisfy $\gq_\sharp \nu = T^*\nu = T^*\mu = \gq_\sharp \mu$. Given compact $K \subset \bwt{M}$, we observe 
\[
 \nu(K) \leq \nu(\gq^{-1}(\gq(K))) = \mu(\gq^{-1}(\gq(K))) = \mu(\gq^{-1}(\gq(K)) \cap \wt{M}) = \mu(K \cap \wt{M}) = \mu(K),
\]
(the equality $\gq^{-1}(\gq(K)) \cap \wt{M}=K \cap \wt{M}$ holds as $\gq^{-1}(\gq(x,y))=\set{(x,y)}$ whenever $(x,y) \in \wt{M}$). By inner regularity of $\nu$ applied to $\bwt{M}\setminus \wt{M}$, $\nu$ is concentrated on $\wt{M}$ also; repeating the argument above with $\mu$ and $\nu$ swapped yields $\nu(K)=\mu(K)$ for all compact $K$, whence $\nu=\mu$ again by inner regularity.
\end{proof}

We continue by showing that the function $d$ factors naturally through $\wt{M}^G$. This fact will be of use later on.

\begin{proposition}\label{pr:d_factor}
The map $d$ is constant on every fibre of $\gq$, thus there is a continuous function $d^G:\wt{M}^G\to[0,\infty]$ such that $d=d^G \circ \gq$.
\end{proposition}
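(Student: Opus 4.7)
The plan is to exploit Corollary \ref{co:overd}, which shows that $\min\{a,b/d\}\in G$ for all $a,b>0$, to prove that the value $d(\zeta)$ is determined by the $G$-values at $\zeta$. Once that is established, the factorisation and continuity of $d^G$ follow formally from the quotient topology on $\wt{M}^G$.

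More concretely, I would fix $\zeta,\omega\in\bwt{M}$ with $\gq(\zeta)=\gq(\omega)$, i.e.\ $g(\zeta)=g(\omega)$ for every $g\in G$. Applying this to $g=\min\{a,1/d\}\in G$ for each $a>0$ gives
\[
 \min\set{a,\tfrac{1}{d(\zeta)}} = \min\set{a,\tfrac{1}{d(\omega)}}, \quad a>0,
\]
with the usual conventions $1/0=\infty$ and $1/\infty=0$. A quick case analysis on whether $d(\zeta)$ is $0$, finite and positive, or $\infty$ (letting $a\to\infty$ to recover $1/d(\zeta)$ in the finite case, and noting that $d(\zeta)=0$ forces the left-hand side to equal $a$ for every $a$, while $d(\zeta)=\infty$ forces it to be $0$) yields $d(\zeta)=d(\omega)$ in every case. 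Thus $d$ is constant on the fibres of $\gq$, so there is a well-defined function $d^G:\wt{M}^G\to[0,\infty]$ with $d^G\circ\gq=d$.

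For continuity, I would invoke the universal property of the quotient topology on $\wt{M}^G$ induced by $\gq$: a map $d^G:\wt{M}^G\to[0,\infty]$ is continuous if and only if $d^G\circ\gq$ is continuous on $\bwt{M}$. Since $d^G\circ\gq=d$ is continuous on $\bwt{M}$ by construction of the extension of $d$ (recalled at the end of Section \ref{subsec:coords}), the map $d^G$ is continuous as required.

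I don't anticipate a serious obstacle here; the only delicate point is handling the degenerate values $d(\zeta)\in\{0,\infty\}$, but the family $\{\min\{a,1/d\}:a>0\}\subset G$ is rich enough to distinguish these cases as well, so the argument goes through uniformly.
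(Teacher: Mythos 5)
Your proposal is correct and takes essentially the same approach as the paper: the paper likewise applies Corollary \ref{co:overd} (using $\min\{n,1/d\}\in G$ for $n\in\NN$) to conclude $d$ is constant on fibres of $\gq$, then invokes the universal property of the quotient. Your case analysis on $d(\zeta)\in\{0\}\cup(0,\infty)\cup\{\infty\}$ simply makes explicit the short deduction the paper leaves implicit.
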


\begin{proof}
By Corollary \ref{co:overd}, $\min\set{n,\frac{1}{d}} \in G$ for all $n \in \NN$. Hence $d$ is constant on every fibre of $\gq$. The existence of $d^G$ follows by the universal property of quotient spaces.
\end{proof}

We end this section with a remark and an example. It would be nice if, like $d$, the coordinate map $\pp:\bwt{M}\to\ucomp{M}\times\ucomp{M}$ factored naturally through $\wt{M}^G$. By the universal property of quotient spaces, the existence of such a map $\mathfrak{q}$ is equivalent to the statement that $\pp(\zeta)=\pp(\omega)$ whenever $\gq(\zeta)=\gq(\omega)$. However, this is false. According to the results of the next section (specifically Lemma \ref{lm:G-functions_separate_points} and Proposition \ref{pr:uwt_basic} \ref{pr:uwt_basic_2}), $\pp(\zeta)=\pp(\omega)$ whenever $\gq(\zeta)=\gq(\omega)$ and $\min\set{d(\zeta),d(\omega)}<\infty$, but this can break down if $d(\zeta)=d(\omega)=\infty$, as the next example shows.

\begin{example}\label{ex:sep_at_infinity_fails}
Let $M=\NN$ with the metric inherited from $\RR$ and with base point $1$. Then $M$ is proper so $\rcomp{M}=M=\NN$. By compactness, the sequences $(1,n^2)_{n=2}^\infty$ and $(n,n^2)_{n=2}^\infty$ in $\wt{M}$ have subnets $(1,n_i^2)$ and $(n_i,n_i^2)$ (indexed by the same directed set) that converge to $\zeta,\omega \in \bwt{M}$, respectively. By continuity of $\pp_1$, we have $\pp_1(\zeta)=1$ and $\pp_1(\omega) \notin \NN$, so certainly $\pp(\zeta) \neq \pp(\omega)$. On the other hand, $g(\zeta)=g(\omega)$ for all $g \in G$. Indeed, let $g \in g \cap B_{C(\bwt{M})}$ and let $h$ be the associated map. By Lemma \ref{lm:auto_cont},
\[
 |h(1,n^2)-h(n,n^2)| \leq n \quad\text{and}\quad |h(n,n^2)| \leq 2n^2,
\]
thus
\begin{align*}
 \lim_n g(1,n^2)-g(n,n^2) &= \lim_n \frac{h(1,n^2)}{n^2-1} - \frac{h(n,n^2)}{n^2-n}\\ 
 &= \lim_n \frac{h(1,n^2)-h(n,n^2)}{n^2-1} + \left(\frac{1}{n^2-1} -\frac{1}{n^2-n} \right)h(n,n^2) = 0.
\end{align*}
Hence, by continuity, $g(\zeta)=g(\omega)$.
\end{example}

\subsection{The connection between \texorpdfstring{$\wt{M}^G$ and $\ucomp{\wt{M}}$}{the G- and uniform compactifications of tilde M}}\label{subsec:wtMG_and_uwt}

The compactification $\wt{M}^G$ was defined so that the corresponding function cone $\widehat{G}$ separates its points. However, as $\wt{M}^G$ was constructed in an abstract manner its structure might appear opaque. The point of this section is to reveal some of this structure by showing that $\wt{M}^G$ is closely related to the uniform compactification $\uwt{M}=\ucomp{(\wt{M})}$ of $\wt{M}$.

First note the distinction between $\uwt{M}$ and the locally compact space
\[
\wt{\ucomp{M}}:=\set{(\xi,\eta) \in \ucomp{M} \times \ucomp{M}\,:\, \xi \neq \eta};
\]
we will not consider the latter in this paper at all. In general the metric space $(\wt{M},\wt{d})$ is not complete, yet $\uwt{M}$ is homeomorphic to the uniform compactification of the completion of $\wt{M}$ \cite[Theorem 3.2]{Woods}. Assuming as we do that $M$ is complete, the completion of $\wt{M}$ is isometric to the closure
\[
\cl{\wt{M}} = \set{(x,y) \in M \times M \,:\, x \neq y \text{ or }x=y \text{ is an accumulation point of }M}
\]
of $\wt{M}$ in $M \times M$. This tells us that if $M$ is compact then $\uwt{M}=\ucomp{\cl{\wt{M}}}=\cl{\wt{M}}=\pp(\bwt{M})$ (the final equality follows from \cite[Proposition 2.2]{APS24b}). On the other hand, if $M$ is uniformly discrete then so is $\wt{M}$, meaning that $\uwt{M}=\bwt{M}$.

The next result explores some basic properties of $\uwt{M}$ in the context of $\bwt{M}$ and associated maps. The final statement starts to reveal the connection between $\uwt{M}$ and $\wt{M}^G$. The reader should note that the map $\ucomp{d}$ in part \ref{pr:uwt_basic_5} below is different from $\ucomp{d}(\cdot,0)$ defined on page \pageref{d_ext}.

\begin{proposition}\label{pr:uwt_basic}
 The following properties hold.
 \begin{enumerate}[label={\upshape{(\roman*)}}]
  \item\label{pr:uwt_basic_1} There is a unique continuous surjection $\uu:\bwt{M}\to\uwt{M}$ such that $\uu\restrict_{\wt{M}}$ is the identity.
  \item\label{pr:uwt_basic_2} There is a continuous coordinate map $\vv:\uwt{M}\to \ucomp{M} \times \ucomp{M}$ such that $\pp = \vv \circ \uu$; i.e.~$\pp$ factors continuously through $\uwt{M}$; in particular $\pp$ is constant on all fibres of $\uu$.
  \item\label{pr:uwt_basic_3} Every $\wt{d}$-Lipschitz map $k:\wt{M}\to\RR$ extends continuously to $\ucomp{k}:\uwt{M}\to[-\infty,\infty]$, and the continuous extension $k:\bwt{M}\to[-\infty,\infty]$ of $k$ to $\bwt{M}$ satisfies $k = \ucomp{k} \circ \uu$; in particular $k$ is constant on all fibres of $\uu$. 
\item\label{pr:uwt_basic_4} Given $g \in G$, \ref{pr:uwt_basic_3} applies to the restriction $h\restrict_{\wt{M}}:\wt{M}\to\RR$ of the associated map.
  \item\label{pr:uwt_basic_5} The map $d$ on $\wt{M}$ extends continuously to $\ucomp{d}:\uwt{M}\to [0,\infty]$ and, as a map on $\bwt{M}$, $d=\ucomp{d} \circ \uu$; in particular $d$ is constant on all fibres of $\uu$.
  \item\label{pr:uwt_basic_6} Every $g \in G$ is constant on every fibre of $\uu$ on which $d$ is finite and non-zero. Consequently, every such fibre of $\uu$ is included in a fibre of $\gq$.
 \end{enumerate}
\end{proposition}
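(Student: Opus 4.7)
My plan would be to handle the six parts in order, since each is essentially a routine consequence of the one (or two) before it, with only \ref{pr:uwt_basic_3} requiring genuine work.

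For \ref{pr:uwt_basic_1}, the existence and uniqueness of $\uu$ is immediate from the universal property of the uniform compactification: the inclusion $\wt{M}\hookrightarrow\uwt{M}$ is a continuous map into a compact Hausdorff space and so extends uniquely to a continuous map $\uu:\bwt{M}\to\uwt{M}$ via the Stone-\v Cech property; surjectivity follows because $\uu(\bwt{M})$ is compact and dense in $\uwt{M}$. For \ref{pr:uwt_basic_2}, the functions $(x,y) \mapsto x$ and $(x,y) \mapsto y$ on $\wt{M}$ are $1$-$\wt{d}$-Lipschitz and bounded on every $\wt{d}$-bounded set; each continues as a bounded uniformly continuous $M$-valued map on $\wt{d}$-bounded subsets and so extends continuously to $\uwt{M}\to\ucomp{M}$ by Proposition \ref{pr:woodsuniform} \ref{pr:woodsuniform_1} (and the usual Lipschitz-extension-to-$\ucomp{M}$ trick used for $\ucomp{f}$). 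Combining the two coordinates yields $\vv:\uwt{M}\to\ucomp{M}\times\ucomp{M}$; the equality $\pp=\vv\circ\uu$ then holds on the dense subset $\wt{M}$, hence everywhere.

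The main work is in \ref{pr:uwt_basic_3}. Here I would proceed exactly as in the proof (cited in the paper) that a Lipschitz function on $M$ extends to $\ucomp{M}\to[-\infty,\infty]$: given a $\wt{d}$-Lipschitz $k:\wt{M}\to\RR$, first apply a fixed homeomorphism $\varphi:\RR\to(-1,1)$ to get a bounded continuous function; this composition will be uniformly continuous on $(\wt{M},\wt{d})$ because $\varphi$ is uniformly continuous and $k$ is Lipschitz, so it extends to a continuous $\uwt{M}\to[-1,1]$. Pushing this extension back through $\varphi^{-1}$ (extended to $[-1,1]\to[-\infty,\infty]$ in the obvious way) yields the desired $\ucomp{k}:\uwt{M}\to[-\infty,\infty]$. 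The identity $k=\ucomp{k}\circ\uu$ on $\bwt{M}$ then follows because both sides are continuous $[-\infty,\infty]$-valued functions that agree on the dense subset $\wt{M}$. In particular $k$ is constant on fibres of $\uu$. Parts \ref{pr:uwt_basic_4} and \ref{pr:uwt_basic_5} are now immediate: Lemma \ref{lm:auto_cont} says that the associated map $h\restrict_{\wt{M}}$ is $\wt{d}$-Lipschitz, and the observation $\abs{d(x,y)-d(u,v)}\leq d(x,u)+d(y,v)=\wt{d}((x,y),(u,v))$ shows that $d\restrict_{\wt{M}}$ is $1$-$\wt{d}$-Lipschitz.

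Finally, for \ref{pr:uwt_basic_6}, fix $g\in G$, let $F=\uu^{-1}(\omega)$ be a fibre of $\uu$ on which $d$ takes a finite non-zero value $c$, and let $h$ be the map associated with $g$. By \ref{pr:uwt_basic_4} and \ref{pr:uwt_basic_5}, both $h$ and $d$ are constant on $F$, with $d\equiv c$ and $h\equiv \ucomp{h}(\omega)$ there. For any $\zeta\in F$, pick a net $(x_i,y_i)\in\wt{M}$ with $(x_i,y_i)\to\zeta$; then $d(x_i,y_i)\to c$ and $h(x_i,y_i)\to\ucomp{h}(\omega)$, so by continuity of $g$ on $\bwt{M}$ and the identity $g=h/d$ on $\wt{M}$,
\[
g(\zeta)=\lim_i g(x_i,y_i)=\lim_i \frac{h(x_i,y_i)}{d(x_i,y_i)}=\frac{\ucomp{h}(\omega)}{c},
\]
which does not depend on $\zeta\in F$. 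Hence $g$ is constant on $F$ for every $g\in G$, which is exactly the statement that $F$ is contained in a single fibre of $\gq$. The main (very mild) obstacle is being careful about the $[-\infty,\infty]$-valued extensions in \ref{pr:uwt_basic_3}, which is why one needs the $\varphi$-trick rather than appealing directly to Proposition \ref{pr:woodsuniform}.
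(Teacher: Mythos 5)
Your proof follows essentially the same approach as the paper: the Stone--\v Cech universal property for \ref{pr:uwt_basic_1}, the extension of the coordinate projections for \ref{pr:uwt_basic_2}, the one-point-compactification (your $\varphi$-trick) argument for \ref{pr:uwt_basic_3} --- which is exactly what the result cited by the paper proves --- and the $g=h/d$ limit computation for \ref{pr:uwt_basic_6}. The one small inaccuracy is in \ref{pr:uwt_basic_2}: Proposition \ref{pr:woodsuniform} \ref{pr:woodsuniform_1} only covers real-valued bounded uniformly continuous functions, so the extension of the $M$-valued projections to $\ucomp{M}$-valued maps needs the general fact that uniformly continuous maps between metric spaces extend continuously to their uniform compactifications (the paper cites a theorem of Woods for this), but you clearly have the right idea and this does not affect the argument.
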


\begin{proof}$\;$
 \begin{enumerate}[label={\upshape{(\roman*)}}]
  \item Define $\uu:\wt{M}\to\uwt{M}$ by $\uu(x,y)=(x,y)$. This map extends uniquely and continuously to $\bwt{M}$. As $\bwt{M}$ is compact and $\uu(\wt{M}) = \wt{M}$ is dense in $\uwt{M}$, we have $\uu(\bwt{M})=\uwt{M}$.
  \item Define $\vv_1,\vv_2:\wt{M}\to M$ by $\vv_1(x,y)=x$ and $\vv_2(x,y)=y$. Evidently these are $1$-$\wt{d}$-Lipschitz maps, so by \cite[Theorem 1.1]{Woods}, they extend uniquely and continuously to $\vv_1,\vv_2:\uwt{M}\to \ucomp{M}$, respectively; we define the continuous map $\vv:\uwt{M}\to \ucomp{M} \times \ucomp{M}$ by $\vv(\vartheta)=(\vv_1(\vartheta),\vv_2(\vartheta))$. Since $\pp(x,y)=(x,y)=\vv(\uu(x,y))$ for all $(x,y) \in \wt{M}$, by density and continuity $\pp=\vv \circ \uu$ on $\bwt{M}$.
\item By \cite[Proposition 2.9]{AP_measures}, $k$ extends uniquely and continuously to $\ucomp{k}:\uwt{M}\to[-\infty,\infty]$. Since $k(x,y)=\ucomp{k}(\uu(x,y))$ for all $(x,y) \in \wt{M}$, again by density and continuity, $k = \ucomp{k} \circ \uu$.
\item This is an immediate consequence of Lemma \ref{lm:auto_cont}.
\item This follows from \ref{pr:uwt_basic_3} (or \ref{pr:uwt_basic_4} by considering $\mathbf{1}_{\bwt{M}}\in G$).
\item Let $g \in G$, let $h$ be the associated map, let $k=h\restrict_{\wt{M}}$ and let $\vartheta \in \uwt{M}$ be such that $d$ is finite and non-zero on $\uu^{-1}(\vartheta)$. From \ref{pr:uwt_basic_4} and \ref{pr:uwt_basic_5}, (the continuous extension of) $k$ and $d$ are constant on all fibres of $\uu$. By assumption $d \equiv a \in (0,\infty)$ on $\uu^{-1}(\vartheta)$. It is clear by continuity that $|k| \leq \norm{g}_\infty d$ pointwise on $\bwt{M}$, hence $k \equiv b \in \RR$ on $\uu^{-1}(\vartheta)$. Now it is easy to see by continuity that $g \equiv b/a$ on $\uu^{-1}(\vartheta)$. The second statement follows immediately. \qedhere
\end{enumerate}
\end{proof}

Example \ref{ex:sep_at_infinity_fails} and Proposition \ref{pr:uwt_basic} \ref{pr:uwt_basic_2} tell us that $\wt{M}^G$ and $\uwt{M}$ differ. However, they agree on $d^{-1}(0,\infty)$, in the sense that the converse of Proposition \ref{pr:uwt_basic} \ref{pr:uwt_basic_6} also holds, which implies that the fibres of $\uu$ and $\gq$ that are included in $d^{-1}(0,\infty)$ are equal. In fact we show a little more than this; to do so we require two technical lemmas that will follow after the next one, which is straightforward.

\begin{lemma}\label{lm:separation}
 Let $\zeta,\omega \in \bwt{M}$ satisfy $\uu(\zeta) \neq \uu(\omega)$. There there exist sets $U \ni \zeta$ and $V \ni \omega$ open in $\bwt{M}$, such that $\wt{d}(U \cap \wt{M},V \cap \wt{M})>0$.
\end{lemma}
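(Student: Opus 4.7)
The plan is to leverage the defining characterisation of the uniform compactification (Proposition \ref{pr:woodsuniform} \ref{pr:woodsuniform_2}), applied to the metric space $(\wt{M},\wt{d})$ (or its completion, which has the same uniform compactification): for non-empty $A,B \subset \wt{M}$, their closures in $\uwt{M}$ are disjoint if and only if $\wt{d}(A,B)>0$. Since $\uu$ is continuous, the strategy is to produce the desired $U,V$ as preimages under $\uu$ of open sets in $\uwt{M}$ separating $\uu(\zeta)$ from $\uu(\omega)$, and then check that the traces of $U$ and $V$ on $\wt{M}$ are at positive $\wt{d}$-distance.

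First I would use the fact that $\uwt{M}$ is compact Hausdorff, hence regular, to choose open neighbourhoods $U' \ni \uu(\zeta)$ and $V' \ni \uu(\omega)$ in $\uwt{M}$ whose closures are disjoint. Set $A=U' \cap \wt{M}$ and $B=V' \cap \wt{M}$; because $\wt{M}$ is dense in $\uwt{M}$ and $U',V'$ are open, the closures of $A$ and $B$ in $\uwt{M}$ coincide with those of $U'$ and $V'$, so in particular are disjoint.

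Applying Proposition \ref{pr:woodsuniform} \ref{pr:woodsuniform_2} (in its version for $(\wt{M},\wt{d})$) then yields $\wt{d}(A,B)>0$. Finally, define $U=\uu^{-1}(U')$ and $V=\uu^{-1}(V')$; these are open in $\bwt{M}$ by continuity of $\uu$ and contain $\zeta$ and $\omega$, respectively. Since by Proposition \ref{pr:uwt_basic} \ref{pr:uwt_basic_1} the map $\uu$ restricts to the identity on $\wt{M}$, we have $U \cap \wt{M}=A$ and $V \cap \wt{M}=B$, so $\wt{d}(U \cap \wt{M},V \cap \wt{M})>0$.

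There is no real obstacle here: the statement is essentially a transcription of the separation property defining $\uwt{M}$, pulled back through $\uu$. The only subtlety worth verifying carefully is the density-based equality $\cl{U' \cap \wt{M}}=\cl{U'}$ in $\uwt{M}$ for open $U'$, which allows the application of Proposition \ref{pr:woodsuniform} \ref{pr:woodsuniform_2} despite $U'$ itself possibly containing points of the remainder.
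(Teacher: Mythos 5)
Your proof is correct and follows essentially the same route as the paper: use regularity of $\uwt{M}$ to separate $\uu(\zeta)$ and $\uu(\omega)$ by open sets with disjoint closures, apply the defining separation property of the uniform compactification (Proposition~\ref{pr:woodsuniform}~\ref{pr:woodsuniform_2} for $(\wt M,\wt d)$) together with density of $\wt M$, and pull back through $\uu$. Your more explicit justification of the density step $\cl{U'\cap\wt M}=\cl{U'}$ is a welcome clarification but does not change the argument.
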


\begin{proof}
 By regularity of $\uwt{M}$, there exist sets $U_0 \ni \uu(\zeta)$ and $V_0 \ni \uu(\omega)$ that are open in $\uwt{M}$ and have disjoint closures. By the density of $\wt{M}$ in $\uwt{M}$ and Proposition \ref{pr:woodsuniform} \ref{pr:woodsuniform_2}, $\wt{d}(U_0 \cap \wt{M},V_0 \cap \wt{M})>0$. To finish, set $U=\uu^{-1}(U_0)$ and $V=\uu^{-1}(V_0)$, which are open in $\bwt{M}$ by Proposition \ref{pr:uwt_basic} \ref{pr:uwt_basic_1}. 
\end{proof}

\begin{lemma}\label{lm:u_and_p_agree_on_diagonal}
 Let $\zeta,\omega \in \bwt{M}$ satisfy $d(\zeta)=d(\omega)=0$. Then $\uu(\zeta)=\uu(\omega)$ if and only if $\pp(\zeta)=\pp(\omega)$.
\end{lemma}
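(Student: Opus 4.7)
The forward implication is immediate from Proposition \ref{pr:uwt_basic} \ref{pr:uwt_basic_2}, which gives $\pp = \vv \circ \uu$; thus $\uu(\zeta) = \uu(\omega)$ forces $\pp(\zeta) = \pp(\omega)$ irrespective of the value of $d$.

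For the converse, set $\pp(\zeta) = \pp(\omega) = (\xi,\eta)$. The first step is to reduce to $\xi = \eta$. Picking any net $(x_i,y_i) \in \wt{M}$ with $(x_i,y_i) \to \zeta$, continuity yields $x_i \to \xi$, $y_i \to \eta$ in $\ucomp{M}$ and $d(x_i,y_i) \to 0$. If $\xi \neq \eta$, I would use the normality of the compact Hausdorff space $\ucomp{M}$ together with the density of $M$ to produce $A,B \subset M$ with $\xi \in \ucl{A}$, $\eta \in \ucl{B}$ and $\ucl{A} \cap \ucl{B} = \emptyset$; then Proposition \ref{pr:woodsuniform} \ref{pr:woodsuniform_2} would give $d(A,B) > 0$, and since $x_i \in A$ and $y_i \in B$ eventually, this would force $d(x_i,y_i) \geq d(A,B) > 0$, contradicting $d(x_i,y_i) \to 0$. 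Hence $\xi = \eta$.

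Now suppose for contradiction that $\uu(\zeta) \neq \uu(\omega)$. By Lemma \ref{lm:separation} I obtain open $U \ni \zeta$ and $V \ni \omega$ in $\bwt{M}$ with $\ep := \wt{d}(U \cap \wt{M}, V \cap \wt{M}) > 0$. I then pick a tail index $I_0$ such that $(x_i,y_i) \in U \cap \wt{M}$ and $d(x_i,y_i) < \ep/4$ for $i \geq I_0$, together with an analogous net $(u_j,v_j) \to \omega$ and tail $J_0$ so that $(u_j,v_j) \in V \cap \wt{M}$, $u_j, v_j \to \xi$ and $d(u_j,v_j) < \ep/4$ for $j \geq J_0$. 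The crucial observation is that the sets $A' = \set{x_i : i \geq I_0}$ and $B' = \set{u_j : j \geq J_0}$ in $M$ both have $\xi$ in their $\ucomp{M}$-closure, so a second application of Proposition \ref{pr:woodsuniform} \ref{pr:woodsuniform_2} gives $d(A',B') = 0$; thus one can choose $i \geq I_0$ and $j \geq J_0$ with $d(x_i,u_j) < \ep/4$. A routine triangle-inequality estimate then yields
$\wt{d}((x_i,y_i),(u_j,v_j)) = d(x_i,u_j) + d(y_i,v_j) \leq 2d(x_i,u_j) + d(x_i,y_i) + d(u_j,v_j) < \ep$, contradicting the choice of $U,V$.

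The principal conceptual obstacle is that convergence in $\ucomp{M}$ is much coarser than $d$-convergence: two nets in $M$ both converging to a common point of $\ucomp{M} \setminus M$ need not be elementwise $d$-close, so one cannot hope to pair up the two nets ``diagonally'' to make the $\wt{d}$-distance small by brute force. What rescues the argument is the defining feature of the Samuel compactification in Proposition \ref{pr:woodsuniform} \ref{pr:woodsuniform_2}, which upgrades the purely topological statement ``both tails cluster at $\xi$'' to the metric statement $d(A',B') = 0$; combining this with the hypotheses $d(\zeta) = d(\omega) = 0$ is exactly what allows one to exhibit points of $U \cap \wt{M}$ and $V \cap \wt{M}$ at $\wt{d}$-distance strictly less than $\ep$.
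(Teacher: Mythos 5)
Your proof is correct and takes essentially the same approach as the paper: both directions rest on Proposition~\ref{pr:uwt_basic}~\ref{pr:uwt_basic_2}, Lemma~\ref{lm:separation}, and Proposition~\ref{pr:woodsuniform}~\ref{pr:woodsuniform_2}, with the same triangle-inequality estimate (the paper shrinks $U,V$ so $d<r/3$ and derives that the $\pp_1$-shadows are $d$-separated; you argue by contradiction and extract $\wt{d}$-close pairs). The preliminary reduction to $\xi=\eta$ is harmless but unnecessary for your argument — you only use $\pp_1(\zeta)=\pp_1(\omega)$, which is immediate from the hypothesis.
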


\begin{proof}
If $\uu(\zeta)=\uu(\omega)$ then $\pp(\zeta)=\pp(\omega)$ by Proposition \ref{pr:uwt_basic} \ref{pr:uwt_basic_2}. Conversely, let $\uu(\zeta) \neq \uu(\omega)$. By Lemma \ref{lm:separation}, there exist sets $U \ni \zeta$ and $V \ni \omega$ open in $\bwt{M}$, such that $r:=\wt{d}(U \cap \wt{M},V \cap \wt{M})>0$. Since $d(\zeta)=0$, by considering $U \cap d^{-1}[0,\frac{r}{3})$ if necessary, we can assume that $d(U) \subset [0,\frac{r}{3})$; likewise we can assume $d(V) \subset [0,\frac{r}{3})$. We claim that $d(\pp_1(U \cap \wt{M}),\pp_1(V \cap \wt{M})) \geq \frac{r}{6}$. Given $x \in \pp_1(U \cap \wt{M})$, $u \in \pp_1(V \cap \wt{M})$, there exist $y,v \in M$ such that $(x,y) \in U \cap \wt{M}$ and $(u,v) \in V \cap \wt{M}$. Then we can estimate
\[
 r \leq \wt{d}((x,y),(u,v)) = d(x,u) + d(y,v) \leq d(x,u) + d(y,x) + d(x,u) + d(u,v) < 2d(x,u) + \tfrac{2r}{3},
\]
giving $d(x,u) > \frac{r}{6}$. This proves the claim. By Proposition \ref{pr:woodsuniform} \ref{pr:woodsuniform_2}, the closures of $\pp_1(U \cap \wt{M})$ and $\pp_1(V \cap \wt{M})$ in $\ucomp{M}$ are disjoint. By continuity of $\pp_1$ and density of $\wt{M}$,
\[
 \pp_1(\zeta) \in \pp_1(U) \subset \pp_1\pare{\cl{U \cap \wt{M}}} \subset \ucl{\pp_1(U \cap \wt{M})},
\]
and likewise $\pp_1(\omega) \in \ucl{\pp_1(V \cap \wt{M})}$, hence $\pp_1(\zeta) \neq \pp_1(\omega)$. This completes the proof.
\end{proof}

Note that $d(\zeta)=0$ implies $\pp_1(\zeta)=\pp_2(\zeta)$. If $\zeta \in \bwtf$ then this is an immediate consequence of Proposition \ref{pr:De_Leeuw_relation} \ref{pr:De_Leeuw_relation_3}, but it holds in general also. Indeed, given $\zeta \in \bwt{M}$ satisfying $\pp_1(\zeta)\neq\pp_2(\zeta)$, by an argument similar to the one in the proof of Lemma \ref{lm:separation}, there exist sets $U_k \ni \pp_k(\zeta)$ open in $\ucomp{M}$, $k=1,2$, such that $r:=d(U_1 \cap M,U_2 \cap M)>0$. Given a net $(x_i,y_i)$ of points in $\wt{M}$ converging to $\zeta$, by continuity of the $\pp_k$ we get $x_i \in U_1 \cap M$ and $y_i \in U_2 \cap M$ for all large enough $i$, rendering $d(\zeta) \geq r > 0$. (The converse of this statement fails, as is witnessed by vanishing points.) Hence if $d(\zeta)=d(\omega)=0$ and $\pp(\zeta) \neq \pp(\omega)$, then $\pp_1(\zeta) = \pp_2(\zeta) \neq \pp_1(\omega)=\pp_2(\omega)$.

\begin{lemma}\label{lm:G-functions_separate_points}
Let $\zeta,\omega$ in $\bwt{M}$ such that $\uu(\zeta) \neq \uu(\omega)$ and $\min\set{d(\zeta),d(\omega)}<\infty$. Then there exists $g \in G$ satisfying $g(\zeta) \neq g(\omega)$. 
\end{lemma}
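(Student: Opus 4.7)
The plan is to split on whether $d(\zeta) = d(\omega)$, taking WLOG $d(\zeta) < \infty$. If $d(\zeta) \neq d(\omega)$, I invoke Corollary \ref{co:overd}: for $\alpha, \beta > 0$ suitably chosen (e.g.\ $\beta = 1$, $\alpha > \max\set{1/d(\zeta), 1/d(\omega)}$ with the natural conventions $1/0 = \infty$ and $1/\infty = 0$), the $G$-function $g = \min\set{\alpha, \beta/d}$ satisfies $g(\zeta) \neq g(\omega)$, which finishes this case.

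Suppose now $d(\zeta) = d(\omega) = a \in [0, \infty)$. Lemma \ref{lm:separation} supplies open sets $U \ni \zeta$, $V \ni \omega$ in $\bwt{M}$ with $r := \wt{d}(U \cap \wt{M}, V \cap \wt{M}) > 0$. I first claim that $\pp(\zeta) \neq \pp(\omega)$: for $a = 0$ this is precisely Lemma \ref{lm:u_and_p_agree_on_diagonal} (using $\uu(\zeta) \neq \uu(\omega)$), and for $a > 0$ I intend to argue by contradiction using the $\wt{d}$-gap $r$ together with Proposition \ref{pr:woodsuniform} \ref{pr:woodsuniform_2}. Namely, if $\pp(\zeta) = \pp(\omega) = (\xi, \eta)$, then nets $(x_i, y_i) \to \zeta$ in $U \cap \wt{M}$ and $(u_j, v_j) \to \omega$ in $V \cap \wt{M}$ have coordinates tending to the common points $\xi, \eta \in \ucomp{M}$, and Proposition \ref{pr:woodsuniform} \ref{pr:woodsuniform_2} lets us extract subnets making $\wt{d}((x_{i_k}, y_{i_k}), (u_{j_k}, v_{j_k})) \to 0$, contradicting the $\wt{d}$-gap.

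Granted $\pp(\zeta) \neq \pp(\omega)$, I construct $g$ via the associated-map family $h_{a_0, b_0}(x, y) := \min\set{d(x, y),\, d(x, a_0) + d(y, b_0)}$ indexed by $(a_0, b_0) \in M \times M$. I can verify that $h_{a_0, b_0}$ is $1$-$\wt{d}$-Lipschitz, vanishes on the diagonal, and satisfies the triangle inequality on $M \times M$ via a short case analysis on which argument attains the min (using the $(M, d)$-triangle inequality in the worst case via the bound $d(u, a_0) + d(u, b_0) \geq d(a_0, b_0)$). Consequently $g_{a_0, b_0} := h_{a_0, b_0}/d$ extends continuously to a member of $G$ by Lemma \ref{lm:auto_cont}. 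For $a > 0$ with $\zeta \in \bwtf$ one has $g_{a_0, b_0}(\zeta) = \min\set{1, [\ucomp{d}(\pp_1(\zeta), a_0) + \ucomp{d}(\pp_2(\zeta), b_0)]/a}$ and similarly for $\omega$; assuming WLOG $\pp_1(\zeta) \neq \pp_1(\omega)$, Proposition \ref{pr:woodsuniform} \ref{pr:woodsuniform_2} supplies $a_0 \in M$ with $\ucomp{d}(\pp_1(\zeta), a_0) \neq \ucomp{d}(\pp_1(\omega), a_0)$, and a suitable $b_0$ ensures the two sums differ (with a fallback to $\Phi f \vee 0 \in G$, which is in $G$ by max-stability, for the $a = 0$ case where $g_{a_0, b_0}$ degenerates). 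The main obstacle is establishing $\pp(\zeta) \neq \pp(\omega)$ when $a > 0$; Example \ref{ex:sep_at_infinity_fails} shows this step relies crucially on the finite-distance hypothesis.
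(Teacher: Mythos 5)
Your overall decomposition --- first $d(\zeta)\neq d(\omega)$, then $d(\zeta)=d(\omega)=a$ split into $a=0$ versus $a>0$ --- matches the paper, and the first two sub-cases are handled essentially as the paper does. The gap lies in the case $a\in(0,\infty)$, where you reduce to the claim that $\uu(\zeta)\neq\uu(\omega)$ forces $\pp(\zeta)\neq\pp(\omega)$. That claim is false, and the subnet argument offered for it does not support it: Proposition \ref{pr:woodsuniform} \ref{pr:woodsuniform_2} lets you find, for each $\ep>0$, some pair $(i,j)$ with $d(x_i,u_j)<\ep$, and also some pair $(i',j')$ with $d(y_{i'},v_{j'})<\ep$, but nothing makes the \emph{same} pair of indices work for both coordinates simultaneously, which is what $\wt{d}((x_{i},y_{i}),(u_{j},v_{j}))\to 0$ would require. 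For a concrete counterexample, take $M=\NN$ with the discrete metric. Then $\wt{M}$ is uniformly discrete, so $\uwt{M}=\bwt{M}$ and $\uu$ is the identity, and $d\equiv 1$ on $\bwt{M}$; yet $\pp:\bwt{M}\to\ucomp{M}\times\ucomp{M}=\beta\NN\times\beta\NN$ is not injective. (For instance $\mathbf{1}_{\set{m<n}}\in C(\bwt{M})$ does not factor through $\pp$, since every $\ucomp{M}\times\ucomp{M}$-neighbourhood of a remainder point $(\xi,\xi)$ meets $\wt{M}$ in pairs with $m<n$ and in pairs with $m>n$.) So there exist $\zeta\neq\omega$ in $\bwt{M}$ with $\pp(\zeta)=\pp(\omega)$, $\uu(\zeta)\neq\uu(\omega)$ and $d(\zeta)=d(\omega)=1\in(0,\infty)$. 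Since your function $g_{a_0,b_0}$ depends on $\zeta$ only through $(\pp(\zeta),d(\zeta))$, no member of that family can separate such $\zeta$ and $\omega$.

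This is exactly why the paper's treatment of the case $\alpha\in(0,\infty)$ bypasses the coordinate map entirely and works directly from the $\wt{d}$-gap supplied by Lemma \ref{lm:separation}: its functions $h_u(x)=\min\set{d(x,u),\max\set{\alpha-d(x,U_u),\tfrac{1}{2}\alpha}}$, with $U_u=\set{x:(x,u)\in U\cap\wt{M}}$, record the \emph{correlation} between the two coordinates of points of $U\cap\wt{M}$, which is precisely the information $\pp$ forgets. Lemma \ref{lm:u_and_p_agree_on_diagonal} really is available in the case $a=0$ because shrinking $U$ and $V$ to have small $d$-image couples the two coordinates of nearby points of $\wt{M}$, so controlling one controls the other; no such coupling exists once $a>0$. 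Your family $g_{a_0,b_0}=\min\set{d,\,d(\cdot,a_0)+d(\cdot,b_0)}/d$ is a correct observation about $G$ --- the associated map does satisfy \eqref{tri_ineq} --- but any construction factoring through $(\pp,d)$ cannot close this case.
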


\begin{proof}
By Proposition \ref{pr:d_factor}, if $d(\zeta) \neq d(\omega)$ then there exists $g \in G$ such that $g(\zeta) \neq g(\omega)$. Hence we can assume hereafter that $d(\zeta)=d(\omega)<\infty$. First, we consider the case $d(\zeta)=d(\omega)=0$. By Lemma \ref{lm:u_and_p_agree_on_diagonal}, $\uu(\zeta) \neq \uu(\omega)$ implies $\pp(\zeta) \neq \pp(\omega)$ and, by the remark immediately after that lemma, we have $\pp_2(\zeta) \neq \pp_2(\omega)$, so we can pick sets $U \ni \pp_2(\zeta)$, $V \ni \pp_2(\omega)$ open in $\ucomp{M}$, such that $s:=d(U \cap M,V \cap M)>0$. Given $u \in U \cap M$, define $h_u,f_u:M\to\RR$ by $h_u(x)=\min\set{d(x,u),s}$ and $f_u(x)=h_u(x)-h_u(0)$, $x \in M$. Then $f_u \in B_{\Lip_0(M)}$. We define $g \in G \cap B_{C(\bwt{M})}$ using Proposition \ref{pr:G-functions} by first setting
\[
g(x,y)=\sup_{u \in U \cap M} \Phi f_u(x,y),\quad (x,y) \in \wt{M},
\]
and then extending continuously to $\bwt{M}$. Define the sets $U'=\pp_2^{-1}(U) \cap d^{-1}[0,s) \ni \zeta$ and $V'=\pp_2^{-1}(V) \ni \omega$, which are open in $\bwt{M}$. Given $(x,y) \in U' \cap \wt{M}$, we have $y \in U \cap M$ and $d(x,y)<s$, so we can estimate
\[
g(x,y) \geq \Phi f_y (x,y) = \frac{h_y(x)-h_y(y)}{d(x,y)} = \frac{h_y(x)}{d(x,y)} = 1.
\]
Hence $g(\zeta)=1$ by continuity. On the other hand, if $(u,v) \in V' \cap \wt{M}$ then $v \in V \cap M$, so $d(x,v) \geq s$ whenever $x \in U \cap M$. Thus for such $x$, $h_x(u)\leq s$ and $h_x(v)=s$, giving
\[
\Phi f_x(u,v) = \frac{h_x(u)-h_x(v)}{d(u,v)} \leq \frac{s-s}{d(u,v)} \leq 0,
\]
giving $g(u,v)\leq 0$. Therefore $g(\omega) \leq 0$ by continuity.

Second, we assume $\alpha:=d(\zeta) = d(\omega) \in (0,\infty)$. Since $\uu(\zeta) \neq \uu(\omega)$, by Lemma \ref{lm:separation}, there exist sets $U \ni \zeta$, $V \ni \omega$ open in $\bwt{M}$ and satisfying $s:=\wt{d}(U \cap \wt{M},V \cap \wt{M}) > 0$. Let us define an element of $G$ satisfying $g(\zeta)\neq g(\omega)$. Given $(u,v) \in \wt{M}$, define $h_{u,v},f_{u,v}:M \to \RR$ by
 \[
  h_{u,v}(x) = \min\set{d(x,v),\max\set{\tfrac{1}{2}\alpha, \alpha-d(x,u)}} \quad\text{and}\quad f_{u,v}(x)=h_{u,v}(x)-h_{u,v}(0), \quad x \in M.
 \]
Evidently $h_{u,v}$ is $1$-Lipschitz, thus $f_{u,v} \in B_{\Lip_0(M)}$. As above, we define $g \in G \cap B_{C(\bwt{M})}$ using Proposition \ref{pr:G-functions} by first setting 
\[
g(x,y)=\sup_{(u,v) \in U \cap\wt{M}} \Phi f_{u,v}(x,y), \quad (x,y) \in \wt{M},
\]
and then extending it continuously to $\bwt{M}$.

Now define $\lambda=\max\set{\frac{1}{2},1-\frac{s}{2\alpha}} < 1$. We claim that $g(\zeta)=1 > \lambda \geq g(\omega)$, from which we obtain the conclusion. First, given $(x,y) \in U$, we estimate
\[
 g(x,y) \geq \Phi f_{x,y}(x,y) = \frac{h_{x,y}(x)-h_{x,y}(y)}{d(x,y)} = \frac{\min\set{d(x,y),\alpha}}{d(x,y)} = \min\set{1,\frac{\alpha}{d(x,y)}}.
\]
Because $\zeta \in U$ and $d(\zeta)=\alpha$, we have $g(\zeta) \geq 1$ by continuity (and equality follows as $\norm{g}_\infty \leq 1$). To demonstrate the other inequality, first suppose that $(x,y) \in \wt{M}$ satisfies $g(x,y) > \frac{\lambda\alpha}{d(x,y)}$. From the definition of $g$ we can pick $(u,v) \in U \cap\wt{M}$ such that
\begin{equation}\label{eqn:G-functions_separate_points_1}
 h_{u,v}(x) - h_{u,v}(y) = f_{u,v}(x)-f_{u,v}(y) > \lambda\alpha.
\end{equation}
Hence $h_{u,v}(y) < h_{u,v}(x)-\lambda\alpha \leq (1-\lambda)\alpha \leq \frac{1}{2}\alpha$, which by the definition of $h_{u,v}$ implies
\begin{equation}\label{eqn:G-functions_separate_points_2}
 d(y,v) = h_{u,v}(y) < (1-\lambda)\alpha.
\end{equation}
From \eqref{eqn:G-functions_separate_points_1} we also obtain $h_{u,v}(x) > \lambda\alpha \geq \frac{1}{2}\alpha$, thus
\[
\max\set{\tfrac{1}{2}\alpha,\alpha-d(x,u)} \geq h_{u,v}(x) > \tfrac{1}{2}\alpha,
\]
which in turn means that
\[
\alpha-d(x,u) = \max\set{\tfrac{1}{2}\alpha,\alpha-d(x,u)} \geq h_{u,v}(x) > \lambda\alpha,
\]
giving $d(x,u) < (1-\lambda)\alpha$. When combined with \eqref{eqn:G-functions_separate_points_2}, we obtain
\[
\wt{d}((x,y),(u,v)) = d(x,u)+d(y,v) < 2(1-\lambda)\alpha \leq s.
\]
Given that $s=\wt{d}(U \cap \wt{M},V \cap \wt{M})$, we conclude that $g(x,y) \leq \frac{\lambda\alpha}{d(x,y)}$ whenever $(x,y) \in V \cap \wt{M}$. By continuity of $g$ and because $\omega \in V$ and $d(\omega)=\alpha$, we deduce that $g(\omega) \leq \lambda$, as claimed. This completes the proof.
\end{proof}

Among other things, the next result gives an almost complete description of the relationship between $\wt{M}^G$ and $\uwt{M}$.

\begin{proposition}\label{co:g_fibres}$\;$
\begin{enumerate}[label={\upshape{(\roman*)}}]
  \item\label{co:g_fibres_1} The maps $\uu$ and $\pp$ are constant on every fibre of $\gq$ on which $d$ is finite.
  \item\label{co:g_fibres_2} The fibres of $\uu$ and $\gq$ agree on $d^{-1}(0,\infty)$.
\end{enumerate}
\end{proposition}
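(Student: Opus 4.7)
The plan is to deduce both parts directly from Lemma \ref{lm:G-functions_separate_points} together with Propositions \ref{pr:d_factor} and \ref{pr:uwt_basic}, since the real work has already been done there. I expect no new obstacles; the main task is to thread the hypotheses correctly, being careful about which statements need $d$ finite versus $d$ finite and non-zero.

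For part \ref{co:g_fibres_1}, I would start by fixing $\zeta,\omega \in \bwt{M}$ with $\gq(\zeta) = \gq(\omega)$ and assume $d(\zeta) < \infty$. By Proposition \ref{pr:d_factor}, $d = d^G \circ \gq$, so $d(\omega) = d(\zeta) < \infty$ as well; in particular $\min\{d(\zeta),d(\omega)\} < \infty$. Now suppose for contradiction that $\uu(\zeta) \neq \uu(\omega)$. Then Lemma \ref{lm:G-functions_separate_points} produces some $g \in G$ with $g(\zeta) \neq g(\omega)$, contradicting the assumption that $\zeta \sim \omega$ (i.e.\ $\gq(\zeta) = \gq(\omega)$). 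Hence $\uu(\zeta) = \uu(\omega)$. The equality $\pp(\zeta) = \pp(\omega)$ then follows at once from the factorisation $\pp = \vv \circ \uu$ in Proposition \ref{pr:uwt_basic} \ref{pr:uwt_basic_2}.

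For part \ref{co:g_fibres_2}, I would show that the fibre of $\uu$ through any point $\zeta \in d^{-1}(0,\infty)$ coincides with the fibre of $\gq$ through $\zeta$, when both are intersected with $d^{-1}(0,\infty)$. One inclusion is immediate from part \ref{co:g_fibres_1}: if $\omega \in d^{-1}(0,\infty)$ and $\gq(\omega) = \gq(\zeta)$, then $d(\omega) < \infty$, so $\uu(\omega) = \uu(\zeta)$. For the reverse inclusion, suppose $\omega \in d^{-1}(0,\infty)$ and $\uu(\omega) = \uu(\zeta)$. By Proposition \ref{pr:uwt_basic} \ref{pr:uwt_basic_5}, $d$ is constant on the fibres of $\uu$, so $d(\omega) = d(\zeta) \in (0,\infty)$; by Proposition \ref{pr:uwt_basic} \ref{pr:uwt_basic_6}, every $g \in G$ is constant on this fibre, which gives $\gq(\omega) = \gq(\zeta)$.

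In short, the substantive content of this corollary is really the separation statement in Lemma \ref{lm:G-functions_separate_points} (for \ref{co:g_fibres_1}) and the constancy statement in Proposition \ref{pr:uwt_basic} \ref{pr:uwt_basic_6} (for the non-trivial direction of \ref{co:g_fibres_2}); the rest is bookkeeping with $\pp = \vv \circ \uu$ and $d = d^G \circ \gq$.
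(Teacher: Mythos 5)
Your proof is correct and follows essentially the same route as the paper's: part (i) is Lemma \ref{lm:G-functions_separate_points} (unpacked as a contrapositive) together with $\pp = \vv \circ \uu$, and part (ii) combines part (i) with Proposition \ref{pr:uwt_basic} \ref{pr:uwt_basic_6}, exactly as in the paper. The only difference is that you spell out the bookkeeping with $d = d^G \circ \gq$ and $d = \ucomp{d} \circ \uu$ explicitly, which the paper leaves implicit.
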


\begin{proof}$\;$
\begin{enumerate}[label={\upshape{(\roman*)}}]
  \item That $\uu$ is constant on every fibre of $\gq$ on which $d$ is finite follows straightaway from Lemma \ref{lm:G-functions_separate_points}. The corresponding assertion about $\pp$ follows from Proposition \ref{pr:uwt_basic} \ref{pr:uwt_basic_2}. 
  \item This follows from \ref{co:g_fibres_1} and Proposition \ref{pr:uwt_basic} \ref{pr:uwt_basic_6}. \qedhere
\end{enumerate}
\end{proof}

Example \ref{ex:sep_at_infinity_fails} and Proposition \ref{pr:uwt_basic} \ref{pr:uwt_basic_2} show that the fibres of $\uu$ and $\gq$ don't always agree on $d^{-1}(\infty)$. The final example of this section shows that the fibres of $\uu$ and $\gq$ don't always agree on $d^{-1}(0)$ either.

\begin{example}\label{ex:u_g_disagree_on_diagonal}
Let $M$ be as in Example \ref{ex:G_non-sep}. Then there exist $\zeta,\omega \in \bwt{M}$ such that $\uu(\zeta)=\uu(\omega)$ and $\gq(\zeta)\neq\gq(\omega)$. Indeed, let $\zeta \in \bwt{M}$ satisfy $\pp_1(\zeta)=\pp_2(\zeta)$. Then $d(\zeta)=0$, else by the compactness of $M$ we would obtain $\zeta=(x,y)$ for some $(x,y) \in \wt{M}$. Note that $\pp(\bwt{M})=M \times M$ in this case, so if $\zeta,\omega \in \pp^{-1}(0,0)$, then $\uu(\zeta)=\uu(\omega)$ by Lemma \ref{lm:u_and_p_agree_on_diagonal}. Now let $\zeta,\omega \in \pp^{-1}(0,0)$ be cluster points of the sequences $((2n-1)^{-1},(2n)^{-1})$ and $((2n)^{-1},(2n+1)^{-1})$, $n \in \NN$, in $\wt{M}$, respectively, and define $f \in \Lip_0(M)$ by $f(0)=0$ and $f((2n)^{-1})=f((2n-1)^{-1})=n^{-1}$, $n \in \NN$. Then $\Phi f((2n-1)^{-1},(2n)^{-1})=0$ for all $n \in \NN$ and $\lim_n \Phi f((2n)^{-1},(2n+1)^{-1})=4$, meaning that $\Phi f(\zeta)=0 \neq 4 = \Phi f(\omega)$ by continuity. In summary, $\uu(\zeta)=\uu(\omega)$ and $\gq(\zeta)\neq\gq(\omega)$.
\end{example}

We remark that the functionals $\Phi^*\delta_\zeta \neq \Phi^*\delta_\omega$, where $\zeta,\omega$ are as above, are examples of \emph{derivations} (see \cite[Section 2.5]{AP_measures} and \cite[Section 7.5]{Weaver2}). As $\Phi\Lip_0(M) \subset G$, the map $\Phi^* \circ \delta:\bwt{M}\to B_{\Lip_0(M)^*}$ factors naturally through $\wt{M}^G$ by the universal property of quotient spaces. On the other hand, Example \ref{ex:u_g_disagree_on_diagonal} shows that $\Phi^* \circ \delta$ doesn't factor through $\uwt{M}$ in this way.

\section{A Choquet-like quasi-order on \texorpdfstring{$\meas{\bwt{M}}^+$}{the Radon measures on the De Leeuw domain}}\label{sec:quasi-order}

\subsection{Definition and basic results}\label{subsec:defn_basic}

Following the concept of the Choquet order, we define a quasi-order $\preccurlyeq$ on $\meas{\bwt{M}}^+$ as follows.

\begin{definition}\label{df:order}
 Define a relation on $\preccurlyeq$ on $\meas{\bwt{M}}^+$ by $\mu \preccurlyeq \nu$ if and only if $\duality{g,\mu} \leq \duality{g,\nu}$ for all $g \in G$.
\end{definition}

It is obvious that $\preccurlyeq$ is reflexive and transitive. However, it is not anti-symmetric in general.

\begin{example}\label{ex:not_anti-sym}
 Let $M$ and $\zeta,\omega \in \bwt{M}$ be from Examples \ref{ex:G_non-sep} or \ref{ex:sep_at_infinity_fails}. Then $\delta_\zeta \preccurlyeq \delta_{\omega}$ and $\delta_{\omega} \preccurlyeq \delta_\zeta$, but $\delta_\zeta \neq \delta_{\omega}$.
\end{example}

It is however ``somewhat anti-symmetric'', as the next result shows.

\begin{proposition}\label{pr:somewhat_anti-sym}
 Let $\mu,\nu \in \meas{\bwt{M}}^+$. Then $\mu \preccurlyeq \nu$ and $\nu \preccurlyeq \mu$ if and only if $\gq_\sharp \mu = \gq_\sharp \nu$.
\end{proposition}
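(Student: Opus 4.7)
The plan is to use the duality between the compactification $\wt{M}^G$ and the cone $\widehat{G}$, together with the Kakutani--Stone lattice form of the Stone--Weierstrass theorem applied on $\wt{M}^G$. The key observation is that, because $T(\widehat{G})=G$ and $T$ is an isometric embedding, questions about integration of $G$-functions against measures on $\bwt{M}$ translate directly into questions about integration of $\widehat{G}$-functions against the pushforward measures on $\wt{M}^G$.

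For the ``if'' direction I assume $\gq_\sharp\mu=\gq_\sharp\nu$. Since every $g\in G$ has the form $g=k\circ\gq$ for some $k\in\widehat{G}\subset C(\wt{M}^G)$, a standard change-of-variables for the pushforward gives
\[
 \duality{g,\mu}=\int_{\wt{M}^G} k\,d(\gq_\sharp\mu)=\int_{\wt{M}^G} k\,d(\gq_\sharp\nu)=\duality{g,\nu},
\]
so both $\mu\preccurlyeq\nu$ and $\nu\preccurlyeq\mu$. For the ``only if'' direction, the hypothesis is that $\duality{g,\mu}=\duality{g,\nu}$ for every $g\in G$, hence by linearity for every $g\in G-G$. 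Using the isometric identification $T(\widehat{G}-\widehat{G})=G-G$, this gives $\duality{k,\gq_\sharp\mu}=\duality{k,\gq_\sharp\nu}$ for every $k\in\widehat{G}-\widehat{G}$. I will then show that $\widehat{G}-\widehat{G}$ is uniformly dense in $C(\wt{M}^G)$; continuity of integration against a finite measure extends the equality to all of $C(\wt{M}^G)$, and the Riesz representation theorem yields $\gq_\sharp\mu=\gq_\sharp\nu$.

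The one non-trivial step is the density claim, and this is the main obstacle. I expect to obtain it from Kakutani--Stone by verifying that $\widehat{G}-\widehat{G}$ is a vector sublattice of $C(\wt{M}^G)$ that contains the constants and separates points. Linearity is clear; the constants belong to $\widehat{G}$ because $\mathbf{1}_{\bwt{M}}\in G$ descends to $\mathbf{1}_{\wt{M}^G}\in\widehat{G}$ via $\gq$; and $\widehat{G}-\widehat{G}$ separates the points of $\wt{M}^G$ by the very definition of the equivalence relation $\sim$ used to construct the quotient. The crux is the sublattice property. Because $G$ is max-stable and $0\in G$, the cone $\widehat{G}$ is max-stable with $0\in\widehat{G}$ (the pullback by $\gq$ intertwines pointwise maxima). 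Consequently, for any $k_1,k_2\in\widehat{G}$,
\[
 (k_1-k_2)\vee 0 = (k_1\vee k_2) - k_2 \in \widehat{G}-\widehat{G},
\]
i.e.\ $\widehat{G}-\widehat{G}$ is closed under the positive-part operation, and therefore also under $\vee$ and under $|\cdot|$. Kakutani--Stone then delivers uniform density in $C(\wt{M}^G)$ and completes the proof. All the work is contained in this lattice verification, which rests squarely on the max-stability of $G$ recorded in Section~\ref{subsec:G_defn}; everything else is routine bookkeeping with the map $T$ and its adjoint $T^{*}=\gq_\sharp$.
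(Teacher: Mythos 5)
Your proposal is correct and takes essentially the same route as the paper: reduce to showing that $\widehat{G}-\widehat{G}$ is norm-dense in $C(\wt{M}^G)$, then apply a lattice form of Stone--Weierstrass using exactly the same three ingredients (constants via $\mathbf{1}_{\bwt{M}}\in G$, point separation by the definition of $\sim$, and the lattice property derived from max-stability of $G$). The only cosmetic difference is that you package the argument as Kakutani--Stone for a vector sublattice and spell out the identity $(k_1-k_2)\vee 0=(k_1\vee k_2)-k_2$ (for which you also implicitly use that $\widehat{G}$ is additively closed, i.e.\ a convex cone, when passing to general differences), whereas the paper cites the cone version of Stone--Weierstrass from \cite[Proposition A.31]{lmns10} directly.
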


\begin{proof}
One implication is immediate. Conversely, let $\mu \in \meas{\bwt{M}}$ be non-positive in general, and assume that $\duality{g,\mu} = 0$ for all $g \in G$. We need to show that $\gq_\sharp \mu=0$, i.e.~$\duality{k \circ \gq,\mu} = 0$ whenever $k \in C(\wt{M}^G)$. Recall the norm-closed cone $\widehat{G}$ defined after Proposition \ref{pr:wt_homeo}. It was established there that $\widehat{G}$ separates points. It is also easy to see that $\mathbf{1}_{\wt{M}^G} \in \widehat{G}$ and that $\widehat{G}$ is max-stable. Using the lattice version of the Stone-Weierstrass theorem \cite[Proposition A.31]{lmns10}, the norm-closure of $\widehat{G}-\widehat{G}$ equals $C(\wt{M}^G)$ (strictly speaking, \cite[Proposition A.31]{lmns10} applies to min-stable convex cones containing the constant functions, but it can easily be adapted to apply in this case). Because $\duality{k \circ \gq,\mu} = 0$ whenever $k \in \widehat{G}$, we conclude that $\gq_\sharp \mu=0$ as required.
\end{proof}

We will see that this somewhat anti-symmetry will be sufficient for our purposes. We can use Proposition \ref{pr:somewhat_anti-sym} to see how the somewhat anti-symmetry affects pushforwards of measures under $\uu$ or $\pp$. To state the next result we define the open subset $\uwtf = (\ucomp{d})^{-1}[0,\infty)$ of $\uwt{M}$. We have $\vv^{-1}(\rcomp{M} \times \rcomp{M}) \subset \uwtf$. Indeed, let $\vartheta \in \uwt{M}\setminus\uwtf$ and, by Proposition \ref{pr:uwt_basic} \ref{pr:uwt_basic_1}, let $\zeta \in \bwt{M}$ satisfy $\uu(\zeta)=\vartheta$. By Proposition \ref{pr:uwt_basic} \ref{pr:uwt_basic_5} and \ref{pr:uwt_basic_2}, $d(\zeta)=\ucomp{d}(\vartheta)=\infty$ and thus $\vv(\vartheta)=\pp(\zeta) \notin \rcomp{M}\times\rcomp{M}$, respectively. Again by Proposition \ref{pr:uwt_basic} \ref{pr:uwt_basic_2} and \ref{pr:uwt_basic_5}, it follows that $\bwtf \subset \uu^{-1}(\uwtf)=d^{-1}[0,\infty)$, with equality not holding in general.

\begin{corollary}\label{co:somewhat_anti-sym}
Let $\mu,\nu \in \meas{\bwt{M}}^+$ such that $\mu \preccurlyeq \nu$ and $\nu \preccurlyeq \mu$. Then $(\uu_\sharp \mu)\restrict_\uwtf = (\uu_\sharp \nu)\restrict_\uwtf$. In turn this implies $(\pp_\sharp \mu)\restrict_{\rcomp{M} \times \rcomp{M}} = (\pp_\sharp \nu)\restrict_{\rcomp{M} \times \rcomp{M}}$.
\end{corollary}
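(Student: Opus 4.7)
The plan is to reduce both assertions to an application of the compatibility between the $\uu$- and $\gq$-fibres on $d^{-1}[0,\infty)$ established in Proposition \ref{co:g_fibres} \ref{co:g_fibres_1}, after first invoking the ``somewhat anti-symmetry'' of $\preccurlyeq$ proved in Proposition \ref{pr:somewhat_anti-sym}. That proposition turns the hypothesis $\mu \preccurlyeq \nu$ and $\nu \preccurlyeq \mu$ into the equality $\gq_\sharp \mu = \gq_\sharp \nu$, so the task becomes: push this equality forward from $\wt{M}^G$ to $\uwt{M}$ while restricting attention to $\uwtf$.

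To do so I would first factor $\uu$ through $\gq$ on the locus where $d$ is finite. Concretely, set $W:= (d^G)^{-1}[0,\infty) \subset \wt{M}^G$, which is open because $d^G$ is continuous (Proposition \ref{pr:d_factor}). By Proposition \ref{co:g_fibres} \ref{co:g_fibres_1}, $\uu$ is constant on every $\gq$-fibre contained in $d^{-1}[0,\infty)=\uu^{-1}(\uwtf)$, so the formula $\tau(\gq(\zeta)):=\uu(\zeta)$ unambiguously defines a map $\tau:W\to\uwt{M}$ satisfying $\uu = \tau\circ\gq$ on $d^{-1}[0,\infty)$. I would check continuity of $\tau$ by a standard net argument using compactness of $\bwt{M}$: given $\gq(\zeta_i) \to \gq(\zeta)$ in $W$, extract a subnet with $\zeta_i \to \zeta' \in \bwt{M}$; then $\gq(\zeta')=\gq(\zeta)$ by continuity of $\gq$ and $d(\zeta') = d^G(\gq(\zeta))<\infty$, so Proposition \ref{co:g_fibres} \ref{co:g_fibres_1} forces $\uu(\zeta') = \uu(\zeta)$, whence continuity of $\uu$ yields $\uu(\zeta_i) \to \uu(\zeta)$.

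With $\tau$ continuous, for any Borel set $A \subset \uwtf$ the preimage $\tau^{-1}(A)$ is a Borel subset of $\wt{M}^G$, and the factorisation $\uu=\tau\circ\gq$ on $\uu^{-1}(\uwtf)$ yields $\uu^{-1}(A) = \gq^{-1}(\tau^{-1}(A))$. Invoking $\gq_\sharp \mu=\gq_\sharp\nu$,
\[
(\uu_\sharp \mu)(A) = (\gq_\sharp \mu)(\tau^{-1}(A)) = (\gq_\sharp \nu)(\tau^{-1}(A)) = (\uu_\sharp \nu)(A),
\]
which gives the first equality. The second is then immediate: $\pp = \vv \circ \uu$ by Proposition \ref{pr:uwt_basic} \ref{pr:uwt_basic_2}, and the paragraph preceding the corollary records the inclusion $\vv^{-1}(\rcomp{M}\times\rcomp{M}) \subset \uwtf$, so for every Borel $C \subset \rcomp{M}\times\rcomp{M}$ one has $\vv^{-1}(C) \subset \uwtf$ and
\[
(\pp_\sharp \mu)(C) = (\uu_\sharp \mu)(\vv^{-1}(C)) = (\uu_\sharp \nu)(\vv^{-1}(C)) = (\pp_\sharp \nu)(C).
\]

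I expect the only subtle point to be the construction and continuity of $\tau$, which is precisely where the compactness of $\bwt{M}$ is used to trade a net $\gq(\zeta_i)$ in $W$ for a convergent net $\zeta_i$ in $\bwt{M}$ whose limit still lies in $d^{-1}[0,\infty)$. Everything else amounts to routine transport of measures along the factorisations.
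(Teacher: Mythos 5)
Your proof is correct, and it takes a genuinely different route from the paper's. The paper argues by contrapositive: assuming $(\uu_\sharp \mu)\restrict_\uwtf \neq (\uu_\sharp \nu)\restrict_\uwtf$, it uses local compactness of $\uwtf$ and the identification $\meas{\uwtf} \equiv C_0(\uwtf)^*$ to produce a separating $f \in C_0(\uwtf)$, extends $f$ by zero to $\uwt{M}$, observes that $f \circ \uu$ is constant on all $\gq$-fibres (those on which $d$ is finite by Proposition \ref{co:g_fibres}~\ref{co:g_fibres_1}, those on which $d$ is infinite because $f$ vanishes there), invokes the universal property of the quotient topology to write $f \circ \uu = k \circ \gq$, and thereby separates $\gq_\sharp\mu$ from $\gq_\sharp\nu$; the second claim is derived analogously via $\vv$. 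You instead argue directly: you first apply Proposition \ref{pr:somewhat_anti-sym} to get $\gq_\sharp\mu = \gq_\sharp\nu$, then build an explicit continuous factoring map $\tau:(d^G)^{-1}[0,\infty)\to\uwt{M}$ with $\uu = \tau\circ\gq$ on $d^{-1}[0,\infty)$, and push measures forward along it. Well-definedness of $\tau$ is exactly Proposition \ref{co:g_fibres}~\ref{co:g_fibres_1} combined with the fact from Proposition \ref{pr:d_factor} that $d$ is constant on $\gq$-fibres (so a fibre meeting $d^{-1}[0,\infty)$ lies entirely in it), and your compactness-of-$\bwt{M}$ argument establishes continuity of $\tau$, which then yields Borel measurability of $\tau^{-1}(A)$ and the desired measure equality. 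Both routes are valid; the paper's is ``dual'' (working with separating continuous functions and letting the quotient topology supply the factorisation for free), while yours is ``primal'' (constructing the factoring map and paying for it with a short continuity argument), and yours arguably exposes more clearly the structural reason the result holds — $\uu$ literally factors through $\gq$ on the relevant locus. One small tightening worth making explicit: your continuity step should be phrased as a contradiction argument (assume $\uu(\zeta_i)\not\to\uu(\zeta)$, pass to a subnet staying outside a neighbourhood, then to a further convergent subnet $\zeta_j\to\zeta'$, and derive $\uu(\zeta')=\uu(\zeta)$, a contradiction), since a single convergent subnet with the right limit does not by itself force the whole net to converge.
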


\begin{proof}
We prove two contrapositive statements. Let $\mu,\nu \in \meas{\bwt{M}}^+$. First we suppose that $(\uu_\sharp \mu)\restrict_\uwtf \neq (\uu_\sharp \nu)\restrict_\uwtf$. As $\uwtf$ is open in $\uwt{M}$, it is locally compact, which implies $\meas{\uwtf} \equiv C_0(\uwtf)^*$. Thus there exists $f \in C_0(\uwtf)$ satisfying $\duality{f,(\uu_\sharp \mu)\restrict_\uwtf} \neq \duality{f,(\uu_\sharp \nu)\restrict_\uwtf}$. We can extend $f$ continuously to $\ucomp{\wt{M}}$ by ensuring that it vanishes on $\ucomp{\wt{M}}\setminus\uwtf = (\ucomp{d})^{-1}(\infty)$. It follows that $f \circ \uu \in C(\bwt{M})$ is constant on all fibres of $\gq$ by Proposition \ref{pr:d_factor} and Proposition \ref{co:g_fibres} \ref{co:g_fibres_1}, whence $f \circ \uu = k \circ \gq$ for some $k \in C(\wt{M}^G)$. We calculate
\[
\duality{k,\gq_\sharp \mu} = \duality{k \circ \gq,\mu} = \duality{f \circ \uu,\mu} = \duality{f,\uu_\sharp \mu} = \duality{f,(\uu_\sharp \mu)\restrict_\uwtf}
\]
and likewise $\duality{k,\gq_\sharp \nu} = \duality{f,(\uu_\sharp \nu)\restrict_\uwtf}$. Therefore either $\mu \not\preccurlyeq \nu$ or vice-versa, by Proposition \ref{pr:somewhat_anti-sym}.

Second, suppose that $(\pp_\sharp \mu)\restrict_{\rcomp{M} \times \rcomp{M}} \neq (\pp_\sharp \nu)\restrict_{\rcomp{M} \times \rcomp{M}}$. We argue similarly. The space $\rcomp{M} \times \rcomp{M}$ is open in $\ucomp{M} \times \ucomp{M}$ so is locally compact, giving $\meas{\rcomp{M} \times \rcomp{M}} \equiv C_0(\rcomp{M} \times \rcomp{M})^*$. Pick $f \in C_0(\rcomp{M} \times \rcomp{M})$ such that
\[
\duality{f,(\pp_\sharp \mu)\restrict_{\rcomp{M} \times \rcomp{M}}} \neq \duality{f,(\pp_\sharp \nu)\restrict_{\rcomp{M} \times \rcomp{M}}},
\]
and extend $f$ continuously to $\ucomp{M} \times \ucomp{M}$ by ensuring that it vanishes on the remainder. By Proposition \ref{pr:uwt_basic} \ref{pr:uwt_basic_2}, $f \circ \vv \in C(\uwt{M})$. Next, as $\vv^{-1}(\rcomp{M}\times\rcomp{M}) \subset \uwtf$, $f \circ \vv$ vanishes on $\uwt{M}\setminus\uwtf$, which implies, again by Proposition \ref{pr:uwt_basic} \ref{pr:uwt_basic_2},
\[
\duality{f \circ \vv,(\uu_\sharp \mu)\restrict_\uwtf} = \duality{f \circ \vv,\uu_\sharp \mu} = \duality{f,\pp_\sharp\mu} = \duality{f,(\pp_\sharp \mu)\restrict_{\rcomp{M} \times \rcomp{M}}}, 
\]
and likewise $\duality{f \circ \vv,(\uu_\sharp \nu)\restrict_\uwtf} = \duality{f,(\pp_\sharp \nu)\restrict_{\rcomp{M} \times \rcomp{M}}}$, giving $(\uu_\sharp \mu)\restrict_\uwtf \neq (\uu_\sharp \nu)\restrict_\uwtf$.
\end{proof}

Recall the discussion about ``good'' and ``bad'' De Leeuw representations in Section \ref{subsec:DeLeeuw}. We pause momentarily to explain in more detail, by way of a simple example, the intuition that motivates the introduction of this quasi-order in the context of these representations.

\begin{example}\label{ex:Choquet_motivation}
Let $M=\set{0,\frac{1}{2},1}$ have the usual metric (with base point $0$), and consider $\delta_{(1,0)}$ and
\[
 \nu:= \tfrac{1}{2}(\delta_{(1,\frac{1}{2})} + \delta_{(\frac{1}{2},0)}).
\]
It is easily seen that $\delta_{(1,0)},\nu \in \opr{\bwt{M}}$, with $\Phi^*\delta_{(1,0)}=\Phi^*\nu=m_{10}$. Moreover, $\delta_{(1,0)} \preccurlyeq \nu$ because $\duality{g,\delta_{(1,0)}} = g(1,0) \leq \frac{1}{2}(g(1,\frac{1}{2}) + g(\frac{1}{2},0)) = \duality{g,\nu}$ for all $g \in G$. Furthermore, $\nu \not\preccurlyeq \delta_{(1,0)}$ because if we set $g=\min\set{2,\frac{1}{d}}$, then $g \in G$ by Corollary \ref{co:overd} and $\duality{g,\delta_{(1,0)}} = 1 < 2 = \duality{g,\nu}$.
\end{example}

In the example above, it seems that $\delta_{(1,0)}$ is a ``better'' choice of representation of $m_{10}$, since the shared coordinate $\frac{1}{2}$ present in the Diracs that make up $\nu$ has been eliminated. More generally, the intuition is that if $\nu,\mu$ are optimal representations of some $\psi \in \Lip_0(M)^*$ and $\mu \preccurlyeq \nu$, then $\mu$ should be a ``better'' representation of $\psi$, and that $\preccurlyeq$-minimal elements (see Section \ref{subsec:min_elements} below) should be ``the best'' representations. The idea of coordinate elimination will be explored further in Section \ref{subsec:eliminating_coordinates}; coordinate elimination will form the basis of the results on mutually singular marginals in Section \ref{subsec:singular_marginals}. The simple example above is generalised considerably in Corollary \ref{co:minimal_points} below.

In the next proposition, we gather some basic properties of $\preccurlyeq$.

\begin{proposition}\label{pr:basic}
 Let $\mu,\nu \in \meas{\bwt{M}}^+$ satisfy $\mu \preccurlyeq \nu$. The following statements hold.
\begin{enumerate}[label={\upshape{(\roman*)}}]
 \item\label{pr:basic_1} $\mu + \mu' \preccurlyeq \nu + \nu'$ and $c\mu \preccurlyeq c\nu$ whenever $\mu',\nu' \in \meas{\bwt{M}}^+$ satisfy $\mu' \preccurlyeq \nu'$ and $c \geq 0$;
  \item\label{pr:basic_2} $\norm{\mu} \leq \norm{\nu}$;
  \item\label{pr:basic_3} $\Phi^*\mu = \Phi^* \nu$;
  \item\label{pr:basic_4} if $\nu$ is optimal then so is $\mu$.
\end{enumerate}
\end{proposition}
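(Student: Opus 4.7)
The plan is to tackle the four parts in order, each using only basic properties of $G$ and $\Phi$ that are already in place. None of the items looks substantially hard; the main thing is to make sure that the right element of $G$ is evaluated against the inequality $\duality{g,\mu}\leq\duality{g,\nu}$ each time.

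For (i), I would just unwind the definition. The map $\mu\mapsto\duality{g,\mu}$ is linear and the inequality defining $\preccurlyeq$ is preserved under addition and multiplication by non-negative scalars, so $\duality{g,\mu+\mu'}\leq\duality{g,\nu+\nu'}$ and $\duality{g,c\mu}\leq\duality{g,c\nu}$ hold for every $g\in G$. For (ii), the observation is that $\mathbf{1}_{\bwt{M}}\in G$ (noted right after the definition of $G$), and since $\mu,\nu\geq 0$ we have $\norm{\mu}=\duality{\mathbf{1}_{\bwt{M}},\mu}\leq\duality{\mathbf{1}_{\bwt{M}},\nu}=\norm{\nu}$.

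For (iii), the point is Proposition \ref{pr:G_intersection}, which says $\Phi\Lip_0(M)=G\cap(-G)$. Thus for every $f\in\Lip_0(M)$ both $\Phi f$ and $\Phi(-f)=-\Phi f$ lie in $G$; applying the defining inequality with each gives $\duality{\Phi f,\mu}\leq\duality{\Phi f,\nu}$ and $-\duality{\Phi f,\mu}\leq-\duality{\Phi f,\nu}$, hence equality. By the definition of $\Phi^*$ this means $\duality{f,\Phi^*\mu}=\duality{f,\Phi^*\nu}$ for all $f\in\Lip_0(M)$, so $\Phi^*\mu=\Phi^*\nu$.

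For (iv), I would chain the previous items together. Since $\Phi^*$ is a quotient map it is non-expansive, so $\norm{\Phi^*\mu}\leq\norm{\mu}$ always. Now if $\nu$ is optimal, then combining optimality of $\nu$, (iii), non-expansiveness, and (ii) yields
\[
\norm{\nu}=\norm{\Phi^*\nu}=\norm{\Phi^*\mu}\leq\norm{\mu}\leq\norm{\nu},
\]
so equality holds throughout and in particular $\norm{\mu}=\norm{\Phi^*\mu}$, i.e.\ $\mu\in\opr{\bwt{M}}$. There is no real obstacle here; the only thing worth being careful about is that the $\norm{\mu}\leq\norm{\nu}$ step depends on $\mu$ and $\nu$ being positive, which we have by hypothesis.
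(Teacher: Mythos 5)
Your proof is correct and follows the same route as the paper's: part (i) by linearity, part (ii) via $\mathbf{1}_{\bwt{M}}\in G$, part (iii) by observing $\pm\Phi f\in G$ and sandwiching, and part (iv) by chaining non-expansiveness of $\Phi^*$ with (ii) and (iii). The only cosmetic difference is that you cite Proposition \ref{pr:G_intersection} for $\pm\Phi f\in G$ whereas the paper just notes it directly from $\Phi\Lip_0(M)\subset G$; the argument is the same.
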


\begin{proof}$\;$
\begin{enumerate}[label={\upshape{(\roman*)}}]
\item This is trivial.
\item As $\mathbf{1}_{\bwt{M}} \in G$ and $\mu,\nu \geq 0$, we have $\norm{\mu} = \duality{\mathbf{1}_{\bwt{M}},\mu} \leq \duality{\mathbf{1}_{\bwt{M}},\nu} = \norm{\nu}$.
\item Let $f \in \Lip_0(M)$. As $\pm\Phi f \in G$, we have $\pm\duality{\Phi f,\mu}\leq \pm\duality{\Phi f, \nu}$, giving equality. Hence $\Phi^*\mu=\Phi^*\nu$.
\item By \ref{pr:basic_2} and \ref{pr:basic_3}, $\norm{\Phi^*\nu} = \norm{\Phi^*\mu} \leq \norm{\mu} \leq \norm{\nu} = \norm{\Phi^*\nu}$, so $\mu$ is optimal. \qedhere
\end{enumerate}
\end{proof}

The quasi-order $\preccurlyeq$ behaves well with respect to the $w^*$-topology. We omit the proof of the next easy fact.

\begin{proposition}\label{pr:w_star}
 Let $(\mu_i),(\nu_i)$ be nets in $\meas{\bwt{M}}^+$ that converge to $\mu,\nu$ in the $w^*$-topology, respectively. Suppose moreover that $\mu_i \preccurlyeq \nu_i$ for all $i$. Then $\mu \preccurlyeq \nu$.
\end{proposition}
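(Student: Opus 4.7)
The plan is to argue directly from the definition of $\preccurlyeq$, exploiting the fact that each $g \in G$ is, by construction, a continuous function on the compact space $\bwt{M}$, and hence a legitimate test function against which $w^*$-convergent nets in $\meas{\bwt{M}} = C(\bwt{M})^*$ can be paired.

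First, I would fix an arbitrary $g \in G$. Since $g \in C(\bwt{M})$, the $w^*$-convergences $\mu_i \to \mu$ and $\nu_i \to \nu$ give
\[
\duality{g,\mu_i} \longrightarrow \duality{g,\mu} \quad\text{and}\quad \duality{g,\nu_i} \longrightarrow \duality{g,\nu}.
\]
The hypothesis $\mu_i \preccurlyeq \nu_i$ yields $\duality{g,\mu_i} \leq \duality{g,\nu_i}$ for every index $i$. Passing to the limit along the net preserves this weak inequality, so $\duality{g,\mu} \leq \duality{g,\nu}$. As $g \in G$ was arbitrary, Definition \ref{df:order} gives $\mu \preccurlyeq \nu$.

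There is no real obstacle here; the only point worth noting is that we do not need to verify that $\mu,\nu \in \meas{\bwt{M}}^+$, as this is automatic: the positive cone of $\meas{\bwt{M}}$ is $w^*$-closed (it is the polar of the cone of non-positive functions in $C(\bwt{M})$), so $w^*$-limits of positive Radon measures are positive. Thus $\mu,\nu \in \meas{\bwt{M}}^+$ and the relation $\mu \preccurlyeq \nu$ is well-defined, completing the argument.
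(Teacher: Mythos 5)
Your proof is correct, and since the paper explicitly omits the proof of this fact as "easy," yours is precisely the intended argument: pair each $g \in G \subset C(\bwt{M})$ against the $w^*$-convergent nets, pass to the limit in the inequality, and note that the positive cone of $\meas{\bwt{M}}$ is $w^*$-closed so $\mu,\nu \in \meas{\bwt{M}}^+$. Nothing to add.
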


The next basic result explores how non-negative scalar multiples of a given non-zero $\mu \in \meas{\bwt{M}}^+$ compare with respect to $\preccurlyeq$. The behaviour differs according to whether $\Phi^*\mu=0$ or not.

\begin{proposition}\label{pr:scalar_mult}
 Let $\mu \in \meas{\bwt{M}}^+$ be non-zero and let $a,b \geq 0$. The following statements hold.
 \begin{enumerate}[label={\upshape{(\roman*)}}]
 \item\label{pr:scalar_mult_1} If $a\mu \preccurlyeq b\mu$ and $\Phi^*\mu \neq 0$, then $a=b$;
 \item\label{pr:scalar_mult_2} $\Phi^*\mu=0$ if and only if $0 \preccurlyeq \mu$;
 \item\label{pr:scalar_mult_3} if $\Phi^*\mu=0$ then $a\mu \preccurlyeq b\mu$ if and only if $a \leq b$.
 \end{enumerate}
\end{proposition}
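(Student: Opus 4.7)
The key machinery I would use throughout is the symmetry $G \cap (-G) = \Phi\Lip_0(M)$ from Proposition \ref{pr:G_intersection}, which turns any inequality $\duality{g,\mu_1} \leq \duality{g,\mu_2}$ (valid for all $g \in G$) into an \emph{equality} when specialised to $g = \Phi f$, $f \in \Lip_0(M)$, because both $\pm\Phi f$ are admissible test functions. Combined with the facts that $\mathbf{1}_{\bwt{M}} \in G$ (for norm comparisons) and that $G$ admits the pointwise representation from Proposition \ref{pr:G-functions} (for one-sided comparisons), this covers every part of the proposition.

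For part \ref{pr:scalar_mult_1}, I would rewrite $a\mu \preccurlyeq b\mu$ as $(b-a)\duality{g,\mu} \geq 0$ for every $g \in G$. Applying this to $g = \Phi f$ and $g = -\Phi f$ gives $(b-a)\duality{f,\Phi^*\mu} = 0$ for every $f \in \Lip_0(M)$, and the hypothesis $\Phi^*\mu \neq 0$ immediately forces $a = b$.

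For part \ref{pr:scalar_mult_2}, the reverse implication is the same trick: from $0 \preccurlyeq \mu$ applied to $\pm\Phi f$ we read off $\duality{f,\Phi^*\mu} = 0$ for all $f$, hence $\Phi^*\mu = 0$. The forward implication is the one place a little work is needed: given an arbitrary $g \in G$, I would invoke Proposition \ref{pr:G-functions} to produce some $f \in \Lip_0(M)$ with $\Phi f \leq g$ pointwise on $\bwt{M}$ (the set $A$ in that proposition is non-empty so any of its elements will do). Positivity of $\mu$ then gives $\duality{g,\mu} \geq \duality{\Phi f,\mu} = \duality{f,\Phi^*\mu} = 0$, yielding $0 \preccurlyeq \mu$.

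For part \ref{pr:scalar_mult_3}, assume $\Phi^*\mu = 0$. If $a \leq b$, then $(b-a)\mu \in \meas{\bwt{M}}^+$ and $\Phi^*((b-a)\mu) = 0$, so part \ref{pr:scalar_mult_2} gives $0 \preccurlyeq (b-a)\mu$; adding $a\mu \preccurlyeq a\mu$ via Proposition \ref{pr:basic} \ref{pr:basic_1} yields $a\mu \preccurlyeq b\mu$. Conversely, if $a\mu \preccurlyeq b\mu$, testing against $\mathbf{1}_{\bwt{M}} \in G$ gives $a\norm{\mu} \leq b\norm{\mu}$, and since $\mu \neq 0$ we conclude $a \leq b$. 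There is no real obstacle here; each item reduces to a one-line application of the structural properties of $G$ already established.
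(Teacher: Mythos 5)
Your proposal is correct and follows essentially the same route as the paper: part \ref{pr:scalar_mult_1} via the $\pm\Phi f$ test (which is what Proposition \ref{pr:basic} \ref{pr:basic_3} packages), part \ref{pr:scalar_mult_2} via Proposition \ref{pr:G-functions} for the forward direction and the $\pm\Phi f$ test for the reverse, and part \ref{pr:scalar_mult_3} via part \ref{pr:scalar_mult_2} in one direction and $\mathbf{1}_{\bwt{M}} \in G$ in the other. The only cosmetic difference is in \ref{pr:scalar_mult_3}, where you apply part \ref{pr:scalar_mult_2} to $(b-a)\mu$ and add $a\mu \preccurlyeq a\mu$, while the paper applies it to $\mu$ and scales; these are the same argument.
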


\begin{proof}$\;$
 \begin{enumerate}[label={\upshape{(\roman*)}}]
 \item By Proposition \ref{pr:basic} \ref{pr:basic_3}, $a\Phi^*\mu=b\Phi^*\mu$, whence $a=b$.
 \item If $\Phi^*\mu \neq 0$ then $0 \not\preccurlyeq \mu$ by \ref{pr:scalar_mult_1}. Now let $\Phi^*\mu = 0$. Given $g \in G$, let $A \subset \Lip_0(M)$ be the associated (non-empty) set from Proposition \ref{pr:G-functions}. In particular, there exists $f \in A$, whence $0=\duality{\Phi f,\mu} \leq \duality{g,\mu}$. It follows that $0 \preccurlyeq \mu$.
 \item Let $\Phi^*\mu = 0$. By \ref{pr:scalar_mult_2}, $\duality{g,\mu} \geq 0$ for all $g \in G$. If $a \leq b$ and $g \in G$ then $\duality{g,a\mu}=a\duality{g,\mu}\leq b\duality{g,\mu}=\duality{g,b\mu}$. Hence $a \leq b$ implies $a\mu \preccurlyeq b\mu$. Conversely, let $a\mu \preccurlyeq b\mu$. Then $a\norm{\mu} = a\duality{\mathbf{1}_{\bwt{M}},\mu} \leq b\duality{\mathbf{1}_{\bwt{M}},\mu} = b\norm{\mu}$. As $\mu$ is non-zero, $a \leq b$. \qedhere
 \end{enumerate}
\end{proof}

\subsection{Minimal measures}\label{subsec:min_elements}

Next we introduce the notion of minimality and present some of the basic results associated with it.

\begin{definition}
Let $\mu \in \meas{\bwt{M}}^+$. We say that $\mu$ is $\preccurlyeq$-minimal, or just minimal, if $\nu \in \meas{\bwt{M}}^+$ and $\nu \preccurlyeq \mu$ implies $\mu \preccurlyeq \nu$. 
\end{definition}

As was discussed in Section \ref{subsec:Choquet}, we deviate significantly from the classical Choquet theory because in that theory the emphasis is on maximal measures almost exclusively (see e.g.~\cite[Section 3.6]{lmns10}), while minimal measures are almost never considered. Despite this deviation, there are some parallels. For a start, minimal elements are plentiful.

\begin{proposition}\label{pr:minimal_exist}
Let $\nu \in \meas{\bwt{M}}^+$. There there exists minimal $\mu \in \meas{\bwt{M}}^+$ such that $\mu \preccurlyeq \nu$.
\end{proposition}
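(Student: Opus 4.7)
The plan is to apply Zorn's lemma to the set
\[
S = \set{\mu \in \meas{\bwt{M}}^+ \,:\, \mu \preccurlyeq \nu}
\]
equipped with the reverse of the quasi-order $\preccurlyeq$. Since $\nu \in S$, the set is non-empty, and by Proposition \ref{pr:basic} \ref{pr:basic_2}, $S$ sits inside the $w^*$-compact ball $\|\nu\| B_{\meas{\bwt{M}}}$. A $\preccurlyeq$-minimal element of $S$ is automatically $\preccurlyeq$-minimal in all of $\meas{\bwt{M}}^+$: if $\lambda \preccurlyeq \mu$ and $\mu \in S$, then $\lambda \preccurlyeq \nu$ by transitivity, so $\lambda \in S$ too, and the minimality within $S$ kicks in.

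The essential step is to verify the chain condition: every chain $C \subset S$ with respect to $\preccurlyeq$ admits a $\preccurlyeq$-lower bound in $S$. For this I would regard $C$ itself as the index set of a net, ordered by $a \leq_C b \iff b \preccurlyeq a$ (so progression along the net means descent in $\preccurlyeq$); the identity assignment $c \mapsto c$ then gives a net $(\mu_c)_{c \in C}$ in the $w^*$-compact ball. By compactness, pick a subnet $(\mu_{h(i)})_{i \in I}$ converging $w^*$ to some $\mu^* \in \meas{\bwt{M}}$. Positivity of $\mu^*$ is automatic from $w^*$-limits of positive measures, so $\mu^* \in \meas{\bwt{M}}^+$.

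I would then check $\mu^* \preccurlyeq \mu$ for every $\mu \in C$. Fix $\mu \in C$ and $g \in G$. By cofinality of $h$ there is $i_0 \in I$ with $h(i_0)$ eventually below $\mu$ in $\preccurlyeq$; by monotonicity of $h$, the same holds for all $i \geq i_0$, so $\duality{g,\mu_{h(i)}} \leq \duality{g,\mu}$ eventually. Passing to the $w^*$-limit gives $\duality{g,\mu^*} \leq \duality{g,\mu}$, hence $\mu^* \preccurlyeq \mu$. In particular, $\mu^* \preccurlyeq \nu$, so $\mu^* \in S$ is the required lower bound. (Alternatively, this lower-bound step is a clean application of Proposition \ref{pr:w_star} with $\nu_i \equiv \mu$.) Zorn's lemma, applied to $(S,\succcurlyeq)$, then yields a maximal element of $S$ in the reversed order, which is precisely a $\preccurlyeq$-minimal element of $\meas{\bwt{M}}^+$ lying below $\nu$.

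The only potentially delicate point is the subnet/cofinality bookkeeping, since $\preccurlyeq$ is not anti-symmetric and $C$ may be uncountable; but this is purely a matter of being careful with the definitions, not a genuine obstacle. There is no issue with the $w^*$-cluster step itself because $G \subset C(\bwt{M})$ and $w^*$-convergence of positive measures automatically preserves the inequalities defining $\preccurlyeq$.
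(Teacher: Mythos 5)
Your proof is correct, and it is close in spirit to the paper's argument --- both combine Zorn's lemma with $w^*$-compactness of the set $\set{\mu \in \meas{\bwt{M}}^+ \,:\, \mu \preccurlyeq \nu}$ --- but the mechanics differ in a way worth noting. You apply Zorn directly to $(S,\succcurlyeq)$, which means invoking the version of Zorn's lemma valid for quasi-orders (equivalently, passing to the quotient poset $S/\!\sim$ where $\mu\sim\mu'$ iff $\mu\preccurlyeq\mu'\preccurlyeq\mu$), and you manufacture a lower bound for an arbitrary $\preccurlyeq$-chain by extracting a $w^*$-convergent subnet indexed by the chain itself, then using Proposition \ref{pr:w_star} on a tail. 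The paper instead applies ordinary poset Zorn to the family of chains in $W$ ordered by inclusion: it obtains a maximal chain $A$, shows $A$ is $w^*$-closed using Proposition \ref{pr:w_star} and maximality, then extracts a lower bound for $A$ by the finite intersection property applied to the $w^*$-closed sets $A_{\mu'}=\set{\lambda\in A : \lambda\preccurlyeq\mu'}$; maximality of $A$ finally converts the lower bound into a $\preccurlyeq$-minimal element. Your route is more direct and is a cleaner use of the chain condition, at the cost of needing the quasi-order form of Zorn and the subnet cofinality bookkeeping that you flag; the paper's route stays within the classical poset form of Zorn and replaces the subnet argument with a finite-intersection-property argument, which does the same compactness work with a little less net-handling. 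You are right that neither the non-antisymmetry nor the uncountability of the chain is an obstacle once the definitions are spelled out.
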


\begin{proof}
Let $W=\set{\mu \in \meas{\bwt{M}}^+ \,:\, \mu \preccurlyeq \nu}$. Evidently $W$ is $w^*$-closed and, by Proposition \ref{pr:basic} \ref{pr:basic_2}, we deduce that it is $w^*$-compact. We will say that $A \subset W$ is a chain if $\mu,\mu' \in A$ implies $\mu \preccurlyeq \mu'$ or vice-versa. By Zorn's Lemma, there exists a chain $A$ that is maximal with respect to inclusion. We claim that $A$ is $w^*$-closed; by the maximality of $A$, it is sufficient to show that $\cl{A}^{w^*}$ is also a chain, but this follows straightaway by Proposition \ref{pr:w_star}. Hence $A$ is $w^*$-closed as claimed. 

Now consider the sets $A_{\mu'}:=\set{\lambda \in A \,:\, \lambda \preccurlyeq \mu'}$, $\mu' \in A$. These sets are $w^*$-closed also and, as $A$ is a chain, they satisfy the finite intersection property. Hence by compactness, there exists $\mu \in \bigcap_{\mu' \in A} A_{\mu'}$. It follows that $\mu \preccurlyeq \mu'$ whenever $\mu' \in A$. Now let $\mu' \in \meas{\bwt{M}}^+$ satisfy $\mu' \preccurlyeq \mu$. Then it is easy to see that $A \cup \{\mu'\} \subset \Gamma$ is also a chain, so $\mu' \in A$ again by maximality. Consequently $\mu \preccurlyeq \mu'$ as well.
\end{proof}

Together with Proposition \ref{pr:minimal_exist}, the next proposition can be regarded as an analogue of \cite[Proposition 2.1]{APS24a}.

\begin{proposition}[{cf. \cite[Proposition 2.1]{APS24a}}]\label{pr:min_basic} Let $\mu \in \meas{\bwt{M}}^+$ be minimal. Then
 \begin{enumerate}[label={\upshape{(\roman*)}}]
  \item\label{pr:min_basic_1} $c\mu$ is minimal for every $c \geq 0$;
  \item\label{pr:min_basic_2} if $0 \leq \lambda \leq \mu$ then $\lambda$ is minimal;
  \item\label{pr:min_basic_3} if $E \subset \bwt{M}$ is Borel then $\mu\restrict_E$ is minimal.
 \end{enumerate}
\end{proposition}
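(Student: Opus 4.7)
The three statements are essentially formal consequences of the definition of $\preccurlyeq$, the linearity properties collected in Proposition \ref{pr:basic} \ref{pr:basic_1}, and the minimality of $\mu$. The only mildly subtle point is \ref{pr:min_basic_2}, which drives the other two; I would prove the items in the order (i), (ii), (iii), although in fact (iii) is an immediate consequence of (ii).

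For \ref{pr:min_basic_1}, the case $c = 0$ reduces to showing that the zero measure is minimal: if $\nu \in \meas{\bwt{M}}^+$ satisfies $\nu \preccurlyeq 0$, then testing against $\mathbf{1}_{\bwt{M}} \in G$ yields $\norm{\nu} \leq 0$, so $\nu = 0$. For $c > 0$ the plan is to rescale: given $\nu \preccurlyeq c\mu$, Proposition \ref{pr:basic} \ref{pr:basic_1} yields $c^{-1}\nu \preccurlyeq \mu$, and minimality of $\mu$ then gives $\mu \preccurlyeq c^{-1}\nu$, so $c\mu \preccurlyeq \nu$, as required.

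For \ref{pr:min_basic_2}, the idea is that one can ``top up'' any $\nu$ below $\lambda$ by the positive measure $\mu - \lambda$ to land below $\mu$ and then exploit minimality of $\mu$. Precisely, suppose $\nu \in \meas{\bwt{M}}^+$ satisfies $\nu \preccurlyeq \lambda$. Since $0 \leq \lambda \leq \mu$, the measure $\mu - \lambda$ belongs to $\meas{\bwt{M}}^+$. By Proposition \ref{pr:basic} \ref{pr:basic_1},
\[
 \nu + (\mu - \lambda) \;\preccurlyeq\; \lambda + (\mu - \lambda) \;=\; \mu,
\]
so by minimality of $\mu$ we have $\mu \preccurlyeq \nu + (\mu - \lambda)$. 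Testing this inequality against an arbitrary $g \in G$ and using that $\duality{g,\mu}$ and $\duality{g,\mu - \lambda}$ are finite real numbers, we may cancel $\duality{g,\mu - \lambda}$ on both sides to obtain $\duality{g,\lambda} \leq \duality{g,\nu}$, i.e.\ $\lambda \preccurlyeq \nu$. Hence $\lambda$ is minimal.

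Finally, \ref{pr:min_basic_3} follows immediately from \ref{pr:min_basic_2}, since $\mu\restrict_E \in \meas{\bwt{M}}^+$ and $0 \leq \mu\restrict_E \leq \mu$ for any Borel $E \subset \bwt{M}$. There is no real obstacle here; the only thing to be careful about is the cancellation step in (ii), which is legitimate because $\mathbf{1}_{\bwt{M}} \in G$ forces every $\duality{g,\cdot}$ against a measure in $\meas{\bwt{M}}$ to be a finite real number.
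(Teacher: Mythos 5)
Your proof is correct and follows essentially the same route as the paper: the key step in (ii) is adding $\mu - \lambda$ to both sides of $\nu \preccurlyeq \lambda$, invoking minimality of $\mu$, and cancelling, with (i) and (iii) reduced to trivialities or to (ii). Your verification that $0$ is minimal via $\mathbf{1}_{\bwt{M}} \in G$ is precisely the content of Proposition \ref{pr:basic} \ref{pr:basic_2}, which is what the paper cites.
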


\begin{proof}$\;$
 \begin{enumerate}[label={\upshape{(\roman*)}}]
  \item This is trivial if $c>0$. The fact that $0$ is minimal follows for example from Proposition \ref{pr:basic} \ref{pr:basic_2}.
  \item Let $\nu \in \meas{\bwt{M}}^+$ satisfy $\nu \preccurlyeq \lambda$. By Proposition \ref{pr:basic} \ref{pr:basic_1}, $\nu + (\mu-\lambda) \preccurlyeq \mu$, giving $\mu \preccurlyeq \nu + (\mu-\lambda)$ by minimality of $\mu$ and thus $\lambda \preccurlyeq \nu$. Hence $\lambda$ is minimal.
  \item This follows trivially from \ref{pr:min_basic_2}. \qedhere
  \end{enumerate}
\end{proof}

We turn now to the task of finding concrete examples of minimal measures. A couple of lemmas are needed first. To begin, we show that fibres of $\uu$ agree with those of $\pp$ whenever we have at least one coordinate in $M$. 

\begin{lemma}\label{lm:mixed_coords}
Let $\zeta,\omega \in \bwt{M}$ with $\pp(\zeta)=\pp(\omega)$ and either $\pp_1(\zeta) \in M$ or $\pp_2(\zeta) \in M$. Then $\uu(\zeta)=\uu(\omega)$.
\end{lemma}

\begin{proof}
 Without loss of generality, we show that if $\pp_2(\zeta)=\pp_2(\omega)=q \in M$ and $\uu(\zeta) \neq \uu(\omega)$, then $\pp_1(\zeta) \neq \pp_1(\omega)$. As $\uu(\zeta) \neq \uu(\omega)$, by Lemma \ref{lm:separation} there exist sets $U' \ni \zeta$, $V' \ni \omega$ open in $\bwt{M}$ such that $r:=\wt{d}(U' \cap \wt{M},V' \cap \wt{M})>0$. Let $W \ni q$ open in $\ucomp{M}$ such that $W \cap M = \set{p \in M \,:\, d(p,q) < \frac{1}{3}r}$, and set $U=U' \cap \pp_2^{-1}(W) \ni \zeta$ and $V=V' \cap \pp_2^{-1}(W) \ni \omega$. We claim that 
 \[
  d(\pp_1(U \cap \wt{M}),\pp_1(V \cap \wt{M})) \geq \tfrac{1}{3}r.
 \]
Indeed, given $x \in \pp_1(U \cap \wt{M})$ and $u \in \pp_1(V \cap \wt{M})$, there exist $y,v \in M$ such that $(x,y) \in U \cap \wt{M}$ and $(u,v) \in V \cap \wt{M}$, and we can estimate
\[
 d(x,u) = \wt{d}((x,y),(u,v)) - d(y,v) \geq r - d(y,q) - d(q,v) > \tfrac{1}{3}r,
\]
which establishes the claim. Hence the closures $\ucl{\pp_1(U \cap \wt{M})}$ and $\ucl{\pp_1(V \cap \wt{M})}$ are disjoint in $\ucomp{M}$. Using the same argument as the one at the end of the proof of Lemma \ref{lm:u_and_p_agree_on_diagonal}, we conclude that $\pp_1(\zeta) \neq \pp_1(\omega)$.
\end{proof}

The above will allow us to partially extend in the next result the supremum attainment witnessed in \eqref{eq:g_sup} to points $\zeta \in \bwtf$ possessing at least one coordinate in $M$. In turn, Proposition \ref{pr:least_measures} will yield our promised examples.

\begin{lemma}\label{lm:delta_minimal}
Let $\zeta \in \bwtf$ satisfy $\pp_1(\zeta) \neq \pp_2(\zeta)=y$, where $y \in M$. Given $g \in G$ and associated map $h$, define $f \in \Lip_0(M)$ by $f(x)=h(x,y)-h(0,y)$, $x \in M$. Then $\Phi f \leq g$ and $g(\zeta) = \Phi f(\zeta)$.
\end{lemma}

\begin{proof}
  Following the proof of Proposition \ref{pr:G-functions}, we obtain $\Phi f \leq g$ and $\Phi f(x,y)=g(x,y)$ for all $x \in M\setminus\set{y}$. To show that $g(\zeta)=\Phi f(\zeta)$, first we consider the special case that $\zeta$ is the limit of a net of points in $\wt{M}$ of the form $(x_i,y)$. In this case, by continuity of both $g$ and $\Phi f$, 
 \[
  g(\zeta) = \lim_i g(x_i,y) = \lim_i \Phi f(x_i,y) = \Phi f(\zeta).
 \]
 
Now we consider the general case. Let $(x_i,y_i)$ be a net of points in $\wt{M}$ converging to $\zeta$. By continuity of $\pp_2$, we have $y_i \to y$ and thus $d(y_i,y) \to 0$. Now consider the points $(x_i,y) \in M \times M$. Because $\pp_1(\zeta) \neq y$, we have $x_i \neq y$ for all $i$ large enough, giving $(x_i,y) \in \wt{M}$. Hence by compactness and by taking a subnet if we need to, we assume that $(x_i,y) \in \wt{M}$ for all $i$ and converges to some $\omega \in \bwt{M}$. From the first case, $g(\omega)=\Phi f(\omega)$. The proof will be finished if we can show that $\gq(\zeta)=\gq(\omega)$, for then $g(\zeta)=g(\omega)=\Phi f(\omega)=\Phi f(\zeta)$. By Proposition \ref{co:g_fibres} \ref{co:g_fibres_1}, this will follow if $\uu(\zeta)=\uu(\omega)$ and $\zeta,\omega \in d^{-1}(0,\infty)$. We know that $\uu(\zeta)=\uu(\omega)$ by Lemma \ref{lm:mixed_coords}, and $\zeta \in d^{-1}(0,\infty)$ by \eqref{eq:R_finite_d} and Proposition \ref{pr:De_Leeuw_relation} \ref{pr:De_Leeuw_relation_3}; likewise for $\omega$.
\end{proof}

\begin{proposition}\label{pr:least_measures}
 Let $\mu \in \meas{\bwt{M}}^+$ be concentrated on $A := \pp^{-1}(\rcomp{M}\setminus\{y\} \times \{y\})$ for some $y \in M$. Then $\mu \preccurlyeq \nu$ whenever $\nu \in \meas{\bwt{M}}^+$ and $\Phi^*\nu=\Phi^*\mu$. In particular, $\mu$ is minimal. The same holds if $\mu$ is concentrated on $\pp^{-1}(\{x\}\times\rcomp{M}\setminus\{x\})$ for some $x \in M$.
\end{proposition}

\begin{proof}
 Let $\nu \in \meas{\bwt{M}}^+$ satisfy $\Phi^*\nu=\Phi^*\mu$. Given $g \in G$, let $f \in \Lip_0(M)$ be as in Lemma \ref{lm:delta_minimal}. Then
 \[
  \duality{g,\mu} = \int_A g \,d\mu = \int_A \Phi f \,d\mu = \duality{\Phi f,\mu} = \duality{\Phi f,\nu} \leq \duality{g,\nu}.
 \]
 Hence $\mu \preccurlyeq \nu$. Now suppose $\nu \in \meas{\bwt{M}}^+$ and $\nu \preccurlyeq \mu$. Then $\Phi^*\nu=\Phi^*\mu$ by Proposition \ref{pr:basic} \ref{pr:basic_3}, so from above $\mu \preccurlyeq \nu$. It follows that $\mu$ is minimal. The last statement follows by considering $\rr_\sharp \mu$ and the invariance of $G$ under $\rr$.
\end{proof}

One observes that any measure $\mu$ satisfying the hypotheses of Proposition \ref{pr:least_measures} is unique, modulo the lack of anti-symmetry of $\preccurlyeq$, in the sense that if $\nu \in \meas{\bwt{M}}^+$ is minimal and $\Phi^*\nu=\Phi^*\mu$, then $\gq_\sharp\nu = \gq_\sharp\mu$.

\begin{corollary}\label{co:minimal_points}
The following statements hold.
  \begin{enumerate}[label={\upshape{(\roman*)}}]
  \item\label{co:minimal_points_1} If $\zeta \in \bwtf$ has distinct coordinates with at least one coordinate in $M$, then $\delta_\zeta$ is minimal and $\gq_\sharp\mu = \gq_\sharp\delta_\zeta$ whenever $\mu \in \meas{\bwt{M}}^+$ is minimal and $\Phi^*\mu = \Phi^*\delta_\zeta$.  
  \item\label{co:minimal_points_2} Given $(x,y) \in \wt{M}$, $\delta_{(x,y)}$ is the unique positive minimal representation of $m_{xy}$.
 \end{enumerate}
\end{corollary}

\begin{proof} Given the above observation only \ref{co:minimal_points_2} requires proof, and this follows immediately from \ref{co:minimal_points_1} and Proposition \ref{pr:ext_g_pre-image}. \qedhere
\end{proof}

\begin{remark}As we noted at the end of Section \ref{subsec:coords}, it is known that $\zeta \in \opt$ whenever $\zeta \in \bwtf$ and exactly one of the coordinates of $\zeta$ belongs to $M$. This follows also by applying Lemma \ref{lm:delta_minimal} to $\mathbf{1}_{\bwt{M}} \in G$ because then $\norm{\Phi^*\delta_\zeta} \geq \Phi f(\zeta)=1$. 
\end{remark}

Next, we show that representations of free space elements that are both optimal and minimal need not be unique. In doing so, we will see that if $\zeta \in \bwtf$ has neither coordinate in $M$, then $\delta_\zeta$ may be optimal and minimal but can fail spectacularly to be unique in the sense of Corollary \ref{co:minimal_points} \ref{co:minimal_points_1} (see Example \ref{ex:minimal_delta_not_unique}).

To see this, first we introduce a modulus
\[
 \gamma(M) = \inf\set{\frac{d(x,u)+d(u,y)}{d(x,y)} \,:\, x,u,y \in M \text{ are distinct}} \geq 1.
\]
We are interested in those spaces $M$ for which $\gamma(M)>1$. This condition is a form of ``uniform concavity'' strictly stronger than Weaver's uniform concavity \cite[Definition 3.33]{Weaver2}, and it is straightforward to check that it is satisfied only if (but not always if) $M$ is uniformly discrete. The main reason to introduce it is that it forces $G$ to be very large.

\begin{lemma}\label{lm:large_jack}
Let $\gamma=\gamma(M)>1$. Given $f \in C(\bwt{M})$, set $a=\big(\frac{\gamma+1}{\gamma-1}\big)\norm{f}_\infty$; then $f + a\mathbf{1}_{\bwt{M}} \in G$. 
\end{lemma}

\begin{proof}
 Let $x,u,y \in M$ be distinct. We estimate
 \begin{align*}
  d(x,y)(f(x,y)+a) \leq d(x,y)(\norm{f}_\infty+a) &= d(x,y)\gamma(a - \norm{f}_\infty)\\
  &\leq (d(x,u)+d(u,y))(a - \norm{f}_\infty)\\
  &\leq d(x,u)(f(x,u)+a) + d(u,y)(f(u,y)+a). \qedhere
 \end{align*}
\end{proof}

\begin{corollary}\label{co:super_concave}
 Let $\gamma(M)>1$. Then every element of $\opr{\bwt{M}}$ is minimal.
\end{corollary}

\begin{proof}
Let $\nu \in \opr{\bwt{M}}$ and suppose that $\mu\in\meas{\bwt{M}}^+$ satisfies $\mu \preccurlyeq \nu$. We claim that $\mu=\nu$, which shows that $\nu$ is minimal. By Proposition \ref{pr:basic} \ref{pr:basic_3} and \ref{pr:basic_4}, $\mu$ is an optimal representation of the same element of $\Lip_0(M)^*$; in particular, $\norm{\mu}=\norm{\nu}$. Let $f \in C(\bwt{M})$. By Lemma \ref{lm:large_jack}, $f + a\mathbf{1}_{\bwt{M}} \in G$ for some $a \geq 0$. Hence
\[
 \duality{f,\mu} = \langle f + a\mathbf{1}_{\bwt{M}},\mu\rangle - a\norm{\mu} \leq \langle f + a\mathbf{1}_{\bwt{M}},\nu\rangle - a\norm{\nu} = \duality{f,\nu}.
\]
Since this holds for all $f \in C(\bwt{M})$, we conclude that $\mu=\nu$.
\end{proof}

\begin{example}\label{ex:min_opt_non-unique}
Let $M=\set{0,1,2,3}$ have the discrete metric (and base point $0$) and define 
\[
 \nu_1 = \delta_{(0,2)} + \delta_{(1,3)} \qquad\text{and}\qquad \nu_2 = \delta_{(0,3)} + \delta_{(1,2)}.
\]
It is readily checked that $m:=\Phi^*\nu_1=\Phi^*\nu_2$ and (by considering $f \in B_{\Lip_0(M)}$ defined by $f(x)=0$ if $x=0$ or $x=1$, and $f(x)=-1$ otherwise) $\norm{m}=2$, so $\nu_1$ and $\nu_2$ are optimal. As $\gamma(M)=2$, by Corollary \ref{co:super_concave}, $\nu_1$ and $\nu_2$ are also minimal, as are all convex combinations thereof. Hence optimal and minimal elements need not be unique.
\end{example}

Another obvious consequence of Lemma \ref{lm:large_jack} is that if $\gamma(M)>1$ then $G$ separates points of $\bwt{M}$, meaning that $\gq$ is injective, $\wt{M}^G = \bwt{M}$ and $\preccurlyeq$ becomes anti-symmetric. This leads to the next example.

\begin{example}\label{ex:minimal_delta_not_unique}
Let $M=\NN$ have the discrete metric (and base point $1$), pick distinct $\xi,\eta \in \rcomp{M}\setminus M = \beta M\setminus M$ and let $\zeta \in \pp^{-1}(\xi,\eta)$. As $d(\zeta)=\bar{d}(\xi,\eta)=1$, $\zeta \in \opt$ by Proposition \ref{pr:De_Leeuw_relation} \ref{pr:De_Leeuw_relation_3}. As $\gamma(M)=2$, $\delta_\zeta$ is minimal by Corollary \ref{co:super_concave}. If $\omega \in \pp^{-1}(\xi,\eta)$ then $\Phi^*\delta_\omega = \Phi^*\delta_\zeta$ by Proposition \ref{pr:De_Leeuw_relation} \ref{pr:De_Leeuw_relation_1}. Given that $\preccurlyeq$ is anti-symmetric in this case, the conclusion of Corollary \ref{co:minimal_points} \ref{co:minimal_points_1} fails because $\pp^{-1}(\xi,\eta)$ is never a singleton. We employ a topological argument to show this (though it can also be demonstrated using tensor products of ultrafilters). Partition $\wt{M}$ into $\Gamma_1=\{(m,n) \in \wt{M} \,:\, m < n\}$ and $\Gamma_2=\{(m,n) \in \wt{M} \,:\, m > n\}$. If $U \ni \xi$ and $V \ni \eta$ are open in $\rcomp{M}$ then $U \cap M$ and $V \cap M$ are infinite (else $\xi \in M$ or $\eta \in M$), meaning that $(U \times V) \cap \Gamma_i \neq \varnothing$ for $i=1,2$. By compactness, there exist distinct
\[
 \zeta_i \in \bigcap_{U,V} \cl{((U \times V) \cap \Gamma_i)}^{\bwt{M}}, \quad i=1,2,
\]
which, by continuity of $\pp$, belong to $\pp^{-1}(\xi,\eta)$.

This non-uniqueness can be taken further by considering the points $\xi,\eta$ obtained in \cite[Example 3.11]{APS24b}. While the underlying set in that example is also $M=\NN$, the metric is different; however, we can replace it by the discrete metric without affecting the construction of $\xi$ or $\eta$. In this case $\pp^{-1}(\xi,\eta)$ is infinite, so in fact it has cardinality $2^{\mathfrak{c}}$ \cite[Theorem 3.6.14]{Engelking}. This yields a vast number of non-unique optimal and minimal Dirac measure representations of the same element of $\Lip_0(M)^*$.
\end{example}

Finally, it is natural to reflect on the relationship between optimality and minimality, and specifically whether one implies the other. Since some more tools are needed in order to address this question in a relatively succinct manner, we will do so later in Section \ref{subsec:optimality_minimality}.

\subsection{The quasi-order and sets on which measures are concentrated}\label{subsec:conc} Define the set $\mathfrak{C}$ of Borel subsets $C \subset \bwt{M}$ having the property that $\mu \in \meas{\bwt{M}}^+$ is concentrated on $C$ whenever $\nu \in \meas{\bwt{M}}^+$ is concentrated on $C$ and $\mu \preccurlyeq \nu$. Observe that $\mathfrak{C}$ is closed under taking intersections of countably many elements. This set is of interest to us because it will yield information about the sets on which minimal measures are concentrated.

We spend this section identifying some important elements of $\mathfrak{C}$. To this end, we define the set of extended Borel functions
\[
 \mathfrak{G} = \set{g:\bwt{M}\to[0,\infty] \,:\, g\text{ is Borel and }\int g\,d\mu \leq \int g\,d\nu \text{ whenever }\mu,\nu \in \meas{\bwt{M}}^+, \mu \preccurlyeq \nu}.
\]
This set can be regarded as a cone, though its elements can take infinite values so it can't be included in a vector space of functions in the usual sense. For the most part, the functions in $\mathfrak{G}$ that we consider will be bounded and thus can be regarded as elements of $C(\bwt{M})^{**}$ via integration. Obviously all positive elements of $G$ belong to $\mathfrak{G}$. By the monotone convergence theorem, $\mathfrak{G}$ is closed under taking pointwise limits of increasing sequences, and pointwise limits of bounded decreasing sequences.

The following result will be our principal tool for identifying elements of $\mathfrak{C}$; this is the main motivation for introducing $\mathfrak{G}$, though this set is used also in Proposition \ref{pr:imp_4.20}.

\begin{proposition}\label{pr:zero_set}
Let $C \subset \bwt{M}$ be Borel, such that given any $K_\sigma$ set $F \subset C$, there exists $g \in \mathfrak{G}$ satisfying $F \subset g^{-1}(0) \subset C$. Then $C \in \mathfrak{C}$.
\end{proposition}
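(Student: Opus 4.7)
The plan is to verify directly the defining property of $\mathfrak{C}$: fix $\nu \in \meas{\bwt{M}}^+$ concentrated on $C$ and $\mu \in \meas{\bwt{M}}^+$ with $\mu \preccurlyeq \nu$, and show that $\mu(\bwt{M}\setminus C) = 0$. The whole argument then reduces to producing a single test function $g \in \mathfrak{G}$ that is zero $\nu$-almost everywhere and strictly positive on $\bwt{M}\setminus C$, and the hypothesis is tailor-made to supply such a $g$ once we know which $K_\sigma$ set to feed it.

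First, I would use the inner regularity of the (finite) Radon measure $\nu$ to exhaust $C$ from within by compact sets: for each $n \in \NN$ choose compact $K_n \subset C$ with $\nu(C \setminus K_n) < 1/n$, and set $F = \bigcup_{n \in \NN} K_n$. Then $F$ is a $K_\sigma$ subset of $C$ with $\nu(C \setminus F) = 0$, and combining this with $\nu(\bwt{M}\setminus C) = 0$ yields $\nu(\bwt{M}\setminus F) = 0$; in particular $\nu$ is concentrated on $F$.

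Next, I would apply the hypothesis to this particular $F$ to obtain $g \in \mathfrak{G}$ with $F \subset g^{-1}(0) \subset C$. Since the non-negative Borel function $g$ vanishes on $F$ and $\nu$ is concentrated on $F$, we have $\int g\,d\nu = 0$. The defining inequality of $\mathfrak{G}$, applied to $\mu \preccurlyeq \nu$, then gives
\[
0 \leq \int g\,d\mu \leq \int g\,d\nu = 0,
\]
so $g = 0$ holds $\mu$-almost everywhere. Since $g^{-1}(0) \subset C$ implies $\bwt{M}\setminus C \subset \set{g > 0}$, this forces $\mu(\bwt{M}\setminus C) = 0$, which is what was needed.

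There is no genuine obstacle here; the proposition is essentially a packaging result. The only mildly delicate point is realising that a single countable union of compact sets does the job, so that one invocation of the hypothesis (rather than a sequence of them followed by some limiting argument) suffices. The reason this works is that $\mathfrak{G}$ accommodates extended-$[0,\infty]$-valued Borel functions and is stable under monotone pointwise limits, so there is no tension between ``$g$ strictly positive off $C$'' and the integrability considerations that a bounded-function formulation would impose.
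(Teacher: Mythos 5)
Your proof is correct and follows essentially the same route as the paper's: inner regularity produces a $K_\sigma$ set $F\subset C$ carrying $\nu$, the hypothesis yields $g\in\mathfrak{G}$ with $F\subset g^{-1}(0)\subset C$, and then $0\leq\int g\,d\mu\leq\int g\,d\nu=0$ forces $\mu(\bwt{M}\setminus C)=0$. The only difference is that you spell out the compact exhaustion explicitly, whereas the paper simply invokes inner regularity.
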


\begin{proof}
 Let $\mu,\nu \in \meas{\bwt{M}}^+$, $\mu \preccurlyeq \nu$, and suppose $\nu$ is concentrated on $C$. By inner regularity, $\nu$ is concentrated on some $K_\sigma$ set $F \subset C$. Let $g \in \mathfrak{G}$ such that $F \subset g^{-1}(0) \subset C$. Then 
 \[
  0 \leq \int_{\bwt{M}\setminus C} g \,d\mu \leq \int_{\bwt{M}\setminus g^{-1}(0)} g \,d\mu = \int g \,d\mu \leq \int g \,d\nu = \int_{g^{-1}(0)} g \,d\nu = 0.
 \]
As $g$ is strictly positive on $\bwt{M}\setminus C$, this implies $\mu(\bwt{M}\setminus C)=0$. Hence $\mu$ is concentrated on $C$.
\end{proof}

The next result, which gives us our first non-trivial element of $\mathfrak{C}$, will be generalised in Proposition \ref{pr:delta} below.

\begin{proposition}\label{pr:diag}
We have $\mathbf{1}_{d^{-1}(0)} \in \mathfrak{G}$ and hence $d^{-1}(0,\infty] \in \mathfrak{C}$.
\end{proposition}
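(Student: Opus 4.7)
The plan is to approximate the indicator $\mathbf{1}_{d^{-1}(0)}$ from above by a decreasing sequence of elements of $G$ and then extract both statements cleanly. For each $n \in \NN$, set $g_n := \min\{1, \tfrac{1}{nd}\}$; by Corollary \ref{co:overd} (with $a = 1$, $b = 1/n$), $g_n \in G$. Using the conventions stated for $\tfrac{1}{d}$, $g_n \equiv 1$ on $d^{-1}(0)$, $g_n \equiv 0$ on $d^{-1}(\infty)$, and $g_n(\zeta) = \min\{1, \tfrac{1}{nd(\zeta)}\} \to 0$ as $n \to \infty$ on $d^{-1}(0,\infty)$. Thus $(g_n)$ is uniformly bounded by $1$ and $g_n \downarrow \mathbf{1}_{d^{-1}(0)}$ pointwise on $\bwt{M}$.

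Next I would verify $\mathbf{1}_{d^{-1}(0)} \in \mathfrak{G}$. Take any $\mu, \nu \in \meas{\bwt{M}}^+$ with $\mu \preccurlyeq \nu$. Since $g_n \in G$, we have $\duality{g_n, \mu} \leq \duality{g_n, \nu}$ for every $n$. As $0 \leq g_n \leq 1$ and $\mu, \nu$ are finite Radon measures, the bounded convergence theorem gives $\duality{g_n, \mu} \to \int \mathbf{1}_{d^{-1}(0)} \, d\mu$ and $\duality{g_n, \nu} \to \int \mathbf{1}_{d^{-1}(0)} \, d\nu$. Passing to the limit yields $\int \mathbf{1}_{d^{-1}(0)} \, d\mu \leq \int \mathbf{1}_{d^{-1}(0)} \, d\nu$, which is precisely the defining condition for $\mathbf{1}_{d^{-1}(0)} \in \mathfrak{G}$.

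To conclude $d^{-1}(0,\infty] \in \mathfrak{C}$, I would apply Proposition \ref{pr:zero_set} with $C := d^{-1}(0,\infty]$ and the witness $g := \mathbf{1}_{d^{-1}(0)}$, which lies in $\mathfrak{G}$ by the previous step. Since $g^{-1}(0) = d^{-1}(0,\infty] = C$, any $K_\sigma$ (indeed any) subset $F \subset C$ trivially satisfies $F \subset g^{-1}(0) \subset C$, so Proposition \ref{pr:zero_set} applies and $C \in \mathfrak{C}$.

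The only real choice in the argument is the approximating family, and this is where a small conceptual point arises. Because $G$ is a cone, not a vector space, one cannot freely use an \emph{increasing} approximation like $1 - \min\{1, nd\}$ of $\mathbf{1}_{d^{-1}(0,\infty]}$, since negatives of $G$-functions need not lie in $G$. So a decreasing approximation from above via Corollary \ref{co:overd} is the natural route; once it is in hand, nothing further is required beyond bounded convergence and a one-line appeal to Proposition \ref{pr:zero_set}.
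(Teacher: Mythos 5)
Your proof is correct and follows essentially the same approach as the paper: the same approximating family $g_n = \min\{1, \tfrac{1}{nd}\}$ from Corollary \ref{co:overd}, the same observation that these decrease pointwise to $\mathbf{1}_{d^{-1}(0)}$, and the same appeal to Proposition \ref{pr:zero_set}. The only cosmetic difference is that you re-derive the limit passage via bounded convergence, whereas the paper invokes the earlier remark that $\mathfrak{G}$ is closed under pointwise limits of monotone sequences.
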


\begin{proof}
By Corollary \ref{co:overd}, $g_n:=\min\set{1,\frac{1}{nd}} \in G$ for all $n \in \NN$. If $d(\zeta)=0$ then $g_n(\zeta)=1$ for all $n \in \NN$, and if $d(\zeta)>0$ then $g_n(\zeta)$ decreases to $0$. Hence $\mathbf{1}_{d^{-1}(0)} \in \mathfrak{G}$, being the pointwise limit of the bounded decreasing sequence $(g_n)$. We apply Proposition \ref{pr:zero_set} to finish.
\end{proof}

We use the next key lemma to find more complicated examples of elements of $\mathfrak{C}$. Let $E^\circ$ denote the interior of a set $E \subset \rcomp{M}$.

\begin{lemma}\label{lm:G_distance_functions}
Let $f:M\to[0,\infty)$ be Lipschitz and let $a \geq \Lip(f)$. Then there exists $g \in G \cap aB_{C(\bwt{M})}$ such that
\begin{equation}\label{eqn:G_distance_functions_1}
  g(x,y) = \min\set{a,\frac{f(x)}{d(x,y)}}, \quad (x,y) \in \wt{M}.
 \end{equation}
Therefore
\begin{equation}\label{eqn:G_distance_functions_2}
 g(\zeta) = \begin{cases}
             \displaystyle \min\set{a,\frac{\rcomp{f}(\pp_1(\zeta))}{d(\zeta)}} & \text{if }\pp_1(\zeta) \in \rcomp{M}\text{ and either }\rcomp{f}(\pp_1(\zeta))>0\text{ or }d(\zeta)>0,\\
             0 & \text{if }\pp_1(\zeta) \in (\rcomp{f})^{-1}(0)^\circ.
        \end{cases}
\end{equation}
Statements \eqref{eqn:G_distance_functions_1} and \eqref{eqn:G_distance_functions_2} also hold when $f(x)$ and $\pp_1$ are replaced by $f(y)$ and $\pp_2$, respectively.
\end{lemma}

\begin{proof}
Define the bounded map $g:\wt{M}\to\RR$ using \eqref{eqn:G_distance_functions_1}. We verify that $g$ satisfies \eqref{G-function} and thus its continuous extension to $\bwt{M}$, also denoted $g$, belongs to $G$ by Lemma \ref{lm:auto_cont}. Let $x,u,y \in M$ be distinct. There are three cases to check. First suppose $ad(x,u) \leq f(x)$ and $ad(u,y) \leq f(u)$. Then
\[
d(x,y)g(x,y) \leq ad(x,y) \leq a(d(x,u)+d(u,y)) = d(x,u)g(x,u)+d(u,y)g(u,y).
\]
Second, if $f(x) < ad(x,u)$, then as $f$ is positive,
\[
d(x,y)g(x,y) \leq f(x) = d(x,u)g(x,u) \leq d(x,u)g(x,u)+d(u,y)g(u,y).
\] 
The final case is $ad(x,u) \leq f(x)$ and $f(u) < ad(u,y)$; given that $a \geq \Lip(f)$,
\[
d(x,y)g(x,y) \leq f(x) \leq ad(x,u) + f(u) = d(x,u)g(x,u) + d(u,y)g(u,y).
\]
Therefore $g$ satisfies \eqref{G-function}.

Now we establish \eqref{eqn:G_distance_functions_2}. Define $U=\pp_1^{-1}((\rcomp{f})^{-1}(0,\infty)) \cup \pp_1^{-1}(\rcomp{M})\setminus d^{-1}(0)$, which is open in $\bwt{M}$. The top line of the right hand side of \eqref{eqn:G_distance_functions_2} defines a continuous function on $U$. Using \eqref{eqn:G_distance_functions_1}, the density of $\wt{M}$ and continuity, we conclude that \eqref{eqn:G_distance_functions_2} holds whenever $\zeta \in U$. Now let $\pp_1(\zeta) \in (\rcomp{f})^{-1}(0)^\circ$ and let $(x_i,y_i)$ be a net in $\wt{M}$ converging to $\zeta$. As $\pp_1$ is continuous and $(\rcomp{f})^{-1}(0)^\circ$ is open, $f(x_i)=0$ for all $i$ large enough. This implies $g(x_i,y_i)=0$ for all such $i$ and, by continuity, $g(\zeta)=0$. This establishes \eqref{eqn:G_distance_functions_2}. The last statement about replacing $f(x)$ and $\pp_1$ follows by considering $g \circ \rr \in G$.
\end{proof} 

This lemma has a number of applications. 

\begin{corollary}\label{co:bwtf_preserved}
There exists a bounded function $g \in \mathfrak{G}$ such that $\bwtf=g^{-1}(0)$, and thus $\bwtf \in \mathfrak{C}$. 
\end{corollary}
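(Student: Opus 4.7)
The plan is to invoke Proposition~\ref{pr:zero_set} by exhibiting a single bounded Borel function $g \in \mathfrak{G}$ with $g^{-1}(0) = \bwtf$ (note that $\bwtf$ is open, hence Borel, since $\rcomp{M}$ is open in $\ucomp{M}$). The function $g$ will be a bounded pointwise decreasing limit of a sequence $(\tilde g_k) \subset G$ designed to detect whether \emph{either} coordinate of $\zeta \in \bwt{M}$ escapes $\rcomp{M}$; symmetrising in both coordinates via $\rr$ will be essential.

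For each $k \in \NN$ set $A_k = \bar{B}_d(0, k) \cap M$ and let $\rcomp{A_k}$ denote its $\tau$-closure in $\rcomp{M}$. Lemma~\ref{lm:G_distance_functions} (with $A = \rcomp{A_k}$ and $n = 1$) produces $g_k \in G \cap B_{C(\bwt{M})}$ satisfying $g_k(x, y) = \min\{1, d(x, A_k)/d(x, y)\}$ on $\wt{M}$, and Proposition~\ref{pr:bar-d_cont}\ref{pr:bar-d_cont_2} gives $\tau$-continuity of $\bar{d}(\cdot, \rcomp{A_k}) = \rcomp{d}(\cdot, A_k)$, so the equality case of~\eqref{eqn:G_distance_functions_2} is available. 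Set $\tilde g_k = g_k + g_k \circ \rr \in G$. Since $A_k$ is increasing in $k$, $\tilde g_k$ is pointwise decreasing on $\wt{M}$ and hence on $\bwt{M}$ by continuity and density; its pointwise limit $g$ is bounded by $2$ and lies in $\mathfrak{G}$ by applying monotone convergence to $\int \tilde g_k \, d\mu \leq \int \tilde g_k \, d\nu$ for $\mu \preccurlyeq \nu$.

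For $\zeta \in \bwtf$ put $r_i = \bar{d}(\pp_i(\zeta), 0) < \infty$. For $k > \max\{r_1, r_2\}$, $\tau$-continuity of $\bar{d}(\cdot, 0)$ on $\rcomp{M}$ places $\pp_i(\zeta) \in \rcomp{A_k}$. If $d(\zeta) > 0$, \eqref{eqn:G_distance_functions_2} gives $g_k(\zeta) = g_k \circ \rr(\zeta) = 0$ directly. If $d(\zeta) = 0$, the formula fails, but approaching $\zeta$ by $(x_i, y_i) \in \wt M$ and using $\tau$-continuity of $d(\cdot, 0)$ forces $d(x_i, 0) \to r_1 < k$, hence eventually $x_i \in A_k$ and $g_k(x_i, y_i) = 0$; continuity of $g_k$ then gives $g_k(\zeta) = 0$, and symmetrically for $g_k \circ \rr$. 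Either way $g(\zeta) = 0$.

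Conversely, for $\zeta \notin \bwtf$ at least one coordinate satisfies $\ucomp d(\pp_i(\zeta), 0) = \infty$. The main obstacle is case (A), where both coordinates escape $\rcomp M$: neither $g_k(\zeta)$ nor $g_k \circ \rr(\zeta)$ is individually forced to be $1$, since the two ratios can split unfavourably, and this is where symmetrisation pays off. Along any approaching net $(x_i, y_i) \to \zeta$, the bounds $d(x_i, A_k) \geq d(x_i, 0) - k$ (triangle inequality with $0 \in A_k$) and $d(x_i, y_i) \leq d(x_i, 0) + d(y_i, 0)$ combine to give
\[
 \frac{d(x_i, A_k)}{d(x_i, y_i)} + \frac{d(y_i, A_k)}{d(x_i, y_i)} \geq 1 - \frac{2k}{d(x_i, y_i)};
\]
combining with the elementary inequality $\min\{1, \alpha\} + \min\{1, \beta\} \geq \min\{2, \alpha + \beta\}$ yields $\tilde g_k(\zeta) \geq 1$ when $d(\zeta) = \infty$, and when $d(\zeta) < \infty$ the individual ratios diverge outright and $\tilde g_k(\zeta) = 2$. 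Cases (B) and (C) are easier: the triangle inequality forces $d(\zeta) = \infty$, and the divergent coordinate alone already drives the corresponding ratio to $1$. Hence $g(\zeta) \geq 1 > 0$, and Proposition~\ref{pr:zero_set} concludes that $\bwtf \in \mathfrak{C}$.
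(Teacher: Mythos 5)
Your proof follows essentially the same route as the paper: exhaust $\rcomp{M}$ by balls centred at $0$, apply Lemma~\ref{lm:G_distance_functions} to obtain the two distance functions to these balls, sum them (the paper writes the sum directly; you write $g_k + g_k\circ\rr$, which is the same thing), take the decreasing pointwise limit, and argue that the zero set of the limit is exactly $\bwtf$; your three-way case split on whether one or both coordinates escape is folded into a single WLOG in the paper's version. One point needs correcting: the displayed ``elementary inequality'' $\min\{1,\alpha\}+\min\{1,\beta\}\geq\min\{2,\alpha+\beta\}$ is \emph{false} (take $\alpha=\tfrac12$, $\beta=\tfrac32$: the left side is $\tfrac32$, the right is $2$). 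The inequality you actually need is $\min\{1,\alpha\}+\min\{1,\beta\}\geq\min\{1,\alpha+\beta\}$, which is true and still yields $\tilde g_k(\zeta)\geq\min\{1,\,1-2k/d(x_i,y_i)\}\to 1$ as $d(x_i,y_i)\to\infty$, so the conclusion $\tilde g_k(\zeta)\geq 1$ is unaffected. With that fix the argument is sound.
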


\begin{proof}
Given $k \in \NN$, define the compact set $B_k=\set{\xi \in \rcomp{M} \,:\, \bar{d}(0,\xi) \leq k}$. According to Lemma \ref{lm:G_distance_functions} and using the fact that $G$ is a cone, for each $k \in \NN$ there exists $g_k \in G \cap 2B_{C(\bwt{M})}$ such that
\[
 g_k(x,y) = \min\set{1,\frac{\bar{d}(x,B_k)}{d(x,y)}} + \min\set{1,\frac{\bar{d}(y,B_k)}{d(x,y)}}, \quad (x,y) \in \wt{M}.
\]
Then $(g_k)$ is a bounded and decreasing sequence of positive elements of $G$, thus its pointwise limit $g$ exists and belongs to $\mathfrak{G}$. 

We show that $\bwtf = g^{-1}(0)$. Let $\zeta \in \bwt{M}\setminus\bwtf$. We claim that $g(\zeta) \geq 1$. Assume without loss of generality that $\zeta \notin \pp_1^{-1}(\rcomp{M})$, and let $(x_i,y_i)$ be a net of points in $\wt{M}$ converging to $\zeta$. We note
\begin{equation}\label{eqn:bwtf_preserved_1}
 |\bar{d}(x,B_k)-d(x,0)| \leq k, \quad x \in M,
\end{equation}
hence $\lim_i \bar{d}(x_i,B_k)=\infty$. There are two cases: $d(\zeta)<\infty$ and $d(\zeta)=\infty$. If $d(\zeta)<\infty$ then
\[
 g_k(\zeta) = \lim_i g_k(x_i,y_i) \geq \lim_i \min\set{1,\frac{\bar{d}(x_i,B_k)}{d(x_i,y_i)}} = 1.
\]
If $d(\zeta)=\infty$ then, again by \eqref{eqn:bwtf_preserved_1}, 
\[
\lim_i \frac{\bar{d}(x_i,B_k) + \bar{d}(y_i,B_k)}{d(x_i,y_i)} = \lim_i \frac{d(x_i,0) + d(y_i,0)}{d(x_i,y_i)} \geq 1,
\]
thus
\[
 g_k(\zeta) = \lim_i g_k(x_i,y_i) \geq \lim_i \min\set{1, \frac{\bar{d}(x_i,B_k) + \bar{d}(y_i,B_k)}{d(x_i,y_i)}} \geq 1.
\]
This completes the proof of the claim. On the other hand, $\bwtf = \bigcup_{k=1}^\infty L_k$, where $L_k := \pp^{-1}(B_k \times B_k)$. If $\zeta \in L_k$ then $x_i,y_i \in B_{k+1}$ for all $i$ large enough by Proposition \ref{pr:bar-d_cont} \ref{pr:bar-d_cont_3}. Hence $\lim_i \bar{d}(x_i,B_{k+1})=\lim_i \bar{d}(y_i,B_{k+1})=0$, giving $g_{k+1}(\zeta)=0$. We deduce that $g(\zeta)=0$ whenever $\zeta \in \bwtf$. The second assertion of the proposition follows immediately by Proposition \ref{pr:zero_set}. 
\end{proof}

The next example shows that the preimages $\pp_1^{-1}(\rcomp{M}), \pp_2^{-1}(\rcomp{M})$ (and their complements) don't belong to $\mathfrak{C}$ in general.

\begin{example}\label{ex:preimages_not_preserved}
 Let $M$, $\zeta$ and $\omega$ be as in Example \ref{ex:sep_at_infinity_fails}. As $g(\zeta)=g(\omega)$ for all $g \in G$, we have $\delta_{\omega} \preccurlyeq \delta_\zeta$ and vice-versa. As $M$ is proper we have $\rcomp{M}=M$. In this case $\delta_\zeta$ and $\delta_{\omega}$ are concentrated on $\pp_1^{-1}(M)$ and $\pp_1^{-1}(\ucomp{M}\setminus M)$, respectively, because $\pp_1(\zeta)=1 \in M$ and $\pp_1(\omega) \notin M$.
\end{example}

Our next (indirect) application of Lemma \ref{lm:G_distance_functions} is Theorem \ref{th:C_conc}. Example \ref{ex:preimages_not_preserved} shows that preimages of nice subsets of $\rcomp{M}$ don't belong to $\mathfrak{C}$ in general. However, if one restricts one's attention to the better behaved realm of $\bwtf$, we get some positive results. Theorem \ref{th:C_conc} below shows that $\bwtf \cap \pp_i^{-1}(E) \in \mathfrak{C}$, $i=1,2$, provided $E \subset \rcomp{M}$ is sufficiently regular.  Before presenting the result we introduce two more lemmas, together with a variant of $\mathfrak{G}$ that will be of use to us again later.

\begin{lemma}\label{lm:dist_pointwise_sup}
Let $A \subset \rcomp{M}$ be closed and non-empty, and let $\Lambda$ be a directed set of subsets of $\rcomp{M}$, ordered by reverse inclusion, such that $A \subset L$ for all $L \in \Lambda$, and given $U \supset A$ open in $\rcomp{M}$, there exists $L \in \Lambda$ satisfying $L \subset U$. Then 
 \[
  \bar{d}(\xi,A) = \sup_{L \in \Lambda} \bar{d}(\xi,L), \quad \xi \in \rcomp{M}.
 \]
\end{lemma}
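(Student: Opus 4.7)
The plan is to prove the two inequalities separately. The inequality $\sup_{L \in \Lambda}\bar{d}(\xi,L) \leq \bar{d}(\xi,A)$ is immediate: every $L \in \Lambda$ contains $A$, so the infimum defining $\bar{d}(\xi,L)$ is taken over a superset of $A$, and therefore $\bar{d}(\xi,L)\leq\bar{d}(\xi,A)$ for each $L$. Taking the supremum over $L$ preserves the inequality.

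For the reverse inequality $\bar{d}(\xi,A) \leq \sup_{L \in \Lambda} \bar{d}(\xi,L)$, the natural tool is Proposition \ref{pr:bar-d_cont} \ref{pr:bar-d_cont_4}, which expresses $\bar{d}$ between $\tau$-closed sets as a supremum of $d$-distances between $M$-traces of $\tau$-open neighbourhoods. Fix any $r<\bar{d}(\xi,A)$ (noting that $\{\xi\}$ is $\tau$-closed, as $\tau$ is Hausdorff). Applying \ref{pr:bar-d_cont_4} to the pair $\{\xi\}$ and $A$, I obtain $\tau$-open sets $U \ni \xi$ and $V \supset A$ in $\rcomp{M}$ satisfying $d(U \cap M, V \cap M) > r$. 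The cofinality hypothesis on $\Lambda$ now gives some $L \in \Lambda$ with $L \subset V$. The goal is then to show $\bar{d}(\xi,L) \geq r$, which together with the arbitrariness of $r$ yields the desired inequality.

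To establish $\bar{d}(\xi,L) \geq r$, I would pick an arbitrary $\eta \in L$ and apply Proposition \ref{pr:bar-d_cont} \ref{pr:bar-d_cont_4} a second time, now to the pair of closed singletons $\{\xi\}$ and $\{\eta\}$: since $U$ is a $\tau$-open set containing $\xi$ and $V$ is a $\tau$-open set containing $\eta$ (because $\eta \in L \subset V$), these are admissible neighbourhoods in the supremum, hence
\[
\bar{d}(\xi,\eta) \geq d(U \cap M, V \cap M) > r.
\]
Taking the infimum over $\eta \in L$ gives $\bar{d}(\xi,L) \geq r$, which completes the argument for the finite case; the case $\bar{d}(\xi,A)=\infty$ follows by the same reasoning applied to arbitrarily large $r$.

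The main obstacle I anticipate is conceptual rather than technical: one has to avoid the temptation of trying to control $\bar{d}(\xi,L)$ via continuity or lower semicontinuity of $\bar{d}$, which is delicate when $\xi \in \rcomp{M}\setminus M$ (cf.~Proposition \ref{pr:bar-d_cont} \ref{pr:bar-d_cont_3}); and one has to resist using closures of the sets $L$, which need not be $\tau$-closed and could push one outside $\Lambda$. The right move is to bypass these issues entirely by going through the open-set characterisation in \ref{pr:bar-d_cont_4} twice, using the same pair $(U,V)$ both times.
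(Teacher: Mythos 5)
Your proof is correct, and it takes a genuinely different route from the one in the paper. The paper argues by compactness: it selects near-optimal points $\eta_L \in L$, uses the $\tau$-compactness of $\bar{d}$-closed balls (Proposition \ref{pr:rcomp_observations} \ref{pr:rcomp_observations_2}) together with the cofinality and directedness hypotheses on $\Lambda$ to extract a cluster point $\eta\in A$ of the net $(\eta_L)$, and then invokes the $\tau$-lower semicontinuity of $\bar{d}$ (Proposition \ref{pr:rcomp_observations} \ref{pr:rcomp_observations_1}) to pass to the limit. You bypass all of this by appealing twice to the open-set characterisation of Proposition \ref{pr:bar-d_cont} \ref{pr:bar-d_cont_4} with the \emph{same} pair $(U,V)$: the first application extracts the separating pair from $r<\bar{d}(\xi,A)$, and the second transfers the quantitative separation to each $\eta\in L\subset V$ via $\bar{d}(\xi,\eta)=\bar{d}(\{\xi\},\{\eta\})\geq d(U\cap M,V\cap M)>r$. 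Your version is shorter and avoids compactness and cluster points entirely, trading them for the heavier-duty characterisation in \ref{pr:bar-d_cont_4}, whereas the paper's version relies only on the more elementary lower semicontinuity and ball-compactness facts from Proposition \ref{pr:rcomp_observations}. A small remark: the sentence about the case $\bar{d}(\xi,A)=\infty$ is vacuous here, since $\bar{d}$ is finite-valued on $\rcomp{M}\times\rcomp{M}$ by construction; but including it does no harm.
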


\begin{proof}
Fix $\xi \in \rcomp{M}$. Clearly $\bar{d}(\xi,A) \geq \bar{d}(\xi,L)$ for all $L \in \Lambda$. Now let $\ep>0$. Given $L \in \Lambda$, pick $\eta_L \in L$ such that $\bar{d}(\xi,L) > \bar{d}(\xi,\eta_L) - \ep$. Set $C=\bar{d}(\xi,L)+\ep$ and let $K=A \cap B_{\bar{d}}(\xi,C)$, which is compact by Proposition \ref{pr:rcomp_observations} \ref{pr:rcomp_observations_2}.

We claim that the net $(\eta_L)$ clusters at a point $\eta \in K$. Indeed, if not, given $\eta \in K$, there exists open $U_\eta \ni \eta$ and $L_\eta \in \Lambda$ such that $\eta_L \notin U_\eta$ whenever $L \in \Lambda$, $L \subset L_\eta$. Let $\eta_1,\ldots,\eta_n \in K$ such that $K \subset U:=\bigcup_{k=1}^n U_{\eta_k}$, and $L' \in \Lambda$ such that $L' \subset \bigcap_{k=1}^n L_{\eta_k}$. Define the open set
\[
V = U \cup (\rcomp{M}\setminus B_{\bar{d}}(\xi,C)),
\]
which includes $A$, and then pick $L \in \Lambda$ satisfying $L \subset V \cap L'$. We see that
\[
 \eta_L \in L \cap B_{\bar{d}}(\xi,C) \subset V \cap B_{\bar{d}}(\xi,C) \subset U,
\]
thus $\eta_L \in U_{\eta_k}$ for some $k$, which is a contradiction.

Given our cluster point $\eta \in K \subset A$, by the lower semicontinuity of $\bar{d}$ (Proposition \ref{pr:rcomp_observations} \ref{pr:rcomp_observations_1}) fix open $W \ni \eta$ such that $\bar{d}(\xi,\eta') > \bar{d}(\xi,\eta)-\ep$ whenever $\eta' \in W$. Then pick $\eta_L \in W$ to get 
 \[
  \bar{d}(\xi,A) \leq \bar{d}(\xi,\eta) < \bar{d}(\xi,\eta_L) + \ep < \bar{d}(\xi,L) + 2\ep. \qedhere
 \]
\end{proof}

Now we define our variant of $\mathfrak{G}$. Let $\bigvee$ denote the pointwise supremum of a set of real-valued functions. Set
\[
 \Glsc = \set{\bigvee E : E \subset G \text{ is non-empty and bounded}} \subset C(\bwt{M})^{**}.
\]
Clearly $G \subset \Glsc$ and every element of $\Glsc$ is a bounded lower semicontinuous real-valued function on $\bwt{M}$ that is constant on fibres of $\gq$. As $G$ is a max-stable cone in $C(\bwt{M})$ that is invariant under the action of the reflection map $\rr$, $\Glsc$ is a max-stable cone in $C(\bwt{M})^{**}$ also invariant under $\rr$. Let $E \subset G$ be non-empty and bounded and let $g=\bigvee E \in \Glsc$. As $G$ is max-stable, the max-stable closure
\[
E':=\set{g_1 \vee \ldots \vee g_n \,:\, g_1,\ldots, g_n \in E,\, n \in \NN},
\]
of $E$ is again a bounded subset of $G$, and it is evident that $g=\bigvee E'$. Thus we can always assume that $E \subset G$ is max-stable to begin with. This makes $E$ an upwards-directed family, so by Lebesgue's monotone convergence theorem for nets \cite[Theorem A.84]{lmns10}, we conclude that every positive element of $\Glsc$ belongs to $\mathfrak{G}$.

\begin{lemma}\label{lm:G_distance_functions_closed}
Let $A \subset \rcomp{M}$ be non-empty and closed, $i \in \set{1,2}$ and $n \in \NN$. Then there exists $g_{n,A} \in \Glsc \cap \mathfrak{G}$, taking values in $[0,n]$, such that
\begin{equation}\label{eqn:G_distance_functions_compact_1}
g_{n,A}(\zeta) = \begin{cases}\displaystyle \min\set{n,\frac{\bar{d}(\pp_i(\zeta),A)}{d(\zeta)}} & \text{if } \zeta \in \pp_i^{-1}(\rcomp{M}\setminus A),\\
0 & \text{if } \zeta \in \pp_i^{-1}(A).
\end{cases}
\end{equation}
Moreover, $g_{m,B} \leq g_{n,A}$ whenever $m \leq n$, $A \subset B \subset \rcomp{M}$ and $B$ is closed.
\end{lemma}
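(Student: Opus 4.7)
The plan is to realise $g_{n,A}$ as the pointwise supremum of a directed family of $G$-functions produced by Lemma \ref{lm:G_distance_functions}, applied to carefully chosen ``fattened'' versions of $A$. Concretely, we would introduce
\[
 \Lambda = \{L_V := \rcl{V \cap M} \,:\, V \subset \rcomp{M} \text{ is open and } A \subset V\},
\]
which has two key features: by Proposition \ref{pr:bar-d_cont} \ref{pr:bar-d_cont_2}, $\bar{d}(\cdot, L_V) = \rcomp{d}(\cdot, V \cap M)$ is $\tau$-continuous; and by $\tau$-normality of $\rcomp{M}$ (Proposition \ref{pr:rcomp_observations} \ref{pr:rcomp_observations_5}), any open $U \supset A$ admits an open $V$ with $A \subset V \subset \cl{V}^\tau \subset U$, so $L_V \subset \cl{V}^\tau \subset U$. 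Combined with the inclusion $V \subset L_V$ (from density of $M$ in $\rcomp{M}$), this checks the hypotheses of Lemma \ref{lm:dist_pointwise_sup} and yields $\bar{d}(\xi, A) = \sup_{L \in \Lambda} \bar{d}(\xi, L)$ for every $\xi \in \rcomp{M}$.

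For each $L \in \Lambda$, the $\tau$-continuity of $\bar{d}(\cdot, L)$ activates the equality clause of Lemma \ref{lm:G_distance_functions}, producing $g_L \in G \cap n B_{C(\bwt{M})}$ such that $g_L(\zeta) = \min\{n, \bar{d}(\pp_i(\zeta), L)/d(\zeta)\}$ on $\pp_i^{-1}(\rcomp{M}) \setminus (\pp_i^{-1}(L) \cap d^{-1}(0))$. We then set $g_{n,A} := \bigvee_{L \in \Lambda} g_L$, which lies in $\Glsc$ by construction, takes values in $[0,n]$, and being positive belongs to $\mathfrak{G}$ by the remark preceding the lemma.

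The verification of the formula \eqref{eqn:G_distance_functions_compact_1} then splits into three cases. For $\zeta \in \pp_i^{-1}(\rcomp{M} \setminus A)$ with $d(\zeta) > 0$, the formula for each $g_L$ applies at $\zeta$; since $a \mapsto \min\{n, a/d(\zeta)\}$ is monotone and continuous, the supremum passes through, giving the desired $\min\{n, \bar{d}(\pp_i(\zeta), A)/d(\zeta)\}$. For $\zeta \in \pp_i^{-1}(\rcomp{M} \setminus A)$ with $d(\zeta) = 0$, $\tau$-normality separates the disjoint $\tau$-closed sets $A$ and $\{\pp_i(\zeta)\}$ by disjoint opens $V, W$; then $L_V \subset \rcomp{M} \setminus W$, so $\pp_i(\zeta) \notin L_V$, the formula applies at $\zeta$, and gives $g_{L_V}(\zeta) = n$, forcing $g_{n,A}(\zeta) = n$. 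Monotonicity is then almost immediate: any open $V \supset B$ automatically contains $A$, and the pointwise bound $\min\{m, \cdot\} \leq \min\{n, \cdot\}$ makes the corresponding $g_L$'s pointwise comparable, so passing to suprema preserves the inequality.

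The main obstacle we expect is the remaining case, namely $g_{n,A} \equiv 0$ on $\pp_i^{-1}(A)$. On the ``diagonal'' $\pp_i^{-1}(A) \cap d^{-1}(0)$, Lemma \ref{lm:G_distance_functions} supplies no information whatsoever about the values of $g_L$. The choice $L = \rcl{V \cap M}$ with $V$ open is precisely what rescues the argument: the set $\pp_i^{-1}(V)$ is open in $\bwt{M}$ and contains $\pp_i^{-1}(A)$, so any net $(x_j, y_j) \in \wt{M}$ converging to $\zeta \in \pp_i^{-1}(A)$ eventually has its $i$-th coordinate in $V \cap M \subset L_V$; the valid formula on $\wt{M}$ then forces $g_L(x_j, y_j) = 0$ eventually, and continuity of $g_L$ on $\bwt{M}$ gives $g_L(\zeta) = 0$. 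Taking the supremum over $L \in \Lambda$ finally yields $g_{n,A}(\zeta) = 0$.
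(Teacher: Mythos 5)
Your argument is correct and follows the same route as the paper: you build the $G$-functions from Lemma \ref{lm:G_distance_functions} applied to closed sets of the form $\rcl{V\cap M}$ with $V$ an open superset of $A$ (exactly what the paper does, since $\rcl{U}=\rcl{U\cap M}$ for open $U$), take the pointwise supremum to land in $\Glsc\cap\mathfrak{G}$, invoke Lemma \ref{lm:dist_pointwise_sup} on the off-diagonal part, and use the net-plus-continuity argument on $\pp_i^{-1}(A)$. The only cosmetic difference is that you split the off-$A$ case into $d(\zeta)>0$ and $d(\zeta)=0$ subcases, which the paper handles in a single step; your version is a little more explicit but not a different proof.
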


\begin{proof}
Without loss of generality we assume $i=1$; the case $i=2$ follows by considering $g_{n,A} \circ \rr \in \Glsc$. Consider any set $U \supset A$ open in $\rcomp{M}$. Since $U \subset \rcomp{M}$ is open and $M$ is dense in $\rcomp{M}$, by Proposition \ref{pr:bar-d_cont} \ref{pr:bar-d_cont_2} we see that $\bar{d}(\cdot,\rcl{U}) = \bar{d}(\cdot,\rcl{U \cap M}) = \rcomp{d}(\cdot,U \cap M)$. Again because $U$ is open, $U \subset (\rcomp{U})^\circ$. By applying Lemma \ref{lm:G_distance_functions}, there exists positive $g_{n,U} \in G \cap nB_{C(\bwt{M})}$ satisfying
\begin{equation}\label{eqn:G_distance_functions_compact_2}
 g_{n,U}(x,y) = \min\set{n,\frac{d(x,U \cap M)}{d(x,y)}}, \quad (x,y) \in \wt{M},
\end{equation}
and
\begin{equation}\label{eqn:G_distance_functions_compact_3}
g_{n,U}(\zeta) = \begin{cases}
                  \displaystyle \min\set{n,\frac{\bar{d}(\pp_1(\zeta),\rcl{U})}{d(\zeta)}} & \text{if } \zeta \in \pp_1^{-1}(\rcomp{M}\setminus \rcl{U})\\
                  0 & \text{if } \zeta \in \pp_1^{-1}(U).
                 \end{cases}
\end{equation}
Define $g_{n,k} \in \Glsc$ by $g_{n,A}(\zeta)=\sup_U g_{n,U}(\zeta)$, where the $U$ range over all supersets of $A$ that are open in $\rcomp{M}$, as above. Because $g_{n,A}$ takes values in $[0,n]$, we see from above that $g_{n,A} \in \mathfrak{G}$.

We claim that $g_{n,A}$ satisfies \eqref{eqn:G_distance_functions_compact_1}. Indeed, because $\rcomp{M}$ is normal by Proposition \ref{pr:rcomp_observations} \ref{pr:rcomp_observations_5}, we can apply Lemma \ref{lm:dist_pointwise_sup} to conclude that
\[
 \bar{d}(\pp_1(\zeta),A) = \sup_{U \supset A} \bar{d}(\pp_1(\zeta),\rcl{U}).
\]
Hence, if $\pp_1(\zeta) \in \rcomp{M}\setminus A$ then \eqref{eqn:G_distance_functions_compact_3} will hold for all small enough open sets $U \supset A$, and taking the supremum of both sides of \eqref{eqn:G_distance_functions_compact_3} yields the stated formula for $g_{n,A}(\zeta)$ in this case. If $\pp_1(\zeta) \in A$ then $g_{n,U}(\zeta)=0$ for all open supersets $U$ of $A$ by \eqref{eqn:G_distance_functions_compact_3}, giving $g_{n,A}(\zeta)=0$ as claimed.

Finally, let $m \leq n$ and $A \subset B$ be non-empty and closed. Given an open set $U \supset B$, it is clear from \eqref{eqn:G_distance_functions_compact_2} that $g_{m,U} \leq g_{n,U}$. Consequently,
\[
 g_{m,B} = \sup_{U \supset B} g_{m,U} \leq \sup_{U \supset A} g_{n,U} = g_{n,A}. \qedhere
\]
\end{proof}

\begin{theorem}\label{th:C_conc}
Let $E \subset \rcomp{M}$ be $\bar{d}$-closed and Borel. Then $\bwtf \cap \pp_i^{-1}(E) \in \mathfrak{C}$, $i=1,2$.
\end{theorem}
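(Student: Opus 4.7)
The plan is to verify the hypothesis of Proposition \ref{pr:zero_set}. Given a $K_\sigma$ subset $F \subset \bwtf \cap \pp_i^{-1}(E)$, write $F = \bigcup_m K_m$ with $(K_m)$ an increasing sequence of compact sets (the case $F = \varnothing$ is handled trivially by taking $g = \mathbf{1}_{\bwt{M}}$). The goal is a function $g \in \mathfrak{G}$ with $F \subset g^{-1}(0) \subset \bwtf \cap \pp_i^{-1}(E)$, which I propose to build as $g := \gamma + g_\infty$, where $\gamma \in \mathfrak{G}$ comes from Corollary \ref{co:bwtf_preserved} (so that $\gamma^{-1}(0) = \bwtf$) and $g_\infty$ is constructed next to detect membership in $E$.

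For each $m$, set $A_m := \pp_i(K_m)$, which is $\tau$-compact in $\rcomp{M}$ because $K_m \subset \bwtf$, and which lies in $E$; let $B_m := A_1 \cup \cdots \cup A_m$ be the resulting increasing sequence of $\tau$-closed subsets of $E$. Applying Lemma \ref{lm:G_distance_functions_closed} with the $B_m$ and $n=1$ yields $g_{1, B_m} \in \Glsc \cap \mathfrak{G}$, bounded by $1$ and vanishing on $\pp_i^{-1}(B_m)$; by the monotonicity clause of that lemma, the sequence $(g_{1, B_m})$ is pointwise decreasing, so dominated convergence places $g_\infty := \lim_m g_{1, B_m}$ in $\mathfrak{G}$ as well, and hence $g \in \mathfrak{G}$.

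Verifying $F \subset g^{-1}(0)$ is immediate from the construction: $\zeta \in K_m$ lies in $\bwtf$, killing $\gamma(\zeta)$, and has $\pp_i(\zeta) \in A_m \subset B_m$, killing $g_{1, B_m}(\zeta)$ and hence $g_\infty(\zeta)$. Conversely, suppose $g(\zeta) = 0$. Then $\gamma(\zeta) = 0$ forces $\zeta \in \bwtf$, so $d(\zeta) < \infty$ and $\pp_i(\zeta) \in \rcomp{M}$, and $g_\infty(\zeta) = 0$ must be analysed via the explicit formula from Lemma \ref{lm:G_distance_functions_closed}. Either $\pp_i(\zeta) \in B_m$ for some $m$, placing $\pp_i(\zeta)$ in $\bigcup_m A_m \subset E$ directly, or $g_{1, B_m}(\zeta) = \min\set{1, \bar{d}(\pp_i(\zeta), B_m)/d(\zeta)}$ for every $m$; in the latter case $d(\zeta) > 0$ (else each term would equal $1$, as $B_m$ is $\tau$-closed hence $\bar{d}$-closed), and the vanishing of the limit forces $\bar{d}(\pp_i(\zeta), B_m) \to 0$. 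Thus $\pp_i(\zeta)$ lies in the $\bar{d}$-closure of $\bigcup_m B_m \subset E$, which equals $E$ since $E$ is $\bar{d}$-closed. This last step is precisely where the $\bar{d}$-closedness hypothesis (as opposed to mere $\tau$-closedness) is essential, and it is the only real subtlety in the argument; Proposition \ref{pr:zero_set} then delivers the conclusion.
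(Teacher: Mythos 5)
Your proof is correct and follows essentially the same route as the paper: both invoke Proposition \ref{pr:zero_set}, build the required function as the sum of the $\mathfrak{G}$-function from Corollary \ref{co:bwtf_preserved} (killing $\bwt{M}\setminus\bwtf$) and a monotone limit of the distance-type functions $g_{1,\cdot}$ from Lemma \ref{lm:G_distance_functions_closed} applied to an increasing exhaustion of $\pp_i(F)$ by compact sets, and then run the same two-case dichotomy on $g(\zeta)=0$, with $\bar{d}$-closedness of $E$ used at exactly the same point. The only cosmetic differences are that you exhaust $F$ first and push forward (the paper exhausts $\pp_i(F)$ directly) and that you cite dominated rather than monotone convergence for passing the limit into $\mathfrak{G}$; both are immaterial.
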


\begin{proof}
Again we use Proposition \ref{pr:zero_set}. Without loss of generality we assume $i=1$. Let $F \subset \bwtf \cap \pp_1^{-1}(E)$ be a non-empty $K_\sigma$ set. Then $\pp_1(F) \subset E$ is a $K_\sigma$ set; write $\pp_1(F)=\bigcup_{n=1}^\infty K_n$, where $(K_n)$ is an increasing sequence of non-empty compact subsets of $\rcomp{M}$. According to Lemma \ref{lm:G_distance_functions_closed}, there exists a decreasing sequence of bounded functions $g_n \in \mathfrak{G}$ such that
\begin{equation}\label{eqn:C_conc_1}
g_n(\zeta) = \begin{cases}\displaystyle \min\set{1,\frac{\bar{d}(\pp_1(\zeta),K_n)}{d(\zeta)}} & \text{if } \zeta \in \pp_1^{-1}(\rcomp{M}\setminus K_n),\\
0 & \text{if } \zeta \in \pp_1^{-1}(K_n).
\end{cases}
\end{equation}
Consequently, by the monotone convergence theorem, the pointwise limit $g'$ of the $g_n$ also belongs to $\mathfrak{G}$. By Corollary \ref{co:bwtf_preserved}, there exists $g'' \in \mathfrak{G}$ such that $\bwtf=g''^{-1}(0)$. Define $g=g'+g'' \in \mathfrak{G}$.

We verify that $F \subset g^{-1}(0) \subset \bwtf \cap \pp_1^{-1}(E)$. Given $\zeta \in F$, we have $\zeta \in \bwtf$, meaning that $g''(\zeta)=0$. We also have $\pp_1(\zeta) \in \pp_1(F)$, so $\pp_1(\zeta) \in K_n$ for all large enough $n$, giving $g'(\zeta)=0$ by \eqref{eqn:C_conc_1}. Hence $g(\zeta)=0$.

If $g(\zeta)=0$ then $g'(\zeta)=g''(\zeta)=0$. Now $g''(\zeta)=0$ implies $\zeta \in \bwtf$, which means in particular that $\zeta \in \pp_1^{-1}(\rcomp{M})$ and $d(\zeta)<\infty$. Either $\pp_1(\zeta) \in \pp_1(F) \subset E$, giving $\zeta \in \pp_1^{-1}(E)$, or $\pp_1(\zeta) \notin \pp_1(F)$. In the second case $\zeta \in \pp_1^{-1}(\rcomp{M}\setminus K_n)$ for all $n \in \NN$, so $g'(\zeta)=0$ and \eqref{eqn:C_conc_1} imply
\[
 \lim_n \min\set{1,\frac{\bar{d}(\pp_1(\zeta),K_n)}{d(\zeta)}} = \lim_n g_n(\zeta) = 0.
\]
which in turn implies $d(\zeta) \in (0,\infty)$ and so $\lim_n \bar{d}(\pp_1(\zeta),K_n)=0$, giving $\pp_1(\zeta) \in \cl{\pp_1(F)}^{\bar{d}} \subset E$. This means that $\zeta \in \pp_1^{-1}(E)$, as required.
\end{proof}

It is not possible to remove the hypothesis that $E$ is $\bar{d}$-closed from Theorem \ref{th:C_conc}.

\begin{example}\label{ex:C_conc}
Let $M=[0,1]$ have the usual metric and base point $0$, and let $C=[0,1]\setminus\bigcup_{n=1}^\infty (a_n,b_n)$ denote the usual middle-thirds Cantor set, where the intervals $(a_n,b_n)$ are disjoint. Set
 \[
  \nu = \sum_{n=1}^\infty (b_n-a_n)\delta_{(b_n,a_n)}.
 \]
We verify that $\Phi^*\nu = m_{10}$. Indeed, given $f \in \Lip_0(M)$, by absolute continuity,
\[
\duality{f,\Phi^*\nu} = \duality{\Phi f,\nu} = \sum_{n=1}^\infty f(b_n)-f(a_n) = \sum_{n=1}^\infty \int_{a_n}^{b_n} f'(t) \,dt = \int_0^1 f'(t) \,dt = f(1)-f(0) = \duality{f,m_{10}}.
\]
Hence $\delta_{(1,0)} \preccurlyeq \nu$ by Proposition \ref{pr:least_measures}. However, $\nu$ is concentrated on $\bwtf \cap \pp_1^{-1}(E)$, where $E:=\set{b_n \,:\, n \in \NN}$, while on the other hand $(1,0) \notin \pp_1^{-1}(E)$.
\end{example}

Recall from Section \ref{subsec:supp} the notion of the shadow of the support of a measure. Theorem \ref{th:C_conc} allows us to show that shadows of the supports of measures concentrated on $\bwtf$ behave well with respect to $\preccurlyeq$. 

\begin{corollary}\label{cr:support}
Let $\mu,\nu \in \meas{\bwt{M}}^+$ such that $\mu \preccurlyeq \nu$ and $\nu$ is concentrated on $\bwtf$. Then $\pp_i(\supp(\mu)) \subset \pp_i(\supp(\nu))$, $i=1,2$.
\end{corollary}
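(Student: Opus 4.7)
The plan is to package Corollary~\ref{co:bwtf_preserved} and Theorem~\ref{th:C_conc} together with the natural choice of closed set containing the shadow of $\supp(\nu)$. First, I would observe that since $\bwtf\in\mathfrak{C}$ by Corollary~\ref{co:bwtf_preserved} and $\nu$ is concentrated on $\bwtf$, the definition of $\mathfrak{C}$ immediately gives that $\mu$ is also concentrated on $\bwtf$. So both $\mu$ and $\nu$ may be treated as measures on $\bwtf$.

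Next I would set $E=\pp_i(\supp(\nu))\subset\ucomp{M}$. This is closed (in fact compact) as the continuous image of the compact set $\supp(\nu)$, but $E$ may straddle $\rcomp{M}$ and its complement, so to feed it into Theorem~\ref{th:C_conc} I would pass to $E_1:=E\cap\rcomp{M}$. Since $\rcomp{M}$ is an open subset of $\ucomp{M}$, $E_1$ is $\tau$-closed in $\rcomp{M}$, hence Borel; and because $\bar{d}$ is finer than $\tau$ by Proposition~\ref{pr:rcomp_observations}~\ref{pr:rcomp_observations_1}, $E_1$ is also $\bar{d}$-closed. Theorem~\ref{th:C_conc} then yields $\bwtf\cap\pp_i^{-1}(E_1)\in\mathfrak{C}$.

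The key observation to wrap up is that on $\bwtf$ the preimages $\pp_i^{-1}(E)$ and $\pp_i^{-1}(E_1)$ coincide: indeed $\pp_i^{-1}(E\setminus E_1)\subset\pp_i^{-1}(\ucomp{M}\setminus\rcomp{M})$ is disjoint from $\bwtf=\pp^{-1}(\rcomp{M}\times\rcomp{M})$. Since $\nu$ is concentrated on both $\supp(\nu)\subset\pp_i^{-1}(E)$ and $\bwtf$, it is concentrated on $\bwtf\cap\pp_i^{-1}(E)=\bwtf\cap\pp_i^{-1}(E_1)$. Applying $\mu\preccurlyeq\nu$ to this member of $\mathfrak{C}$, I conclude that $\mu$ is concentrated on $\bwtf\cap\pp_i^{-1}(E_1)\subset\pp_i^{-1}(E)$, and since $\pp_i^{-1}(E)$ is closed (preimage of a closed set under the continuous map $\pp_i$), $\supp(\mu)\subset\pp_i^{-1}(E)$; taking $\pp_i$ yields $\pp_i(\supp(\mu))\subset E=\pp_i(\supp(\nu))$.

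I do not anticipate any serious obstacle, since Theorem~\ref{th:C_conc} is doing all of the real work; the only small technical point to be careful about is the need to cut $E$ down to its ``real'' part $E_1$ before invoking Theorem~\ref{th:C_conc}, which is harmless precisely because $\mu$ has already been pushed onto $\bwtf$ using Corollary~\ref{co:bwtf_preserved}.
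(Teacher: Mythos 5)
Your proof is correct and follows essentially the same route as the paper's: both cut $\pp_i(\supp(\nu))$ down to its intersection with $\rcomp{M}$ (the paper calls this set $E$, you call it $E_1$), observe it is closed in $\rcomp{M}$ and hence $\bar{d}$-closed, and then feed it to Theorem~\ref{th:C_conc}. The only cosmetic difference is at the very end: you note $\mu$ is concentrated on the closed set $\pp_i^{-1}(\pp_i(\supp(\nu)))$ and conclude directly that $\supp(\mu)$ is contained there, while the paper argues by showing $\mu$ puts no mass on the preimage of the complementary open set — these are two ways of saying the same thing. (Your opening appeal to Corollary~\ref{co:bwtf_preserved} to push $\mu$ onto $\bwtf$ is harmless but not needed, since the membership $\bwtf\cap\pp_i^{-1}(E_1)\in\mathfrak{C}$ already delivers the concentration of $\mu$ once $\nu$ is shown to be concentrated on that set.)
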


\begin{proof}
Without loss of generality assume $i=1$. Consider $E=\rcomp{M} \cap \pp_1(\supp(\nu))$, which is closed in $\rcomp{M}$ and thus $\bar{d}$-closed as well. We see that $\nu$ is concentrated on $\bwtf \cap \supp(\nu) \subset \bwtf \cap \pp_1^{-1}(E)$, hence $\mu$ is concentrated on $\bwtf \cap \pp_1^{-1}(E)$ by Theorem \ref{th:C_conc}. Now let $U=\ucomp{M}\setminus\pp_1(\supp(\nu))$, which is open in $\ucomp{M}$. Then
\[
\mu(\pp_1^{-1}(U)) = \mu(\pp_1^{-1}(U) \cap \bwtf \cap \pp_1^{-1}(E)) \leq \mu(\pp_1^{-1}(U \cap \pp_1(\supp(\nu)))) = 0.
\]
Given that $\pp_1^{-1}(U)$ is open, this implies $\supp(\mu) \cap \pp_1^{-1}(U) = \varnothing$, which in turn implies $\pp_1(\supp(\mu)) \cap U = \varnothing$, i.e. $\pp_1(\supp(\mu)) \subset \pp_1(\supp(\nu))$. 
\end{proof}

We remark that supports themselves do not behave well with respect to $\preccurlyeq$. Indeed, consider the measure $\nu$ in Example \ref{co:somewhat_anti-sym}. In this case $\delta_{(1,0)} \preccurlyeq \nu$, but $\supp(\delta_{(1,0)}) = \set{(1,0)} \not\subset \set{(1,\frac{1}{2}),(\frac{1}{2},0)} = \supp(\nu)$.

In Section \ref{subsec:shadow} we find an upper bound for the shadow of minimal measures concentrated on $d^{-1}[0,\infty)$.

We continue with some more examples of sets that belong to $\mathfrak{C}$.

\begin{corollary}\label{co:wt}
 We have $\bwtf\setminus d^{-1}(0), \pp^{-1}(M \times M),\wt{M} \in \mathfrak{C}$.
\end{corollary}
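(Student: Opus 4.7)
The plan is to dispatch the three sets in turn, exploiting the closure of $\mathfrak{C}$ under countable intersections (noted at the start of Section \ref{subsec:conc}) together with elements of $\mathfrak{C}$ established earlier in the section. For $\bwtf\setminus d^{-1}(0)$, I would simply write this as $\bwtf \cap d^{-1}(0,\infty]$ and combine $\bwtf \in \mathfrak{C}$ from Corollary \ref{co:bwtf_preserved} with $d^{-1}(0,\infty] \in \mathfrak{C}$ from Proposition \ref{pr:diag}.

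For $\pp^{-1}(M \times M)$, I would apply Theorem \ref{th:C_conc} with $E = M$, once for each coordinate $i \in \set{1,2}$. Verifying the hypotheses amounts to checking that $M$ is $\bar{d}$-closed in $\rcomp{M}$ (Proposition \ref{pr:rcomp_observations} \ref{pr:rcomp_observations_6}) and $\tau$-Borel; the latter follows from the remark after Proposition \ref{pr:bar-d_cont} that $\bar{d}(\cdot,M)$ is $\tau$-upper semicontinuous, which expresses $M = \set{\bar{d}(\cdot,M) = 0}$ as a $\tau$-$G_\delta$. Intersecting $\bwtf \cap \pp_1^{-1}(M)$ and $\bwtf \cap \pp_2^{-1}(M)$ and observing that $M \subset \rcomp{M}$ yields $\pp^{-1}(M \times M) \in \mathfrak{C}$.

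For $\wt{M}$, the heart of the matter is the identity
\[
\wt{M} = \pp^{-1}(M \times M) \setminus d^{-1}(0),
\]
after which membership in $\mathfrak{C}$ follows by intersection with the previous two cases. The inclusion $\subset$ is obvious. For the reverse, let $\zeta$ satisfy $\pp(\zeta) = (x,y) \in M \times M$ and $d(\zeta) > 0$. First I would rule out $x = y$: if $\pp_1(\zeta) = \pp_2(\zeta) = x \in M$, then $\bar{d}(\cdot,x)$ is $\tau$-continuous by Proposition \ref{pr:bar-d_cont} \ref{pr:bar-d_cont_3}, so any net $(x_i,y_i) \in \wt{M}$ converging to $\zeta$ in $\bwt{M}$ has $d(x_i,x)$ and $d(y_i,x)$ tending to $0$, forcing $d(\zeta) = \lim d(x_i,y_i) = 0$ and contradicting the assumption. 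With $x \neq y$ in hand, such a net satisfies $x_i \to x$ and $y_i \to y$ in $\ucomp{M}$ by continuity of the coordinate maps; since $x, y \in M$ and the subspace topology on $M$ in $\ucomp{M}$ is the metric topology (Proposition \ref{pr:woodsuniform}), the net converges to $(x,y)$ in the metric topology on $\wt{M}$, and hence, via the embedding $\wt{M} \hookrightarrow \bwt{M}$, in $\bwt{M}$. Hausdorffness of $\bwt{M}$ then gives $\zeta = (x,y) \in \wt{M}$.

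The main obstacle is precisely this reverse inclusion for $\wt{M}$: it requires juggling three topologies at once --- $\tau$ on $\rcomp{M}$, the compact topology on $\ucomp{M}$, and the original metric topology on $M$ --- and correctly localising the argument via the $\tau$-continuity of $\bar{d}(\cdot,x)$ for $x \in M$. All necessary ingredients are already in the preliminaries; it is simply a matter of assembling them coherently.
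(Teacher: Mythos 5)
Your argument is correct and follows the paper's proof essentially verbatim: the same decomposition of each set, the same appeals to Corollary \ref{co:bwtf_preserved}, Proposition \ref{pr:diag}, Theorem \ref{th:C_conc} and closure of $\mathfrak{C}$ under countable intersections. The only difference is that you spell out the Borel-ness of $M$ and the set identity $\wt{M} = \pp^{-1}(M\times M)\setminus d^{-1}(0)$, both of which the paper leaves implicit; your verifications of these are sound.
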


\begin{proof}
By Corollary \ref{co:bwtf_preserved} and Proposition \ref{pr:diag}, $\bwtf\setminus d^{-1}(0) \in \mathfrak{C}$. Since $M$ is $\bar{d}$-closed and Borel, $\bwtf \cap \pp_i^{-1}(M) \in \mathfrak{C}$, $i=1,2$, by Theorem \ref{th:C_conc}. Thus $\pp^{-1}(M \times M) = \bwtf \cap \pp_1^{-1}(M) \cap \pp_2^{-1}(M) \in \mathfrak{C}$ also. Finally, $\wt{M}=\pp^{-1}(M \times M)\setminus d^{-1}(0) \in \mathfrak{C}$ by this and Proposition \ref{pr:diag}.
\end{proof}

We finish the section with another example of a family of sets that belong to $\mathfrak{C}$. This is our promised generalisation of Proposition \ref{pr:diag}. Hereafter, given a Borel set $A \subset \rcomp{M}$, define $\Delta(A) = d^{-1}(0)\cap \pp^{-1}(A \times A)$. 

\begin{proposition}\label{pr:delta} If $A \subset \rcomp{M}$ is Borel then $\bwtf\setminus\Delta(A) \in \mathfrak{C}$. 
\end{proposition}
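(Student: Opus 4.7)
The plan is to show that $\mu(\Delta(A)) = 0$ whenever $\nu \in \meas{\bwt{M}}^+$ is concentrated on $\bwtf \setminus \Delta(A)$ and $\mu \preccurlyeq \nu$; together with $\mu$ being concentrated on $\bwtf$ (Corollary \ref{co:bwtf_preserved}), this will establish $\bwtf \setminus \Delta(A) \in \mathfrak{C}$. I will first handle the case where $A$ is open in $\rcomp{M}$ using the functions from Lemma \ref{lm:G_distance_functions_closed}, and then reduce the general Borel case to this open case via outer regularity of an auxiliary measure on $\rcomp{M}$.

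Suppose $U \subsetneq \rcomp{M}$ is open. Apply Lemma \ref{lm:G_distance_functions_closed} to the non-empty closed set $\rcomp{M}\setminus U$ to obtain $g_n := g_{n, \rcomp{M}\setminus U} \in \Glsc \cap \mathfrak{G}$, valued in $[0,n]$ and increasing in $n$ (by the monotonicity clause of the lemma, applied with $A=B=\rcomp{M}\setminus U$). On $\bwtf$ the explicit formula gives $g_n(\zeta) = 0$ whenever $\pp_1(\zeta) \notin U$, and $g_n(\zeta) = \min\set{n, \bar{d}(\pp_1(\zeta), \rcomp{M}\setminus U)/d(\zeta)}$ whenever $\pp_1(\zeta) \in U$; in the latter case $\bar{d}(\pp_1(\zeta), \rcomp{M}\setminus U) > 0$ because $\rcomp{M}\setminus U$ is $\bar{d}$-closed ($\bar{d}$ is finer than $\tau$ by Proposition \ref{pr:rcomp_observations}). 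Reading this off on $\bwtf$: for $\zeta \in \Delta(U)$ one has $d(\zeta)=0$, so $g_n(\zeta) = n$ and $g_n(\zeta)/n = 1$; for any other $\zeta \in \bwtf$, either $g_n(\zeta) = 0$ or $g_n(\zeta)$ is eventually the finite constant $\bar{d}(\pp_1(\zeta), \rcomp{M}\setminus U)/d(\zeta)$, so $g_n(\zeta)/n \to 0$. Hence on $\bwtf$ the sequence $g_n/n$ is bounded by $1$ and decreases pointwise to $\mathbf{1}_{\Delta(U)}$. Applying the $\mathfrak{G}$-inequality $\int(g_n/n)\,d\mu \leq \int(g_n/n)\,d\nu$ and passing to the limit by dominated convergence (both measures sit on $\bwtf$) yields
\[
\mu(\Delta(U)) \leq \nu(\Delta(U)).
\]
The degenerate cases $U = \varnothing$ (trivial) and $U = \rcomp{M}$ (where $\Delta(\rcomp{M}) = d^{-1}(0) \cap \bwtf$, so the inequality follows from $\mathbf{1}_{d^{-1}(0)} \in \mathfrak{G}$, Proposition \ref{pr:diag}) are handled directly.

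For a general Borel set $A \subset \rcomp{M}$, form the auxiliary measure $\nu_0 := (\pp_1)_\sharp(\nu\restrict_{d^{-1}(0) \cap \bwtf})$, a finite Radon measure concentrated on $\rcomp{M}$ (pushforward of a Radon measure under the continuous map $\pp_1$, which sends $\bwtf$ into $\rcomp{M}$). Since $\Delta(A) \subset \bwtf$, $\nu_0(A) = \nu(\Delta(A)) = 0$. Outer regularity of $\nu_0$ on the locally compact Hausdorff space $\rcomp{M}$ furnishes a decreasing sequence of open sets $U_k \supset A$ with $\nu_0(U_k) \downarrow 0$; the $G_\delta$ set $A' := \bigcap_k U_k$ then satisfies $A \subset A'$ and $\nu(\Delta(A')) = \nu_0(A') = 0$. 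Because $\Delta(A') = \bigcap_k \Delta(U_k)$ is a decreasing intersection and the measures are finite, continuity from above combined with the open case gives
\[
\mu(\Delta(A)) \leq \mu(\Delta(A')) = \lim_k \mu(\Delta(U_k)) \leq \lim_k \nu(\Delta(U_k)) = \nu(\Delta(A')) = 0,
\]
as required.

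The principal technical step is the pointwise analysis of $g_n/n$ on $\bwtf$: everything hinges on the dichotomy between the singularity $\bar{d}/d = +\infty$ forcing $g_n = n$ on $\Delta(U)$, and the eventual stabilisation of $g_n$ at the finite value $\bar{d}(\pp_1(\zeta), \rcomp{M}\setminus U)/d(\zeta)$ elsewhere on $\bwtf \cap \pp_1^{-1}(U)$. Once this dichotomy is secured via Lemma \ref{lm:G_distance_functions_closed}, the remainder of the argument is a standard regularity-plus-monotone-convergence reduction.
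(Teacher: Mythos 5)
Your proof is correct, and the route is genuinely different from the paper's. The paper proves $\bwtf\setminus\Delta(A) \in \mathfrak{C}$ by invoking the abstract zero-set mechanism of Proposition \ref{pr:zero_set}: fixing a $K_\sigma$ set $F \subset \bwtf\setminus\Delta(A)$ on which $\nu$ is concentrated, it exhausts $\pp_1(F \cap d^{-1}(0))$ by compact sets $K_m \subset \rcomp{M}\setminus A$ and builds, via a double limit of the functions $\tfrac{1}{n}g_{n,K_m}$ from Lemma \ref{lm:G_distance_functions_closed} plus the function witnessing $\bwtf \in \mathfrak{C}$, a single $g \in \mathfrak{G}$ with $F \subset g^{-1}(0) \subset \bwtf\setminus\Delta(A)$. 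You bypass $\mathfrak{C}$'s inner-regularity machinery entirely and instead establish the quantitative comparison $\mu(\Delta(U)) \leq \nu(\Delta(U))$ for open $U$ directly, using the decreasing sequence $g_n/n$ with $g_n = g_{n,\rcomp{M}\setminus U}$, whose pointwise limit on $\bwtf$ is $\mathbf{1}_{\Delta(U)}$ — the essential dichotomy $\bar d(\pp_1(\zeta),\rcomp{M}\setminus U)/d(\zeta) = \infty$ on $\Delta(U)$ versus finite-or-zero off it being exactly the one the paper also relies on. You then handle general Borel $A$ by pushing $\nu\restrict_{d^{-1}(0)\cap\bwtf}$ forward to $\rcomp{M}$ and appealing to outer regularity of the resulting Radon measure, which replaces the paper's $K_\sigma$-exhaustion with a $G_\delta$-envelope. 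Both arguments consume the same underlying tools (Lemma \ref{lm:G_distance_functions_closed}, Corollary \ref{co:bwtf_preserved}, Proposition \ref{pr:De_Leeuw_relation}(iii), and the fact $\mathbf{1}_{d^{-1}(0)}\in\mathfrak{G}$ for the degenerate $U=\rcomp{M}$ case), but yours is organised around a direct measure inequality rather than zero-set containment; the trade-off is that the paper's route uniformises the Borel case inside Proposition \ref{pr:zero_set}, whereas yours needs the extra pushforward-plus-regularity reduction, which is clean but introduces an auxiliary measure on $\rcomp{M}$. One small remark: your claim that $g_n/n$ is decreasing follows from the explicit formula on $\bwtf$ (namely $\min\{1,c/n\}$ is nonincreasing in $n$) rather than from the monotonicity clause of Lemma \ref{lm:G_distance_functions_closed} that you cite, which only gives $g_m \leq g_n$; this is worth stating, since $g_m \leq g_n$ alone does not imply $g_m/m \geq g_n/n$.
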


\begin{proof}
We will use Proposition \ref{pr:zero_set} once more. Fix a $K_\sigma$ set $F \subset \bwtf\setminus\Delta(A)$. We consider the $K_\sigma$ set $H:=F \cap d^{-1}(0)$. First we assume $H$ is non-empty. Notice that $\pp_1(H) \subset \rcomp{M}\setminus A$, because if $\zeta \in H$ then $\zeta \in \bwtf$, $d(\zeta)=0$ and $\zeta \notin \Delta(A)$, which means by Proposition \ref{pr:De_Leeuw_relation} \ref{pr:De_Leeuw_relation_3} that $\pp_1(\zeta) = \pp_2(\zeta) \in \rcomp{M}\setminus A$. Thus we can write $\pp_1(H) = \bigcup_{m=1}^\infty K_m$, where $(K_m)$ is an increasing sequence of non-empty compact subsets of $\rcomp{M}\setminus A$.

Use Lemma \ref{lm:G_distance_functions_closed} to define the functions $g_{n,K_m} \in \mathfrak{G}$, and set $g_{n,m} = \frac{1}{n}g_{n,K_m} \in \mathfrak{G}$. According to \eqref{eqn:G_distance_functions_compact_1}, for all $m,n \in \NN$ we have
 \begin{equation}\label{eqn:delta_1}
g_{n,m}(\zeta) = \begin{cases}\displaystyle \min\set{1,\frac{\bar{d}(\pp_1(\zeta),K_m)}{nd(\zeta)}} & \text{if } \zeta \in \pp_1^{-1}(\rcomp{M}\setminus K_m),\\
0 & \text{if } \zeta \in \pp_1^{-1}(K_m).
\end{cases}
\end{equation}
For a fixed $n\in \NN$, the bounded sequence $(g_{n,m})_m$ is decreasing. So if we define $g_n:\bwt{M}\to[0,1]$ by $g_n(\zeta) = \lim_m g_{n,m}(\zeta)$, we have $g_n \in \mathfrak{G}$ by the monotone convergence theorem. Similarly, for a fixed $m \in \NN$, the sequence $(g_{n,m})_n$, is decreasing, meaning that $(g_n)$ is also decreasing. So if we define $g':\bwt{M}\to[0,1]$ by $g'(\zeta)=\lim_n g_n(\zeta)$, then $g' \in \mathfrak{G}$ as well. Using Corollary \ref{co:bwtf_preserved}, let $g'' \in \mathfrak{G}$ such that $\bwtf=g''^{-1}(0)$, and define $g=g'+g'' \in \mathfrak{G}$.

To finish, we show that $F \subset g^{-1}(0) \subset \bwtf\setminus\Delta(A)$. Let $\zeta \in F$. There are two cases: $\zeta \in H$ or $\zeta \notin H$. If $\zeta \in H$ then $\pp_1(\zeta) \in K_m$ for all large enough $m$, meaning that $g_n(\zeta)=\lim_m g_{n,m}(\zeta)=0$ for all $n$, and thus $g(\zeta)=0$. If $\zeta \notin H$ then $d(\zeta)>0$, so
\[
g(\zeta) = \lim_n g_n(\zeta) \leq \lim_n g_{n,1}(\zeta) \leq \lim_n \frac{\bar{d}(\pp_1(\zeta),K_1)}{nd(\zeta)} = 0.
\]
Now we show that if $g(\zeta)=0$ then $\zeta \in \bwtf\setminus\Delta(A)$. Given such $\zeta$, as $g''(\zeta)=0$ we have $\zeta \in \bwtf$. If $d(\zeta)>0$ then $\zeta \in \bwtf\setminus\Delta(A)$, so we assume from now on that $d(\zeta)=0$. If $\zeta \in \pp_1^{-1}(\rcomp{M}\setminus K_m)$ for all $m \in \NN$, then according to \eqref{eqn:delta_1}, $g_{n,m}(\zeta)=1$ for all $m,n \in \NN$, which implies $g(\zeta) = 1 \neq 0$. This implies that for some $m \in \NN$ we have $\pp_1(\zeta) \in K_m \subset \rcomp{M}\setminus A$, giving $\zeta \in \bwtf\setminus\Delta(A)$. This completes the proof when $H$ is non-empty.

If $H$ is empty then define instead $g'=\mathbf{1}_{d^{-1}(0)}$, which we know belongs to $\mathfrak{G}$ from Proposition \ref{pr:diag}. Then it is straightforward to check that $g:=g'+g''$ satisfies $F \subset g^{-1}(0) \subset \bwtf\setminus d^{-1}(0) \subset \bwtf\setminus\Delta(A)$.
\end{proof}

\section{A characterisation of minimality and consequences}\label{sec:min_charac}

\subsection{A characterisation of minimality}\label{subsec:min_charac}

The main result of this section is a characterisation of minimality. It is the key to all of the deeper results about minimality. It borrows from \cite[Lemma 3.21]{lmns10}, which is an essential component of Mokobodzki's test for measures that are maximal with respect to the classical Choquet order \cite[Theorem 3.58]{lmns10}. Theorem \ref{th:minimality_test} is a test for minimality, so the results are different. In Section \ref{subsec:shadow} we use it to show that the shadows of minimal elements concentrated on $d^{-1}[0,\infty)$ are themselves minimal, modulo the base point $0$. In Section \ref{subsec:optimality_minimality}, it is used to show that minimality does not imply optimality. Finally, in the remaining sections, it is used to prove a series of results concerning the marginals of minimal elements which, together with the characterisation, will be needed in Section \ref{subsec:opt_conc}.

Before presenting it, we note by the universal property of quotient spaces that $f \in C(\bwt{M})$ is constant on all fibres of $\gq$ if and only if $f = k \circ \gq$ for some $k \in C(\wt{M}^G)$.

\begin{theorem}[{cf. \cite[Lemma 3.21]{lmns10}}]\label{th:minimality_test}
Let $\mu \in \meas{\bwt{M}}^+$. Then the following are equivalent.
\begin{enumerate}[label={\upshape{(\roman*)}}]
\item\label{pr:minimality_test_1} $\mu$ is minimal; 
\item\label{pr:minimality_test_2} given $f \in C(\bwt{M})$ constant on all fibres of $\gq$,
 \begin{equation}\label{eqn:minimality_test}
  \duality{f,\mu} = \inf\set{\duality{g,\mu} \,:\ g \in G,\, f \leq g};
 \end{equation}
\item\label{pr:minimality_test_3} given an upper semicontinuous function $f:\bwt{M}\to\RR$ that is constant on all fibres of $\gq$, 
 \[
  \int_{\bwt{M}} f \,d\mu = \inf\set{\duality{g,\mu} \,:\ g \in G,\, f \leq g};
 \]
 \item\label{pr:minimality_test_4} given a Borel set $A \subset \wt{M}^G$ such that $\mu$ is concentrated on $\gq^{-1}(A)$, and an upper semicontinuous function $f:\bwt{M}\to\RR$ that is constant on all fibres of $\gq$ included in $\gq^{-1}(A)$, 
 \[
  \int_{\bwt{M}} f \,d\mu = \inf\set{\duality{g,\mu} \,:\ g \in G,\, f \leq g}.
 \]
 \end{enumerate}
\end{theorem}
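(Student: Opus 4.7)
The plan is to prove the equivalences in the chain (iv) $\Leftrightarrow$ (iii) $\Leftrightarrow$ (ii) $\Leftrightarrow$ (i), where each forward arrow (iv) $\Rightarrow$ (iii) $\Rightarrow$ (ii) $\Rightarrow$ (i) is essentially formal and the reverse arrows require work. The first two forward implications are free: take $A = \wt{M}^G$ for (iv) $\Rightarrow$ (iii), and note that continuous functions are upper semicontinuous for (iii) $\Rightarrow$ (ii). For (ii) $\Rightarrow$ (i), given $\nu \in \meas{\bwt{M}}^+$ with $\nu \preccurlyeq \mu$ and any $f \in C(\bwt{M})$ constant on fibres of $\gq$, for each $g \in G$ with $f \leq g$ I would estimate $\int f \, d\nu \leq \int g \, d\nu \leq \langle g, \mu\rangle$; taking the infimum over $g$ and using (ii) yields $\int f \, d\nu \leq \int f \, d\mu$, and the same argument applied to $-f$ produces equality. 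Since $C(\wt{M}^G)$ identifies (via the map $T$ from Section \ref{subsec:wtMG}) with the continuous functions on $\bwt{M}$ that are constant on fibres of $\gq$, this forces $\gq_\sharp\nu = \gq_\sharp\mu$, so $\mu \preccurlyeq \nu$ by Proposition \ref{pr:somewhat_anti-sym}.

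The central step is (i) $\Rightarrow$ (ii), which I would prove by Hahn--Banach in the spirit of \cite[Lemma 3.21]{lmns10}. Define $p: C(\bwt{M}) \to \RR$ by $p(h) = \inf\{\langle g, \mu\rangle : g \in G, h \leq g\}$; because $G$ is a convex cone containing the nonnegative multiples of $\mathbf{1}_{\bwt{M}}$, $p$ is finite, sublinear and $\|\mu\|$-Lipschitz. Suppose for contradiction that some $f \in C(\bwt{M})$ constant on fibres of $\gq$ satisfies $\int f \, d\mu < c := p(f)$. Sublinearity together with $p(0) = 0$ gives $p(-f) \geq -c$, so the linear functional $tf \mapsto tc$ on $\RR f$ is dominated by $p$; Hahn--Banach extends it to a linear $\ell \leq p$ on $C(\bwt{M})$, which by boundedness is represented as $\ell(h) = \langle h, \lambda\rangle$ for some $\lambda \in \meas{\bwt{M}}$. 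Testing $\ell \leq p$ against $h \leq 0$ (using $g = 0 \in G$, giving $p(h) \leq 0$) forces $\lambda \geq 0$; testing against $h \in G$ (using $g = h$, giving $p(h) \leq \langle h, \mu\rangle$) forces $\lambda \preccurlyeq \mu$. Then $\int f \, d\lambda = c > \int f \, d\mu$; since $f$ is constant on fibres of $\gq$, this gives $\gq_\sharp\lambda \neq \gq_\sharp\mu$, and Proposition \ref{pr:somewhat_anti-sym} then gives $\mu \not\preccurlyeq \lambda$, contradicting minimality.

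The remaining reverse arrows are approximations. For (ii) $\Rightarrow$ (iii), an upper semicontinuous $f$ constant on all fibres of $\gq$ factors as $f = \tilde{f} \circ \gq$ with $\tilde{f}$ upper semicontinuous on $\wt{M}^G$ (as the quotient topology turns $\{\tilde{f} < c\}$ open), so outer regularity of the Radon measure $\gq_\sharp\mu$ on $\wt{M}^G$ writes $\int f \, d\mu = \int \tilde{f} \, d\gq_\sharp\mu$ as an infimum over continuous $k \geq \tilde{f}$; applying (ii) to each $k \circ \gq \in C(\bwt{M})$ (which is continuous and constant on all fibres) and transferring via change of variables yields the equality. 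For (iii) $\Rightarrow$ (iv), I would define $\bar{f} := \tilde{f}' \circ \gq$ where $\tilde{f}'(w) := \sup\{f(\omega) : \omega \in \gq^{-1}(w)\}$; compactness of the fibres combined with upper semicontinuity of $f$ shows $\{\tilde{f}' \geq c\} = \gq(\{f \geq c\})$, which is closed in $\wt{M}^G$, so $\bar{f}$ is upper semicontinuous on $\bwt{M}$ and constant on all fibres of $\gq$. The fibre hypothesis on $f$ forces $\bar{f} = f$ on $\gq^{-1}(A)$, so $\int \bar{f} \, d\mu = \int f \, d\mu$ because $\mu$ is concentrated there; since $\bar{f} \geq f$, the infimum in (iii) applied to $\bar{f}$ dominates the one in (iv) for $f$, closing the loop.

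The main obstacle will be the Hahn--Banach step in (i) $\Rightarrow$ (ii): one must extract both positivity and $\preccurlyeq$-domination of the extended measure from a single sublinear functional, by separately testing on the cones $-C(\bwt{M})^+$ (to get positivity) and $G$ (to get $\preccurlyeq$-domination of $\mu$). The subsequent approximations are more mechanical but depend on $\wt{M}^G$ being a Hausdorff quotient of $\bwt{M}$ and on the fibres of $\gq$ being compact, both of which are already secured in Section \ref{sec:G}.
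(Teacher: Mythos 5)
Your proposal is correct and follows essentially the same route as the paper: the same Hahn--Banach construction (with Riesz representation and testing against $-C(\bwt{M})^+$ and $G$ to extract positivity and $\preccurlyeq$-domination) for (i) $\Rightarrow$ (ii), the same downward-directed approximation for (ii) $\Rightarrow$ (iii), and the same fibre-supremum function $\bar{f}$ for (iii) $\Rightarrow$ (iv). The only genuine variations are presentational: you argue (i) $\Rightarrow$ (ii) by contradiction where the paper proves equality directly, and you establish upper semicontinuity of $\bar{f}$ by noting $\{\tilde{f}' \geq c\} = \gq(\{f \geq c\})$ is compact and hence closed, which is a tidier argument than the paper's open-neighbourhood construction.
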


\begin{proof}
\ref{pr:minimality_test_1} $\Rightarrow$ \ref{pr:minimality_test_2}: let $\mu$ be minimal and suppose that $f \in C(\bwt{M})$ is constant on all fibres of $\gq$. To obtain \eqref{eqn:minimality_test}, we use a Hahn-Banach extension argument. Define $p:C(\bwt{M}) \to \RR$ by
\[
p(k)= \inf\set{\duality{g,\mu} \,:\ g \in G,\, k \leq g}
\]
Then $p$ is subadditive and positively homogeneous. Indeed, given $k,k' \in C(\bwt{M})$ with $g,g' \in G$, $k \leq g$, $k' \leq g'$, we have $g+g' \in G$, $k+k' \leq g+g'$, and thus
\[
 p(k+k') \leq \duality{g+g',\mu} = \duality{g,\mu} + \duality{g',\mu},
\]
whence $p(k+k') \leq p(k) + p(k')$. As $G$ is a cone, positive homogeneity is also easy to establish. Since $k \leq \norm{k}_\infty\cdot\mathbf{1}_{\bwt{M}}$ and the latter function belongs to $G$, we also have $p(k) \leq \norm{k}\norm{\mu}$. Now define the linear functional $\psi:\lspan(f)\to\RR$ by $\duality{tf,\psi}=tp(f)$; because $0=p(0)=p(f-f) \leq p(f) + p(-f)$, we have $\psi \leq p\restrict_{\lspan(f)}$. By the Hahn-Banach and Riesz representation theorems, there exists $\nu \in \meas{\bwt{M}}$ such that $p(f)=\duality{f,\nu}$ and $\duality{k,\nu} \leq p(k)$ for $k \in C(\bwt{M})$ (so in particular $\norm{\nu} \leq \norm{\mu}$). Furthermore, given $k \geq 0$, we have $-k \leq 0$ and $0 \in G$, so $-\duality{k,v}=\duality{-k,v} \leq p(-k) \leq 0$, meaning that $\nu \in \meas{\bwt{M}}^+$. As $\duality{g,\nu} \leq p(g) = \duality{g,\mu}$ for all $g \in G$, we have $\nu \preccurlyeq \mu$. By minimality of $\mu$ and Proposition \ref{pr:somewhat_anti-sym}, it follows that $\gq_\sharp\nu = \gq_\sharp\mu$. Since $f = k \circ \gq$ for some $k \in C(\wt{M}^G)$, we obtain $\duality{f,\nu}=\duality{f,\mu}$. Finally, we have $\duality{f,\mu} = \duality{f,\nu} = p(f)$, which yields the conclusion.

\ref{pr:minimality_test_2} $\Rightarrow$ \ref{pr:minimality_test_3}: let $f:\bwt{M}\to\RR$ be upper semicontinuous and constant on all fibres of $\gq$. Then $f=k \circ \gq$ for some $k:\wt{M}^G\to\RR$, and from the definition of quotient topology we see that $k$ is upper semicontinuous also. According to e.g.~\cite[Proposition A.50]{lmns10}, $k$ is the pointwise infimum of all elements $\ell \in C(\wt{M}^G)$ satisfying $k \leq \ell$. As the set of all such $\ell$ is min-stable and thus a downwards-directed family, we have
\[
 \int_{\bwt{M}} f \,d\mu = \inf\set{\duality{h,\mu} \,:\, h \in C(\bwt{M}) \text{ is constant on all fibres of $\gq$ and $f \leq h$}}
\]
by Lebesgue's monotone convergence theorem for nets \cite[Theorem A.84]{lmns10}. The conclusion follows.

\ref{pr:minimality_test_3} $\Rightarrow$ \ref{pr:minimality_test_4}: let $A \subset \wt{M}^G$ be a Borel set such that $\mu$ is concentrated on $\gq^{-1}(A)$, and let $f:\bwt{M}\to\RR$ be an upper semicontinuous function that is constant on all fibres of $\gq$ included in $\gq^{-1}(A)$. Define a new function $\bar{f}:\bwt{M}\to\RR$ by
\[
\bar{f}(\zeta) = \max\set{f(\omega) \,:\, \omega \in \gq^{-1}(\gq(\zeta))}.
\]
This is well-defined because, on compact subsets, upper semicontinuous functions are bounded above and attain their suprema. Clearly $\bar{f}$ is constant on fibres of $\gq$ and agrees with $f$ on $\gq^{-1}(A)$. We claim that $\bar{f}$ is also upper semicontinuous. Let $\zeta \in \bwt{M}$ and $\alpha>\bar{f}(\zeta)$. Define the set $U=f^{-1}(-\infty,\alpha)$, which is open by the upper semicontinuity of $f$, and a second open set $V=\bwt{M}\setminus \gq^{-1}(\gq(\bwt{M}\setminus U))$. We claim that $\zeta \in V$ and $\bar{f}(\zeta') < \alpha$ whenever $\zeta' \in V$. Indeed, if $\zeta \notin V$ then $\gq(\zeta)=\gq(\omega)$ for some $\omega \in \bwt{M}\setminus U$, yielding $\bar{f}(\zeta) \geq f(\omega) \geq \alpha > \bar{f}(\zeta)$ which is impossible. Given $\zeta' \in V$, let $\omega' \in \gq^{-1}(\gq(\zeta'))$ such that $\bar{f}(\zeta')=f(\omega')$. Then $\omega' \in U$ because $\zeta' \notin \gq^{-1}(\gq(\bwt{M}\setminus U))$. Hence $\bar{f}(\zeta') < \alpha$. This completes the proof of the claim.

Because $\mu$ is concentrated on $\gq^{-1}(A)$ and $f$ and $\bar{f}$ agree on this set, we have
\[
\int_{\bwt{M}} f \,d\mu = \int_{\bwt{M}} \bar{f} \,d\mu.
\]
Moreover, given $g \in G$, we have $f \leq g$ if and only if $\bar{f} \leq g$. Indeed, one implication is immediate and the other follows because $g$ is constant on fibres of $\gq$. Hence we can apply \ref{pr:minimality_test_3} to $\bar{f}$ to obtain the desired conclusion. 

\ref{pr:minimality_test_4} $\Rightarrow$ \ref{pr:minimality_test_3} and \ref{pr:minimality_test_3} $\Rightarrow$ \ref{pr:minimality_test_2} are immediate.

\ref{pr:minimality_test_2} $\Rightarrow$ \ref{pr:minimality_test_1}: let $\mu \in \meas{\bwt{M}}^+$ and suppose
\[
  \duality{f,\mu} = \inf\set{\duality{g,\mu} \,:\ g \in G,\, f \leq g}.
\]
whenever $f \in C(\bwt{M})$ is constant on all fibres of $\gq$. Let $\nu \geq 0$ satisfy $\nu \preccurlyeq \mu$. We show that $\mu \preccurlyeq \nu$ and thus that $\mu$ is minimal. Let $h \in G$ and $\ep>0$. Since $-h$ is constant on all fibres of $\gq$, we can pick $g \in G$ satisfying $-h \leq g$ and $\duality{g,\mu} < \duality{-h,\mu} + \ep$. As $\nu \geq 0$ and $\nu \preccurlyeq \mu$
\[
 -\duality{h,\nu} = \duality{-h,\nu} \leq \duality{g,\nu} \leq \duality{g,\mu} < \duality{-h,\mu} + \ep = -\duality{h,\mu} + \ep,
\]
giving $\duality{h,\mu} < \duality{h,\nu}+\ep$.
Because this holds for all $h \in G$ and $\ep>0$, we obtain $\duality{h,\nu}=\duality{h,\mu}$ for all $h \in G$, as required.
\end{proof}

The essential idea behind most of the following applications of Theorem \ref{th:minimality_test} is to choose functions $f$ that are ``far away'' from $G$ in some sense (certainly, picking $f \in G$ yields no information). In this way, the fact that we can approximate $\int_{\bwt{M}} f \,d\mu$ by $\duality{g,\mu}$, $g \in G$, $f \leq g$, despite $f$ being far away from $G$, should provide information about $\mu$.

\subsection{The shadows of minimal measures}\label{subsec:shadow}

In this section we show that, under an assumption milder than the one in Proposition \ref{pr:minimal_support_representation}, the shadows of minimal measures satisfy the same conclusion (and in so doing, we slightly generalise this result). This shows that minimal measures are also ``minimally supported'', which lends weight to the argument that it is the minimal measures that fit most naturally into the existing theory of supports and extended supports of elements of $\lipfree{M}$ and $\Lip_0(M)^*$.

We start by noting that, by Proposition \ref{pr:least_measures}, if $(x,y) \in \wt{M}$, $g \in G$ and $\mu \in \meas{\bwt{M}}^+$ are such that $g(x,y) \geq 0$ and $\Phi^*\mu=m_{xy}=\Phi^*\delta_{(x,y)}$, then $\duality{g,\mu} \geq \duality{g,\delta_{(x,y)}} = g(x,y) \geq 0$. It turns out that the inequality $\duality{g,\mu} \geq 0$ is important in some applications of Theorem \ref{th:minimality_test}. 

\begin{proposition}\label{pr:G-support}
Let $g \in G$ and $A \subset M$ such that $0 \in A$ and $g(x,y) \geq 0$ for all $(x,y) \in \wt{A}$. Then there exists $f \in \Lip_0(M)$ such that $f$ vanishes on $A$ and $\Phi f \leq g$. Consequently, if $\mu \in \meas{\bwt{M}}^+$ has the property that $\duality{h,\Phi^*\mu}=0$ whenever $h \in \Lip_0(M)$ vanishes on $A$, then $\duality{g,\mu} \geq 0$.
\end{proposition}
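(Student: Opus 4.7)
The plan is to construct $f$ directly from the associated map $h$ of $g$ (Definition \ref{df:assoc_map}) by taking an infimum over $A$. Specifically, I would set
\[
 f(x) = \inf_{u \in A} h(x,u), \quad x \in M.
\]
This mirrors the construction $f_u(x)=h(x,u)-h(0,u)$ from the proof of Proposition \ref{pr:G-functions}, but the positivity hypothesis $g \geq 0$ on $\wt{A}$ lets us dispense with the normalising subtraction and still obtain a function that is finite and vanishes on all of $A$, not just at $0$.

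First I would verify that $f$ is well-defined and lies in $\Lip_0(M)$. The upper bound $f(x) \leq h(x,0) \leq \norm{g}_\infty d(x,0)$ is immediate because $0 \in A$. For a lower bound, the triangle inequality for $h$ (Proposition \ref{pr:h_triangle}) gives $h(x,u) \geq h(0,u) - h(0,x)$ for every $u \in A$; since $(0,u) \in \wt{A}$ when $u \neq 0$ and $g \geq 0$ on $\wt{A}$, we have $h(0,u) \geq 0$, so $f(x) \geq -h(0,x) \geq -\norm{g}_\infty d(x,0)$. The same estimate applied to $x \in A$ shows that $h(x,u) \geq 0$ for all $u \in A$, while $h(x,x)=0$ is attained in the infimum; hence $f$ vanishes on $A$, and in particular $f(0)=0$. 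To obtain both $\Phi f \leq g$ and the Lipschitz bound, given $\ep>0$ and $x,y \in M$ pick $u \in A$ with $h(y,u) < f(y)+\ep$; the triangle inequality then yields
\[
 f(x) - f(y) \leq h(x,u) - h(y,u) + \ep \leq h(x,y) + \ep,
\]
and letting $\ep \to 0$ gives $f(x)-f(y) \leq h(x,y)$. For distinct $x,y$ this is exactly $\Phi f(x,y) \leq g(x,y)$, while combining it with its reflection gives $|f(x)-f(y)| \leq \norm{g}_\infty d(x,y)$.

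For the second assertion, the inequality $\Phi f \leq g$ holds on $\wt{M}$ and extends to $\bwt{M}$ by continuity, so $g - \Phi f \geq 0$ on $\bwt{M}$. Since $\mu \geq 0$ and $f$ vanishes on $A$,
\[
 \duality{g,\mu} \geq \duality{\Phi f,\mu} = \duality{f,\Phi^*\mu} = 0,
\]
the last equality by the hypothesis on $\mu$ applied to $f$.

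The only real obstacle is ensuring that the infimum defining $f$ is finite. This is precisely where both standing hypotheses enter: having $0 \in A$ provides a concrete element giving an upper bound and serves as an anchor in the lower bound, while the positivity of $g$ on $\wt{A}$ forces $h(0,u) \geq 0$ for $u \in A$ and so supplies a lower bound on $h(x,u)$ that is uniform in $u$. Once well-definedness is secured, the rest follows mechanically from the triangle inequality for $h$ and the positivity of $\mu$.
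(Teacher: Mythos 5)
Your proof is correct and follows essentially the same route as the paper: you define $f(x)=\inf_{u\in A} h(x,u)$ using the associated map $h$, verify well-definedness via the triangle inequality for $h$ together with $h(0,u)\geq 0$ for $u\in A$, show $f$ vanishes on $A$, establish $\Phi f \leq g$ by choosing a near-minimiser $u$ and applying \eqref{tri_ineq}, and then deduce the positivity of $\duality{g,\mu}$. The only cosmetic difference is how you phrase the vanishing of $f$ on $A$; the underlying observation (that $h\geq 0$ on $A\times A$ with $h(x,x)=0$) is the same.
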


\begin{proof} Given $g \in G$, let $h$ be the associated map from Definition \ref{df:assoc_map}. Define $f:M \to \RR$ by $f(x)=\inf_{a \in A} h(x,a)$. First we make sure that $f$ is well-defined. Given $x \in M$ and $a \in A$,
\[
h(x,a) \geq h(0,a) - h(0,x) \geq -d(0,x)\norm{g}_\infty,
\]
by \eqref{tri_ineq}, the definition of $h$ and the fact that $h(0,a) \geq 0$ whenever $a \in A$. Hence $f$ is well-defined. Next, given $x \in A$, $h(x,x) = 0 \leq h(x,a)$ for all $a \in A$, so $f(x)=0$. Finally, let $(x,y) \in \wt{M}$. Given $\ep>0$, pick $a \in A$ such that $f(y) > h(y,a)-\ep d(x,y)$. Again by \eqref{tri_ineq},
\[
f(x)-f(y) < f(x) - h(y,a) + \ep d(x,y) \leq h(x,a)-h(y,a)+\ep d(x,y) \leq h(x,y) + \ep d(x,y),
\]
giving $\Phi f(x,y) < g(x,y)+\ep$. As this holds for all $(x,y) \in \wt{M}$ and $\ep>0$, and $\Phi f$ and $g$ are continuous on $\bwt{M}$, we obtain $\Phi f \leq g$ (of course, since $g$ is bounded, this also implies $f$ is Lipschitz, so it belongs to $\Lip_0(M)$).

Finally, let $\mu \in \meas{\bwt{M}}^+$ have the property that $\duality{h,\Phi^*\mu}=0$ whenever $h \in \Lip_0(M)$ vanishes on $A$. Then $\duality{g,\mu} \geq \duality{\Phi f,\mu}=\duality{f,\Phi^*\mu}=0$ as required.
\end{proof}

Armed with this, we can give our result on the shadows of minimal measures.

\begin{theorem}\label{th:minimal_shadow} Let $\mu \in \meas{\bwt{M}}^+$ be minimal and concentrated on $d^{-1}[0,\infty)$. Then $\pp_s(\supp(\mu)) \subset \esupp{\Phi^*\mu} \cup \set{0}$.
\end{theorem}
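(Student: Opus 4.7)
The plan is to argue by contradiction using the minimality characterisation (Theorem \ref{th:minimality_test}) applied to a well-chosen upper semicontinuous test function, combined with Proposition \ref{pr:G-support} to force $\duality{g,\mu}\geq 0$ for every $g\in G$ dominating the test function. First, I would reduce to showing $\pp_1(\supp(\mu))\subset\esupp{\Phi^*\mu}\cup\{0\}$, by noting that the reflection map $\rr$ sends $\mu$ to another minimal measure concentrated on $d^{-1}[0,\infty)$ (since $G$ and $d$ are both $\rr$-invariant) satisfying $\Phi^*(\rr_\sharp\mu)=-\Phi^*\mu$, so $\esupp{\Phi^*(\rr_\sharp\mu)}=\esupp{\Phi^*\mu}$. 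Assume for a contradiction that some $\xi_0\in\pp_1(\supp(\mu))$ lies outside $\esupp{\Phi^*\mu}\cup\{0\}$.

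The next step exploits $\xi_0\notin\esupp{\Phi^*\mu}$: the equivalent characterisation of extended support recalled after Definition \ref{defn_extended_support} provides an open neighbourhood $N$ of $\xi_0$ in $\ucomp{M}$ such that $\duality{h,\Phi^*\mu}=0$ whenever $h\in\Lip_0(M)$ satisfies $\supp(h)\subset N\cap M$. Using normality of $\ucomp{M}$ and $\xi_0\neq 0$, I would shrink to an open $V\ni\xi_0$ with $\ucl{V}\subset N$ and $0\notin\ucl{V}$, and set $A:=M\setminus V$, so that $0\in A$. A short closure computation shows that every $h\in\Lip_0(M)$ vanishing on $A$ satisfies $\supp(h)\subset\ucl{V}\cap M\subset N\cap M$, and hence $\duality{h,\Phi^*\mu}=0$.

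The heart of the argument is then to apply Theorem \ref{th:minimality_test}\ref{pr:minimality_test_4} to the upper semicontinuous function $f:=-\mathbf{1}_{\pp_1^{-1}(V)}$. Since $\pp_1=\vv_1\circ\uu$ by Proposition \ref{pr:uwt_basic}\ref{pr:uwt_basic_2}, $f$ is constant on every $\uu$-fibre, and by Proposition \ref{co:g_fibres}\ref{co:g_fibres_1} this transfers to every $\gq$-fibre with finite $d$-value. Taking $A_0:=(d^G)^{-1}[0,\infty)$ (open, hence Borel, in $\wt{M}^G$), Proposition \ref{pr:d_factor} gives $\gq^{-1}(A_0)=d^{-1}[0,\infty)$, on which $\mu$ is concentrated by hypothesis, so the minimality theorem yields
\[
-\mu(\pp_1^{-1}(V))=\int f\,d\mu=\inf\set{\duality{g,\mu}\,:\,g\in G,\,f\leq g}.
\]
The left-hand side is strictly negative because $\xi_0\in V\cap\pp_1(\supp(\mu))$ forces $\mu(\pp_1^{-1}(V))>0$. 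For any $g\in G$ with $f\leq g$, one has $g\geq 0$ on $\wt{A}$ (since $(x,y)\in\wt{A}$ forces $x\notin V$, so $f(x,y)=0$), so Proposition \ref{pr:G-support}, combined with $0\in A$ and the vanishing property from the previous paragraph, delivers $\duality{g,\mu}\geq 0$. The resulting sign contradiction will complete the proof.

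The main obstacle will be designing the test function $f$: it must be simultaneously upper semicontinuous, constant on all $\gq$-fibres meeting the concentration set of $\mu$, and shaped so that every $g\in G$ dominating it fits into Proposition \ref{pr:G-support}. The hypothesis that $\mu$ is concentrated on $d^{-1}[0,\infty)$ is exactly what is needed to upgrade ``constant on $\uu$-fibres'' to ``constant on $\gq$-fibres'' through Proposition \ref{co:g_fibres}\ref{co:g_fibres_1}; this is the one delicate point tying the topological structure of $\wt{M}^G$ to the coordinate map $\pp_1$ used in $f$.
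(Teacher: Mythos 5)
Your proof is correct and follows essentially the same route as the paper's: it applies Theorem~\ref{th:minimality_test}~\ref{pr:minimality_test_4} with the Borel set $(d^G)^{-1}[0,\infty)$ to a test function $-\mathbf{1}_W$ with $W$ a $\pp$-preimage of an open set, then invokes Proposition~\ref{pr:G-support} (together with $0 \in A$ and the vanishing property) to force $\duality{g,\mu}\geq 0$ for any dominating $g \in G$. The only cosmetic deviations are that you argue by contradiction with a single bad point and neighbourhood $V$ rather than directly intersecting over the family of admissible sets $A$, and you reduce to the first coordinate via the reflection map whereas the paper treats both coordinates at once by taking $W = \bwt{M}\setminus\pp^{-1}(\ucl{A}\times\ucl{A})$.
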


\begin{proof}
Set $\psi=\Phi^*\mu \in \Lip_0(M)^*$ and $K=\esupp{\psi} \cup \set{0} \subset \ucomp{M}$. Let $\mathcal{A}$ denote the set of all $A \subset M$ satisfying $0 \in A$ and having the property that $\duality{f,\psi}=0$ whenever $f \in \Lip_0(M)$ vanishes on $A$. Define the open set
\[
U_A = \bwt{M}\setminus\pp^{-1}\pare{\ucl{A} \times \ucl{A}}.
\]
We claim that $\mu(U_A)=0$. Before we prove this claim, we show how this yields the desired result. Given that $\mu(U_A)=0$ for all $A \in \mathcal{A}$, by inner regularity of $\mu$ and a simple compactness argument we obtain $\mu(\bigcup_{A \in \mathcal{A}} U_A)=0$ (cf.~\cite[Proposition 7.2.2 (i)]{Bogachev}). Therefore
\[
\supp(\mu) \subset \bwt{M}\setminus\pare{\bigcup_{A \in \mathcal{A}} U_A} = \bigcap_{A \in \mathcal{A}} \pp^{-1}\pare{\ucl{A} \times \ucl{A}} = \pp^{-1}(K \times K).
\]
The final equality above follows from Definition \ref{defn_extended_support}. From this we conclude that $\pp_s(\supp(\mu)) \subset K$, as required.

To finish, we show that $\mu(U_A)=0$ for all $A \in \mathcal{A}$. Let $A \in \mathcal{A}$ and $\ep>0$. Define $k=-\mathbf{1}_{U_A}$. We would like to apply Theorem \ref{th:minimality_test} \ref{pr:minimality_test_4} to $k$. To do this, we recall from Proposition \ref{pr:d_factor} the map $d^G$ and define the open set $V=(d^G)^{-1}[0,\infty) \subset \wt{M}^G$. Then $d^{-1}[0,\infty) = \gq^{-1}(V)$, and by Proposition \ref{co:g_fibres} \ref{co:g_fibres_1}, $k$ is constant on every fibre of $\gq$ included in $d^{-1}[0,\infty)$. Since $\mu$ is assumed to be concentrated on $d^{-1}[0,\infty)$, we can apply Theorem \ref{th:minimality_test} \ref{pr:minimality_test_4} to $k$ to obtain $g \in G$ satisfying $k \leq g$ and 
\[
\duality{g,\mu} < \int_{\bwt{M}} k \,d\mu + \ep = -\mu(U_A) + \ep.
\]
As $g(x,y) \geq k(x,y)=0$ whenever $(x,y) \in \wt{A}$, we conclude from Proposition \ref{pr:G-support} that $\duality{g,\mu} \geq 0$, whence $\mu(U_A) < \ep$. Since $\ep>0$ was arbitrary, we have $\mu(U_A)=0$ as claimed.
\end{proof}

We remark that, in general, it is not possible to remove the base point from Theorem \ref{th:minimal_shadow} so that the inclusion becomes $\pp_s(\supp(\mu)) \subset \esupp{\Phi^*\mu}$; see \cite[Proposition 5.5]{APS24b}.

\subsection{Optimality versus minimality}\label{subsec:optimality_minimality}

Optimality and minimality turn out to be incomparable properties. That optimality fails to imply minimality is easily seen by considering again the measure $\nu$ in Example \ref{ex:Choquet_motivation}. Here, $\nu$ is an optimal but non-minimal representation of $m_{10}$. Finding an example of a minimal but non-optimal representation takes a little more work, but one can be found using a 4-point metric space. Theorem \ref{th:minimality_test} is used here to demonstrate that the measure exhibited below is minimal.

\begin{example}\label{ex:minimal_non-optimal}
Consider $M=\set{0,a,b,c}$ with metric $d$ given by
\[
 d(x,y) = \begin{cases} 0 & \text{if }x=y \\ \frac{1}{2} & \text{if $(x,y)=(a,b)$ or $(x,y)=(b,a)$} \\ 1 & \text{otherwise.}\end{cases}
\]
(The triangle inequality is trivially satisfied because $\frac{1}{2} \leq d(x,y) \leq 1$ whenever $x \neq y$.)

Now set $\mu=\delta_{(0,a)}+\delta_{(b,c)}$. Then $\norm{\mu}=2$, $\Phi^*\mu=m_{0a}+m_{bc}$ and $\duality{f,\Phi^*\mu}=f(b)-f(a)-f(c)$, $f \in \Lip_0(M)$. If we set $\lambda=\frac{1}{2}\delta_{(b,a)}+\delta_{(0,c)}$, then $\norm{\lambda}=\frac{3}{2}$ and we have
\[
 \duality{f,\Phi^*\lambda} = \tfrac{1}{2}\cdot2(f(b)-f(a)) - f(c) = \duality{f,\Phi^*\mu}, \quad f \in \Lip_0(M),
\]
so $\Phi^*\lambda=\Phi^*\mu$, which implies that $\mu$ is not optimal (and by considering $f \in B_{\Lip_0(M)}$ given by $f(0)=f(b)=0$, $f(a)=-\frac{1}{2}$ and $f(b)=-1$, we see that $\lambda$ is optimal).

It remains to show that $\mu$ is minimal. For this we will use Theorem \ref{th:minimality_test} \ref{pr:minimality_test_2}. Specifically, given $f:\wt{M}\to\RR$, we show that there exists $g \in G$ such that $f \leq g$, $g(0,a)=f(0,a)$ and $g(b,c)=f(b,c)$. This will demonstrate that $\mu$ is minimal. The map $g$ will take the form $g=\max\set{\Phi k_1,\Phi k_2,\Phi k_3}$, where $k_1,k_2,k_3:M\to\RR$ all satisfy $k(0)=0$ (and are automatically Lipschitz).

To this end, let $f:\wt{M}\to\RR$ and set $K=\max f$, $\alpha=f(0,a)$ and $\beta=f(b,c)$. Given a parameter $t \in \RR$, define $k:M\to \RR$ by $k(0)=0$, $k(a)=-\alpha$, $k(b)=t$ and $k(c)=t-\beta$. Then it is clear that $\Phi k(0,a)=\alpha=f(0,a)$ and $\Phi k(b,c)=\beta=f(b,c)$, regardless of the value of $t$. Moreover, we calculate
\[
 \Phi k(0,b) = -t, \quad \Phi k(0,c) = \beta-t, \quad \Phi k(a,b) = -2(t+\alpha) \quad\text{and}\quad \Phi k(a,c) = \beta-\alpha-t.
\]
Hence by considering $L>0$ large enough and defining $k_1=k$ where $t=L$, and $k_2=k$ where $t=-L$, we can ensure that $\max\set{\Phi k_1,\Phi k_2}$ agrees with $f$ on $(0,a)$ and $(b,c)$, and is greater than or equal to $K$ on all other $(x,y) \in \wt{M}$ besides $(a,0)$ and $(c,b)$. Finally, if we define $k_3:M\to \RR$ by $k_3(0)=k_3(b)=0$, $k_3(a)=\max\set{K,-\alpha}$ and $k_3(c)=\max\set{K,-\beta}$, we can verify that
\[
 \Phi k_3(0,a) \leq \alpha = f(0,a), \quad \Phi k_3(a,0) \geq K, \quad \Phi k_3(b,c) \leq \beta = f(b,c) \quad \text{and} \quad \Phi k_3(c,b) \geq K.
\]
Therefore $g:=\max\set{\Phi k_1,\Phi k_2,\Phi k_3}$ agrees with $f$ on $(0,a)$ and $(b,c)$, and is greater than or equal to $K$ (and hence $f$) on all other $(x,y) \in \wt{M}$, as required.
\end{example}

\subsection{Elimination of shared coordinates}\label{subsec:eliminating_coordinates}

Corollary \ref{co:minimality_coordinates} below is our third application of Theorem \ref{th:minimality_test}. It demonstrates that minimal elements concentrated on $d^{-1}(0,\infty)$ are further concentrated on a Borel subset of $d^{-1}(0,\infty)$ that doesn't admit pairs of points that share coordinates. This will be important when it comes to proving results about mutually singular marginals in Section \ref{subsec:singular_marginals}.

To make sense of what this means, we recall the brief discussion about eliminating shared coordinates after Example \ref{ex:Choquet_motivation}. Here, we consider the idea more generally. Let $x,u,y \in M$ be distinct and define $\zeta_1 = (x,u),\zeta_2 = (u,y) \in \wt{M}$. Obviously, these elements share a coordinate: $\pp_2(\zeta_1)=u=\pp_1(\zeta_2)$. Moreover, there exists a third element of $\wt{M}$, namely $\zeta=(x,y)$, satisfying $\pp(\zeta)=(\pp_1(\zeta_1),\pp_2(\zeta_2))$, $d(\zeta) \leq d(\zeta_1)+d(\zeta_2)$, and $d(\zeta)\delta_\zeta \preccurlyeq d(\zeta_1)\delta_{\zeta_1} + d(\zeta_2)\delta_{\zeta_2}$, because
\[
 \duality{g,d(\zeta)\delta_\zeta} = d(\zeta)g(\zeta) \leq d(\zeta_1)g(\zeta_1)+d(\zeta_2)g(\zeta_2) = \duality{g,d(\zeta_1)\delta_{\zeta_1} + d(\zeta_2)\delta_{\zeta_2}}, \quad g \in G. 
\]
The element $\zeta$ is the result of eliminating the shared ``middle'' coordinate from the pair $(\zeta_1,\zeta_2)$, and this elimination process is compatible with $\preccurlyeq$ in the sense that we obtain a new measure $d(\zeta)\delta_\zeta$ that is less than $d(\zeta_1)\delta_{\zeta_1} + d(\zeta_2)\delta_{\zeta_2}$, with respect to $\preccurlyeq$. It turns out that this elimination process can be generalised widely to elements $\zeta_1,\zeta_2 \in d^{-1}(0,\infty)$. 

\begin{lemma}\label{lm:shared_coordinate}
Let $\zeta_1,\zeta_2 \in \bwt{M}$ satisfy $\pp_2(\zeta_1)=\pp_1(\zeta_2)$ and $\zeta_1,\zeta_2 \in d^{-1}(0,\infty)$. Then either
\begin{equation}\label{lm:shared_coordinate_1}
0 \preccurlyeq d(\zeta_1)\delta_{\zeta_1} + d(\zeta_2)\delta_{\zeta_2} \quad\text{and}\quad \pp_1(\zeta_1) = \pp_2(\zeta_2),
\end{equation}
or there exists $\zeta \in \bwt{M}$ such that 
\begin{equation}\label{lm:shared_coordinate_2}
d(\zeta)\delta_\zeta \preccurlyeq d(\zeta_1)\delta_{\zeta_1} + d(\zeta_2)\delta_{\zeta_2},\;\; 0 < d(\zeta) \leq d(\zeta_1)+d(\zeta_2)\;\;\text{and}\;\; \pp(\zeta)=(\pp_1(\zeta_1),\pp_2(\zeta_2)).
\end{equation}
\end{lemma}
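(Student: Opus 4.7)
Setup and strategy. Write $\xi_0 := \pp_2(\zeta_1) = \pp_1(\zeta_2)$, $\xi_1 := \pp_1(\zeta_1)$ and $\xi_2 := \pp_2(\zeta_2)$ in $\ucomp{M}$. The plan is to approximate $\zeta_1$ and $\zeta_2$ by pairs in $\wt{M}$ whose ``middle'' coordinates are approximately matched, to apply the triangle-type inequality \eqref{tri_ineq} satisfied by the associated maps of elements of $G$, and to extract $\zeta$ by compactness of $\bwt{M}$. A case distinction based on whether the resulting $d(\zeta)$ is zero or positive will yield either \eqref{lm:shared_coordinate_1} or \eqref{lm:shared_coordinate_2}.

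Construction of the net. I will show that for any open neighborhoods $V_i \ni \zeta_i$ in $\bwt{M}$ and any $\epsilon > 0$ there exist $a, b, b', e \in M$ with $(a, b) \in V_1 \cap \wt{M}$, $(b', e) \in V_2 \cap \wt{M}$, and $d(b, b') < \epsilon$. Indeed, by continuity of $\pp_1, \pp_2$ and density of $\wt{M}$ in $\bwt{M}$, the point $\xi_0$ lies in the $\ucomp{M}$-closures of both $\pp_2(V_1 \cap \wt{M})$ and $\pp_1(V_2 \cap \wt{M})$, and since these closures are not disjoint, Proposition \ref{pr:woodsuniform} \ref{pr:woodsuniform_2} forces the $d$-distance between the two subsets of $M$ themselves to be zero. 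Indexing such quadruples by the directed set of triples $(V_1, V_2, \epsilon)$ in the obvious way, and passing to subnets using compactness of $\bwt{M}$, I obtain a net $(a_\lambda, b_\lambda, b'_\lambda, e_\lambda)$ in $M^4$ with $(a_\lambda, b_\lambda) \to \zeta_1$, $(b'_\lambda, e_\lambda) \to \zeta_2$, and $d(b_\lambda, b'_\lambda) \to 0$; one of the sets $\set{\lambda : a_\lambda = e_\lambda}, \set{\lambda : a_\lambda \neq e_\lambda}$ is cofinal, so after a further subnet either (A) $a_\lambda = e_\lambda$ for all $\lambda$, or (B) $a_\lambda \neq e_\lambda$ for all $\lambda$ and $(a_\lambda, e_\lambda) \to \zeta$ in $\bwt{M}$ for some $\zeta$.

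Triangle estimate and conclusion. Fix $g \in G$ with associated map $h$ (Definition \ref{df:assoc_map}). Applying \eqref{tri_ineq} twice gives
\[
h(a_\lambda, e_\lambda) \leq h(a_\lambda, b_\lambda) + h(b_\lambda, b'_\lambda) + h(b'_\lambda, e_\lambda).
\]
By Lemma \ref{lm:auto_cont}, $|h(b_\lambda, b'_\lambda)| \leq \norm{g}_\infty d(b_\lambda, b'_\lambda) \to 0$, while $h = d \cdot g$ is continuous at $\zeta_1, \zeta_2$, and also at $\zeta$ in case B since $d(\zeta) \leq d(\zeta_1) + d(\zeta_2) < \infty$ by continuity of $d$ and its triangle inequality. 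In case A, $h(a_\lambda, e_\lambda) = 0$ for all $\lambda$, so passing to the limit yields $0 \leq d(\zeta_1) g(\zeta_1) + d(\zeta_2) g(\zeta_2)$ for every $g \in G$, i.e.\ $0 \preccurlyeq d(\zeta_1)\delta_{\zeta_1} + d(\zeta_2)\delta_{\zeta_2}$; in case B one gets $d(\zeta) g(\zeta) \leq d(\zeta_1) g(\zeta_1) + d(\zeta_2) g(\zeta_2)$ for every $g \in G$, i.e.\ $d(\zeta)\delta_\zeta \preccurlyeq d(\zeta_1)\delta_{\zeta_1} + d(\zeta_2)\delta_{\zeta_2}$. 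Continuity of $\pp$ gives $\pp(\zeta) = (\xi_1, \xi_2)$ in case B, while in case A the identity $a_\lambda = e_\lambda$ forces $\xi_1 = \xi_2$ (as both limits of the same net in the Hausdorff space $\ucomp{M}$) and so yields \eqref{lm:shared_coordinate_1} at once. In case B with $d(\zeta) > 0$, the conditions of \eqref{lm:shared_coordinate_2} are all verified. In case B with $d(\zeta) = 0$, the $\preccurlyeq$-inequality collapses to $0 \preccurlyeq $ sum, and $d(a_\lambda, e_\lambda) \to 0$ again forces $\xi_1 = \xi_2$: otherwise regularity of $\ucomp{M}$ supplies open $U_1 \ni \xi_1, U_2 \ni \xi_2$ with disjoint closures, and then Proposition \ref{pr:woodsuniform} \ref{pr:woodsuniform_2} gives $d(U_1 \cap M, U_2 \cap M) > 0$, contradicting $d(a_\lambda, e_\lambda) \to 0$. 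Hence \eqref{lm:shared_coordinate_1} holds here too.

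Main obstacle. The delicate point is that exact middle-matching ($b_\lambda = b'_\lambda$), which would immediately exhibit $d(\zeta_1)\delta_{\zeta_1} + d(\zeta_2)\delta_{\zeta_2} - d(\zeta)\delta_\zeta$ as a $w^*$-limit of the elementary $G^*$-measures $\lambda_{a_\lambda, b_\lambda, e_\lambda}$ introduced before Proposition \ref{pr:super_concave}, is not available in general: the uniform compactification only guarantees that $\xi_0$ is approached by elements of $\pp_2(V_1 \cap \wt{M})$ and $\pp_1(V_2 \cap \wt{M})$ at arbitrarily small mutual $d$-distance, not by equal elements. The Lipschitz control on $h$ from Lemma \ref{lm:auto_cont} converts this discrepancy into an error that vanishes in the limit, which is precisely what makes the approximate matching sufficient.
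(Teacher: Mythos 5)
Your proof is correct, and it takes a genuinely cleaner route than the paper's. Both arguments rest on the same core idea: use Proposition~\ref{pr:woodsuniform}~\ref{pr:woodsuniform_2} to produce nets $(a_\lambda,b_\lambda)\to\zeta_1$ and $(b'_\lambda,e_\lambda)\to\zeta_2$ in $\wt{M}$ with $d(b_\lambda,b'_\lambda)\to 0$, and then pass to a limit. The divergence is in how the $\preccurlyeq$-inequality is extracted. The paper works at the level of Dirac measures on $\wt{M}$: it chains the elementary inequality $d(x,y)\delta_{(x,y)}\preccurlyeq d(x,u)\delta_{(x,u)}+d(u,y)\delta_{(u,y)}$ and then invokes Proposition~\ref{pr:w_star}. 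Because the elementary inequality only makes sense when the intermediate pairs $(x,y)$, $(u,v)$ actually lie in $\wt{M}$, the paper has to run a separate preliminary step forcing $(x,v),(u,y)\in\wt{M}$ and then split into four subcases governed by the sets $B_j$ and $C_j$ (and even this bookkeeping is slightly garbled in the last case, where $C_k$ is tacitly redefined with $A_k$ in place of $B_k$). You instead apply the triangle inequality \eqref{tri_ineq} for the associated map $h$ directly, which is valid on all of $M\times M$ by Proposition~\ref{pr:h_triangle}, including when $b_\lambda=b'_\lambda$ or $a_\lambda=e_\lambda$. This collapses the paper's case analysis to the single dichotomy $a_\lambda=e_\lambda$ versus $a_\lambda\neq e_\lambda$, removes the need for the preliminary $\wt{M}$-membership lemma and for Proposition~\ref{pr:w_star}, and reads the conclusion off a pointwise limit of real numbers. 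One minor remark: in Case B with $d(\zeta)=0$ you rederive $\pp_1(\zeta)=\pp_2(\zeta)$ directly; you could equally have cited the remark following Lemma~\ref{lm:u_and_p_agree_on_diagonal}, which states exactly this. Everything else checks out, including the use of cofinality to split the index set and the domination of $d(\zeta)$ by $d(\zeta_1)+d(\zeta_2)$ to ensure $h$ is continuous at $\zeta$.
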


\begin{proof}
Let $I$ be the set of all triples $i=(U_1,U_2,n)$, where $U_1 \ni \zeta_1$, $U_2 \ni \zeta_2$ are open in $\bwt{M}$ and $n \in \NN$. We direct $I$ in the natural way by writing $(U_1,U_2,n) \leq (U_1',U_2',n')$ if and only if $U_1' \subset U_1$, $U_2' \subset U_2$ and $n \leq n'$. Given $i=(U_1,U_2,n) \in I$, define 
\[
A_i = \set{((x,u),(v,y)) \in (U_1 \cap \wt{M}) \times (U_2 \cap \wt{M}) \,:\, d(u,v) < \textstyle\frac{1}{n}}.
\]
First we claim that $A_i \neq \varnothing$ for all $i \in I$. The following argument has echoes of the one in the proof of Lemma \ref{lm:mixed_coords}. Suppose $i=(U_1,U_2,n) \in I$ satisfies $A_i=\varnothing$. Then $d(u,v) \geq \frac{1}{n}$ whenever $u \in \pp_2(U_1 \cap \wt{M})$ and $v \in \pp_1(U_2 \cap \wt{M})$. By Proposition \ref{pr:woodsuniform} \ref{pr:woodsuniform_2}, the closures of $\pp_2(U_1 \cap \wt{M})$ and $\pp_1(U_2 \cap \wt{M})$ in $\ucomp{M}$ are disjoint. Then, as $U_1$ is open and $\wt{M}$ is dense in $\bwt{M}$, we have $U_1 \subset \cl{U_1} = \cl{U_1 \cap \wt{M}}$. By continuity
\[
\pp_2(\zeta_1) \in \pp_2(U_1) \subset \pp_2\pare{\cl{U_1 \cap \wt{M}}} \subset \ucl{\pp_2(U_1 \cap \wt{M})},
\]
and likewise
\[
\pp_1(\zeta_2) \in \ucl{\pp_1(U_2 \cap \wt{M})},
\]
from which we deduce $\pp_2(\zeta_1) \neq \pp_1(\zeta_2)$. Since this is not so, we conclude that $A_i \neq \varnothing$ for all $i \in I$.

Next, we show that there exists $i=(U_1,U_2,n) \in I$ such that $(x,v),(u,y) \in \wt{M}$ whenever $((x,u),(v,y)) \in A_i$. Set $t=\min\set{d(\zeta_1),d(\zeta_2)}>0$ and let $n \in \NN$ such that $\frac{2}{n}< t$. By continuity, there exist open $U_1 \ni \zeta_1$, $U_2 \ni\zeta_2$ such that $|d(\zeta_1)-d(x,u)|,|d(\zeta_2)-d(v,y)| < \frac{1}{2}t$ whenever $(x,u) \in U_1 \cap \wt{M}$ and $(v,y) \in U_2 \cap \wt{M}$. Set $i=(U_1,U_2,n)$. Given $((x,u),(v,y)) \in A_i$, we see that
\[
d(x,v) \geq d(x,u)-d(u,v) > \textstyle d(\zeta_1) - \frac{1}{2}t - \frac{1}{n} > d(\zeta_1)-t \geq 0,
\]
and likewise $d(u,y) > 0$.

For $j \geq i$, consider
\[
B_j = \set{((x,u),(v,y)) \in A_j \,:\, (x,y) \in \wt{M}}
\]
Either $B_j \neq \varnothing$ for all $j \geq i$ or $B_j=\varnothing$ for some $j \geq i$. We examine both of these cases. Assume first that $B_j \neq \varnothing$ for all $j \geq i$. We consider two subcases by defining the sets
\[
C_j = \set{((x,u),(v,y)) \in B_j \,:\, (u,v) \in \wt{M}}.
\]
Again, either $C_j \neq \varnothing$ for all $j \geq i$ or $C_j=\varnothing$ for some $j \geq i$. If the first case holds, we can pick $((x_j,u_j),(v_j,y_j)) \in A_j$ such that $(x_j,y_j),(u_j,v_j) \in \wt{M}$ for all $j \geq i$. By compactness, and by taking a subnet if necessary, there exists $\zeta \in \bwt{M}$ such that $(x_j,y_j) \to \zeta$. Since $(x_j,u_j) \to \zeta_1$ and $(v_j,y_j) \to \zeta_2$, by continuity and the fact that $\lim_j d(u_j,v_j)=0$, we obtain
\begin{equation}\label{eqn:shared_coordinate_d}
d(\zeta) = \lim_j d(x_j,y_j) \leq \lim_j d(x_j,u_j) + d(u_j,v_j) + d(v_j,y_j) = d(\zeta_1) + d(\zeta_2).
\end{equation}
and
\begin{equation}\label{eqn:shared_coordinate_pp}
\pp(\zeta) = (\lim_j x_j, \lim_j y_j) = (\pp_1(\zeta_1),\pp_2(\zeta_2)).
\end{equation}
We claim that
\begin{equation}\label{eqn:shared_coordinate}
d(\zeta)\delta_\zeta \preccurlyeq d(\zeta_1)\delta_{\zeta_1} + d(\zeta_2)\delta_{\zeta_2}.
\end{equation}
Given $j \geq i$, we observe
\[
d(x_j,y_j)\delta_{(x_j,y_j)} \preccurlyeq d(x_j,u_j)\delta_{(x_j,u_j)} + d(u_j,v_j)\delta_{(u_j,v_j)} + d(v_j,y_j)\delta_{(v_j,y_j)}
\]
(By the above, we have ensured that these measures are well-defined, since the pairs $(x_j,y_j)$, $(x_j,u_j)$, $(u_j,v_j)$ and $(v_j,y_j)$ belong to $\wt{M}$ for all $j\geq i$.) Now \eqref{eqn:shared_coordinate} follows using Proposition \ref{pr:w_star}, continuity of $d$ and the fact that $\lim_j d(u_j,v_j)=0$. 

Now we consider the case where $C_j=\varnothing$ for some $j \geq i$. Then $C_k=\varnothing$ whenever $k \geq j$; as $B_k \neq \varnothing$ for $k \geq j$, we can pick $((x_k,u_k),(u_k,y_k)) \in A_k$ such that $(x_k,y_k) \in \wt{M}$ for all $k \geq j$. We find $\zeta$ and a subnet as above. Evidently \eqref{eqn:shared_coordinate_d} and \eqref{eqn:shared_coordinate_pp} follow as above. This time, given $k \geq j$, we observe
\[
d(x_k,y_k)\delta_{(x_k,y_k)} \preccurlyeq d(x_k,u_k)\delta_{(x_k,u_k)} + d(u_k,y_k)\delta_{(u_k,y_k)},
\]
whence \eqref{eqn:shared_coordinate} follows again by Proposition \ref{pr:w_star} and continuity of $d$.

In summary, we have shown that \eqref{eqn:shared_coordinate_d} -- \eqref{eqn:shared_coordinate} hold if $B_j \neq \varnothing$ for all $j \geq i$. Finally, if $d(\zeta)>0$ then we obtain \eqref{lm:shared_coordinate_2}. If $d(\zeta)=0$, then $\pp_1(\zeta_1) = \pp_1(\zeta) = \pp_2(\zeta) = \pp_2(\zeta_2)$ by \eqref{eqn:shared_coordinate_pp}, so we obtain \eqref{lm:shared_coordinate_1}.

Now suppose that $B_j=\varnothing$ for some $j \geq i$. In this case we claim that \eqref{lm:shared_coordinate_1} holds. We have $B_k=\varnothing$ for all $k \geq j$. For $k \geq j$, we consider $C_k$ defined as above, and consider two cases. First assume $C_k \neq \varnothing$ for all $k \geq j$.  On this occasion, for $k \geq j$ we can pick $((x_k,u_k),(v_k,x_k)) \in A_k$ such that $(u_k,v_k) \in \wt{M}$, thus
\[
0 \preccurlyeq d(x_k,u_k)\delta_{(x_k,u_k)} + d(u_k,v_k)\delta_{(u_k,v_k)} + d(v_k,x_k)\delta_{(v_k,x_k)},
\]
by Proposition \ref{pr:scalar_mult} \ref{pr:scalar_mult_2}. Hence again by Proposition \ref{pr:w_star}, continuity of $d$ and as $\lim_k d(u_k,v_k)=0$,
\begin{equation}\label{eqn:shared_coordinate_1}
0 \preccurlyeq d(\zeta_1)\delta_{\zeta_1} + d(\zeta_2)\delta_{\zeta_2}.
\end{equation}
Furthermore, by continuity $\pp_1(\zeta_1)=\lim_k x_k = \pp_2(\zeta_2)$, giving \eqref{lm:shared_coordinate_1} as claimed. Second, assume that $C_k=\varnothing$ for some $k \geq j$. Then $C_\ell=\varnothing$ for all $\ell \geq k$. Given such $\ell$, pick $((x_\ell,u_\ell),(u_\ell,x_\ell)) \in A_\ell$. Again by Proposition \ref{pr:scalar_mult} \ref{pr:scalar_mult_2},
\[
0 \preccurlyeq d(x_\ell,u_\ell)\delta_{(x_\ell,u_\ell)} + d(u_\ell,x_\ell)\delta_{(u_\ell,x_\ell)},
\]
whence \eqref{eqn:shared_coordinate_1} holds as before. Because $\pp_1(\zeta_1)=\lim_\ell x_\ell = \pp_2(\zeta_2)$, again we have \eqref{lm:shared_coordinate_1}. This completes the proof.
\end{proof}

We require one more lemma before giving the final result of the section. 

\begin{lemma}\label{lm:approx_g_functions}
Let $\mu \in \meas{\bwt{M}}^+$ be minimal, let $E \subset \bwt{M}$ be Borel and let $f:\bwt{M}\to\RR$ satisfy $f(\zeta)=\lim_n f_n(\zeta)$ for all $\zeta \in E$, where the functions $f_n:\bwt{M}\to\RR$, $n \in \NN$, are upper semicontinuous, constant on all fibres of $\gq$ and $\norm{f_n}_{L_1(\mu)}<\infty$. Then there exists a sequence $(g_n)$ of elements of $G$ and a Borel subset $F \subset E$ that satisfy $\mu(E\setminus F)=0$, $f(\zeta) \leq \liminf_n g_n(\zeta)$ for all $\zeta \in E$ and $f(\zeta) = \lim_n g_n(\zeta)$ for all $\zeta \in F$.
\end{lemma}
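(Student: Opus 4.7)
The plan is to use the minimality characterization Theorem~\ref{th:minimality_test} to approximate each $f_n$ individually by a continuous $g_n \in G$ such that the integral of the non-negative difference $g_n - f_n$ is summable in $n$, and then to run a Borel--Cantelli style argument to extract $\mu$-almost everywhere convergence $g_n - f_n \to 0$.

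In more detail, first, fix $n \in \NN$. The function $f_n$ is upper semicontinuous and constant on all fibres of $\gq$, so by Theorem~\ref{th:minimality_test}~\ref{pr:minimality_test_3},
\[
\int_{\bwt{M}} f_n \, d\mu \;=\; \inf\set{\duality{g,\mu} \,:\, g \in G,\ f_n \leq g}.
\]
Since $f_n \in L_1(\mu)$ and every $g \in G \subset C(\bwt{M})$ is bounded (hence integrable) on the compact space $\bwt{M}$, both sides are finite. Therefore we may pick $g_n \in G$ with $f_n \leq g_n$ and $\duality{g_n,\mu} < \int f_n\, d\mu + 2^{-n}$. In particular, $h_n := g_n - f_n$ is a non-negative Borel function satisfying $\int h_n\, d\mu < 2^{-n}$.

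Second, by Tonelli's theorem, $\int \sum_{n=1}^\infty h_n\, d\mu = \sum_n \int h_n\, d\mu < \infty$, so the series $\sum_n h_n(\zeta)$ converges for $\mu$-almost every $\zeta \in \bwt{M}$, and thus $h_n(\zeta) \to 0$ $\mu$-a.e. Set
\[
F = E \cap \set{\zeta \in \bwt{M} \,:\, \lim_n (g_n(\zeta) - f_n(\zeta)) = 0},
\]
which is Borel (each $f_n$ is Borel and each $g_n$ is continuous) and satisfies $\mu(E \setminus F) = 0$.

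Third, verify the two convergence statements. For $\zeta \in F$, one has $f_n(\zeta) \to f(\zeta)$ by hypothesis and $g_n(\zeta) - f_n(\zeta) \to 0$ by construction, whence $g_n(\zeta) \to f(\zeta)$. For any $\zeta \in E$, $g_n(\zeta) \geq f_n(\zeta)$ gives $\liminf_n g_n(\zeta) \geq \liminf_n f_n(\zeta) = \lim_n f_n(\zeta) = f(\zeta)$. The approach presents no serious obstacle; the only conceptual step is recognising that the minimality of $\mu$ provides enough flexibility to choose the approximants $g_n$ with errors summable in $n$, after which the $\mu$-almost everywhere convergence is routine.
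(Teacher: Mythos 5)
Your proposal is correct and follows essentially the same strategy as the paper: use Theorem~\ref{th:minimality_test}~\ref{pr:minimality_test_3} to select $g_n \in G$ with $f_n \leq g_n$ and summable $L^1(\mu)$ errors, then extract $\mu$-a.e.\ convergence of $g_n - f_n$ to $0$. The only cosmetic difference is that you invoke Tonelli to get $\sum_n (g_n - f_n) < \infty$ $\mu$-a.e., whereas the paper uses a Chebyshev estimate (with $4^{-n}$ rather than $2^{-n}$) followed by an explicit Borel--Cantelli argument on the open sets $U_n=(g_n-f_n)^{-1}(2^{-n},\infty)$; both are standard and interchangeable routes to the same conclusion.
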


We remark that the sequence $(g_n)$ need not be uniformly bounded in norm in general, so $\liminf_n g_n(\zeta)=\infty$ for all $\zeta$ in a possibly non-empty (though $\mu$-null) subset of $E$.

\begin{proof}
Given $n \in \NN$, by Theorem \ref{th:minimality_test} \ref{pr:minimality_test_3}, as $\norm{f_n}_{L_1(\mu)}<\infty$ there exists $g_n \in G$, $f_n \leq g_n$, such that 
\[
 \int_{\bwt{M}} g_n - f_n \,d\mu < 4^{-n}.
\]
First, the fact that $f_n \leq g_n$ for all $n$ and $f_n \to f$ pointwise on $E$ implies $f(\zeta) \leq \liminf_n g_n(\zeta)$ for all $\zeta \in E$.
Second, the positive function $g_n-f_n$ is lower semicontinuous, so $A_n:=(g_n-f_n)^{-1}[0,2^{-n}]$ is closed and $U_n := \bwt{M}\setminus A_n = (g_n-f_n)^{-1}(2^{-n},\infty)$ is open. Let $A=\liminf_n A_n$. Since $2^{-n}\cdot\mathbf{1}_{U_n} \leq g_n-f_n$, we obtain
\[
 2^{-n}\mu(U_n) \leq \int_{\bwt{M}} g_n - f_n \,d\mu < 4^{-n},
\]
whence $\mu(U_n) < 2^{-n}$. It follows that
\[
 \mu\pare{\bigcup_{p=n} U_p} \leq \sum_{p=n}^\infty \mu(U_p) < 2^{1-n},
\]
giving $\mu(\limsup_n U_n)=0$ and $\mu(A)=\norm{\mu}$. By construction $\mu(E\setminus F)=0$, where $F:=A \cap E$, and $\lim_n g_n(\zeta)=f(\zeta)$ for all $\zeta \in F$.
\end{proof}

Using the two lemmas above, we can present our promised result about the sets on which certain minimal elements are concentrated.

\begin{corollary}\label{co:minimality_coordinates}
Let $\mu \in \meas{\bwt{M}}^+$ be minimal and concentrated on $d^{-1}(0,\infty)$. Then $\mu$ is concentrated on a Borel set $A\subset d^{-1}(0,\infty)$, such that if $\zeta_1,\zeta_2 \in A$ then $\pp_2(\zeta_1) \neq \pp_1(\zeta_2)$.
\end{corollary}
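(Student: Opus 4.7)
The plan is to argue by contradiction via Lemma \ref{lm:shared_coordinate}. First, observe that the conclusion is equivalent to the mutual singularity of the marginals $\nu_i := (\pp_i)_\sharp \mu$ ($i = 1, 2$) on $\ucomp{M}$: given disjoint Borel sets $E_1, E_2 \subset \ucomp{M}$ with $\nu_i(E_i) = \norm{\mu}$, the set $A := \pp_1^{-1}(E_1) \cap \pp_2^{-1}(E_2) \cap d^{-1}(0, \infty)$ is a Borel subset of $d^{-1}(0, \infty)$ carrying all of $\mu$ and satisfying $\pp_1(A) \cap \pp_2(A) \subset E_1 \cap E_2 = \varnothing$. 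So I would assume for contradiction that $\alpha := \nu_1 \wedge \nu_2$ is a nonzero positive Radon measure on $\ucomp{M}$ and aim to contradict minimality.

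After localizing to $\mu\restrict_{d^{-1}[\delta, \delta^{-1}]}$ (still minimal by Proposition \ref{pr:min_basic}\ref{pr:min_basic_3}) so that $d$ is bounded above and away from zero on $\supp(\mu)$, I would form $\mu^{(1)} := (h_2 \circ \pp_2)\mu$ and $\mu^{(2)} := (h_1 \circ \pp_1)\mu$ using the Radon--Nikodym densities $h_i := d\alpha / d\nu_i \in [0, 1]$, so that $(\pp_2)_\sharp \mu^{(1)} = \alpha = (\pp_1)_\sharp \mu^{(2)}$. The gluing lemma for Radon measures then yields a coupling $\pi$ on $\bwt{M} \times \bwt{M}$ with marginals $\mu^{(1)}, \mu^{(2)}$ concentrated on $\set{(\zeta, \zeta') : \pp_2(\zeta) = \pp_1(\zeta')}$. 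For $\pi$-a.e.\ pair, Lemma \ref{lm:shared_coordinate} supplies either (case 1) $0 \preccurlyeq d(\zeta)\delta_\zeta + d(\zeta')\delta_{\zeta'}$, or (case 2) a point $\hat\zeta$ with $d(\hat\zeta)\delta_{\hat\zeta} \preccurlyeq d(\zeta)\delta_\zeta + d(\zeta')\delta_{\zeta'}$ and $\pp(\hat\zeta) = (\pp_1(\zeta), \pp_2(\zeta'))$. Via Jankov--von Neumann measurable selection, choose $\hat\zeta$ Borel on the case-2 region and set $\sigma := \int d(\hat\zeta)\delta_{\hat\zeta}\, d\pi$. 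Integrating the pointwise inequalities against $\pi$ gives $\sigma \preccurlyeq d\mu^{(1)} + d\mu^{(2)}$, so $\lambda := (d\mu^{(1)} + d\mu^{(2)}) - \sigma$ lies in the dual cone $G^* \subset \meas{\bwt{M}}$. For sufficiently small $\ep > 0$ (ensured by $d$ bounded above and $\mu^{(i)} \leq \mu$), the measure $\tilde\mu := \mu - \ep\lambda$ is positive, and $\tilde\mu \preccurlyeq \mu$ follows since $\mu - \tilde\mu \in G^*$.

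The hardest step—verifying $\gq_\sharp \tilde\mu \neq \gq_\sharp \mu$, which by Proposition \ref{pr:somewhat_anti-sym} would contradict minimality—is the main obstacle. If case 1 occurs on a set of positive $\pi$-measure, then $\norm{\sigma} < \norm{d\mu^{(1)} + d\mu^{(2)}}$, yielding $\norm{\tilde\mu} < \norm{\mu}$ and an immediate contradiction via Proposition \ref{pr:basic}\ref{pr:basic_2}. In the remaining pure case-2 scenario, I would invoke Proposition \ref{co:g_fibres}\ref{co:g_fibres_1}: on $d^{-1}(0, \infty)$ the fibres of $\gq$ coincide with those of $\pp$, so $\hat\zeta$ lies on a distinct $\gq$-fibre from both $\zeta$ and $\zeta'$ provided $\pp_1(\zeta) \neq \pp_1(\zeta')$ and $\pp_2(\zeta) \neq \pp_2(\zeta')$—that is, provided $\zeta$ and $\zeta'$ are not ``diagonal'' ($\pp_1 = \pp_2$). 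The diagonal part of $\mu$ within $\bwtf$ consists of vanishing points (by Proposition \ref{pr:De_Leeuw_relation}\ref{pr:De_Leeuw_relation_2}), which can be excluded from $\supp(\mu)$ a priori using Proposition \ref{pr:scalar_mult}\ref{pr:scalar_mult_2} combined with minimality. The chief technical burdens are: the careful bookkeeping of the $d$-weighting throughout the integrated inequalities (crucially using that $d$ is bounded on $\supp(\mu)$ to ensure positivity of $\tilde\mu$); the measurable selection of $\hat\zeta(\zeta, \zeta')$, which requires verifying that the graph of the reduction is analytic in $\bwt{M}^3$; and handling the non-$\bwtf$ portion of $\mu$ (if any) by a parallel argument or by first reducing to the $\bwtf$-supported case.
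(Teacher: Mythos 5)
The high-level plan of invoking Lemma \ref{lm:shared_coordinate} is right, but you take a much heavier route than the paper, and that route has two genuine gaps. The paper's proof is entirely local: it applies Lemma \ref{lm:approx_g_functions} to the functions $f_n = -\min\{n,\frac{1}{d}\} \in -G$ and $f = -\frac{1}{d}$ (on $d^{-1}(0,\infty)$) to produce a sequence $(g_n)$ in $G$ converging to $-\frac{1}{d}$ on a Borel set $A \subset d^{-1}(0,\infty)$ of full $\mu$-measure. Then for any pair $\zeta_1,\zeta_2 \in A$ with $\pp_2(\zeta_1) = \pp_1(\zeta_2)$, evaluating the $G$-inequalities from Lemma \ref{lm:shared_coordinate} against $g_n$ and sending $n\to\infty$ gives $0 \leq -2$ (case \eqref{lm:shared_coordinate_1}) or $-1 \leq -2$ (case \eqref{lm:shared_coordinate_2}) directly — no coupling, no selection, no competitor measure.

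Your proposal instead tries to construct a global competitor $\tilde\mu \prec \mu$, and this breaks down in two places. First, the final verification that $\gq_\sharp\tilde\mu \neq \gq_\sharp\mu$ is not established. In the pure case-2 scenario you only know $d(\hat\zeta) \leq d(\zeta) + d(\zeta')$, so $\|\tilde\mu\|$ need not drop; and the observation that $\hat\zeta$ sits on a different $\gq$-fibre from $\zeta,\zeta'$ does not rule out $\gq_\sharp\sigma = \gq_\sharp(d\mu^{(1)}+d\mu^{(2)})$ — mass could be redistributed onto new fibres and yet push forward to the same measure. You would need to exhibit an explicit $g \in G$ with $\duality{g,\lambda}>0$, which is exactly what the paper's $g_n \to -\frac{1}{d}$ trick provides for free. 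Second, the measurable selection of $\hat\zeta$ via Jankov--von Neumann is not available here: $\bwt{M}$ is compact Hausdorff but not Polish (it is a Stone--\v Cech compactification), so the standard analytic/Borel selection machinery does not apply, and the multifunction coming out of Lemma \ref{lm:shared_coordinate} is built via nets and compactness, not in any constructively measurable way. There is also a smaller issue: the claim that the corollary's conclusion is \emph{equivalent} to mutual singularity of the marginals is overstated — the direction you actually need (mutual singularity $\Rightarrow$ conclusion) is fine, but the converse would require separating disjoint analytic sets in $\ucomp{M}$, again a Polish-space fact you cannot invoke; moreover the corollary is genuinely more general than Theorem \ref{th:mutually_singular_upgrade} because it allows $\mu$ to charge $d^{-1}(0,\infty)\setminus\bwtf$, where the marginals live outside $\rcomp{M}$ and the paper's mutual-singularity machinery does not yet apply.
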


\begin{proof}
Consider the maps $f:\bwt{M}\to\RR$ given by $f(\zeta)=-1/d(\zeta)$, $d(\zeta)>0$ and $f(\zeta)=0$ otherwise, and $f_n \in C(\bwt{M})$ given by $f_n(\zeta)=-\min\set{n,\frac{1}{d}}$. Then $-f_n \in G$ by Corollary \ref{co:overd} (so the $f_n$ are constant on fibres of $\gq$) and $f_n \to f$ pointwise on $d^{-1}(0,\infty)$. By Lemma \ref{lm:approx_g_functions}, there exists a sequence $(g_n)$ of elements of $G$ that satisfies $f(\zeta) \leq \liminf_n g_n(\zeta)$ for all $\zeta \in d^{-1}(0,\infty)$ and converges to $f$ everywhere on a Borel set $A \subset d^{-1}(0,\infty)$, where $\mu(d^{-1}(0,\infty)\setminus A)=0$. Let $\zeta_1,\zeta_2 \in A$ and suppose for a contradiction that $\pp_2(\zeta_1) = \pp_1(\zeta_2)$. By Lemma \ref{lm:shared_coordinate}, either \eqref{lm:shared_coordinate_1} holds, or there exists $\zeta \in \bwt{M}$ such that \eqref{lm:shared_coordinate_2} holds. Both cases are impossible: if \eqref{lm:shared_coordinate_1} holds then
\[
 0 \leq d(\zeta_1)g_n(\zeta_1) + d(\zeta_2)g_n(\zeta_2) \to -2,
\]
and if \eqref{lm:shared_coordinate_2} holds then $0 < d(\zeta) \leq d(\zeta_1)+d(\zeta_2) < \infty$ and
\[
 d(\zeta)g_n(\zeta) \leq d(\zeta_1)g_n(\zeta_1) + d(\zeta_2)g_n(\zeta_2),
\]
and taking the limit inferior of both sides yields $-1 \leq -2$.
\end{proof}

\subsection{Mutually singular marginals}\label{subsec:singular_marginals}

Let $\mu \in \meas{\bwt{M}}^+$. We define the \emph{marginals} of $\mu$ to be the pushforward measures $(\pp_i)_\sharp\mu$ of $\mu$ under the coordinate maps $\pp_i$, $i=1,2$. These measures belong to $\meas{\ucomp{M}}$ in general, though they are concentrated on $\rcomp{M}$ or $M$ if $\mu$ is concentrated on $\bwtf$ or $\pp^{-1}(M \times M)$, respectively. We will say that $\mu$ has \emph{mutually singular marginals} if $(\pp_1)_\sharp\mu \wedge (\pp_2)_\sharp\mu=0$, that is,
\[
 (\pp_1)_\sharp\mu \perp (\pp_2)_\sharp\mu.
\]
Let $\lambda$ be a Borel measure on $\ucomp{M}$. Assuming $\lambda$ is sufficiently regular, it is possible to define an element $\widehat{\lambda} \in \Lip_0(M)$ via integration:
\[
 \duality{f,\widehat{\lambda}} = \int_{\ucomp{M}} \ucomp{f}\,d\lambda.
\]
Assuming further regularity of $\lambda$, we find that $\widehat{\lambda} \in \lipfree{M}$. Such ``measure-induced functionals'' are studied intensively in \cite{AP_measures} (where $\widehat{\lambda}$ is denoted $\mathcal{L}\lambda$); we mention in particular \cite[Propositions 4.3 and 4.4 and Theorem 4.11]{AP_measures}, which state precisely the regularity conditions alluded to above.

It happens that there are strong and natural links between measure-induced functionals and optimal De Leeuw representations satisfying certain finiteness conditions. These links were obtained by taking advantage of the idea of optimal couplings and the famous Kantorovich-Rubinstein duality theorem from optimal transport theory. The fact that every element of $\lipfree{M}$ is a convex integral of molecules whenever $M$ is uniformly discrete is one of the applications of this endeavour. We refer the reader to \cite[Section 3]{APS24a} and \cite[Section 4.4]{APS24b} for more details.

In particular, it was found that the representations $\mu \in \meas{\bwt{M}}$ of $\widehat{\lambda}$ obtained in e.g.~\cite[Theorem 3.1]{APS24a} and \cite[Theorems 4.19, 4.20 and Corollary 4.21]{APS24b} can be chosen to have mutually singular marginals. This is essentially due to the Jordan decomposition $\lambda = \lambda^+-\lambda^-$ and the fact that the representations $\mu$ can be chosen to satisfy $(\pp_1)_\sharp\mu\ll\lambda^+$ and $(\pp_2)_\sharp\mu\ll\lambda^-$ (or failing that a slightly weaker condition).

Having representations with mutually singular marginals is a desirable property because it helps to understand and simplify calculations involving the corresponding free space or bidual elements, and it is of interest to see whether the marginals (or parts thereof) of representations can be made to be mutually singular in general, for free space or bidual elements that cannot be measure-induced (or, more generally, \emph{majorisable}; see above Theorem \ref{th:bidual_majorisable} for the definition of this term). This is the aim of the section. As well as being a desirable property in its own right, having (partially) mutually singular marginals is an essential component of the proof of Theorem \ref{th:opt_conc}, which in turn is needed to prove Theorem \ref{th:extreme}. Once again, it is minimality that gives us the desired results.

The main results of this section are Theorems \ref{th:mutually_singular_upgrade} and \ref{th:singular_marginals}. We start with some preliminary work. Given a set $L$, we let
\[
 \wt{L} = \set{(x,y) \in L \times L \,:\, x \neq y}.
\]
While the next lemma is stated in terms of locally compact spaces, by inspection of the proof it can be seen to apply more widely.

\begin{lemma}\label{lm:rectangle_marginals_2}
Let $L$ be locally compact, let $\lambda \in \meas{L \times L}^+$ and let $\ep>0$. Then there exists a finite partition $\mathscr{P}$ of $L$ into Borel sets, such that
\begin{equation}\label{eqn:rectangle_marginals_2}
 \sum_{F \in \mathscr{P}} \lambda(\wt{F}) < \ep.
\end{equation}
\end{lemma}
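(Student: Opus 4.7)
The plan is to exploit the Hausdorffness of $L$ together with the inner regularity of the Radon measure $\lambda$ on the locally compact Hausdorff space $L \times L$. First I would observe that since $L$ is Hausdorff, the diagonal $\Delta := \set{(x,x) \,:\, x \in L}$ is closed in $L \times L$, and for any Borel partition $\mathscr{P}$ of $L$ the sets $\wt{F}$, $F \in \mathscr{P}$, are pairwise disjoint and contained in $(L \times L)\setminus\Delta$. Hence $\sum_{F \in \mathscr{P}} \lambda(\wt{F}) = \lambda(\bigcup_F \wt{F}) \leq \lambda((L\times L)\setminus\Delta)$, and the strategy is to choose $\mathscr{P}$ so that $\bigcup_F \wt{F}$ avoids a prescribed compact set capturing most of the mass of $\lambda$ off $\Delta$.

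Second, by inner regularity I would pick a compact set $K \subset (L \times L)\setminus\Delta$ with $\lambda((L\times L)\setminus\Delta) < \lambda(K) + \ep$. The goal becomes to produce a finite Borel partition $\mathscr{P}$ of $L$ such that $K \cap \bigcup_{F \in \mathscr{P}} \wt{F} = \varnothing$, for then $\bigcup_F \wt{F} \subset (L \times L)\setminus\Delta\setminus K$ and $\sum_F \lambda(\wt{F}) < \ep$, as required.

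Third, I would use Hausdorffness pointwise: for each $(x,y) \in K$ there exist disjoint open sets $U_{x,y} \ni x$ and $V_{x,y} \ni y$ in $L$, and $U_{x,y} \times V_{x,y}$ is an open neighbourhood of $(x,y)$ in $L \times L$. Compactness of $K$ yields a finite subcover, giving open sets $U_1,V_1,\ldots,U_n,V_n \subset L$ with $U_i \cap V_i = \varnothing$ for every $i$, such that $K \subset \bigcup_{i=1}^n U_i \times V_i$.

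Finally I would take $\mathscr{P}$ to be the collection of non-empty atoms of the finite Boolean algebra of subsets of $L$ generated by $\set{U_1,V_1,\ldots,U_n,V_n}$. This is a finite (at most $4^n$ elements) partition of $L$ into Borel sets. Each atom $F \in \mathscr{P}$ satisfies, for every $i$, either $F \subset U_i$ or $F \cap U_i = \varnothing$, and similarly for $V_i$; since $U_i \cap V_i = \varnothing$, no atom can meet both $U_i$ and $V_i$. Consequently, if $(x,y) \in K$ then $(x,y) \in U_i \times V_i$ for some $i$, and no atom $F$ can contain both $x$ and $y$, so $(x,y) \notin \bigcup_F \wt{F}$. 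This gives $K \cap \bigcup_F \wt{F} = \varnothing$, and the estimate \eqref{eqn:rectangle_marginals_2} follows from the second paragraph. The only mildly technical point is verifying the separation property of the atoms of the Boolean algebra, but this is a routine combinatorial observation; no serious obstacle is anticipated.
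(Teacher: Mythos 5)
Your proposal is correct and follows essentially the same route as the paper: inner regularity to pick a compact $K \subset \wt{L}$ carrying all but $\ep$ of the mass, a finite cover of $K$ by open rectangles $U_i \times V_i$ with $U_i \cap V_i = \varnothing$ (the paper tacitly uses Hausdorffness here, as all spaces in the paper are assumed Hausdorff), and then the atoms of the Boolean algebra generated by $\{U_1,\ldots,U_n,V_1,\ldots,V_n\}$ as the partition. The paper's explicit $A_I \cap B_J$ construction is just a concrete description of those same atoms, so the two arguments coincide.
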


\begin{proof}
By inner regularity, there exists a compact set $K \subset \wt{L}$ such that $\lambda(\wt{L}\setminus K) < \ep$. By compactness, there exist open sets $U_1,\ldots,U_n,V_1,\ldots,V_n \subset L$ such that $U_i \cap V_i = \varnothing$, $i=1,\ldots,n$, and $K \subset \bigcup_{i=1}^n U_i \times V_i \subset \wt{L}$. Next, given $I \subset \set{1,\ldots,n}$, define the Borel sets
 \[
  A_I = \begin{cases}
         \displaystyle \bigcap_{i \in I} U_i \setminus \bigcup_{i \notin I} U_i & \text{if }I \neq \varnothing, \\ \displaystyle  L\setminus\bigcup_{i=1}^n U_i & \text{if }I = \varnothing
        \end{cases}
        \qquad\text{and}\qquad
  B_I = \begin{cases}
         \displaystyle \bigcap_{i \in I} V_i \setminus \bigcup_{i \notin I} V_i & \text{if }I \neq \varnothing, \\ \displaystyle L\setminus\bigcup_{i=1}^n V_i & \text{if }I = \varnothing.
        \end{cases}      
 \]
Let $\Gamma$ be the set of all pairs $(I,J)$, where $I,J \subset \set{1,\ldots,n}$ and $F_{(I,J)}:=A_I \cap B_J$ is non-empty. Then $\set{F_{(I,J)} \,:\, (I,J) \in \Gamma}$ is a finite partition of $L$ consisting of Borel sets. We claim that $(F_{(I,J)} \times F_{(I,J)}) \cap K = \varnothing$ whenever $(I,J) \in \Gamma$, which will establish \eqref{eqn:rectangle_marginals_2}. Suppose that, for some such $(I,J)$, there exists $(x,y) \in (F_{(I,J)} \times F_{(I,J)}) \cap K$. Then $(x,y) \in U_i \times V_i$ for some $i \leq n$. This implies $x \in A_I \cap U_i$ so $i \in I$ and $A_I \subset U_i$; likewise $B_J \subset V_i$. However, this forces $A_I \cap B_J \subset U_i \cap V_i = \varnothing$, which isn't the case. This completes the proof.
\end{proof}

At this point we need to introduce the set $\bwtfp$ defined originally in \cite[(4.6)]{APS24b}:
\[
\bwtfp = \set{\zeta\in\bwtf \,:\, \pp_1(\zeta)\neq\pp_2(\zeta)}.
\]
Equivalently, $\zeta \in \bwtfp$ if $\zeta \in \bwtf$ and $\bar{d}(\pp(\zeta))>0$. It will be helpful to compare $\bwtfp$ and $\bwtf\setminus d^{-1}(0)$. To do this, recall Proposition \ref{pr:De_Leeuw_relation} and the brief discussion on optimal points and vanishing points at the end of Section \ref{subsec:coords}. We note that $\bwtfp \subset \bwtf\setminus d^{-1}(0)$ by Proposition \ref{pr:De_Leeuw_relation} \ref{pr:De_Leeuw_relation_3}, and the inclusion is strict in general because of the existence of points $\zeta$ satisfying $d(\zeta) > \bar{d}(\pp(\zeta))=0$.

Despite the difference between $\bwtfp$ and $\bwtf\setminus d^{-1}(0)$, with respect to optimal representations we can regard them as being equal, because for $\mu \in \opr{\bwt{M}}$ we have $\mu\restrict_{\bwtfp} = \mu\restrict_{\bwtf\setminus d^{-1}(0)}$. Indeed, $\bwtfp \subset \bwtf\setminus d^{-1}(0)$ implies $\mu\restrict_{\bwtfp} \leq \mu\restrict_{\bwtf\setminus d^{-1}(0)}$. Conversely, if $\mu$ is optimal then so is $\mu\restrict_{\bwtf\setminus d^{-1}(0)}$ by \cite[Proposition 2.1 (d)]{APS24a}, so $\mu\restrict_{\bwtf\setminus d^{-1}(0)}$ is concentrated on $(\bwtf \cap \opt)\setminus d^{-1}(0)$, which is a subset of $\bwtfp$ again by Proposition \ref{pr:De_Leeuw_relation} \ref{pr:De_Leeuw_relation_3}.

\begin{theorem}\label{th:mutually_singular_upgrade}
Let $\mu \in \meas{\bwt{M}}^+$ be minimal and concentrated on $\bwtfp$. Then $(\pp_1)_\sharp\mu \perp (\pp_2)_\sharp\mu$.
\end{theorem}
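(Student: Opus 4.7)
The plan is to derive Theorem \ref{th:mutually_singular_upgrade} directly from Corollary \ref{co:minimality_coordinates}, which already provides the essential combinatorial content (no two points of the support share a ``middle'' coordinate); the remaining work is to upgrade a set-theoretic disjointness into a Borel-measurable witness of mutual singularity.

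First I would verify the inclusion $\bwtfp \subset d^{-1}(0,\infty)$. Indeed, $\zeta \in \bwtf$ gives $d(\zeta) < \infty$ by \eqref{eq:R_finite_d}, while $\bar{d}(\pp(\zeta)) > 0$ combined with Proposition \ref{pr:De_Leeuw_relation} \ref{pr:De_Leeuw_relation_3} forces $d(\zeta) \geq \bar{d}(\pp(\zeta)) > 0$. Thus $\mu$ is minimal and concentrated on $d^{-1}(0,\infty)$, so Corollary \ref{co:minimality_coordinates} produces a Borel set $A \subset d^{-1}(0,\infty)$ on which $\mu$ is concentrated and such that $\pp_2(\zeta_1) \neq \pp_1(\zeta_2)$ for all $\zeta_1,\zeta_2 \in A$. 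Varying the pair, this is exactly the statement $\pp_1(A) \cap \pp_2(A) = \varnothing$.

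Next I would pass to a $\sigma$-compact subset by inner regularity of the Radon measure $\mu$: choose compact sets $K_n \subset A$ with $\mu(A\setminus K_n) < 2^{-n}$ and set $K = \bigcup_n K_n$. Then $K$ is $\sigma$-compact (hence Borel), is contained in $A$, and satisfies $\mu(\bwt{M}\setminus K) = 0$. Because $\pp_1,\pp_2$ are continuous, each $\pp_i(K_n)$ is compact, and therefore $B_i := \pp_i(K) = \bigcup_n \pp_i(K_n)$ is a $\sigma$-compact Borel subset of $\ucomp{M}$. The inclusion $K \subset A$ together with $\pp_1(A) \cap \pp_2(A) = \varnothing$ ensures $B_1 \cap B_2 = \varnothing$. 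Finally, using $K \subset \pp_i^{-1}(B_i)$ for $i=1,2$, the bound
\[
(\pp_i)_\sharp\mu(\ucomp{M}\setminus B_i) \;=\; \mu\bigl(\bwt{M}\setminus \pp_i^{-1}(B_i)\bigr) \;\leq\; \mu(\bwt{M}\setminus K) \;=\; 0
\]
shows that each marginal $(\pp_i)_\sharp\mu$ is concentrated on $B_i$, so $(\pp_1)_\sharp\mu \perp (\pp_2)_\sharp\mu$ as required.

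The only real obstacle is the measurability point just addressed: continuous images of Borel sets need only be analytic, so working with $\pp_i(A)$ directly would not immediately furnish Borel witnesses to mutual singularity. The inner-regular passage to the $\sigma$-compact $K$ sidesteps this cleanly, and no new analytic ingredients beyond Corollary \ref{co:minimality_coordinates} and standard Radon regularity are needed.
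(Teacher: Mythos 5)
Your proof is correct and takes a shorter route than the paper's. Both proofs start from Corollary \ref{co:minimality_coordinates}, whose shared-coordinate condition on the Borel set $A$ you correctly translate as $\pp_1(A) \cap \pp_2(A) = \varnothing$, and you then secure measurability of the coordinate images (continuous images of Borel sets being in general only analytic) by passing, via inner regularity, to a $K_\sigma$ subset $K \subset A$ still carrying all of $\mu$, so that $B_i = \pp_i(K)$ are disjoint $K_\sigma$ (hence Borel) sets on which the respective marginals concentrate --- mutual singularity by definition. The paper instead uses Corollary \ref{co:minimality_coordinates} only to establish a localised claim: for each Borel $E \subset \rcomp{M}$ there are disjoint Borel $P, Q \subset E$ with $(\pp_2)_\sharp\mu(E\setminus P)$ and $(\pp_1)_\sharp\mu(E\setminus Q)$ bounded by $\pp_\sharp\mu(E \times E)$. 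It then iterates this over a partition of $\rcomp{M}$ produced by Lemma \ref{lm:rectangle_marginals_2} to show that $\bigl((\pp_1)_\sharp\mu \wedge (\pp_2)_\sharp\mu\bigr)(E) < 2\ep$ for all Borel $E$ and $\ep>0$. You have noticed that the disjointness furnished by Corollary \ref{co:minimality_coordinates} is already global rather than merely local, so the partitioning apparatus and Lemma \ref{lm:rectangle_marginals_2} are not needed for this theorem; the paper's localised claim also passes to $K_\sigma$ subsets by inner regularity, so your argument adds no new ingredient but deploys the same one more efficiently, reaching the conclusion in one step rather than via an $\ep$-argument.
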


\begin{proof}
Let $\mu \in \meas{\bwt{M}}^+$ be minimal and concentrated on $\bwtfp$. Then $\pp_\sharp \mu \in \meas{\rcomp{M} \times \rcomp{M}}$ is concentrated on $\wt{\rcomp{M}}$. Fix $\ep>0$. Using Lemma \ref{lm:rectangle_marginals_2}, we fix a partition $\set{F_1,\ldots,F_n}$ of Borel subsets of $\rcomp{M}$ that satisfies 
\begin{equation}\label{eqn:mutually_singular_upgrade_1}
 \sum_{k=1}^n \pp_\sharp\mu(F_k \times F_k) < \ep.
\end{equation}
Next, we make the following claim.

\begin{claim}
Let $E \subset \rcomp{M}$ be Borel. Then there exist disjoint Borel sets $P,Q \subset E$ such that 
\begin{equation}\label{eqn:mutually_singular_upgrade_2}
 (\pp_2)_\sharp\mu(E\setminus P),\,(\pp_1)_\sharp\mu(E\setminus Q) \leq \pp_\sharp\mu(E \times E).
\end{equation}
\end{claim}

Indeed, using Corollary \ref{co:minimality_coordinates}, $\mu$ is concentrated on a set $A\subset \bwtfp$, such that if $\zeta_1,\zeta_2 \in A$ then $\pp_2(\zeta_1) \neq \pp_1(\zeta_2)$. Let $E \subset \rcomp{M}$ be Borel. Using inner regularity, pick $K_\sigma$ subsets $P' \subset A \cap \pp^{-1}((\rcomp{M}\setminus E)  \times E), Q' \subset A \cap \pp^{-1}(E \times (\rcomp{M}\setminus E))$ satisfying
\[
 \mu(\pp^{-1}((\rcomp{M}\setminus E) \times E)\setminus P') = \mu(\pp^{-1}(E \times (\rcomp{M}\setminus E))\setminus Q') = 0.
\]
Set $P=\pp_2(P'), Q=\pp_1(Q') \subset E$, which are again $K_\sigma$ sets. We observe that $P \cap Q = \varnothing$. Indeed, given $\xi \in P \cap Q$, there would exist $\zeta_1 \in P'$, $\zeta_2 \in Q'$ such that $\pp_2(\zeta_1)=\xi=\pp_1(\zeta_2)$, but as $P',Q' \subset A$, this isn't possible. Finally, we estimate
\begin{align*}
 (\pp_2)_\sharp\mu(E\setminus P) = \mu(\pp_2^{-1}(E)\setminus\pp_2^{-1}(\pp_2(P'))) &\leq \mu(\pp_2^{-1}(E)\setminus P')\\
 &= \mu(\pp^{-1}(E \times E)) + \mu(\pp^{-1}((\rcomp{M}\setminus E)  \times E)\setminus P')\\
 &= \mu(\pp^{-1}(E \times E)) = \pp_\sharp\mu(E \times E).
\end{align*}
Likewise we get the same upper estimate for $(\pp_1)_\sharp\mu(E\setminus Q)$, which establishes \eqref{eqn:mutually_singular_upgrade_2} and completes the proof of the claim.

To finish, define $\omega=(\pp_1)_\sharp\mu \wedge (\pp_2)_\sharp\mu$. Fix a Borel set $E \subset \rcomp{M}$ and write $E_k=E \cap F_k$, $k=1,\ldots,n$. We apply the claim to each $E_k$ to obtain disjoint Borel sets $P_k,Q_k \subset E_k$ satisfying 
\[
(\pp_2)_\sharp\mu(E_k\setminus P_k),\,(\pp_1)_\sharp\mu(E_k\setminus Q_k) \leq \pp_\sharp\mu(E_k \times E_k), \quad k=1,\ldots,n.
\]
Define the disjoint Borel sets $P=\bigcup_{k=1}^n P_k, Q = \bigcup_{k=1}^n Q_k \subset E$. We estimate
\begin{align*}
 \omega(E) = \omega(E\setminus P) + \omega(P) &\leq (\pp_2)_\sharp\mu(E\setminus P) + (\pp_1)_\sharp\mu(E\setminus Q)\\
 &= \sum_{k=1}^n (\pp_2)_\sharp\mu(E_k\setminus P_k) + (\pp_1)_\sharp\mu(E_k\setminus Q_k)\\
 &\leq 2\sum_{k=1}^n \pp_\sharp\mu(E_k \times E_k) < 2\ep.
\end{align*}
Since $\ep>0$ and $E \subset \rcomp{M}$ were arbitrary, we obtain $\omega=0$.
\end{proof}

It will be helpful to have to hand a slight generalisation of the above result.

\begin{corollary}\label{co:mutually_singular_upgrade}
Let $A \subset \rcomp{M}$ be Borel, and let $\mu \in \meas{\bwt{M}}^+$ be minimal and concentrated on $\bwtf\setminus\set{\zeta \in \bwtf \,:\, \pp_1(\zeta)=\pp_2(\zeta) \in A}$. Then the restrictions of the marginals of $\mu$ to $A$ are mutually singular:
\[
((\pp_1)_\sharp\mu)\restrict_A \perp ((\pp_2)_\sharp\mu)\restrict_A.
\]
\end{corollary}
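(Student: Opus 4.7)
The plan is to decompose $\mu$ into the part concentrated where its two coordinates genuinely differ and the part where they coincide, dispatch the first piece using Theorem \ref{th:mutually_singular_upgrade}, and argue that the second piece contributes nothing to the marginals once they are restricted to $A$. In this way the corollary reduces directly to Theorem \ref{th:mutually_singular_upgrade}.

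Concretely, I would first verify that $\bwtfp$ is Borel (in fact open) in $\bwt{M}$: since $\bar{d}$ is $\tau$-lower semicontinuous by Proposition \ref{pr:rcomp_observations}\ref{pr:rcomp_observations_1}, the set $\{(\xi,\eta) \in \rcomp{M}\times\rcomp{M} \,:\, \bar{d}(\xi,\eta)>0\}$ is open, and $\bwtfp$ is its preimage under the continuous map $\pp$ intersected with $\bwtf$. Consequently, setting $\mu_1 := \mu\restrict_{\bwtfp}$ and $\mu_2 := \mu\restrict_{\bwtf\setminus\bwtfp}$, Proposition \ref{pr:min_basic}\ref{pr:min_basic_3} guarantees that both are minimal. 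Since $\mu_1$ is concentrated on $\bwtfp$, Theorem \ref{th:mutually_singular_upgrade} gives $(\pp_1)_\sharp\mu_1 \perp (\pp_2)_\sharp\mu_1$, and mutual singularity is obviously preserved under restriction to $A$.

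It remains to check that $\mu_2$ makes no contribution. By construction $\mu_2$ is concentrated on $\bwtf\setminus\bwtfp$, on which $\pp_1(\zeta)=\pp_2(\zeta)$ by definition of $\bwtfp$. Combined with the hypothesis that $\mu$ charges no $\zeta \in \bwtf$ with $\pp_1(\zeta)=\pp_2(\zeta)\in A$, this forces $\mu_2$ to be concentrated on $\{\zeta \,:\, \pp_1(\zeta)=\pp_2(\zeta)\in \rcomp{M}\setminus A\}$, so that $((\pp_i)_\sharp\mu_2)\restrict_A = 0$ for $i=1,2$. Therefore $((\pp_i)_\sharp\mu)\restrict_A = ((\pp_i)_\sharp\mu_1)\restrict_A$, and mutual singularity of the latter pair, already established in the previous paragraph, yields the conclusion. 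The proof is essentially bookkeeping; there is no real obstacle, the only mild subtlety being the Borel measurability check that licenses the decomposition, as the substantive analytic content has already been absorbed into Theorem \ref{th:mutually_singular_upgrade}.
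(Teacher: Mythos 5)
Your proof is correct and follows essentially the same route as the paper: strip off the part of $\mu$ that contributes nothing to the $A$-restricted marginals, note the remainder is minimal by Proposition \ref{pr:min_basic}\ref{pr:min_basic_3} and concentrated on $\bwtfp$, and invoke Theorem \ref{th:mutually_singular_upgrade}. The only cosmetic difference is the choice of cutting set: the paper removes $\mu\restrict_{\pp^{-1}(B\times B)}$ with $B=\rcomp{M}\setminus A$, whereas you restrict directly to $\bwtfp$; both residual measures are concentrated on $\bwtfp$ and both discarded pieces have marginals living in $B$, so the two decompositions serve identically.
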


\begin{proof}
Define $B=\rcomp{M}\setminus A$ and $\omega=\mu - \mu\restrict_{\pp^{-1}(B \times B)}$, which is minimal by Proposition \ref{pr:min_basic} \ref{pr:min_basic_3}. We observe that $\omega$ is concentrated on $\bwtfp$. By Theorem \ref{th:mutually_singular_upgrade}, $\omega$ has mutually singular marginals. Given a Borel set $E \subset \rcomp{M}$, 
\begin{align*}
 ((\pp_1)_\sharp\mu)\restrict_A(E) = \mu(\pp_1^{-1}(A \cap E)) &= \omega(\pp_1^{-1}(A \cap E)) + \mu\restrict_{\pp^{-1}(B \times B)}(\pp_1^{-1}(A \cap E))\\
 &= \omega(\pp_1^{-1}(A \cap E)) = ((\pp_1)_\sharp\omega)\restrict_A(E),
\end{align*}
whence $((\pp_1)_\sharp\mu)\restrict_A=((\pp_1)_\sharp\omega)\restrict_A$. Likewise $((\pp_2)_\sharp\mu)\restrict_A=((\pp_2)_\sharp\omega)\restrict_A$. The result follows.
\end{proof}

We can combine several previous results to obtain the following theorem, which is our general statement on (partially) mutually singular marginals. 

\begin{theorem}\label{th:singular_marginals}
Let $\nu \in \opr{\bwt{M}}$. Then there exists $\mu \in \opr{\bwt{M}}$ such that 
\begin{enumerate}[label={\upshape{(\roman*)}}]
\item\label{th:singular_marginals_1} $\Phi^*\mu=\Phi^*\nu$;
\item\label{th:singular_marginals_2} if $\nu$ is concentrated on an element of $\mathfrak{C}$ (including any set in Corollary \ref{co:wt} or Proposition \ref{pr:delta}), then so is $\mu$;
\item\label{th:singular_marginals_3} if $\nu$ is concentrated on $\bwtf\setminus\Delta(A)$, where $A \subset \rcomp{M}$ is Borel, then 
\[
((\pp_1)_\sharp\mu)\restrict_A \perp ((\pp_2)_\sharp\mu)\restrict_A.
\]
\end{enumerate}
\end{theorem}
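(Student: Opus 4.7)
The natural approach is to realise $\mu$ as a $\preccurlyeq$-minimal measure sitting below $\nu$, since minimality is the hinge that turns the previously established technology into both the concentration and singularity conclusions. Accordingly, my first step is to invoke Proposition \ref{pr:minimal_exist} to produce a minimal $\mu\in\meas{\bwt{M}}^+$ with $\mu\preccurlyeq\nu$. Proposition \ref{pr:basic} \ref{pr:basic_3} gives $\Phi^*\mu=\Phi^*\nu$, proving (i), while Proposition \ref{pr:basic} \ref{pr:basic_4} transfers optimality from $\nu$ to $\mu$, so $\mu\in\opr{\bwt{M}}$.

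For (ii), the defining property of the family $\mathfrak{C}$ (Section \ref{subsec:conc}) applied to the relation $\mu\preccurlyeq\nu$ shows that if $\nu$ is concentrated on any $C\in\mathfrak{C}$ then so is $\mu$; the parenthetical mention of Corollary \ref{co:wt} and Proposition \ref{pr:delta} is then just a reminder that $\wt{M}$, $\bwtf$, $\pp^{-1}(M\times M)$, $\bwtf\setminus d^{-1}(0)$ and all the sets $\bwtf\setminus\Delta(A)$ lie in $\mathfrak{C}$.

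For (iii), assume $\nu$ is concentrated on $\bwtf\setminus\Delta(A)$. By Proposition \ref{pr:delta} this set is a member of $\mathfrak{C}$, so by (ii) the same is true of $\mu$. The goal is to invoke Corollary \ref{co:mutually_singular_upgrade}, which needs $\mu$ to be concentrated on $\bwtf\setminus S$, where $S=\set{\zeta\in\bwtf\,:\,\pp_1(\zeta)=\pp_2(\zeta)\in A}$. Since $\Delta(A)\cap\bwtf\subset S$, the discrepancy consists of points $\zeta\in\bwtf$ with $\pp_1(\zeta)=\pp_2(\zeta)\in A$ and $d(\zeta)>0$; for any such $\zeta$ one has $\bar{d}(\pp(\zeta))=0<d(\zeta)$, and by the remark following Proposition \ref{pr:De_Leeuw_relation} at the end of Section \ref{subsec:coords} every such $\zeta$ is a vanishing point, hence lies outside $\opt$. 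Because $\mu$ is optimal it is concentrated on $\opt$, so $\mu(S\setminus\Delta(A))=0$, and therefore $\mu$ is concentrated on $\bwtf\setminus S$. Corollary \ref{co:mutually_singular_upgrade} then yields $((\pp_1)_\sharp\mu)\restrict_A\perp((\pp_2)_\sharp\mu)\restrict_A$.

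The only genuinely delicate point is this last reconciliation between the concentration set $\bwtf\setminus\Delta(A)$ that (ii) provides and the slightly smaller set $\bwtf\setminus S$ that Corollary \ref{co:mutually_singular_upgrade} demands; everything else is a direct invocation of prior results. That step is handled by observing that the difference $S\setminus\Delta(A)$ is swept up into the vanishing points and is therefore invisible to any optimal measure.
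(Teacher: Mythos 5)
Your proposal is correct and follows the same route as the paper: extract a minimal $\mu \preccurlyeq \nu$ via Proposition \ref{pr:minimal_exist}, read off (i), (ii) and optimality from Proposition \ref{pr:basic} and the definition of $\mathfrak{C}$, and then use the optimality of $\mu$ (concentration on $\opt$, so the vanishing points in $S\setminus\Delta(A)$ are $\mu$-null) to upgrade concentration from $\bwtf\setminus\Delta(A)$ to $\bwtf\setminus S$ before invoking Corollary \ref{co:mutually_singular_upgrade}. Your explicit identification of $S\setminus\Delta(A)$ with vanishing points is exactly the content of the paper's appeal to the remark preceding Theorem \ref{th:mutually_singular_upgrade}.
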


\begin{proof} As $\nu$ is optimal it is positive. Using Proposition \ref{pr:minimal_exist}, there exists minimal $\mu \in \meas{\bwt{M}}^+$ such that $\mu \preccurlyeq \nu$. By Proposition \ref{pr:basic} \ref{pr:basic_4}, $\mu$ is optimal.
\begin{enumerate}[label={\upshape{(\roman*)}}]
\item This follows by Proposition \ref{pr:basic} \ref{pr:basic_3}.
\item This is immediate by the definition of $\mathfrak{C}$.
\item By \ref{th:singular_marginals_2}, $\mu$ is concentrated on $\bwtf\setminus\Delta(A)$. Similarly to the remarks made just above Theorem \ref{th:mutually_singular_upgrade}, as $\mu$ is optimal, it is concentrated moreover on $\bwtf\setminus\set{\zeta \in \bwtf \,:\, \pp_1(\zeta)=\pp_2(\zeta) \in A}$. Hence we can apply Corollary \ref{co:mutually_singular_upgrade}. \qedhere
\end{enumerate}
\end{proof}

We finish the section with some applications of the previous results. We can combine Theorems \ref{th:minimal_shadow} and \ref{th:singular_marginals} to obtain ``ideal'' representations of convex integrals of molecules.

\begin{corollary}\label{co:nice_cim}
 Let $m \in \lipfree{M}$ be a convex integral of molecules. Then $m$ has a representation $\mu \in \opr{\bwt{M}}$ that has mutually singular marginals and is concentrated on $\wt{A}$, where $A:=\supp(m) \cup \set{0}$.
\end{corollary}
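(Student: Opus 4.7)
The starting point is the definition of convex integral of molecules: there exists $\nu \in \opr{\bwt{M}}$ concentrated on $\wt{M}$ with $\Phi^*\nu = m$. My plan is to extract from $\nu$ a single minimal representation $\mu$ that inherits three desired properties (optimality, concentration on $\wt{A}$, and mutual singularity of marginals) by invoking the results of Section \ref{subsec:singular_marginals} together with Theorem \ref{th:minimal_shadow}. Concretely, use Proposition \ref{pr:minimal_exist} to pick a minimal $\mu \in \meas{\bwt{M}}^+$ with $\mu \preccurlyeq \nu$; then Proposition \ref{pr:basic} \ref{pr:basic_3} and \ref{pr:basic_4} give $\Phi^*\mu = m$ and $\mu \in \opr{\bwt{M}}$.

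Next I would verify that $\mu$ retains concentration on $\wt{M}$ and has mutually singular marginals. Since $\wt{M} \in \mathfrak{C}$ by Corollary \ref{co:wt} and $\nu$ is concentrated on $\wt{M}$, so is $\mu$. Because $\wt{M} \subset \bwtfp$ (the coordinates of every $(x,y) \in \wt{M}$ lie in $M \subset \rcomp{M}$ and are distinct), $\mu$ is minimal and concentrated on $\bwtfp$, so Theorem \ref{th:mutually_singular_upgrade} yields $(\pp_1)_\sharp\mu \perp (\pp_2)_\sharp\mu$.

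Finally, to upgrade concentration from $\wt{M}$ to $\wt{A}$, apply Theorem \ref{th:minimal_shadow} to $\mu$ (which is minimal and concentrated on $\wt{M} \subset d^{-1}[0,\infty)$) to obtain $\pp_s(\supp(\mu)) \subset \esupp{m} \cup \{0\}$. Since $m \in \lipfree{M}$ we have $\esupp{m} = \ucl{\supp(m)}$, and since $\supp(m)$ is closed in $M$, Proposition \ref{pr:woodsuniform} \ref{pr:woodsuniform_2} implies $\ucl{\supp(m)} \cap M = \supp(m)$. Hence any $(x,y) \in \supp(\mu) \cap \wt{M}$ satisfies $x,y \in \pp_s(\supp(\mu)) \cap M \subset \supp(m) \cup \{0\} = A$, so $(x,y) \in \wt{A}$. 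Combined with $\mu(\bwt{M}\setminus\wt{M}) = 0$, this gives $\mu(\bwt{M}\setminus\wt{A}) = 0$, completing the proof.

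The main conceptual point, rather than any technical obstacle, is that a single minimal optimal representation simultaneously delivers the shadow bound from Theorem \ref{th:minimal_shadow} and the mutual singularity from Theorem \ref{th:mutually_singular_upgrade}; this is precisely why minimality is the ``right'' notion for these purposes, in line with the philosophy articulated in Section \ref{subsec:Choquet}.
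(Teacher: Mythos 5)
Your proof is correct and follows essentially the same route as the paper's: the paper applies Theorem \ref{th:singular_marginals} (which in its proof constructs $\mu$ precisely via Proposition \ref{pr:minimal_exist}) and then Theorem \ref{th:minimal_shadow}, whereas you unpack Theorem \ref{th:singular_marginals} and invoke Proposition \ref{pr:minimal_exist}, Corollary \ref{co:wt} and Theorem \ref{th:mutually_singular_upgrade} directly. One small benefit of your version is that it makes explicit the minimality of $\mu$, which the paper tacitly relies on when it applies Theorem \ref{th:minimal_shadow} to the $\mu$ produced by Theorem \ref{th:singular_marginals}, even though minimality is not part of that theorem's stated conclusion.
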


\begin{proof}
 Let $\nu \in \opr{\bwt{M}}$ represent $m$ and be concentrated on $\wt{M}$. By Theorem \ref{th:singular_marginals}, there exists $\mu \in \opr{\bwt{M}}$ also representing $m$, concentrated on $\wt{M}$ and having mutually singular marginals. By Theorem \ref{th:minimal_shadow}, $\supp(\mu) \subset \pp^{-1}(K \times K)$, where $K:=\esupp{m} \cup \set{0}$. By \cite[Corollary 3.7]{AP_measures}, $A=K \cap M$. From these facts it follows easily that $\mu$ is concentrated on $\supp(\mu) \cap \wt{M} \subset \wt{A}$.
\end{proof}

Recall that for some metric spaces $M$, every element of $\lipfree{M}$ is a convex integral of molecules. For example, this is true if $M$ is uniformly discrete \cite[Corollary 3.7]{APS24a} or if $M$ is proper and purely 1-unrectifiable \cite[Corollary 6.6]{APS24b}. It turns out that this is a hereditary property, as was observed by R. Aliaga.

\begin{corollary}[R. Aliaga]\label{co:hereditary_cim}Let $M$ have the property that every element of $\lipfree{M}$ is a convex integral of molecules, and let $A \subset M$ be closed and contain $0$. Then every element of $\lipfree{A}$ is a convex integral of molecules (with respect to $\wt{A}$).
\end{corollary}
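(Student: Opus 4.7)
The plan is to pull any $m \in \lipfree{A}$ up to $\lipfree{M}$ via the natural isometric embedding, apply Corollary \ref{co:nice_cim} there to obtain a representation already concentrated on the smaller space, and then push the resulting measure back down to $\bwt{A}$.

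First I would identify $\lipfree{A}$ with its isometric image $\cl{\lspan}(\delta(A)) \subset \lipfree{M}$ (which is valid since $0 \in A$). An element $m$ lying in this copy satisfies $\supp(m) \subset A$ directly from the definition of support. By hypothesis $m$ is a convex integral of molecules in $\lipfree{M}$, so Corollary \ref{co:nice_cim} supplies $\mu \in \opr{\bwt{M}}$ representing $m$ and concentrated on $\wt{\supp(m) \cup \{0\}}$; because $\supp(m) \cup \{0\} \subset A$, this means $\mu$ is concentrated on $\wt{A}$ (viewed as a subset of $\wt{M} \subset \bwt{M}$).

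Second I would transfer $\mu$ to a Radon measure $\mu' \in \meas{\bwt{A}}^+$ concentrated on $\wt{A}$. This uses only that $\wt{A}$ is a $G_\delta$ (hence Borel) subset of both $\bwt{M}$ and $\bwt{A}$: the Borel measure $\mu\restrict_{\wt{A}}$ on the Polish space $\wt{A}$ extends by zero on the Stone-\v{C}ech remainder to a Radon measure on $\bwt{A}$, by inner regularity. To verify that $\mu'$ represents $m$ as an element of $\lipfree{A}$, I would take $f \in \Lip_0(A)$, extend it to $\hat{f} \in \Lip_0(M)$ by McShane--Whitney, and observe that $\Phi_A f$ and $\Phi \hat{f}$ agree on $\wt{A}$, so that
\[
 \duality{f,\Phi_A^*\mu'} = \int_{\wt{A}} \Phi_A f \,d\mu' = \int_{\wt{M}} \Phi \hat{f} \,d\mu = \duality{\hat{f},\Phi^*\mu} = \duality{\hat{f},m} = \duality{f,m}.
\]
Optimality follows from $\norm{\mu'} = \mu(\wt{A}) = \norm{\mu} = \norm{m}_{\lipfree{M}} = \norm{m}_{\lipfree{A}}$, the last equality being the isometric nature of the embedding of free spaces.

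The main obstacle is conceptual rather than technical: we must choose a representation that is \emph{already} concentrated on the support of $m$, since only then can it be naturally reinterpreted as a measure on $\bwt{A}$. This is precisely what Corollary \ref{co:nice_cim} provides, and everything else is routine bookkeeping between the two De Leeuw domains $\bwt{M}$ and $\bwt{A}$.
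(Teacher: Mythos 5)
Your proposal is correct and follows exactly the paper's route: identify $\lipfree{A}$ with $\cl{\lspan}(\delta(A)) \subset \lipfree{M}$, invoke the hypothesis to see $m$ is a convex integral of molecules in $\lipfree{M}$, and apply Corollary \ref{co:nice_cim} together with $\supp(m)\cup\{0\}\subset A$ to get $\mu\in\opr{\bwt{M}}$ concentrated on $\wt{A}$. The paper stops there and treats the transfer from $\opr{\bwt{M}}$ to $\opr{\bwt{A}}$ as implicit; your extra paragraph spelling out the pushforward to $\bwt{A}$, the agreement of $\Phi_A f$ with $\Phi\hat{f}$ on $\wt{A}$, and the norm-preservation is correct and merely makes explicit the routine bookkeeping that the paper elides.
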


\begin{proof}
Let $m \in \lipfree{A}$. Then $m$ can be regarded as an element of $\lipfree{M}$, and so it is a convex integral of molecules (with respect to $\wt{M}$). By Corollary \ref{co:nice_cim}, $m$ has a representation $\mu \in \opr{\bwt{M}}$ that is concentrated on $\wt{A}$.
\end{proof}

Finally, we can use Lemma \ref{lm:G_distance_functions} and Theorem \ref{th:singular_marginals} to improve \cite[Theorem 4.22]{APS24b}. We state the following ``finiteness condition'' for a measure $\mu \in \meas{\bwt{M}}^+$ \cite[(4.10)]{APS24b}:
\begin{equation}\label{eq:bwt_marginals_finite_first_moment}
\int_{\bwtfp}\frac{\bar{d}(\pp_1(\zeta),0)}{d(\zeta)}\,d\mu(\zeta) < \infty.
\end{equation}

\begin{proposition}\label{pr:imp_4.20}
Let $\psi \in \Lip_0(M)^*$ have an optimal representation $\nu$ concentrated on $\bwtfp$ and satisfying \eqref{eq:bwt_marginals_finite_first_moment}. Then $\psi$ has an optimal representation $\mu$ concentrated on $\bwtfp$, having mutually singular marginals, and satisfying \eqref{eq:bwt_marginals_finite_first_moment}.
\end{proposition}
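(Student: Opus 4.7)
First, I would apply Proposition \ref{pr:minimal_exist} to fix a minimal $\mu \in \meas{\bwt{M}}^+$ with $\mu \preccurlyeq \nu$. By Proposition \ref{pr:basic} \ref{pr:basic_3} and \ref{pr:basic_4}, we then have $\Phi^*\mu = \Phi^*\nu = \psi$ and $\mu$ is optimal.

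Concentration of $\mu$ on $\bwtfp$ would follow from the fact that $\nu$ is concentrated on $\bwtfp \subset \bwtf\setminus d^{-1}(0)$ and that the latter belongs to $\mathfrak{C}$ by Corollary \ref{co:wt}; hence $\mu$ is also concentrated on $\bwtf\setminus d^{-1}(0)$. Invoking the observation recorded just above Theorem \ref{th:mutually_singular_upgrade} that $\lambda\restrict_{\bwtfp} = \lambda\restrict_{\bwtf\setminus d^{-1}(0)}$ for every $\lambda \in \opr{\bwt{M}}$, this upgrades to concentration of $\mu$ on $\bwtfp$. The mutually singular marginal property of $\mu$ is then immediate from Theorem \ref{th:mutually_singular_upgrade}, since $\mu$ is minimal and concentrated on $\bwtfp$.

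The main obstacle is verifying that \eqref{eq:bwt_marginals_finite_first_moment} passes from $\nu$ to $\mu$. For this I would invoke Lemma \ref{lm:G_distance_functions} with $A=\set{0}$: since $\bar{d}(\cdot,0)$ is $\tau$-continuous by Proposition \ref{pr:bar-d_cont} \ref{pr:bar-d_cont_3}, for every $n \in \NN$ we obtain $g_n \in G$ such that
\[
 g_n(\zeta) = \min\set{n,\frac{\bar{d}(\pp_1(\zeta),0)}{d(\zeta)}} \quad \text{for all } \zeta \in \bwtfp,
\]
where we use that $\bwtfp \subset \pp_1^{-1}(\rcomp{M}) \setminus (\pp_1^{-1}(\set{0}) \cap d^{-1}(0))$, since $\pp_1(\zeta)=0$ and $d(\zeta)=0$ together force $\pp_2(\zeta)=\pp_1(\zeta)$ by Proposition \ref{pr:De_Leeuw_relation} \ref{pr:De_Leeuw_relation_3}, contrary to $\zeta \in \bwtfp$. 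The sequence $(g_n)$ increases pointwise on $\bwtfp$ to the integrand of \eqref{eq:bwt_marginals_finite_first_moment}. Since each $g_n \in G$ and $\mu \preccurlyeq \nu$, we have $\duality{g_n,\mu} \leq \duality{g_n,\nu}$; applying the monotone convergence theorem and using that both $\mu$ and $\nu$ are concentrated on $\bwtfp$, we conclude
\[
 \int_{\bwtfp}\frac{\bar{d}(\pp_1(\zeta),0)}{d(\zeta)}\,d\mu(\zeta) \leq \int_{\bwtfp}\frac{\bar{d}(\pp_1(\zeta),0)}{d(\zeta)}\,d\nu(\zeta) < \infty,
\]
completing the argument.
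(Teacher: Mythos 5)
Your argument is correct and follows essentially the same route as the paper. The paper compresses your first three paragraphs into a single citation of Theorem~\ref{th:singular_marginals} (whose proof is precisely your sequence of steps: take a minimal $\mu\preccurlyeq\nu$ via Proposition~\ref{pr:minimal_exist}, use Proposition~\ref{pr:basic} to transfer optimality and the represented functional, invoke $\mathfrak{C}$-membership from Corollary~\ref{co:wt} to get concentration on $\bwtf\setminus d^{-1}(0)$, and then upgrade to $\bwtfp$ via the remark on optimal measures preceding Theorem~\ref{th:mutually_singular_upgrade}). For the finiteness condition, the paper packages the pointwise increasing sequence $(g_n)$ from Lemma~\ref{lm:G_distance_functions} into a single element of $\mathfrak{G}$ and applies its defining property, whereas you apply the inequality $\duality{g_n,\mu}\leq\duality{g_n,\nu}$ term by term and pass to the limit with monotone convergence — the same estimate, written slightly more directly. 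One small bookkeeping point worth noting (which you did address, though it is easy to overlook): the equality case of Lemma~\ref{lm:G_distance_functions} does require continuity of $\bar d(\cdot,0)$, and $\bwtfp$ must be checked to lie inside the domain of that equality; you handled both correctly via Proposition~\ref{pr:bar-d_cont}~\ref{pr:bar-d_cont_3} and Proposition~\ref{pr:De_Leeuw_relation}~\ref{pr:De_Leeuw_relation_3}.
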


\begin{proof}
By Theorem \ref{th:singular_marginals}, $\psi$ has an optimal representation $\mu \preccurlyeq \nu$ concentrated on $\bwtfp$ and having mutually singular marginals. We just need to check that $\mu$ satisfies \eqref{eq:bwt_marginals_finite_first_moment}. Since $\bar{d}(\cdot,0)=\rcomp{d}(\cdot,0)$ by Proposition \ref{pr:bar-d_cont} \ref{pr:bar-d_cont_2}, according to Lemma \ref{lm:G_distance_functions} there exists an increasing sequence of positive functions $g_n \in G \cap nB_{C(\bwt{M})}$ satisfying
\[
 g_n(\zeta) = \min\set{n,\frac{\bar{d}(\pp_1(\zeta),0)}{d(\zeta)}}, \quad \zeta \in \pp_1^{-1}(\rcomp{M})\setminus (\pp_1^{-1}(0) \cap d^{-1}(0)).
\]
Therefore, by the monotone convergence theorem, there exists $g \in \mathfrak{G}$ such that
\[
g(\zeta) = \frac{\bar{d}(\pp_1(\zeta),0)}{d(\zeta)}, \quad \zeta \in \pp_1^{-1}(\rcomp{M})\setminus (\pp_1^{-1}(0) \cap d^{-1}(0)).
\]
Since $\mu$ and $\nu$ are concentrated on $\bwtfp$, the result follows.
\end{proof}

The next result is an immediate consequence of Proposition \ref{pr:imp_4.20} and \cite[Theorem 4.22]{APS24b}. We say that $\psi \in \Lip_0(M)^*$ (or $\lipfree{M}$) is \emph{majorisable} if we can write $\psi=\psi^+ - \psi^-$, where $\psi^\pm \in \Lip_0(M)^*$ (or $\lipfree{M}$) are \emph{positive functionals}, that is, $\duality{f,\psi^\pm} \geq 0$ whenever $f \in \Lip_0(M)$ is positive in the usual sense. We highlight that, in general, not every element of $\Lip_0(M)^*$ or $\lipfree{M}$ can be expressed in this way; see \cite[Example 3.4]{APS24a}, \cite[Example 4.17]{AP_measures} and \cite[Example 3.24]{Weaver2}.

\begin{theorem}[cf. {\cite[Theorem 4.22]{APS24b}}]\label{th:bidual_majorisable}
Let $\psi\in\Lip_0(M)^*$. Then the following are equivalent:
\begin{enumerate}[label={\upshape{(\roman*)}}]
\item\label{th:bidual_majorisable-1} $\psi$ has an optimal representation $\mu$ concentrated on $\bwtfp$ satisfying \eqref{eq:bwt_marginals_finite_first_moment};
\item\label{th:bidual_majorisable-2} $\psi$ is majorisable and avoids 0 and infinity.
\end{enumerate}
If the above hold, then $\mu$ can be chosen to satisfy $(\pp_1)_\sharp\mu \perp(\pp_2)_\sharp\mu$.
\end{theorem}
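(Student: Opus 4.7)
The plan is to use the fact, stated explicitly in the excerpt just before the theorem, that Theorem \ref{th:bidual_majorisable} is an immediate consequence of Proposition \ref{pr:imp_4.20} and \cite[Theorem 4.22]{APS24b}. So the work reduces to citing the existing equivalence and then upgrading the representation using minimality.

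First I would dispense with the equivalence \ref{th:bidual_majorisable-1} $\Leftrightarrow$ \ref{th:bidual_majorisable-2}. The theorem \cite[Theorem 4.22]{APS24b} already establishes that $\psi \in \Lip_0(M)^*$ is majorisable and avoids $0$ and infinity if and only if $\psi$ admits some optimal representation concentrated on $\bwtfp$ and satisfying the finiteness condition \eqref{eq:bwt_marginals_finite_first_moment}. Neither direction of this equivalence requires anything new from the present paper, so it can simply be invoked.

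Then, assuming that \ref{th:bidual_majorisable-1} (equivalently, \ref{th:bidual_majorisable-2}) holds, I would fix an optimal representation $\nu$ of $\psi$ concentrated on $\bwtfp$ and satisfying \eqref{eq:bwt_marginals_finite_first_moment}, furnished by the cited equivalence. Applying Proposition \ref{pr:imp_4.20} directly to $\nu$ produces another optimal representation $\mu$ of $\psi$ which remains concentrated on $\bwtfp$, still satisfies \eqref{eq:bwt_marginals_finite_first_moment}, and in addition has mutually singular marginals $(\pp_1)_\sharp\mu \perp (\pp_2)_\sharp\mu$. This $\mu$ witnesses the final clause of the theorem.

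The heavy lifting is all in the references: the equivalence in \cite[Theorem 4.22]{APS24b} uses optimal transport / Kantorovich--Rubinstein duality, while Proposition \ref{pr:imp_4.20} relies on minimality (via Theorem \ref{th:singular_marginals}) together with a monotone-convergence argument producing a function in $\mathfrak{G}$ that equals $\bar d(\pp_1(\zeta),0)/d(\zeta)$ on $\bwtfp$, which then forces $\mu \preccurlyeq \nu$ to preserve the finiteness condition. No step of the present proof should therefore be an obstacle; the only care required is to verify that the optimal, minimal $\mu \preccurlyeq \nu$ produced by Proposition \ref{pr:imp_4.20} inherits both the concentration on $\bwtfp$ (which follows because $\bwtf\setminus d^{-1}(0) \in \mathfrak{C}$ by Corollary \ref{co:wt}, and an optimal measure concentrated on $\bwtf\setminus d^{-1}(0)$ is automatically concentrated on $\bwtfp$) and the finiteness of the integral, both of which are already built into the statement of Proposition \ref{pr:imp_4.20}.
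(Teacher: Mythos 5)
Your proposal is correct and follows exactly the route the paper intends: cite \cite[Theorem 4.22]{APS24b} for the equivalence \ref{th:bidual_majorisable-1}~$\Leftrightarrow$~\ref{th:bidual_majorisable-2}, then apply Proposition~\ref{pr:imp_4.20} to upgrade the representation to one with mutually singular marginals. The paper itself provides no further proof beyond calling the result an immediate consequence of these two ingredients, so your reconstruction matches it precisely (your final sentence about re-verifying concentration on $\bwtfp$ and the finiteness condition is harmless but redundant, since both are already part of the conclusion of Proposition~\ref{pr:imp_4.20}).
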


\section{The main theorems and applications to Lipschitz-free spaces}\label{sec:main_theorems}

\subsection{Optimal and minimal representations of free space elements}\label{subsec:opt_conc}

We recall from Proposition \ref{co:free_bwtf} that all optimal representations of free space elements must be concentrated on $\bwtf=\pp^{-1}(\rcomp{M}\times\rcomp{M})$, though by Example \ref{ex:bad_rep}, they are not obliged to be concentrated on $\pp^{-1}(M \times M)$. The point of this section is to prove the next two results; the first of these was prompted by a question of R.~Aliaga.

\begin{theorem}\label{th:opt_conc} Let $m \in \lipfree{M}$. Then there exists $\mu \in \opr{\bwt{M}}$ such that $m=\Phi^*\mu$ and $\mu$ is concentrated on $\pp^{-1}(M\times M)$.
\end{theorem}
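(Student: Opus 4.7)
The plan is to construct $\mu$ by combining three tools developed earlier in the paper: the positive representation \eqref{eq:positive_rep}, the existence of $\preccurlyeq$-minimal representatives from Proposition \ref{pr:minimal_exist}, and the fact that $\pp^{-1}(M \times M) \in \mathfrak{C}$ from Corollary \ref{co:wt}.

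I would begin by producing, for each $n \in \NN$, an almost-optimal representation of $m$ concentrated on the target set. Using \eqref{eq:positive_rep}, obtain a positive representation $\nu_n \in \meas{\bwt{M}}^+$ of $m$ concentrated on $\wt{M}$ with $\norm{\nu_n} < \norm{m} + 1/n$. Applying Proposition \ref{pr:minimal_exist} yields a $\preccurlyeq$-minimal $\mu_n \preccurlyeq \nu_n$. By Proposition \ref{pr:basic} \ref{pr:basic_2} and \ref{pr:basic_3}, $\Phi^*\mu_n = m$ and $\norm{\mu_n} \leq \norm{\nu_n} < \norm{m} + 1/n$. Because $\pp^{-1}(M \times M) \in \mathfrak{C}$ and $\nu_n$ is concentrated on $\wt{M} \subset \pp^{-1}(M \times M)$, the measure $\mu_n$ inherits this concentration.

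Next I would extract a $w^*$-cluster point $\mu \in \meas{\bwt{M}}^+$ of the norm-bounded sequence $(\mu_n)$. The $w^*$-$w^*$-continuity of $\Phi^*$ gives $\Phi^*\mu = m$, and $w^*$-lower semicontinuity of the norm combined with $\norm{\mu} \geq \norm{\Phi^*\mu} = \norm{m}$ yields $\norm{\mu} = \norm{m}$, so $\mu \in \opr{\bwt{M}}$ and represents $m$.

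The critical obstacle is verifying that the $w^*$-cluster point $\mu$ remains concentrated on $\pp^{-1}(M \times M)$. This set is Borel but not closed in $\bwt{M}$ when $M$ is non-discrete (Proposition \ref{pr:discrete}), and Example \ref{ex:bad_rep} warns that a naive $w^*$-limit can escape it. To handle this, I would refine the first step by choosing each $\nu_n$ within $\lipfree{\supp(m) \cup \{0\}}$, so that $\nu_n$ is concentrated on $\wt{\supp(m) \cup \{0\}}$. Since $\supp(m) \cup \{0\}$ is $\bar{d}$-closed in $\rcomp{M}$ (Proposition \ref{pr:rcomp_observations} \ref{pr:rcomp_observations_6}), Theorem \ref{th:C_conc} then places each $\mu_n$ on $\bwtf \cap \pp^{-1}((\supp(m) \cup \{0\})^2)$, so $\supp(\mu_n)$ lies in the closed subset $\pp^{-1}((\ucl{\supp m} \cup \{0\})^2)$ of $\bwt{M}$, which is preserved under $w^*$-limits. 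Together with Corollary \ref{co:free_bwtf} (which confines the optimal $\mu$ to $\bwtf$), a further application of Proposition \ref{pr:minimal_exist} to extract a $\preccurlyeq$-minimal $\mu' \preccurlyeq \mu$, and an inner-regularity argument applied to compact subsets of $\rcomp{M} \setminus M$ via Theorem \ref{th:C_conc}, one should be able to push $\mu'$ back onto $\pp^{-1}(M \times M)$ while retaining optimality. Making this final reduction rigorous is the technical heart of the proof.
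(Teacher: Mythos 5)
Your setup through the $w^*$-cluster-point step is sound, but the argument stops exactly where the actual work of the theorem begins, and what you defer to as ``the technical heart'' is not a small reduction --- it is the whole content of the result. The set $\pp^{-1}(M\times M)$ is not closed in $\bwt{M}$ when $M$ is non-discrete (Proposition~\ref{pr:discrete}), and $w^*$-limits of measures concentrated on $\wt{M}$ can escape it entirely; Example~\ref{ex:bad_rep} exhibits precisely such a $w^*$-limit, an optimal representation $\mu$ of a molecule with $\mu(\pp^{-1}(M\times M))=0$. Your proposed fix --- restrict $\nu_n$ to $\lipfree{A}$ with $A=\supp(m)\cup\{0\}$ and then pass to closures --- only lands the cluster point in $\pp^{-1}(\ucl{A}\times\ucl{A})$, and $\ucl{A}$ will in general contain points of $\ucomp{M}\setminus M$ whenever $A$ is non-compact, so you are no better off. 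Taking a further minimal $\mu'\preccurlyeq\mu$ does not rescue this: membership in $\mathfrak{C}$, including Theorem~\ref{th:C_conc}, only says that $\preccurlyeq$-descent \emph{preserves} concentration on a nice set you already had; it gives no mechanism for moving mass off $\pp^{-1}((\rcomp{M}\setminus M)\times(\rcomp{M}\setminus M))$ or the mixed preimages when the only starting information is that $\mu'$ is concentrated on $\bwtf$. The ``inner regularity via Theorem~\ref{th:C_conc} on compact $K\subset\rcomp{M}\setminus M$'' idea runs into the same directionality problem.

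What is actually needed is Theorem~\ref{th:min_conc}: every minimal optimal representation of a free-space element is concentrated on $\pp^{-1}(M\times M)$. Once that is in hand, Theorem~\ref{th:opt_conc} is a one-line consequence --- take any optimal $\nu$ representing $m$, choose $\mu\preccurlyeq\nu$ minimal by Proposition~\ref{pr:minimal_exist}, and apply Proposition~\ref{pr:basic}~\ref{pr:basic_3} and~\ref{pr:basic_4} --- so your $w^*$-cluster-point construction is an unnecessary detour. The proof of Theorem~\ref{th:min_conc} has two genuinely hard steps, neither of which appears in your proposal: first, Theorem~\ref{th:kill_Gamma} shows $\mu(\Gamma)=0$ for $\Gamma=\pp^{-1}((\rcomp{M}\setminus M)\times(\rcomp{M}\setminus M))$, using the minimality characterisation of Theorem~\ref{th:minimality_test} together with Proposition~\ref{pr:Glsc_lipfree} and the $\Glsc$-functions from Lemma~\ref{lm:G_distance_functions_closed}; second, the mass on points with exactly one coordinate in $M$ is killed by combining Corollary~\ref{co:mutually_singular_upgrade} (mutually singular marginals, which rests on the coordinate-elimination machinery of Lemma~\ref{lm:shared_coordinate} and Corollary~\ref{co:minimality_coordinates}) with the approximation estimate of Lemma~\ref{lm:Psi_norming} and a dominated-convergence argument. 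None of that is supplied or even sketched, so the proposal has a genuine gap.
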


By Proposition \ref{pr:basic} \ref{pr:basic_3} and \ref{pr:basic_4} and Proposition \ref{pr:minimal_exist}, Theorem \ref{th:opt_conc} follows immediately from the next result.

\begin{theorem}\label{th:min_conc} Let $\mu \in \opr{\bwt{M}}$ be minimal and satisfy $\Phi^*\mu \in \lipfree{M}$. Then $\mu$ is concentrated on $\pp^{-1}(M\times M)$.
\end{theorem}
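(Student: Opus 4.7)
The plan is to reduce the theorem to showing $\mu(\pp_1^{-1}(B))=0$ for a specific ``excess'' set $B\subset\rcomp{M}\setminus M$ (with the analogous statement for $\pp_2$ following by symmetry), and then to establish this vanishing by applying Theorem~\ref{th:minimality_test} to a judicious upper semicontinuous test function, combined with Proposition~\ref{pr:G-support}.

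Setting up the reduction, I would let $A=\supp(\Phi^*\mu)\cup\{0\}$, which is $d$-closed in $M$ and contains the base point, so that $\duality{h,\Phi^*\mu}=0$ for every $h\in\Lip_0(M)$ vanishing on $A$. By Corollary~\ref{co:free_bwtf}, $\mu$ is concentrated on $\bwtf$, and Theorem~\ref{th:minimal_shadow} combined with the identity $\esupp{\Phi^*\mu}\cup\{0\}=\ucl{A}$ (valid since $\Phi^*\mu\in\lipfree{M}$) yields that $\mu$ is concentrated on $\pp^{-1}(\rcl{A}\times\rcl{A})$, where $\rcl{A}:=\ucl{A}\cap\rcomp{M}$. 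Because $A$ is $\bar{d}$-closed in $\rcomp{M}$ by Proposition~\ref{pr:rcomp_observations}~\ref{pr:rcomp_observations_6} and $\rcl{A}\cap M=A$ by Proposition~\ref{pr:woodsuniform}~\ref{pr:woodsuniform_2} (applied to $A$ and singletons in $M\setminus A$), the excess set $B:=\rcl{A}\setminus A$ lies entirely in $\rcomp{M}\setminus M$. It therefore suffices to prove $\mu(\pp_1^{-1}(B))=0$; applying the argument to $\rr_\sharp\mu$---which is also minimal and optimal, and represents $-\Phi^*\mu\in\lipfree{M}$ with the same support set $A$---then gives $\mu(\pp_2^{-1}(B))=0$, completing the reduction.

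For the main step, for each $n\in\NN$ I would consider the function $f_n(\zeta):=\min\{n,\bar{d}(\pp_1(\zeta),A)\}$ on $\bwt{M}$, extending $\bar{d}(\cdot,A)$ by $+\infty$ off $\rcomp{M}$. As the infimum of the $\tau$-continuous functions $\bar{d}(\cdot,a)$, $a\in A\subset M$ (Proposition~\ref{pr:bar-d_cont}~\ref{pr:bar-d_cont_3}), $\bar{d}(\cdot,A)$ is $\tau$-USC on $\ucomp{M}$; truncating and composing with continuous $\pp_1$ keeps $f_n$ bounded and USC, and Proposition~\ref{co:g_fibres}~\ref{co:g_fibres_1} ensures $f_n$ is constant on every fibre of $\gq$ included in $\bwtf$. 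Theorem~\ref{th:minimality_test}~\ref{pr:minimality_test_4} then delivers
\[
\int f_n\,d\mu \;=\; \inf\bigl\{\duality{g,\mu}:g\in G,\;f_n\leq g\bigr\}.
\]
Since $f_n\equiv 0$ on $\wt{A}$, every such $g$ satisfies $g\geq 0$ on $\wt{A}$, so Proposition~\ref{pr:G-support} forces $\duality{g,\mu}\geq 0$; hence $\int f_n\,d\mu\geq 0$. The objective would then be to show this infimum actually vanishes, so that $f_n=0$ $\mu$-a.e.\ and, letting $n\to\infty$ by monotone convergence, $\bar{d}(\pp_1(\zeta),A)=0$ for $\mu$-a.e.\ $\zeta$; the $\bar{d}$-closedness of $A$ in $\rcomp{M}$ then forces $\pp_1(\zeta)\in A\subset M$ $\mu$-a.e., giving $\mu(\pp_1^{-1}(B))=0$.

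The hard part will be to prove the infimum equals $0$ by exhibiting, for each $\ep>0$, an explicit $g_\ep\in G$ with $f_n\leq g_\ep$ and $\duality{g_\ep,\mu}<\ep$. Using Proposition~\ref{pr:G-functions}, one would express $g_\ep=\bigvee_\alpha\Phi h_\alpha$ as a pointwise supremum of De~Leeuw transforms of a uniformly Lipschitz-bounded family $\{h_\alpha\}\subset\Lip_0(M)$ of ``tent''-type functions peaked near points of a suitably dense subset of $A$, chosen so that the sup dominates $f_n$ on the relevant region of $\wt{M}$; each $h_\alpha$ would then be decomposed as $h_\alpha^{(0)}+h_\alpha^{(1)}$ with $h_\alpha^{(0)}$ vanishing on $A$ (and so killed by $\Phi^*\mu$) and $h_\alpha^{(1)}$ small in $\Lip_0(M)$-norm, to control $\duality{g_\ep,\mu}$. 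The principal obstacle is that $A$ is typically \emph{not} $\tau$-closed in $\rcomp{M}$: any single $\Phi h$ with $h\in\Lip_0(M)$ vanishing on $A$ automatically has its continuous extension $\rcomp{h}$ vanishing on $\rcl{A}\supset B$, so $\Phi h\equiv 0$ on $\pp^{-1}(\rcl{A}^2)$ where $\mu$ sits, and thus cannot by itself dominate the positive $f_n$. The successful construction must therefore combine Lipschitz bumps that \emph{do not} vanish on $A$---so as to majorise $f_n$ pointwise---with compensating subtractions exploiting the $\tau$-density of $A$ in $\rcl{A}$, a delicate balancing act that appears to be the technical core of the argument.
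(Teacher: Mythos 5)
Your reduction is correct: by Corollary~\ref{co:free_bwtf}, Theorem~\ref{th:minimal_shadow}, and the compatibility $\esupp{m}=\ucl{\supp(m)}$ for $m\in\lipfree{M}$, the measure $\mu$ is concentrated on $\pp^{-1}(\rcl{A}\times\rcl{A})$ with $A=\supp(\Phi^*\mu)\cup\{0\}$, and $\rcl{A}\cap M=A$, so the theorem indeed reduces to $\mu(\pp_1^{-1}(\rcl{A}\setminus A))=\mu(\pp_2^{-1}(\rcl{A}\setminus A))=0$; and the reflection trick correctly transfers the $\pp_1$ statement to $\pp_2$. The setup with $f_n(\zeta)=\min\{n,\bar{d}(\pp_1(\zeta),A)\}$ and Theorem~\ref{th:minimality_test}~\ref{pr:minimality_test_4} is also legitimate. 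But the argument then stalls exactly where the real work begins, and the stall is fatal.

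Two issues. First, the step invoking Proposition~\ref{pr:G-support} to conclude $\int f_n\,d\mu\geq 0$ is vacuous: $f_n\geq 0$ and $\mu\geq 0$, so the inequality is automatic. The lower bound from Proposition~\ref{pr:G-support} is only informative when the test function takes negative values somewhere that $\mu$ charges, as in the paper's proof of Theorem~\ref{th:minimal_shadow} (and of Theorem~\ref{th:kill_Gamma}, which uses the $\Glsc$-variant Proposition~\ref{pr:Glsc_lipfree}); with a non-negative $f_n$ it provides no leverage, and you are left having to prove $\inf\{\duality{g,\mu}:g\in G,\,f_n\leq g\}\leq 0$ from scratch. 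Second, and decisively, you do not produce the approximating $g_\ep\in G$, and your own closing paragraph identifies the obstruction: any $f\in\Lip_0(M)$ vanishing on $A$ has $\rcomp{f}$ vanishing on $\rcl{A}$, so $\Phi f$ vanishes on $\pp^{-1}(\rcl{A}\times\rcl{A})$, i.e.~exactly where $\mu$ lives; hence no sup of such $\Phi f$'s can majorise the strictly positive $f_n$ on $\pp_1^{-1}(\rcl{A}\setminus A)$. ``Compensating subtractions'' are not available because $G$ is a cone, not a subspace, and one cannot subtract within it. This is not a detail to be filled in; it is the core of the theorem, and the one-shot construction you sketch does not exist.

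The paper's proof is structurally different and, in effect, shows why a single test function cannot suffice. It proceeds in two stages: Theorem~\ref{th:kill_Gamma} first annihilates $\Gamma=\pp^{-1}((\rcomp{M}\setminus M)\times(\rcomp{M}\setminus M))$ via the minimality test combined with Proposition~\ref{pr:Glsc_lipfree}, where a carefully chosen element $g_1+g_2\in\Glsc$ is added to the minimising $g_0$ so that the sum is non-negative on $\pp^{-1}(M\times M)$ (this is exactly the ``compensation'' your approach lacks, carried out in $\Glsc$ rather than $G$). Then, for the mixed case of exactly one coordinate outside $M$, the paper does not use the minimality test at all but instead combines mutual singularity of the marginals restricted to $\rcomp{M}\setminus M$ (Corollaries~\ref{co:minimality_coordinates} and~\ref{co:mutually_singular_upgrade}, which do need minimality) with the auxiliary transform $\Psi$ on $\Lip_0(\rcomp{M},\bar{d})$ and Lemma~\ref{lm:Psi_norming} (which exploits $\Phi^*\mu\in\lipfree{M}$ through approximation by finitely supported elements), followed by a dominated-convergence argument. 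Your proposal captures neither ingredient, so the gap is genuine.
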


In order to do this, we first recall the cone of lower semicontinuous functions $\Glsc$ defined in Section \ref{subsec:conc}, and present a development of Proposition \ref{pr:G-support}. 

\begin{proposition}\label{pr:Glsc_lipfree}
 Suppose $g \in \Glsc$ satisfies $g\restrict_{\wt{M}} \geq 0$, and let $\mu \in \meas{\bwt{M}}^+$ such that $\Phi^*\mu \in \lipfree{M}$. Then $\int_{\bwt{M}} g\,d\mu \geq 0$.
\end{proposition}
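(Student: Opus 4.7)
The plan is to mimic the proof of Proposition \ref{pr:G-support}, now with the specific closed set $A := \supp(\Phi^*\mu) \cup \set{0}$ (which contains the base point). This choice is made possible by the hypothesis $\Phi^*\mu \in \lipfree{M}$: since $\Phi^*\mu$ lies in $\lipfree{A \cup \set{0}}$ as a subspace of $\lipfree{M}$, every $f \in \Lip_0(M)$ vanishing on $A$ satisfies $\duality{f, \Phi^*\mu} = 0$, which is precisely the vanishing condition on $\mu$ assumed in Proposition \ref{pr:G-support}.

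Writing $g = \bigvee E$ for a bounded max-stable $E \subset G$ and applying Proposition \ref{pr:G-functions} to each member, I would express $g = \bigvee_{f \in \mathcal{A}} \Phi f$ for a bounded $\mathcal{A} \subset \Lip_0(M)$, and then define the upper envelope $H : M \times M \to \RR$ by $H(x,y) := \sup_{f \in \mathcal{A}} (f(x) - f(y))$ for $x \neq y$, with $H(x,x) := 0$. As a supremum of a bounded family of Lipschitz functions obeying the triangle inequality, $H$ is itself Lipschitz on $M \times M$, satisfies the triangle inequality, agrees with $d \cdot g$ on $\wt{M}$, and is non-negative on $M \times M$ (using $g \geq 0$ on $\wt{M}$). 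Setting $f^*(x) := \inf_{a \in A} H(x,a)$ and running the computation of Proposition \ref{pr:G-support} then yields $f^* \in \Lip_0(M)$ with $f^* \equiv 0$ on $A$ (so that $\duality{f^*, \Phi^*\mu} = 0$ and hence $\int \Phi f^* \, d\mu = 0$) and $\Phi f^* \leq g$ pointwise on $\wt{M}$.

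The main obstacle will be extending the inequality $\Phi f^* \leq g$ from the dense subset $\wt{M}$ to the whole of $\bwt{M}$: because $g$ is only lower semicontinuous, this extension fails in general, so the desired chain $\int \Phi f^* \, d\mu \leq \int g \, d\mu$ is not automatic. To circumvent this, I would proceed by a sequence of reductions. First, by Proposition \ref{pr:minimal_exist} pass to a $\preccurlyeq$-minimal $\nu \preccurlyeq \mu$; by the sup-definition of $\Glsc$ together with monotone convergence for nets one has $\int g \, d\nu \leq \int g \, d\mu$, so it suffices to prove the inequality for $\nu$. Second, reduce to optimal $\mu$ by separating off any positive component $\lambda \leq \mu$ with $\Phi^*\lambda = 0$, to which Proposition \ref{pr:scalar_mult} \ref{pr:scalar_mult_2} gives $\int g \, d\lambda \geq 0$ (through the monotone-convergence extension of that proposition to $\Glsc$). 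Third, for a minimal optimal representation, Corollary \ref{co:free_bwtf} and Theorem \ref{th:minimal_shadow} together confine $\supp(\mu)$ to $\pp^{-1}(\ucl{A} \times \ucl{A}) \cap \bwtf$. Using Proposition \ref{pr:De_Leeuw_relation} \ref{pr:De_Leeuw_relation_1} and the fact that $\rcomp{f^*}$ vanishes on $\rcl{A}$, I would then show $\Phi f^* = 0$ on the points of $\supp(\mu)$ with $d > 0$; the residual $d^{-1}(0)$ part is controlled by the optimality of $\mu$ (which excludes vanishing points) and, where derivation-type points remain, by the symmetry of the envelope construction under the reflection $\rr$. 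This gives $\Phi f^* \leq g$ $\mu$-almost everywhere on the relevant support, yielding $0 = \int \Phi f^* \, d\mu \leq \int g \, d\mu$, as required.
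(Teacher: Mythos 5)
Your envelope construction (the $\wt{d}$-Lipschitz map $H$ agreeing with $d\cdot g$ on $\wt{M}$ and the associated $f^*$ vanishing on $A$) is sound, but the reduction chain you use to transfer the pointwise inequality $\Phi f^* \leq g$ on $\wt{M}$ to a $\mu$-almost everywhere statement has two genuine gaps.

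First, the passage to an \emph{optimal} minimal representation is not available. It is not true that a minimal $\nu \preccurlyeq \mu$ can be written as $\nu_1 + \lambda$ with $\nu_1$ optimal and $\Phi^*\lambda = 0$: Example \ref{ex:minimal_non-optimal} already provides a minimal measure $\delta_{(0,a)} + \delta_{(b,c)}$ that is not optimal and admits no non-zero subcomponent with vanishing De Leeuw image. Without optimality, Corollary \ref{co:free_bwtf} is unavailable, so you cannot even place $\supp(\nu)$ inside $\bwtf$, and Theorem \ref{th:minimal_shadow} (which you want to confine $\supp(\nu)$ to $\pp^{-1}(\ucl{A}\times\ucl{A})$) needs the concentration on $d^{-1}[0,\infty)$ that you have not established. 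Second, even granting minimal-and-optimal, the $d^{-1}(0)$ slice of the support is not controlled: a point $\zeta$ with $\pp_1(\zeta)=\pp_2(\zeta)\in\rcl{A}$ is typically a derivation, and there is no reason $\Phi f^*(\zeta)$ should vanish just because $\rcomp{f^*}$ does; moreover $g$, being only lower semicontinuous and nonnegative merely on $\wt{M}$, may take negative values there. The invocation of symmetry under $\rr$ does not resolve this.

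The paper's proof avoids all of this by an entirely elementary approximation. Fix $\ep>0$, approximate $m=\Phi^*\mu$ by a finitely supported $m'$ with $A':=\supp(m')\cup\{0\}$ finite, and use max-stability of $E$ (with $g=\bigvee E$) to select a single \emph{continuous} $g'\in E$ satisfying $g'(x,y)>-\lambda/d(x,y)$ on the finite set $\wt{A'}$. Building $f$ from the associated map of $g'$ as in Proposition \ref{pr:G-support} then gives $\Phi f\leq g'\leq g$ — an inequality valid on all of $\bwt{M}$ because $g'$ is continuous and $g$ a supremum of continuous functions — together with $-\lambda<f(x)\leq 0$ on $A'$. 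The estimate $\duality{f,m}\geq \duality{f,m'}-L\norm{m-m'}>-\ep$ then finishes. The decisive idea, which your proposal misses, is to never work with $g$ itself but always with some continuous member of $E$; this removes the extension problem you correctly identified as the main obstacle and makes the reductions unnecessary.
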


\begin{proof}
Pick a non-empty and bounded set $E \subset G$ such that $g=\bigvee E$. Recall that we can assume $E$ is max-stable. Let $L>0$ such that $\norm{g'} \leq L$ for all $g' \in E$. Set $m=\Phi^*\mu$ and let $\ep>0$. Pick $m' \in \lipfree{M}$ such that $\norm{m - m'} < \frac{\ep}{L}$ and $A:=\supp(m') \cup \set{0}$ is finite. Let $m'=\sum_{k=1}^n a_km_{x_ky_k}$ for some scalars $a_k>0$ and points $(x_k,y_k) \in \wt{A}$, $k=1,\ldots,n$. Set
\[
\lambda = \left(\sum_{k=1}^n \frac{a_k}{d(x_k,y_k)} \right)^{-1}\cdot(\ep-L\norm{m-m'}) > 0.
\]
As $g\restrict_{\wt{M}} \geq 0$ and $E$ is max-stable, we can find $g' \in E$ such that $g'(x,y) > -\lambda/d(x,y)$ for all $(x,y) \in \wt{A}$. Let $h'$ be the map associated with $g'$ from Definition \ref{df:assoc_map}. As in the proof of Proposition \ref{pr:G-support}, we can define $f \in \Lip_0(M)$ by
\[
 f(x) = \min_{a \in A} h'(x,a), \quad x \in M.
\]
Following the argument in the proof of that result, $\Phi f \leq g' \leq g$ and $0 = h'(x,x) \geq f(x) > -\lambda$ whenever $x \in A$. Therefore
\begin{align*}
 \int_{\bwt{M}} g \,d\mu \geq \duality{\Phi f,\mu} = \duality{f,m} &= \duality{f,m'} + \duality{f,m-m'} \\
 &\geq \sum_{k=1}^n a_k\frac{f(x_k)-f(y_k)}{d(x_k,y_k)} - L\norm{m-m'}\\
 &> -\lambda\left(\sum_{k=1}^n \frac{a_k}{d(x_k,y_k)}\right) - L\norm{m-m'} = -\ep.
\end{align*}
As this holds for all $\ep>0$, we are done.
\end{proof}

We define the set $\Gamma=\pp^{-1}((\rcomp{M}\setminus M) \times (\rcomp{M}\setminus M))$ of all points in $\bwtf$ that have neither coordinate in $M$. The next step in the proof of Theorem \ref{th:min_conc} is to show that minimal measures that are concentrated on $\bwtf$ and represent free space elements cannot have any content on $\Gamma$.

\begin{theorem}\label{th:kill_Gamma} Let $\mu \in \meas{\bwt{M}}^+$ be minimal and concentrated on $\bwtf$, and suppose that $\Phi^*\mu \in \lipfree{M}$. Then $\mu(\Gamma)=0$.
\end{theorem}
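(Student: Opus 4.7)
The plan is to apply Theorem~\ref{th:minimality_test}(iv) (the characterization of minimality) combined with Proposition~\ref{pr:Glsc_lipfree} (the $\lipfree{M}$-positivity principle) to an appropriate upper semicontinuous test function. I would argue by contradiction, assuming that $\mu(\Gamma)>0$.

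First I would construct a suitable $f:\bwt{M}\to[0,1]$. Define $\tilde{h}:\ucomp{M}\to[0,1]$ by $\tilde{h}(\xi)=\min\{1,\bar{d}(\xi,M)\}$ for $\xi\in\rcomp{M}$ (which is $\tau$-upper semicontinuous on $\rcomp{M}$ by the remark after Proposition~\ref{pr:bar-d_cont}) and $\tilde{h}(\xi)=1$ for $\xi\in\ucomp{M}\setminus\rcomp{M}$; this extension remains upper semicontinuous because $\tilde{h}\leq 1$ globally. Setting $f(\zeta):=\tilde{h}(\pp_1(\zeta))\tilde{h}(\pp_2(\zeta))$ gives a bounded upper semicontinuous function that vanishes on $\wt{M}$ (both $\pp_i(x,y)\in M$ force $\tilde{h}(\pp_i)=0$) and is strictly positive on $\Gamma$ (both coordinates in $\rcomp{M}\setminus M$ give $\bar d(\pp_i,M)>0$). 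Since $d$ is constant on fibres of $\gq$ (Proposition~\ref{pr:d_factor}) and $\pp$ is also constant on fibres where $d$ is finite (Proposition~\ref{co:g_fibres}(i)), $\bwtf$ is a union of $\gq$-fibres and $f$ is constant on every such fibre included in $\bwtf$.

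Because $\bwtf=\bigcup_k\pp^{-1}(B_{\bar d}(0,k)\times B_{\bar d}(0,k))$ is $\sigma$-compact, the set $A:=\gq(\bwtf)\subset\wt{M}^G$ is Borel, $\gq^{-1}(A)=\bwtf$, and $\mu$ is concentrated on $\gq^{-1}(A)$. Theorem~\ref{th:minimality_test}(iv) then yields, for each $\varepsilon>0$, a function $g\in G$ with $f\leq g$ and $\langle g,\mu\rangle<\int f\,d\mu+\varepsilon$. Since $g\geq f\geq 0$ on $\wt{M}$ and $g\in G\subset\Glsc$, Proposition~\ref{pr:Glsc_lipfree} forces $\langle g,\mu\rangle\geq 0$, and hence $\int f\,d\mu\geq 0$.

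The hard part will be closing the contradiction: on its own this only gives $\int f\,d\mu\geq 0$, which is consistent with the assumption $\mu(\Gamma)>0$, so a finer argument is needed to produce the reverse inequality. I expect this to come either from a sharpened, iterated application of Theorem~\ref{th:minimality_test}(iv) in which the dominating $g\in G$ is built explicitly from $\Phi^*\mu$ via Proposition~\ref{pr:Bochner} (decomposing $\Phi^*\mu$ into a convex series of molecules concentrated on $\wt{M}$, then running the Hahn--Banach style construction used in the proof of Proposition~\ref{pr:Glsc_lipfree} to force $\langle g,\mu\rangle<\varepsilon$), or from an invocation of Lemma~\ref{lm:approx_g_functions} applied to a carefully chosen sequence of upper semicontinuous $f_n$ whose pointwise limit separates $\Gamma$ from $\wt{M}$, producing a sequence $(g_n)\subset G$ in which Proposition~\ref{pr:Glsc_lipfree} and the pointwise behaviour on $\Gamma$ are in direct conflict. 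Either route reduces to showing that, under minimality and $\Phi^*\mu\in\lipfree{M}$, the positive content of $\mu$ on $\Gamma$ cannot be hidden from the $G$-cone.
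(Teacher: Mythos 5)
Your setup is sound: reducing to an application of Theorem~\ref{th:minimality_test} combined with Proposition~\ref{pr:Glsc_lipfree}, observing that $\bwtf$ is a union of $\gq$-fibres (so Theorem~\ref{th:minimality_test}~(iv) applies with $A=\gq(\bwtf)$), and choosing a test function built from $\bar{d}(\cdot,M)$ that vanishes on $\wt{M}$ and detects $\Gamma$, is exactly the right framework. But you correctly diagnose that your $f\geq 0$ only yields the vacuous bound $\int f\,d\mu\geq 0$, and the gap is real: the two routes you sketch for closing it do not work and do not match the paper's mechanism.

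Here is what is actually missing. You cannot simply flip the sign of your $f$, because $-\tilde h(\pp_1(\cdot))\tilde h(\pp_2(\cdot))$ is \emph{lower} semicontinuous (as $\tilde h$ is upper semicontinuous), so Theorem~\ref{th:minimality_test}~(iii) or~(iv) no longer applies. Moreover, even with a negative upper semicontinuous $f$, the $g_0\in G$ produced by the minimality test satisfies $g_0\geq f$, which gives no positivity of $g_0$ on $\wt{M}$ when $f<0$ there is possible, so Proposition~\ref{pr:Glsc_lipfree} cannot be applied to $g_0$ directly. The paper resolves both problems at once by a correction-function trick: after reducing (via inner regularity) to showing $\mu(\pp^{-1}(K\times K))=0$ for compact $K\subset\rcomp{M}$ with $\bar d(M,K)>0$, it constructs, using Lemma~\ref{lm:G_distance_functions_closed}, two lower semicontinuous functions $g_1,g_2\in\Glsc$ depending on $K$, with the key quantitative property $(g_1+g_2)(\zeta)\geq r>0$ for all $\zeta\in\pp^{-1}(M\times M)$. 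The test function is then $f=\min\{-r\mathbf{1},-(g_1+g_2)\}$, which is upper semicontinuous precisely because $g_1+g_2$ is lower semicontinuous, and one checks $f+g_1+g_2\leq -r\mathbf{1}_{\pp^{-1}(K\times K)}$ with equality to $0$ on $\pp^{-1}(M\times M)$. The minimality test is applied to this $f$, producing $g_0\in G$, and Proposition~\ref{pr:Glsc_lipfree} is applied not to $g_0$ but to the \emph{combination} $g_0+g_1+g_2\in\Glsc$, which is nonnegative on $\wt{M}$ because $g_0+g_1+g_2\geq f+g_1+g_2=0$ there. This yields $0\leq\langle g_0+g_1+g_2,\mu\rangle< -r\mu(\pp^{-1}(K\times K))+r\ep$, closing the estimate. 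Neither of your proposed routes (rebuilding $g$ from a molecular decomposition via Proposition~\ref{pr:Bochner}, or invoking Lemma~\ref{lm:approx_g_functions} with a pointwise-limiting sequence) supplies this correction mechanism, so the proposal as written does not yield a proof.
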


\begin{proof}
Let $\mu \in \meas{\bwt{M}}^+$ be minimal and concentrated on $\bwtf$, and let $\Phi^*\mu \in \lipfree{M}$. Given $n \in \NN$, define
\[
\Gamma_n = \set{\zeta \in \bwtf \,:\, \bar{d}(\pp_1(\zeta),M),\bar{d}(\pp_2(\zeta),M) \geq \tfrac{1}{n}}.
\]
Since $\bar{d}(\cdot,M)$ is upper semicontinuous (see after Proposition \ref{pr:bar-d_cont}), the sets $\Gamma_n$ are closed in $\bwtf$. Evidently $\Gamma=\bigcup_{n=1}^\infty \Gamma_n$, so it is sufficient to prove that $\mu(\Gamma_n)=0$ for all $n \in \NN$. Moreover, by the inner regularity of $\mu$, it is sufficient to prove that $\mu(H)=0$ whenever $H \subset \bwtf$ is compact and $H \subset \Gamma_n$ for some $n \in \NN$. Given such $H$, we can set $K=\pp_s(H)$, which is a compact subset of $\rcomp{M}$ satisfying $H \subset \pp^{-1}(K \times K) \subset \Gamma_n$ and $\bar{d}(M,K) \geq \frac{1}{n}$. Therefore, the proof will be complete if we can show that $\mu(\pp^{-1}(K \times K))=0$ whenever $K \subset \rcomp{M}$ is compact and $\bar{d}(M,K)>0$.

Hereafter, let us fix such a set $K$. As compact subsets of $\rcomp{M}$ are $\bar{d}$-bounded by Proposition \ref{pr:rcomp_observations} \ref{pr:rcomp_observations_3}, we can fix $C>0$ such that $\bar{d}(0,\xi) \leq C$ whenever $\xi \in K$. Set 
$r = \min\set{\tfrac{1}{2},\frac{1}{2C}\bar{d}(M,K)}>0$. By Lemma \ref{lm:G_distance_functions_closed}, for $i=1,2$, there exist $g_i \in \Glsc$ such that
\begin{equation}\label{eqn:maybe_2}
g_i(\zeta) = \begin{cases}\displaystyle \min\set{1,\frac{\bar{d}(\pp_i(\zeta),K)}{d(\zeta)}} & \text{if } \zeta \in \pp_i^{-1}(\rcomp{M}\setminus K),\\
0 & \text{if } \zeta \in \pp_i^{-1}(K).
\end{cases}
\end{equation}
Then $g_1+g_2 \in \Glsc$. Let $\zeta \in \pp^{-1}(M \times M)$. We claim that $(g_1+g_2)(\zeta) \geq r$. If $g_i(\zeta)=1$ for some $i=1,2$, then we are done, so we assume otherwise. If $d(\zeta) \leq 4C$ then
\[
g_i(\zeta) = \frac{\bar{d}(\pp_i(\zeta),K)}{d(\zeta)} \geq \frac{\bar{d}(M,K)}{4C}, \quad i=1,2,
\]
giving $(g_1+g_2)(\zeta) \geq r$. Otherwise $d(\zeta) > 4C$, and by recalling \eqref{eq:R_finite_d} we obtain
\[
\bar{d}(\pp_1(\zeta),K)+\bar{d}(\pp_2(\zeta),K) \geq \bar{d}(\pp_1(\zeta),0)+\bar{d}(\pp_2(\zeta),0) - 2C\\  \geq d(\zeta) - 2C \geq \tfrac{1}{2}d(\zeta),
\]
giving $(g_1+g_2)(\zeta) \geq \frac{1}{2} \geq r$. 

Set $f=\min\{-r\cdot\mathbf{1}_{\bwt{M}},-(g_1+g_2)\}$. As $f$ is the minimum of two upper semicontinuous functions that are constant on fibres of $\gq$, it enjoys these properties as well. Fix $k=-r\cdot\mathbf{1}_{\pp^{-1}(K \times K)}$. We claim that $f+g_1+g_2 \leq k$. Indeed, $f+g_1+g_2 \leq 0$ by definition, and if $\zeta \in \pp^{-1}(K \times K)$ then $g_1(\zeta)=g_2(\zeta)=0$, so $f(\zeta)=-r=k(\zeta)$.  We further claim that if $\zeta \in \pp^{-1}(M \times M)$ then $(f+g_1+g_2)(\zeta)= 0$. Given such $\zeta$, as $(g_1+g_2)(\zeta) \geq r$, we have $f(\zeta)=-(g_1+g_2)(\zeta)$, thus $(f+g_1+g_2)(\zeta)= 0$.

Now let $\ep>0$.  By Theorem \ref{th:minimality_test} \ref{pr:minimality_test_3} and the minimality of $\mu$, there exists $g_0 \in G$ such that $f \leq g_0$ and
\[
\duality{g_0,\mu} < \int_{\bwt{M}} f\,d\mu + r\ep. 
\]
Since $g_0+g_1+g_2 \in \Glsc$ and $(g_0 + g_1+g_2)(\zeta) \geq (f + g_1+g_2)(\zeta)=0$ whenever $\zeta \in \pp^{-1}(M \times M)$, by Proposition \ref{pr:Glsc_lipfree} and the above we obtain
\[
0 \leq \int_{\bwt{M}} g_0+g_1+g_2 \,d\mu < \int_{\bwt{M}} f+g_1+g_2 \,d\mu + r\ep \leq \int_{\bwt{M}} k \,d\mu + r\ep = -r\mu(\pp^{-1}(K \times K)) + r\ep.
\]
It follows that $\mu(\pp^{-1}(K \times K)) < \ep$. As this holds for all $\ep>0$, the proof is complete.
\end{proof}

Some more preliminary work is needed before we can prove Theorem \ref{th:min_conc}. The next step is to find another necessary condition for an element to belong to $\lipfree{M}$. This time, the condition will be expressed in terms of sufficiently well-behaved elements of $\Lip_0(\rcomp{M},\bar{d})$. 

We start by defining the linear map $\Psi:\Lip_0(\rcomp{M},\bar{d})\to\ell_\infty(\bwt{M})$ by
\[
 (\Psi k)(\zeta) = \begin{cases} \dfrac{k(\pp_1(\zeta))-k(\pp_2(\zeta))}{d(\zeta)} & \text{if }\zeta \in \bwtf\setminus d^{-1}(0) \\ 0 & \text{otherwise.} \end{cases}
\]
We show first that $\norm{\Psi k}_\infty = \lipnorm{k}$. Given $\zeta \in \bwtf\setminus d^{-1}(0)$, we have $\bar{d}(\pp(\zeta)) \leq d(\zeta)$ by Proposition \ref{pr:De_Leeuw_relation} \ref{pr:De_Leeuw_relation_3} and thus
\[
 |\Psi k(\zeta)| \leq \frac{\bar{d}(\pp(\zeta))\lipnorm{k}}{d(\zeta)} \leq \lipnorm{k}.
\]
Conversely, given distinct $\xi,\eta \in \rcomp{M}$, using Proposition \ref{pr:optimals_everywhere} we find $\zeta \in \opt \cap \pp^{-1}(\xi,\eta)$, so that
\[
\norm{\Psi k}_\infty \geq \left| \frac{k(\pp_1(\zeta))-k(\pp_2(\zeta))}{d(\zeta)}\right| = \frac{|k(\xi)-k(\eta)|}{\bar{d}(\xi,\eta)},
\]
again by Proposition \ref{pr:De_Leeuw_relation} \ref{pr:De_Leeuw_relation_3}. Therefore $\norm{\Psi k}_\infty = \lipnorm{k}$.

We are interested specifically in those functions belonging to
\[
\mathcal{X} :=\set{k \in \Lip_0(\rcomp{M},\bar{d}) \,:\, \text{$k$ is Borel}}.
\]
It is clear that $\Psi k$ is Borel whenever $k \in \mathcal{X}$, and thus $\Psi k$ can be viewed naturally as an element of $C(\bwt{M})^{**}$ via integration.

We want to focus on some particular subspaces of $\mathcal{X}$. Given $n \in \NN$, define
\begin{equation}\label{eqn:M_r}
M_n = \set{\xi \in \rcomp{M} \,:\, \bar{d}(\xi,M) \leq \tfrac{1}{n}}.
\end{equation}
The $M_n$ are $G_\delta$ sets (and hence Borel) because $\bar{d}(\cdot,M)$ is upper semicontinuous on $\rcomp{M}$. Define the subspaces
\[
 \mathcal{X}_n = \set{k \in \mathcal{X} \,:\, \text{$k(\xi)=0$ whenever $\xi \in M_n$}}, \qquad n \in \NN.
\]

The next lemma is another of the key tools needed to prove Theorem \ref{th:min_conc}. It has its roots in \cite[Theorem 4.11]{AP_measures}. It is stated in greater generality than is needed for its immediate purpose, but this generalisation only requires a marginally longer proof, and it may have future applications. Recall the definition of $\Delta(A)$ given just before Proposition \ref{pr:delta}.

\begin{lemma}\label{lm:Psi_norming}
Let $\mu \in \meas{\bwt{M}}$ be concentrated on $\bwtf\setminus \Delta(\rcomp{M}\setminus M)$. Then
 \begin{equation}\label{eqn:Psi_norming}
  \sup\set{\int_{\bwt{M}} \Psi k \,d\mu : k \in \bigcup_{n=1}^\infty B_{\mathcal{X}_n}} \leq \inf\set{\norm{\Phi^*\mu - m} \,:\, m \in \lipfree{M}}.
 \end{equation}
\end{lemma}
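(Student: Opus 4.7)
The plan is to approximate $\Psi k$ by $\Phi f$ for some $f \in B_{\Lip_0(M)}$ and exploit the duality $\duality{f, \Phi^*\mu - m} \leq \norm{\Phi^*\mu - m}$.

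Fix $k \in B_{\mathcal{X}_n}$ for some $n$, fix $m \in \lipfree{M}$ and $\epsilon > 0$; I will show $\int_{\bwt{M}} \Psi k \, d\mu \leq \norm{\Phi^*\mu - m} + O(\epsilon)$ and then let $\epsilon \to 0$. First I would pick, via \eqref{eq:positive_rep}, a positive representation $\nu \in \meas{\bwt{M}}^+$ of $m$ concentrated on $\wt{M}$ with $\norm{\nu}$ close to $\norm{m}$. Since $k$ vanishes on $M$ and $\pp_i(\wt{M}) \subset M$, the function $\Psi k$ vanishes on $\wt{M}$, so $\int \Psi k \, d\nu = 0$ and $\int \Psi k \, d\mu = \int \Psi k \, d(\mu - \nu)$. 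The marginals $(\pp_i)_\sharp|\mu|$ and $(\pp_i)_\sharp\nu$ being finite Radon measures on $\rcomp{M}$, I invoke Lusin's theorem applied to the Borel function $k$ to find a compact $K \subset \rcomp{M}$ containing $0$ such that $k|_K$ is $\tau$-continuous and $(\pp_i)_\sharp|\mu|(\rcomp{M}\setminus K), (\pp_i)_\sharp\nu(\rcomp{M}\setminus K) < \epsilon$ for $i=1,2$. Since $k|_K$ is bounded, $\tau$-continuous and $1$-$\bar{d}$-Lipschitz, Proposition~\ref{pr:Mat_app} yields $f \in \Lip_0(M)$, bounded, with $\lipnorm{f} \leq 1$ and $\rcomp{f}|_K = k|_K$; the base-point condition $f(0)=0$ holds because $0 \in K$ and $k(0)=0$ (as $0 \in M \subset M_n$).

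On $S := \pp^{-1}(K \times K) \cap \bwtf \cap d^{-1}(0,\infty)$, the equality $\Phi f = \Psi k$ holds by construction, since $\Phi f(\zeta) = (\rcomp{f}(\pp_1(\zeta)) - \rcomp{f}(\pp_2(\zeta)))/d(\zeta) = (k(\pp_1(\zeta)) - k(\pp_2(\zeta)))/d(\zeta) = \Psi k(\zeta)$. Outside $S$ within $\bwt{M}$, both functions are bounded by $1$ and the $|\mu - \nu|$-mass of $\bwt{M}\setminus\pp^{-1}(K\times K)$ is $O(\epsilon)$ by the Lusin choice. The pairing $\duality{f, m}$ is of size $O(\epsilon)$ as well: $f$ vanishes on $K \cap M$ (because $\rcomp{f}|_K = k|_K$ and $k$ vanishes on $M$), so only molecules $m_{x_iy_i}$ with at least one endpoint outside $K$ contribute to $\duality{f, m} = \sum_i a_i(f(x_i)-f(y_i))/d(x_i,y_i)$, and such molecules carry $\nu$-mass $O(\epsilon)$. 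Assembling,
\[
\int \Psi k \, d\mu = \int \Phi f \, d(\mu - \nu) + O(\epsilon) = \duality{f, \Phi^*\mu - m} + O(\epsilon) \leq \norm{\Phi^*\mu - m} + O(\epsilon),
\]
and letting $\epsilon \to 0$, then taking the supremum over $k$ and infimum over $m$, completes the proof.

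The hard part will be controlling the residual contribution over $\pp^{-1}(K \times K) \cap \bwtf \cap d^{-1}(0)$, on which $\Psi k = 0$ but $\Phi f$ may be non-zero and encode derivation-type data. This is precisely where the concentration hypothesis on $\mu$ enters: because $\mu$ avoids $\Delta(\rcomp{M}\setminus M)$, its mass on $\bwtf \cap d^{-1}(0)$ lives on $\pp^{-1}(\mathrm{diag}(M))$, so the problematic term comes only from derivation fibers based at points of $K \cap M$ (where $\rcomp{f}=0$). I would handle this either by refining the Matouskova extension so that $\rcomp{f}$ vanishes on a $\bar{d}$-neighborhood of $K \cap M$ (forcing $\Phi f$ to be small on those fibers), or by first establishing the lemma for measures concentrated on $\bwtf \cap d^{-1}(0,\infty)$ and then extending to the general case by exploiting the fact that the $M$-diagonal part of $\mu$ contributes zero to the left-hand side, so only an independent argument using the concentration hypothesis is needed to reabsorb it.
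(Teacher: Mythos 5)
Your proposal follows the same line as the paper's: reduce to $\mu-\nu$ with $\nu$ a positive molecular representation of $m$, use Lusin to find a compact $K$ on which $k$ is $\tau$-continuous and whose complement has small marginal mass, apply Proposition~\ref{pr:Mat_app} to get $f$ agreeing with $k$ on $K$, note $\Phi f = \Psi k$ on $\pp^{-1}(K\times K)\cap\bwtf\cap d^{-1}(0,\infty)$, and pair $f$ against $\Phi^*\mu-m$. You also correctly identify the one place where this breaks down: the diagonal fibers $\pp^{-1}(K\times K)\cap\bwtf\cap d^{-1}(0)$, where $\Psi k=0$ but $\Phi f$ may carry derivation data. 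Your first suggested fix — forcing $\rcomp{f}$ to vanish on a $\bar{d}$-neighbourhood of $K\cap M$ — is precisely what the paper does, so the plan is right.

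However, you stop short of carrying out that fix, and the missing details are the real content. What the paper does is take $L=\set{\xi:\bar{d}(\xi,K\cap M)\leq s}$ and apply Proposition~\ref{pr:Mat_app} to $L\cup K$ rather than to $K$, with $s$ chosen to meet two constraints simultaneously: $s\leq 1/n$ so that $L\subset M_n$ (hence $k$ vanishes on $L$ and the piecewise function is well-defined), and $s<\frac{1}{2}\bar{d}(K\cap M, K\setminus M)$ so that $L$ and $K\setminus M$ are disjoint closed sets and the glued function $k\restriction_{L\cup K}$ is $\tau$-continuous (the positivity of $\bar{d}(K\cap M,K\setminus M)$ uses compactness and lower semicontinuity of $\bar{d}$). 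With $f$ so chosen, proving $\Phi f(\zeta)=0$ on a diagonal fiber $\zeta$ with $\pp(\zeta)=(x,x)$, $x\in K\cap M$, is not automatic from vanishing of $\rcomp{f}$ on a $\tau$-neighbourhood: one needs that a net $(x_i,y_i)\to\zeta$ eventually satisfies $d(x_i,x),d(y_i,x)\leq s$, which follows from Proposition~\ref{pr:bar-d_cont}~\ref{pr:bar-d_cont_3} (the map $\bar{d}(\cdot,x)$ is $\tau$-continuous precisely because $x\in M$). This is also where the concentration hypothesis on $\mu$ earns its keep, exactly as you observe: it guarantees the diagonal fiber sits over a point of $M$, not of $\rcomp{M}\setminus M$, so the argument applies. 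Incidentally, the side remark that $\duality{f,m}=O(\ep)$ is true but superfluous; once $\Phi f$ agrees with $\Psi k$ off a set of small $|\mu-\nu|$-mass, the bound $\duality{f,\Phi^*\mu-m}\leq\norm{\Phi^*\mu-m}$ is all that is needed.
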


\begin{proof}
To ease notation, set $\Delta = \Delta(\rcomp{M}\setminus M)$. Let $n \in \NN$, $k \in B_{\mathcal{X}_n}$, $m \in \lipfree{M}$ and $\ep>0$. Using \eqref{eq:positive_rep}, let $\nu \in \meas{\bwt{M}}^+$ be a (not necessarily optimal) representation of $m$ concentrated on $\wt{M}$, and set $\lambda=\mu-\nu$, so that $\Phi^*\lambda = \Phi^*\mu - m$. We have $\duality{\mu,\Psi k}=\duality{\lambda,\Psi k}$ because $k$ vanishes on $M$ and $\nu$ is concentrated on $\wt{M}$. 

Given $\ep>0$, by Lusin's theorem \cite[Theorem 7.1.13]{Bogachev} applied to both marginals of $|\lambda|$ (which are concentrated on $\rcomp{M}$), there exists a compact set $K \subset \rcomp{M}$ such that $k\restrict_K$ is continuous and $|\lambda|(H) < \frac{1}{2}\ep$, where
\[
 H:= \pp_1^{-1}(\rcomp{M}\setminus K) \cup \pp_2^{-1}(\rcomp{M}\setminus K).
\]
Since $k\restrict_{K \cup \set{0}}$ is also continuous, we can and do suppose that $0 \in K$. Furthermore, by inner regularity of the marginals of $|\lambda|$, we can assume that $K \cap M$ and $K\setminus M$ are also compact. 

In the next step, we enlarge $K \cap M$ by defining the set
\[
 L=\set{\xi \in \rcomp{M} \,:\, \bar{d}(\xi,K \cap M) \leq s},
\]
for a suitably chosen $s>0$, in such a way that $L \subset M_n$ and $k\restrict_{L \cup (K\setminus M)}=k\restrict_{L \cup K}$ is continuous as well. By Proposition \ref{pr:bar-d_cont} \ref{pr:bar-d_cont_1}, $L$ is closed (in fact it is easily seen to be compact). Using Proposition \ref{pr:bar-d_cont} \ref{pr:bar-d_cont_1} and compactness, we have $r:=\bar{d}(K \cap M,K\setminus M)>0$. Thus if we set $s=\min\set{\frac{1}{n},\frac{r}{2}}$, then $L \subset M_n$ and, by a straightforward argument, we have $\bar{d}(L,K\setminus M) \geq \frac{r}{2}$ and thus $L \cap K\setminus M = \varnothing$. Since $k\restrict_L$ vanishes and is hence continuous, $k\restrict_{K\setminus M}$ is continuous, and $L$ and $K\setminus M$ are closed and disjoint, it follows that $k\restrict_{L \cup (K\setminus M)}=k\restrict_{L \cup K}$ is continuous.

Since $k$ vanishes on $L$ and $k\restrict_{K\setminus M}$ is compact, $k$ is bounded on $L \cup K$ (alternatively, just consider the compactness of $L$). By Proposition \ref{pr:Mat_app}, there exists a bounded function $f \in B_{\Lip_0(M)}$ such that $\rcomp{f}\restrict_{L \cup K}=k\restrict_{L \cup K}$. We claim that 
\begin{equation}\label{lm:Psi_norming_1}
 \Phi f\restrict_{\bwtf\setminus (\Delta \cup H)} = \Psi k\restrict_{\bwtf\setminus (\Delta \cup H)}.
\end{equation}
To prove this claim, let $\zeta \in \bwtf\setminus (\Delta \cup H)$. First assume that $d(\zeta)>0$. Then because $\zeta \in \bwtf\setminus H=\pp^{-1}(K\times K)$, we have by Proposition \ref{pr:De_Leeuw_relation} \ref{pr:De_Leeuw_relation_1}
\[
 \Phi f(\zeta) = \frac{\rcomp{f}(\pp_1(\zeta))-\rcomp{f}(\pp_2(\zeta))}{d(\zeta)} = \frac{k(\pp_1(\zeta))-k(\pp_2(\zeta))}{d(\zeta)} = \Psi k(\zeta).
\]
Now suppose that $d(\zeta)=0$. Then $\Psi k(\zeta)=0$, so we need to show that $\Phi f(\zeta)=0$ as well. As $\pp_1(\zeta)=\pp_2(\zeta)$ (by Proposition \ref{pr:De_Leeuw_relation} \ref{pr:De_Leeuw_relation_3}) and $\zeta \notin \Delta$, it follows that $\zeta \in \pp^{-1}(M \times M)$, thus there exists $x \in K \cap M$ such that $\pp_1(\zeta)=\pp_2(\zeta)=x$. Let $(x_i,y_i)$ be a net in $\wt{M}$ converging to $\zeta$. Then $d(x_i,x),d(y_i,x) \leq s$ for all large enough $i$, which implies $x_i,y_i \in L \cap M$ and $f(x_i)=f(y_i)=0$ for these $i$. Therefore $\Phi f(\zeta)=0$ as required, and we have established \eqref{lm:Psi_norming_1}.

Since \eqref{lm:Psi_norming_1} holds and $\lambda$ is concentrated on $R\setminus \Delta$, we can estimate
\begin{align*}
 \duality{\mu,\Psi k}=\duality{\lambda,\Psi k} &\leq \norm{\Phi^*\lambda} + |\duality{\lambda,\Psi k} - \duality{\Phi f,\lambda}|\\
 &= \norm{\Phi^*\lambda} + \bigg|\int_H \Psi k - \Phi f \,d\lambda \bigg|\\
 &\leq \norm{\Phi^*\mu - m} + 2|\lambda|(H) < \norm{\Phi^*\mu - m} + \ep.
\end{align*}
This holds for all $n \in \NN$, $k \in B_{\mathcal{X}_n}$, $m \in \lipfree{M}$ and $\ep>0$, hence result.
\end{proof}

Finally, we can prove Theorem \ref{th:min_conc}.

\begin{proof}[Proof of Theorem \ref{th:min_conc}]
Let $\mu \in \opr{\bwt{M}}$ be minimal and satisfy $\Phi^*\mu \in \lipfree{M}$. By Proposition \ref{co:free_bwtf}, $\mu$ is concentrated on $\bwtf$. By Theorem \ref{th:kill_Gamma}, $\mu(\Gamma)=0$. Thus, to finish the proof, it suffices to show
\[
\mu(\pp^{-1}((\rcomp{M}\setminus M) \times M))=\mu(\pp^{-1}(M \times (\rcomp{M}\setminus M)))=0.
\]
In other words, the set of $\zeta \in \bwtf$ having exactly one coordinate in $M$ is $\mu$-null. By symmetry, it will be enough to show that $\mu(\pp^{-1}((\rcomp{M}\setminus M) \times M))=0$.

To prove this we will use Corollary \ref{co:mutually_singular_upgrade} and Lemma \ref{lm:Psi_norming}. The fact that $\mu(\Gamma)=0$ implies $\mu(\Delta(\rcomp{M}\setminus M))=0$. Hence, by Corollary \ref{co:mutually_singular_upgrade},
\[
((\pp_1)_\sharp\mu)\restrict_{\rcomp{M}\setminus M} \perp ((\pp_2)_\sharp\mu)\restrict_{\rcomp{M}\setminus M}.
\]
This means there exists a Borel set $A \subset \rcomp{M}\setminus M$ satisfying $\mu(\pp_2^{-1}(A))=0$ and $\mu(\pp_1^{-1}(E))=\mu(\pp_1^{-1}(A \cap E))$ whenever $E \subset \rcomp{M}\setminus M$ is Borel. Thus we need only to show that $\mu(\pp^{-1}(A \times M))=0$. Using a measure-theoretic argument similar to the one given at the start of the proof of Theorem \ref{th:kill_Gamma}, it will be sufficient to show that $\mu(\pp^{-1}(K \times M))=0$ whenever $K \subset A$ is compact and $\bar{d}(M,K)>0$.

Let $K$ be such a set and fix $N \in \NN$ such that $\frac{2}{N}\leq \bar{d}(M,K)$. The function $\bar{d}(\cdot,K)$ is lower semicontinuous by Proposition \ref{pr:bar-d_cont} \ref{pr:bar-d_cont_1}. Given $n \in \NN$, define the $\bar{d}$-Lipschitz and upper semicontinuous (and thus Borel) function $k_n:\rcomp{M}\to[0,1]$ by $k_n(\xi)=\max\set{0,1-n\bar{d}(\xi,K)}$. We observe that
\[
W_n := \set{\xi \in \rcomp{M} \,:\, \bar{d}(\xi,K) < \tfrac{1}{n}} = k_n^{-1}(0,1].
\]
Let $n \geq N$ and $\xi \in W_n$. Then $\bar{d}(\xi,\eta)<\frac{1}{n}$ for some $\eta \in K$, whence
\[
 \bar{d}(\xi,M) \geq \bar{d}(\eta,M) - \bar{d}(\xi,\eta) \geq \bar{d}(M,K) - \bar{d}(\xi,\eta) > \tfrac{2}{N}-\tfrac{1}{n} \geq \tfrac{1}{N}.
\]
Hence $W_n \cap M_N = \varnothing$, which implies $k_n \in \mathcal{X}_N$ whenever $n \geq N$. According to Lemma \ref{lm:Psi_norming}, because $\mu$ is concentrated on $\bwtf\setminus \Delta(\rcomp{M}\setminus M)$ and $\Phi^*\mu \in \lipfree{M}$, the integrals $\int_{\bwt{M}} \Psi k_n \,d\mu$ vanish whenever $n \geq N$.

Define the Borel function $\psi:\bwt{M} \to\RR$ by
\[
 \psi(\zeta) = \begin{cases} \frac{1}{d(\zeta)} & \text{if } \zeta \in \pp^{-1}(K \times (\rcomp{M}\setminus K)),\\
 -\frac{1}{d(\zeta)} & \text{if } \zeta \in \pp^{-1}((\rcomp{M}\setminus K) \times K),\\
 0 & \text{otherwise.}\end{cases}
\]
We claim that the sequence $(\Psi k_n)$ converges pointwise to $\psi$ on $\bwt{M}$. Let $\zeta \in \bwt{M}$. If $\zeta \notin \bwtf$ then $\Psi k_n(\zeta)=0=\psi(\zeta)$ for all $n$, so assume hereafter that $\zeta \in \bwtf$. It is clear that if $\xi \in K$ then $k_n(\xi)=1$  for all $n$, else $k_n(\xi)=0$ for all large enough $n$, so from these facts we quickly deduce that $\Psi k_n(\zeta) \to \psi(\zeta)$ whenever $\zeta \in \bwtf$.

We want to use the dominated convergence theorem, so we need to dominate $\psi$ and the $\Psi k_n$, $n \geq N$, by a $\mu$-integrable function. Define $\theta:\bwt{M} \to\RR$ by
\[
 \theta(\zeta) = \begin{cases} \frac{1}{d(\zeta)} & \text{if } \zeta \in E:=\bwtf\setminus d^{-1}(0) \cap (\pp_1^{-1}(W_N) \cup \pp_2^{-1}(W_N)),\\
 0 & \text{otherwise.}\end{cases}
\]
Evidently $|\psi| \leq \theta$ and $|\Psi k_n| \leq \theta$ whenever $n \geq N$. Let us verify that $\theta$ is $\mu$-integrable. Let $\zeta \in E\setminus \Gamma$. If $\pp_1(\zeta) \in W_N$ then $\pp_1(\zeta) \notin M_N$. As $\zeta \in \bwtf\setminus \Gamma$, we must have $\pp_2(\zeta) \in M$, and thus by Proposition \ref{pr:De_Leeuw_relation} \ref{pr:De_Leeuw_relation_3} $d(\zeta) \geq \bar{d}(\pp(\zeta)) \geq \frac{1}{N}$, giving $\theta(\zeta) \leq N$. We reach the same conclusion if $\pp_2(\zeta) \in W_N$. Given this and the fact that $\mu(\Gamma)=0$,
\[
\int_{\bwt{M}} \theta \,d\mu = \int_\Gamma \theta \,d\mu + \int_{E\setminus\Gamma} \theta \,d\mu \leq N\norm{\mu} < \infty.
\]
Therefore we can apply the dominated convergence theorem to obtain
\[
\int_{\bwt{M}} \psi \,d\mu = \lim_n \int_{\bwt{M}} \Psi k_n \,d\mu = 0.
\]
Again given that $\mu(\Gamma)=0$ and recalling that $\mu(\pp_2^{-1}(K))=\mu(\pp_2^{-1}(A))=0$, the above reduces to
\[
\int_{\pp^{-1}(K \times M)} \frac{d\mu}{d} = \int_{\bwt{M}} \psi \,d\mu = 0.
\]
Because $\frac{1}{d}$ is strictly positive on $\pp^{-1}(K \times M)$, we deduce that $\mu(\pp^{-1}(K \times M))=0$. This ends the proof.
\end{proof}

We can combine Theorem \ref{th:min_conc}, Proposition \ref{pr:basic} \ref{pr:basic_3} and \ref{pr:basic_4} and Proposition \ref{pr:minimal_exist}, together with part of the argument in the proof of Corollary \ref{co:nice_cim}, to obtain the final result of the section. Bearing in mind that not every free space element is a convex integral of molecules, it yields representations of such elements that are are good as one can hope for.

\begin{corollary}\label{co:nice_rep}
Let $m \in \lipfree{M}$ and set $A=\supp(m) \cup \set{0}$. Then $\mu$ is concentrated on $\pp^{-1}(A\times A)$ whenever $\mu \in \opr{\bwt{M}}$ is minimal and $\Phi^*\mu=m$. Consequently, $m$ has an optimal representation concentrated on $\pp^{-1}(A\times A)$.
\end{corollary}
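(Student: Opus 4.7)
The plan is to combine the two main theorems of the preceding subsection: Theorem \ref{th:min_conc}, which forces a minimal optimal representation of a free space element to live on $\pp^{-1}(M \times M)$, with Theorem \ref{th:minimal_shadow}, which confines the shadow of the support of a minimal measure to the extended support of the functional it represents. Together with the identity $\esupp{m} = \ucl{\supp(m)}$ valid for $m \in \lipfree{M}$ (recalled in Section \ref{subsec:supp}), this pins the support of $\mu$ down to $\pp^{-1}(A \times A)$. The ``consequently'' clause will then follow from Propositions \ref{pr:minimal_exist} and \ref{pr:basic}.

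Concretely, I would first let $\mu \in \opr{\bwt{M}}$ be minimal with $\Phi^*\mu = m$. Since $m \in \lipfree{M}$, Corollary \ref{co:free_bwtf} gives that $\mu$ is concentrated on $\bwtf \subset d^{-1}[0,\infty)$, so Theorem \ref{th:minimal_shadow} applies and yields
\[
 \pp_s(\supp(\mu)) \subset \esupp{m} \cup \set{0} = \ucl{\supp(m)} \cup \set{0} \subset \ucl{A},
\]
equivalently $\supp(\mu) \subset \pp^{-1}(\ucl{A} \times \ucl{A})$. In parallel, Theorem \ref{th:min_conc} says $\mu$ is concentrated on $\pp^{-1}(M \times M)$. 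Intersecting these two containments,
\[
 \supp(\mu) \cap \pp^{-1}(M \times M) \subset \pp^{-1}\bigl((\ucl{A} \cap M) \times (\ucl{A} \cap M)\bigr),
\]
and the identity $\ucl{A} \cap M = A$ finishes the concentration statement. This last identity is valid because $A$ is closed in $M$: by Proposition \ref{pr:woodsuniform} \ref{pr:woodsuniform_2}, if some $x \in M \setminus A$ lay in $\ucl{A}$, then $d(\set{x},A) > 0$ would separate $\set{x}$ and $A$ at positive distance, forcing $\ucl{\set{x}} \cap \ucl{A} = \varnothing$ and contradicting $x \in \ucl{A}$.

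For the existence of such a representation, start from any $\nu \in \opr{\bwt{M}}$ representing $m$ (these exist by the general De Leeuw theory recalled in Section \ref{subsec:DeLeeuw}), use Proposition \ref{pr:minimal_exist} to produce a minimal $\mu \in \meas{\bwt{M}}^+$ with $\mu \preccurlyeq \nu$, and appeal to Proposition \ref{pr:basic} \ref{pr:basic_3} and \ref{pr:basic_4} to conclude that $\mu$ is optimal and still represents $m$. The first half of the corollary then applies to $\mu$. No deeper difficulty remains at this stage; all the substantive work is absorbed into Theorems \ref{th:min_conc} and \ref{th:minimal_shadow}, and the only piece of housekeeping is the trace identity $\ucl{A} \cap M = A$ used above.
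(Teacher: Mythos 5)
Your proof is correct and follows essentially the same route as the paper's: both combine Theorem \ref{th:min_conc} with Theorem \ref{th:minimal_shadow} (via the argument in the proof of Corollary \ref{co:nice_cim}) and then finish the existence clause with Propositions \ref{pr:minimal_exist} and \ref{pr:basic} \ref{pr:basic_3}, \ref{pr:basic_4}. The only cosmetic difference is that the paper phrases the trace identity directly as $A = K \cap M$ with $K := \esupp{m} \cup \{0\}$, using $\supp(m) = \esupp{m} \cap M$, whereas you pass through $\ucl{A}$ and re-derive $\ucl{A} \cap M = A$ from Proposition \ref{pr:woodsuniform} \ref{pr:woodsuniform_2}; since $\ucl{A} = \ucl{\supp(m)} \cup \{0\} = K$ these are the same set, so the two finishes coincide.
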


As is written in Section \ref{subsec:background}, Corollary \ref{co:nice_rep} is the starting point for the proof of Theorem \ref{th:extreme} and a number of other applications to Lipschitz-free spaces; we refer the reader to \cite{APS24c} for a full treatment of them. 

\section*{Acknowledgements}

I would like to express my gratitude to R.~Aliaga and E.~Perneck\'a, with whom I have had countless interesting and fruitful mathematical discussions in recent years, and from whom I have learned much about Lipschitz-free spaces. Particular thanks are due to R.~Aliaga, who read a number of iterations of this work and whose valuable comments led to many improvements. Thanks go to M.~C\'uth as well, for supplying further comments and corrections.

\end{document}